\documentclass[12pt,twoside]{amsart}

\usepackage{geometry}
\geometry{a4paper, top=3cm, bottom=3cm, left=2.5cm, right=2.5cm}

\usepackage{amssymb, amsmath, amsthm, amscd}
\usepackage{enumerate}
\usepackage{mathrsfs}
\usepackage{bm}

\usepackage{graphicx}
\usepackage{hhline}
\usepackage[all]{xy}


\usepackage{xcolor}

\usepackage{url}

\usepackage{iftex}

\ifPDFTeX

  \usepackage[pdfencoding=auto, unicode]{hyperref}
\else
  \usepackage[dvipdfmx, unicode]{hyperref}
\fi

\hypersetup{
    colorlinks=false,
    linkcolor=blue,
    citecolor=blue,
    urlcolor=blue,
    bookmarksnumbered=true,
    pdfstartview=FitH
}

\title{Notes on acceptable bundles I} 
\author{Osamu Fujino, Taro Fujisawa, and Takashi Ono}
\date{2026/4/8, version 0.59}
\subjclass[2020]{Primary 32L10; Secondary 30J99}
\keywords{acceptable bundles, subharmonic functions, parabolic structures, 
punctured disks, filtered bundles}
\address{Department of 
Mathematics, Graduate School of Science, 
Kyoto University, Kyoto 606-8502, Japan}
\email{fujino@math.kyoto-u.ac.jp}
\address{Department of Mathematics and Data Science,
Center for Liberal Arts and Sciences,
Tokyo Denki University, Tokyo, Japan}
\email{fujisawa@mail.dendai.ac.jp}
\address{Research Institute for Mathematical Sciences, Kyoto University, Kyoto 606-8502, Japan}
\email{takashio@kurims.kyoto-u.ac.jp}
\DeclareMathOperator{\re}{Re}
\DeclareMathOperator{\ord}{ord}
\DeclareMathOperator{\rank}{rank}
\DeclareMathOperator{\supp}{supp}
\DeclareMathOperator{\Area}{Area}
\DeclareMathOperator{\Hom}{Hom}
\DeclareMathOperator{\Par}{\mathcal{P}\!\it{ar}}
\DeclareMathOperator{\Parr}{\mathcal{P}{\overset{\mathrm{red}}{\!\it{ar}}}}

\newtheorem{thm}{Theorem}[section]
\newtheorem{lem}[thm]{Lemma}
\newtheorem{cor}[thm]{Corollary}
\newtheorem{prop}[thm]{Proposition}

\newtheorem*{claim}{Claim}

\theoremstyle{definition}
\newtheorem{defn}[thm]{Definition}
\newtheorem{rem}[thm]{Remark}
\newtheorem{ex}[thm]{Example}
\newtheorem*{ack}{Acknowledgments}  
\newtheorem{step}{Step}
\newtheorem{say}[thm]{}

\makeatletter

\@addtoreset{equation}{section}
\makeatother

\begin{document}

\begin{abstract} 
The notion of acceptable bundles plays a fundamental role in the Simpson--Mochizuki theory. This paper presents a detailed study of acceptable bundles on a punctured disk. In addition to its expository aspects, we introduce a new invariant and provide arguments that differ from those of Simpson and Mochizuki.
\end{abstract}

\maketitle 

\tableofcontents 

\section{Introduction}\label{p-sec1}

The notion of acceptable bundles plays a fundamental role in the Simpson--Mochizuki theory; see, for example, \cite{simpson1}, 
\cite{simpson2}, \cite{mochizuki1}, \cite{mochizuki2}, 
\cite{mochizuki3}, \cite{mochizuki4}, \cite{mochizuki5}, and 
\cite{mochizuki6}. The present paper and the subsequent paper 
\cite{ffo} provide detailed studies of acceptable bundles 
on a punctured disk and on a partially punctured polydisk, respectively. 
While these papers are primarily expository in nature, they also contain new arguments that differ from those of Simpson and Mochizuki. 

Takuro Mochizuki gives a general account 
of acceptable bundles in a broad setting 
in \cite[Chapter 21, Acceptable Bundles]{mochizuki4}. 
However, the primary focus there is on higher-dimensional 
generalizations of the results of Simpson (\cite{simpson1} 
and \cite{simpson2}), and the treatment of the most 
basic case, namely, acceptable bundles on a punctured disk, 
is relatively brief. 
The present paper is intended to fill this gap. 
In particular, we give a detailed study of acceptable bundles on a punctured disk and introduce new tools that will be used in subsequent work. 
In the subsequent paper \cite{ffo}, we study acceptable bundles on a partially punctured polydisk, building on the results of the present paper.

\medskip

Let $E$ be a holomorphic vector bundle over 
$\Delta^* := \{z \in \mathbb{C} \mid 0 < |z| < 1\}$, 
and let $h$ be a smooth Hermitian metric on $E$.  
We denote the curvature form of the Chern 
connection associated with $(E, h)$ by $\sqrt{-1}\Theta_h(E)$, 
which is a smooth $\Hom(E, E)$-valued $(1,1)$-form on $\Delta^*$.

We consider the Poincar\'e metric on $\Delta^*$ given by
\[
\omega_P := \frac{\sqrt{-1}\,dz \wedge d\overline{z}}{|z|^2 (-\log|z|^2)^2}.
\]
The induced metric on $\Hom(E, E)$ by 
$h$ is also denoted by $h$, whenever there is no risk of confusion.

Let us recall the definition of acceptable 
vector bundles on $\Delta^*$ in the sense of Mochizuki (see 
\cite{mochizuki1}, \cite{mochizuki2}, 
\cite{mochizuki3}, and \cite[Chapter 21]{mochizuki4}).

\begin{defn}[Acceptable bundles, see Definition \ref{p-def2.1}]\label{p-def1.1}
Let $(E, h)$ be a Hermitian holomorphic vector bundle on $\Delta^*$.  
We say that $(E, h)$ is an \emph{acceptable vector bundle} (in 
the sense of Mochizuki) if there exists a constant $C > 0$ such that
\[
|\Theta_h(E)|_{h, \omega_P} \leq C \quad \text{on } \Delta^*,
\]
where $|\bullet|_{h, \omega_P}$ denotes the pointwise 
norm of $\bullet$ with respect to the Hermitian metric induced by $h$ and $\omega_P$. 
\end{defn}

Although Simpson treats a more general setting (see \cite[Section 10]{simpson1} 
and \cite[Section 3]{simpson2}), 
in this paper we adopt the above definition of acceptable vector bundles.

\begin{defn}[Prolongation by increasing orders, 
see Definition \ref{p-def2.3}]\label{p-def1.2}
Let $(E, h)$ be an acceptable vector bundle on $\Delta^*$, 
and let $a$ be any real number.
For any open subset $U \subset \Delta$, we define
\[
{}_a E(U) := \left\{ f \in 
E(U \setminus \{0\}) \,\middle|\, |f|_h = O\left(\frac{1}{|z|^{a+\varepsilon}}\right) 
\text{ for every } \varepsilon > 0 \right\}, 
\] 
where $|f|_h$ denotes the norm of $f$ with respect 
to the Hermitian metric $h$. 
Then we obtain a sheaf of \( \mathcal{O}_\Delta \)-modules, 
denoted by \( {}_a E \). 
When $a = 0$, we usually write ${}^\diamond\! E := {}_0 E$.
\end{defn}

The following foundational result is due to Simpson (see \cite{simpson1} and 
\cite{simpson2}):

\begin{thm}[Simpson, 
see \cite{simpson1} and \cite{simpson2}]\label{p-thm1.3}
Let $(E, h)$ be an acceptable vector 
bundle on $\Delta^*$. Then ${}_a E$ is a 
holomorphic vector bundle for every $a \in \mathbb{R}$.
\end{thm}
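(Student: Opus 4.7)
The plan is to reduce to the case $a=0$ and then exhibit a holomorphic frame of ${}^\diamond E$ in a neighborhood of the puncture, using the fact that acceptability forces $\log|f|_h^2$ (up to a logarithmic correction in $\log|z|$) to be subharmonic for any local holomorphic section $f$. The reduction is the standard twist: since $\sqrt{-1}\,\partial\bar\partial\log|z|^{2a}\equiv 0$ on $\Delta^*$, the rescaled metric $h_a:=|z|^{2a}h$ has the same Chern curvature as $h$, so $(E,h_a)$ is again acceptable, and the relation $|f|_{h_a}=|z|^{a}|f|_h$ identifies ${}_a(E,h)$ with ${}^\diamond(E,h_a)$. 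Hence it suffices to prove the theorem for ${}^\diamond E$.

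The analytic input is the following. For any nonzero local holomorphic section $f$ of $E$, a standard Chern connection calculation combined with Cauchy--Schwarz gives
\[
\sqrt{-1}\,\partial\bar\partial \log|f|_h^2 \;\geq\; -\,\frac{\langle \sqrt{-1}\Theta_h(E)\, f,\, f\rangle_h}{|f|_h^2} \;\geq\; -C\,\omega_P,
\]
where the second inequality uses acceptability. A direct computation shows $\omega_P=\sqrt{-1}\,\partial\bar\partial\log(-\log|z|^2)$ on $\Delta^*$, so
\[
\varphi_f \;:=\; \log|f|_h^2 \,+\, C\log(-\log|z|^2)
\]
is subharmonic on $\Delta^*$. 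The same estimate applies to any exterior power $\bigwedge^k E$, which is again acceptable.

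Next I would build a frame of ${}^\diamond E$ near $0$. Because $\Delta^*$ is Stein, $E|_{\Delta^*}$ is holomorphically trivial; fix a frame $(e_1,\dots,e_r)$ and let $H(z)=(h(e_j,e_i))$ have eigenvalues $\mu_1(z)\geq\dots\geq\mu_r(z)>0$. Applying the subharmonicity of $\varphi_f$ to well-chosen sections of $E$ and of $\bigwedge^k E$ for $k=1,\dots,r$, I would extract for each $i$ a well-defined linear growth rate $\log\mu_i=\alpha_i\log|z|^2+o(\log|z|)$ with $\alpha_i\in\mathbb{R}$. Multiplying each $e_i$ by a suitable integer power of $z$ and performing a Gram--Schmidt style procedure, I would produce $r$ holomorphic sections $s_1,\dots,s_r$ on a neighborhood $U$ of $0$ that belong to ${}^\diamond E(U)$ and are linearly independent at every point of $U\setminus\{0\}$. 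Since every section $f$ of ${}^\diamond E$ satisfies $|f|_h=O(|z|^{-\epsilon})$ for all $\epsilon>0$, Cramer's rule expresses $f$ in the basis $(s_i)$ with coefficients holomorphic on $U\setminus\{0\}$ and of order $O(|z|^{-\epsilon})$ for every $\epsilon>0$; by Riemann's removable singularity theorem these coefficients extend across $0$, so $(s_1,\dots,s_r)$ is a frame of ${}^\diamond E$ over $U$.

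The main obstacle is the middle step, namely constructing sections with the right \emph{two-sided} order behavior. The subharmonicity estimate produces one-sided upper bounds $|f|_h\leq C|z|^{-\alpha-\epsilon}$ but, on its own, no matching lower bounds; without such lower bounds one cannot guarantee that the candidate sections $s_i$ are linearly independent away from $0$, nor pin down the correct rank. Obtaining the lower bounds requires applying the subharmonicity argument symmetrically to the dual bundle $E^\vee$ (with its dual metric, which is again acceptable) and to the determinant line bundle $\det E$, and using these to show that the set of achievable orders is discrete and of total multiplicity exactly $r$. This two-sided norm estimate, where all wedge powers of $E$ rather than $E$ alone really enter the picture, is the analytic heart of Simpson's theorem.
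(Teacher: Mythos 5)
Your reduction to $a=0$ via the twist $h\mapsto |z|^{2a}h$ is exactly Step~1 of the paper's proof, and the idea of using subharmonicity from seminegativity (Lemma~\ref{p-lem2.7}) after compensating the curvature by a Poincar\'e-type weight is also correct in spirit. There is, however, a sign error: a direct computation (and \eqref{p-eq4.1} in the paper, where $\chi(N)=-N\log(-\log|z|^2)$ satisfies $\sqrt{-1}\partial\overline\partial\chi(N)=N\omega_P$) gives $\sqrt{-1}\partial\overline\partial\log(-\log|z|^2)=-\omega_P$, not $+\omega_P$. So the function that is subharmonic is $\log|f|_h^2 - C\log(-\log|z|^2)$, i.e.\ $\log|f|^2_{he^{-\chi(-C)}}$ as in Lemma~\ref{p-lem5.4}, not the one you wrote. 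This matters because the whole point is to upper-bound $|f|_h$, and the wrong sign gives the bound in the useless direction.

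More importantly, the gap you honestly flag at the end is genuinely the whole theorem. You start from an arbitrary Stein trivialization of $E|_{\Delta^*}$ and try to read off well-defined growth exponents $\alpha_i$ for the eigenvalues of the Gram matrix. For an arbitrary frame such exponents need not exist, and even granting their existence, the claim that a Gram--Schmidt procedure plus integer twists produces sections that are simultaneously in ${}^\diamond E$ \emph{and} remain linearly independent near $0$ is precisely the two-sided (upper \emph{and} lower) norm control you admit you cannot derive from subharmonicity alone. As written, this is not a proof but a sketch with the crux left open.

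The paper closes that crux by a route that your sketch never touches, and which avoids eigenvalue asymptotics entirely. First, instead of taking an arbitrary trivialization, it \emph{constructs} holomorphic sections lying in ${}^\diamond E$: twist $h$ by $e^{-\chi(N)}$ to make $E\otimes K^{-1}_{\Delta^*}$ Nakano semipositive, solve a $\overline\partial$-equation (Lemma~\ref{p-lem5.2}) to get $v_i$ with prescribed value at a point $z_0$ and finite weighted $L^2$-norm, and then convert the $L^2$ bound into a pointwise bound $|v_i|_h=O(|z|^{-\varepsilon})$ by the mean-value inequality (Lemma~\ref{p-lem5.3}). Second, and this is the decisive lower bound you were missing, it invokes the already-proved line-bundle case (Proposition~\ref{p-prop4.1}) applied to $\det E$: writing $v_1\wedge\cdots\wedge v_r=a(z)\mathbf e$ in the trivialization ${}^\diamond(\det E)=\mathcal O_\Delta\cdot\mathbf e$, the function $a(z)$ is honestly holomorphic with a finite order of vanishing $l=\ord_0 a$, and for any $s=\sum s_i v_i\in{}^\diamond E$ the wedge $s\wedge v_2\wedge\cdots\wedge v_r=s_1 a(z)\mathbf e$ must again lie in ${}^\diamond(\det E)$, forcing $\ord_0 s_i\geq -l$. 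This traps ${}^\diamond E$ inside the free sheaf $\bigoplus_i\mathcal O_\Delta\cdot z^{-l}v_i$, hence coherence and local freeness follow. To repair your proposal you would need, at minimum, (a) the correct sign in the subharmonicity step, (b) a construction of sections lying in ${}^\diamond E$ rather than an arbitrary frame, and (c) some replacement for Proposition~\ref{p-prop4.1} on the determinant; without (c) the independence of the $s_i$ near $0$ remains unproved.
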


More precisely, Simpson asserts the 
coherence of ${}_a E$ in a slightly more general setting.
Furthermore, he states that the desired coherence 
follows from the theory 
of Cornalba--Griffiths \cite{cornalba-griffiths}, with a minor modification. 
For details, see the discussion on pages 909--910 of \cite{simpson1}.

The next corollary follows easily from the definition of ${}_a E$ and Theorem \ref{p-thm1.3}:

\begin{cor}[see Section \ref{p-sec7}]\label{p-cor1.4}
In the setting of Theorem \ref{p-thm1.3}, for $a, b \in \mathbb{R}$, we have:
\begin{itemize}
\item[(i)] ${}_a E$ is locally free;
\item[(ii)] ${}_a E \subset j_*E$ and 
${}_a E|_{\Delta^*} = E$, where 
$j\colon \Delta^* \hookrightarrow \Delta := \{z \in \mathbb{C} \mid |z| < 1\}$;
\item[(iii)] ${}_a E \subset {}_b E$ if $a \leq b$;
\item[(iv)] ${}_{a+1}E = {}_a E \otimes \mathcal{O}_\Delta([0])$;
\item[(v)] ${}_{a+\varepsilon}E = {}_a E$ for 
all sufficiently small $\varepsilon > 0$;
\item[(vi)] The set $\{a \in \mathbb{R} \mid {}_a E / {}_{<a} E \neq 0\}$ 
is discrete in $\mathbb{R}$, 
where ${}_{<a} E := \bigcup_{b < a} {}_b E \subset j_*E$.
\end{itemize}
\end{cor}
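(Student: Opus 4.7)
The plan is to dispose of (i)--(iv) by directly reading the definitions together with Theorem \ref{p-thm1.3}, and then to handle (v) and (vi) simultaneously via a single finite-dimensional filtration argument at the stalk $0 \in \Delta$.

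Item (i) is a verbatim restatement of Theorem \ref{p-thm1.3}. For (ii), any element of ${}_aE(U)$ is by definition a holomorphic section of $E$ over $U\setminus\{0\}$, hence lies in $(j_*E)(U)$; and on any open $V \subset \Delta^*$ relatively compact in $\Delta^*$ the growth condition near $0$ is vacuous, so ${}_aE|_{\Delta^*}=E$. For (iii), since $|z|<1$ we have $|z|^{-a-\varepsilon} \leq |z|^{-b-\varepsilon}$ whenever $a\leq b$, so ${}_aE(U)\subset {}_bE(U)$ sectionwise. For (iv), a direct inspection of the growth bounds shows that as subsheaves of $j_*E$ one has ${}_{a+1}E = z^{-1}{}_aE$, which is precisely the definition of ${}_aE \otimes \mathcal{O}_\Delta([0])$. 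None of these steps requires a real idea.

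The content of (v) and (vi) is encoded in the stalk quotient
\[
M := ({}_{a+1}E)_0 \big/ ({}_aE)_0.
\]
By (iv), $({}_aE)_0 = z \cdot ({}_{a+1}E)_0$, so $M$ is the fiber of the rank-$r$ locally free sheaf ${}_{a+1}E$ at $0$, hence a $\mathbb{C}$-vector space of dimension $r := \rank E$. For $b\in[a,a+1]$ put $M_b := ({}_bE)_0/({}_aE)_0 \subset M$; by (iii) these form an increasing filtration of $M$. I would then establish two facts: (a) $M_b = \bigcap_{b'>b}M_{b'}$, i.e., the filtration is right-continuous in $b$, which comes from unwinding the universal quantifier in ``$|f|_h = O(|z|^{-b-\varepsilon})$ for every $\varepsilon>0$''; and (b) the jumps of the global filtration $\{{}_bE\}$ lying in $(a,a+1]$ are exactly the points at which $\dim M_b$ strictly increases, using that ${}_bE$ and ${}_{<b}E$ agree on $\Delta^*$ so equality at the stalk $0$ implies equality of sheaves.

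Granting (a) and (b), $\dim M_b$ is a right-continuous, non-decreasing, integer-valued function on $[a,a+1]$ bounded by $r$, so it has at most $r$ jumps in $(a,a+1]$; since $a\in\mathbb{R}$ was arbitrary, the jump set of (vi) is locally finite and hence discrete. For (v), $\dim M_a = 0$ together with right-continuity at $a$ and integer values forces $\dim M_b = 0$ on some initial interval $[a, a+\varepsilon]$, and there ${}_{a+\varepsilon}E = {}_aE$. The one mildly delicate point in the whole argument is the right-continuity property $M_b = \bigcap_{b'>b}M_{b'}$: it is elementary once the quantifiers are sorted, but it is easy to confuse with the condition defining a jump, which concerns the \emph{union} over $b'<b$ rather than the intersection over $b'>b$. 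Everything else is either immediate from the definition or a standard fact about monotone integer-valued step functions.
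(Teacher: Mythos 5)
Your argument is correct, and for (i)--(iv) it is exactly what one would expect. For (v)--(vi), the underlying mechanism is the same as the paper's (Section \ref{p-sec7}): track an integer-valued non-decreasing quantity bounded by $\rank E$ across an interval of length $1$, and combine this with right-continuity of the filtration (which is the paper's Lemma \ref{p-lem7.9}, your fact (a)). What differs is the bookkeeping: you work directly with $\dim_{\mathbb C}\bigl(({}_bE)_0/({}_aE)_0\bigr)$, whereas the paper introduces the invariant $\gamma({}_\alpha E)$ via $\det H(h,\bm v)$, proves that $\gamma$-differences are the integers in $\mathbb Z_{\geq 0}$ (Lemma \ref{p-lem7.8}), and only later identifies them with your dimensions (Lemma \ref{p-lem7.12}). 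Your route is therefore a bit more self-contained for the purposes of Corollary \ref{p-cor1.4} alone, since it bypasses the metric-theoretic definition of $\gamma$ entirely once local freeness is in hand, while the paper's $\gamma$ pays off elsewhere (Theorems \ref{p-thm7.5} and \ref{p-thm12.3}). The only point worth spelling out more than you do is fact (a) itself, $M_b=\bigcap_{b'>b}M_{b'}$: at the stalk level this is ${(}_bE)_0=\bigcap_{b'>b}({}_{b'}E)_0$, and the paper gives it a short direct proof (Lemma \ref{p-lem7.9}) by inserting an auxiliary $\beta'$ with $\alpha<\beta'<\alpha+\varepsilon$; your parenthetical acknowledgment that this is ``elementary once the quantifiers are sorted'' is accurate, but a reader should be shown the chain $|v|_h|z|^{\alpha+\varepsilon}=|v|_h|z|^{\beta'+\varepsilon'}\cdot|z|^{\alpha+\varepsilon-\beta'-\varepsilon'}$ or something equivalent.
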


Thus, we can regard ${}_\ast E 
:= \left( {}_a E \mid a \in \mathbb{R} \right)$ 
as a filtered bundle over $E$ in the sense of Mochizuki 
(see Section \ref{p-sec8}).

To prove Theorem \ref{p-thm1.3}, we first 
establish the following special case, which plays a crucial role in the overall proof.

\begin{prop}[Proposition \ref{p-prop4.1}]\label{p-prop1.5} 
Let $L$ be a holomorphic line bundle 
on $\Delta^*$ and let $h$ be a Hermitian metric on $L$ such that
\[
-C \cdot \omega_P \leq \sqrt{-1} \Theta_h(L) \leq C \cdot \omega_P
\]
holds on $\Delta^*$ for some constant $C > 0$.
That is, writing
\[
\sqrt{-1} \Theta_h(L) = f(z) \cdot \omega_P,
\]
we have $|f(z)| \leq C$ on $\Delta^*$. 
Then ${}_a L$ is a holomorphic line bundle for every $a \in \mathbb{R}$.
\end{prop}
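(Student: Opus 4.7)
The plan is to produce a distinguished nowhere-vanishing holomorphic section $\sigma$ of $L$ on $\Delta^{*}$ whose norm $|\sigma|_h$ is asymptotic to $|z|^{-\alpha}$ up to subpolynomial factors in $\log(1/|z|)$, for a suitable weight $\alpha\in\mathbb{R}$. Once this is achieved, ${}_aL$ is generated near $0$ by $z^{\lceil\alpha-a\rceil}\sigma$, and hence is locally free of rank $1$.

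First I would trivialize $L$ on the Stein manifold $\Delta^{*}$ by a nowhere-zero holomorphic section $e$, set $\varphi:=-\log|e|_h^{2}$, and pull everything back to the universal cover of $\Delta^{*}$ via $z=e^{w}$, $w=-s+i\theta$, $s>0$, $\theta\in\mathbb{R}/2\pi\mathbb{Z}$. In these coordinates the Poincar\'e metric becomes $\sqrt{-1}\,dw\wedge d\bar w/(4s^{2})$, so the curvature hypothesis translates into the Euclidean bound $|\Delta\varphi|\le C/s^{2}$ on the cylinder. Two integrations of the averaged inequality $|\tilde\psi''(s)|\le C/s^{2}$, where $\tilde\psi(s):=\frac{1}{2\pi}\int_{0}^{2\pi}\varphi(-s+i\theta)\,d\theta$, then yield $\tilde\psi(s)=-2\alpha s+O(\log s)$ as $s\to\infty$ for a unique $\alpha\in\mathbb{R}$, which is the candidate weight.

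The heart of the argument is to show that the angular oscillation $v(s,\theta):=\varphi(s,\theta)-\tilde\psi(s)$ (which has zero angular mean and $|\Delta v|\le 2C/s^{2}$) admits a decomposition
\[
v=\re F+u
\]
with $F$ holomorphic on a punctured neighbourhood of $0$ (in general with an essential singularity; the model $\varphi=\re(1/z)+C\log(-\log|z|)$ forces $F=1/z$) and $u$ a bounded smooth function. My approach is Fourier-analytic: for each $n\neq 0$ the Fourier coefficient $v_{n}(s)$ solves $v_{n}''-n^{2}v_{n}=g_{n}$ with $|g_{n}|\le C/s^{2}$, and a bounded particular solution is obtained from the integral kernel $-\frac{1}{2|n|}e^{-|n||s-\sigma|}$, giving a function $u_{n}$ with $|u_{n}(s)|\le C/(n^{2}s_{0}^{2})$ for $s\ge s_{0}$. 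Summing, $u:=\sum_{n\neq 0}u_{n}(s)e^{in\theta}$ is bounded and satisfies $\Delta u=\Delta v$, so $h:=v-u$ is harmonic with zero angular mean; the standard Laurent-type expansion of harmonic functions on a punctured disk (the zero-mean condition kills the $\log|z|$ term) then identifies $h=\re F$ for a single-valued holomorphic $F$ on the punctured neighbourhood.

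With $F$ and $u$ in hand, set $\sigma:=e^{F/2}\,e$; this is a nowhere-vanishing holomorphic section of $L$ on $\Delta^{*}$, and the identity $\varphi=\tilde\psi+\re F+u$ gives $|\sigma|_h=e^{-\tilde\psi(s)/2-u/2}\asymp|z|^{-\alpha}\cdot(\log(1/|z|))^{O(1)}$ near $0$. For $g$ holomorphic on $U\setminus\{0\}$, the condition $|g\sigma|_{h}=O(|z|^{-a-\varepsilon})$ for every $\varepsilon>0$ then reduces to $|g|=O(|z|^{\alpha-a-\varepsilon})$ for every $\varepsilon>0$; the extended Riemann extension theorem (the bound is polynomial, so $g$ is meromorphic at $0$) forces $g\in z^{\lceil\alpha-a\rceil}\mathcal{O}_{\Delta,0}$, and one concludes ${}_aL=\mathcal{O}_{\Delta}\cdot z^{\lceil\alpha-a\rceil}\sigma$ near $0$. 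The main obstacle is the third paragraph: one must ensure that $\sum_{n}u_{n}(s)e^{in\theta}$ really converges to a bounded smooth function, and one must verify that the Laurent expansion of $F$ converges on a punctured neighbourhood of $0$. The subtle point is that $v$ itself can grow exponentially in $s$ (as in the model above), so the decomposition is essential: the growth of $v$ is carried by the essential singularity of $F$, not absorbed into a bounded correction.
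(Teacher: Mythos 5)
Your argument is correct and reaches the same conclusion as the paper --- that ${}_aL$ is generated near $0$ by a single section of the form $z^{\lceil\alpha-a\rceil}\sigma$ --- but by a genuinely different route.  The paper is potential-theoretic: it extends $\sqrt{-1}\Theta_h(L)$ as a closed $(1,1)$-current across $0$ (a special case of Skoda--El~Mir), produces subharmonic potentials $\psi_1,\psi_2$ on the full disk for $\sqrt{-1}\Theta_{he^{-\chi(\pm N)}}$, splits the harmonic defects $\pm2\varphi+\chi(N)-2\psi_i$ by Lemma~3.2 into $c_i\log|z|^2+2\re g_i$, absorbs $g_1$ into the trivialization, uses Lemma~3.3 to extend $g_2$ across $0$, and reads off the weight $\gamma=\nu_1+c_1$ as a Lelong number plus a constant.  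Your proof is Fourier-analytic on the half-cylinder: the weight $\alpha$ is the slope of the angular mean $\tilde\psi$, extracted by integrating $|\tilde\psi''(s)|\le C/s^2$ twice; the zero-mean part $v$ is corrected mode by mode via the Green kernel of $d^2/ds^2-n^2$ to a bounded $u$ with $\Delta u=\Delta v$; and $v-u$ is then a zero-angular-mean harmonic function, identified with $\re F$ by the Laurent argument that is the content of Lemma~3.2.  Your change of frame $e\mapsto e^{F/2}e$ plays exactly the role of the paper's absorption of $g_1$.  What you avoid: currents, Lelong numbers, and Lemma~3.3.  What you lose: the identity $\gamma=\nu_1+c_1$ and the current-theoretic phrasing the paper reuses in Sections~7 and~12 when generalizing to higher rank, and the slick derivation of $\gamma=\lim\varphi/\log|z|$ via $\liminf=\gamma=\limsup$.

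Neither of the worries you flag at the end is a genuine gap.  Smoothness of $u$ is not needed: $|u_n|=O(n^{-2})$ uniformly for $s\ge s_0$ gives uniform convergence, so $u$ is bounded and continuous; pairing $\sum u_ne^{in\theta}$ against a test function and using $u_n''-n^2u_n=g_n$ term by term shows $\Delta u=\Delta v$ in the distributional sense, and then $v-u$ is harmonic by Weyl's lemma with no further regularity of $u$ required.  As for the Laurent expansion of $F$: its convergence on a punctured neighbourhood is exactly the statement $h=\re g+c\log|z|$ for a harmonic $h$ on a punctured disk (the paper's Lemma~3.2, proved by lifting to the universal cover), and the zero-mean condition forces $c=0$ and the constant term of $g$ to be purely imaginary; there is nothing to check beyond that.
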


Note that a more precise description of ${}_a L$ is provided in Theorem \ref{p-thm4.4}.  
The authors believe that the explicit formulation given in this paper is new. 

\begin{thm}[Theorem \ref{p-thm4.4}]\label{p-thm1.6}
Let $(L, h)$ be an acceptable line bundle on $\Delta^*$. 
By taking a suitable trivialization 
\[
(L, h)\simeq \left(\mathcal O_{\Delta^*}, |\cdot|^2e^{-2\varphi}\right), 
\] 
we have the following properties. 
\begin{itemize}
\item[(i)] The limit 
\[
\gamma :=\lim_{z\to 0} \frac{\varphi(z)}{\log |z|}\in \mathbb R
\] 
exists. 
\item[(ii)] Let $f$ be a holomorphic function on $\Delta(0, r)^*$ for some $0<r<1$, 
where $\Delta(0, r)^*:=\{z\in \mathbb C\mid 0<|z|<r\}$. 
Then $f\in ({}_a L)_0$ holds for some $a\in \mathbb R$ if and only if 
$f$ is meromorphic at $0$, where $({}_a L)_0$ 
denotes the stalk of ${}_a L$ at $0 \in \Delta$. 
\item[(iii)] Let $f$ be a meromorphic function on some open neighborhood 
of $0$ and let $a$ be any real number. 
Then $f\in ({}_a L)_0$ holds if and only if 
\begin{equation}\label{p-eq1.1} 
\lim _{z\to 0} \frac{\log \left(|f|e^{-\varphi}\right)}{\log |z|} \geq -a. 
\end{equation} 
Note that 
\[
\lim _{z\to 0} \frac{\log \left(|f|e^{-\varphi}\right)}{\log |z|}=\ord _0 f-\gamma 
\] holds. 
Therefore, \eqref{p-eq1.1} is equivalent to 
\[
\ord _0 f\geq -\lfloor a -\gamma\rfloor. 
\]
\item[(iv)] Let $f$ be a meromorphic function on some open 
neighborhood of $0$ and let $a$ be any real number. 
Then $f\not\in ({}_a L)_0$ holds if and only if 
\begin{equation}\label{p-eq1.2}
\lim _{z\to 0} \frac{\log \left(|f|e^{-\varphi}\right)}{\log |z|}<-a. 
\end{equation} 
Note that \eqref{p-eq1.2} implies that 
\[
|f|e^{-\varphi} >\frac{1}{|z|^a}
\] 
holds on some small open neighborhood of $0$. 
\end{itemize}
\end{thm}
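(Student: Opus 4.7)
The plan is to pass to the logarithmic coordinate $w=\log z=u+\sqrt{-1}v$ with $u=\log|z|<0$, so that the Poincar\'e metric takes the simple form $\omega_P=\sqrt{-1}\,dw\wedge d\overline{w}/(4u^2)$. Writing $\tilde\varphi(u,v):=\varphi(e^{u+\sqrt{-1}v})$ and using $\sqrt{-1}\Theta_h(L)=2\sqrt{-1}\partial\overline\partial\varphi=f(z)\,\omega_P$ with $|f|\le C$, the acceptability hypothesis becomes the pointwise bound
\[
|\Delta_{u,v}\tilde\varphi(u,v)|\le \frac{C'}{u^2},
\]
where $\Delta_{u,v}=\partial_u^2+\partial_v^2$ is the flat Laplacian on the cylinder. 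This estimate is the common technical input for every part.

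For (i), I would decompose $\tilde\varphi$ into its circle average and an oscillation. Setting $A(u):=\frac{1}{2\pi}\int_0^{2\pi}\tilde\varphi(u,v)\,dv$, periodicity in $v$ kills $\partial_v^2$, so $|A''(u)|\le C'/u^2$. Integrability of $1/u^2$ at $-\infty$ makes $A'$ Cauchy, producing $\gamma:=\lim_{u\to-\infty}A'(u)\in\mathbb R$ and, after one more integration, $A(u)=\gamma u+O(\log|u|)$. To control the oscillation $\psi(u,v):=\tilde\varphi(u,v)-A(u)$, which is mean-zero in $v$, I would Fourier-expand $\psi=\sum_{n\ne 0}a_n(u)e^{\sqrt{-1}nv}$; each mode satisfies $a_n''-n^2 a_n=b_n(u)$ with $|b_n|\lesssim 1/u^2$, and the explicit Green's function $G_n(u,s)=\frac{1}{2|n|}e^{-|n||u-s|}$ yields a particular solution of size $O(1/(n^2u^2))$. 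The exponentially large homogeneous mode $e^{-|n|u}$ is ruled out because the plurisubharmonic decomposition of $\varphi$ using the canonical potential $-\log(-\log|z|^2)$ of $\omega_P$ forces $\tilde\varphi$ to grow at most logarithmically in $|u|$. Summing the Fourier series yields $|\psi(u,v)|=O(1/u^2)$ uniformly in $v$, and hence $\varphi(z)/\log|z|\to\gamma$ pointwise.

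Granted (i), statement (iii) becomes a formal computation. For a meromorphic $f$ at $0$ with $k:=\ord_0 f$, write $f=z^k g$ with $g$ holomorphic and $g(0)\ne 0$; then
\[
\frac{\log(|f|e^{-\varphi})}{\log|z|}=k-\frac{\varphi(z)}{\log|z|}+\frac{\log|g(z)|}{\log|z|}\longrightarrow k-\gamma
\]
as $z\to 0$, since $\log|g|/\log|z|\to 0$. The membership $f\in({}_aL)_0$ reads $|f|e^{-\varphi}=O(|z|^{-a-\varepsilon})$ for every $\varepsilon>0$; taking logs and dividing by the negative quantity $\log|z|$ flips this into $\lim_{z\to 0}\log(|f|e^{-\varphi})/\log|z|\ge -a$, which, combined with the limit just computed, is equivalent to $\ord_0 f-\gamma\ge -a$, i.e., $\ord_0 f\ge -\lfloor a-\gamma\rfloor$ (as $\ord_0 f\in\mathbb Z$). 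Part (iv) is the contrapositive, together with the observation that a strict inequality in the limit propagates to a pointwise strict inequality $\log(|f|e^{-\varphi})>-a\log|z|$ for $z$ near $0$, i.e., $|f|e^{-\varphi}>|z|^{-a}$.

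Finally, (ii) follows from (i) and (iii). If $f$ is meromorphic at $0$, the computation above places $f$ in ${}_aL$ for any $a\ge\gamma-\ord_0 f$; conversely, if $f\in({}_aL)_0$, then by (i)
\[
|f|\le |f|e^{-\varphi}\cdot e^{\varphi}=O\bigl(|z|^{\gamma-a-\varepsilon}\bigr)
\]
has polynomial growth at $0$, so Riemann's removable singularity theorem extends $f$ to a meromorphic function at $0$. The main obstacle throughout is (i), and within it the step of ruling out the exponentially growing Fourier modes via the a priori bound $|\tilde\varphi(u,v)|=O(\log|u|)$; making that estimate uniform in the angular variable $v$ is the most delicate technical point.
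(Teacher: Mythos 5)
Your plan for parts (ii)--(iv) is sound and essentially matches the paper's formal manipulations once (i) is in hand. For part (i), however, you take a genuinely different route from the paper (universal cover plus Fourier analysis, versus the paper's subharmonic extension plus Lelong numbers and Lemma \ref{p-lem3.3}), and there is a real gap precisely at the point you yourself flag as ``the most delicate technical point.''

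The claimed ``a priori bound $|\tilde\varphi(u,v)|=O(\log|u|)$'' is not a priori. The statement of Theorem \ref{p-thm1.6} begins ``By taking a suitable trivialization,'' and this is essential: changing the trivialization by a nowhere-vanishing holomorphic factor $e^{g}$ replaces $\varphi$ by $\varphi-\re g$. If one takes, say, $g(z)=1/z$, then $\re g$ grows like $\re(1/z)$, so $\varphi/\log|z|$ oscillates between $\pm\infty$ along different rays and no limit exists. Thus the limit $\gamma$ is trivialization-dependent, and for a generic trivialization it fails to exist. Your Fourier decomposition at that stage picks up exactly the dangerous homogeneous modes $a_n(u)\sim c\,e^{-|n|u}$ which are present unless the trivialization has already been corrected. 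The acceptability hypothesis alone only constrains $\Delta_{u,v}\tilde\varphi$, which is blind to the addition of any harmonic function, so it cannot rule those modes out. The work of choosing the trivialization is exactly what the paper does in Steps \ref{p-step4.1-4}--\ref{p-step4.1-5} of the proof of Proposition \ref{p-prop4.1}: one first writes $2\varphi+\chi(N)=2\psi_1+c_1\log|z|^2+2\re g_1$ with $\psi_1$ subharmonic on $\Delta$, absorbs $\re g_1$ by re-trivializing, and only then invokes the Lelong-number identity \eqref{p-eq4.6} together with Lemma \ref{p-lem3.3} to show that the remaining harmonic pieces are holomorphic at $0$, from which the existence of $\gamma$ drops out. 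Your sketch would need an analogous step --- a normalization of the trivialization that kills the $e^{-|n|u}$ modes --- before the Green's-function estimate for the oscillation $\psi$ is applicable.

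A secondary but related issue: even the assertion that the circle average satisfies $A(u)=\gamma u+O(\log|u|)$ tacitly uses that the full function $\tilde\varphi$ is at worst of linear-plus-logarithmic growth in $u$; once the trivialization is fixed as above this indeed holds, but before that normalization the statement is unavailable. So the Fourier approach is not wrong in spirit, but it cannot be carried out without first doing the hard part of the paper's argument, which your sketch presupposes rather than supplies.
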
  

The following corollaries 
follow directly from the description of ${}_a L$ in the proof of Proposition \ref{p-prop1.5}:

\begin{cor}[Duality for line bundles, see Corollary \ref{p-cor4.3}]\label{p-cor1.7}
In Proposition \ref{p-prop1.5}, we have
\[
({}_a L)^\vee = {}_{-a + 1 - \varepsilon}(L^\vee)
\]
for all sufficiently small $\varepsilon > 0$.
\end{cor}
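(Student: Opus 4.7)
The plan is to deduce Corollary 1.7 directly from the explicit description of ${}_a L$ provided by Theorem 1.6. First I would observe that $(L^\vee, h^\vee)$ is again acceptable in the sense of Proposition 1.5, since $\sqrt{-1}\Theta_{h^\vee}(L^\vee) = -\sqrt{-1}\Theta_h(L)$. In the trivialization $(L, h) \simeq (\mathcal O_{\Delta^*}, |\cdot|^2 e^{-2\varphi})$ produced by Theorem 1.6, the dual bundle becomes $(L^\vee, h^\vee) \simeq (\mathcal O_{\Delta^*}, |\cdot|^2 e^{2\varphi})$, and the constant from Theorem 1.6 (i) associated with $L^\vee$ is $-\gamma$. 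This reduces the statement to an explicit comparison of sheaves of meromorphic germs.

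Next I would make both sides of the asserted equality explicit as subsheaves of $j_* L^\vee$. By Theorem 1.6 (iii), the stalk $({}_a L)_0$ consists exactly of meromorphic germs $f$ with $\ord_0 f \geq -\lfloor a - \gamma \rfloor$; in the fixed trivialization, ${}_a L$ is therefore locally generated by $z^{-\lfloor a - \gamma \rfloor}$, and its $\mathcal O_\Delta$-dual $({}_a L)^\vee$ embeds into $j_* L^\vee$ as the subsheaf whose stalk at $0$ consists of germs $g$ with $\ord_0 g \geq \lfloor a - \gamma \rfloor$. On the other hand, applying Theorem 1.6 (iii) to $(L^\vee, h^\vee)$ at level $b = -a + 1 - \varepsilon$ (with $-\gamma$ in place of $\gamma$) shows that $({}_b(L^\vee))_0$ consists of meromorphic germs of order at least $-\lfloor -a + 1 - \varepsilon + \gamma \rfloor$.

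The remaining task is to verify the floor identity
\[
-\lfloor -a + 1 - \varepsilon + \gamma \rfloor = \lfloor a - \gamma \rfloor
\]
for all sufficiently small $\varepsilon > 0$. Writing $t = a - \gamma$, this reduces to $\lfloor 1 - t - \varepsilon \rfloor = -\lfloor t \rfloor$. If $t \in \mathbb Z$, both sides equal $-t$ for any $\varepsilon \in (0, 1)$; if $t \notin \mathbb Z$, both sides equal $-\lfloor t \rfloor$ for all sufficiently small $\varepsilon > 0$. I expect this floor matching to be the only subtle point, and it is precisely what forces the shift by $1 - \varepsilon$ in the statement rather than a naive $-a$ or $1 - a$. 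Once the orders coincide, the natural pairing ${}_a L \otimes {}_{-a + 1 - \varepsilon}(L^\vee) \to \mathcal O_\Delta$ induced from $L \otimes L^\vee \to \mathcal O_{\Delta^*}$ identifies the two subsheaves of $j_* L^\vee$, giving the desired equality.
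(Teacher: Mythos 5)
Your proposal is correct and follows essentially the same route as the paper: both pass to the trivialization $(L^\vee, h^\vee) \simeq (\mathcal O_{\Delta^*}, |\cdot|^2 e^{2\varphi})$ with invariant $-\gamma$, read off the generator $z^{-\lfloor \beta + \gamma \rfloor}$ for ${}_\beta(L^\vee)$ from the explicit description in Proposition~\ref{p-prop4.1}/Theorem~\ref{p-thm4.4}, and reduce the claim to the floor identity $-\lfloor -a+1-\varepsilon+\gamma\rfloor = \lfloor a-\gamma\rfloor$ for $0<\varepsilon\ll 1$. The only cosmetic difference is that you invoke Theorem~\ref{p-thm1.6} as a black box and split the floor verification into the integer/non-integer cases of $a-\gamma$, whereas the paper cites the equations \eqref{p-eq4.4}--\eqref{p-eq4.5} from the proof and rewrites via the ceiling; the underlying computation is identical.
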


\begin{cor}\label{p-cor1.8}
Let $L$ be a holomorphic line bundle on 
$\Delta^*$, and let $h$ be a flat Hermitian metric on $L$. Then
\[
(L, h) \simeq \left( \mathcal{O}_{\Delta^*}, 
\frac{|\cdot|^2}{|z|^{2c}} \right)
\]
for some $c \in \mathbb{R}$.
\end{cor}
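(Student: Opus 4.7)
My plan is to reduce the statement to the classical decomposition of a harmonic function on $\Delta^*$. First I would trivialize $(L,h) \simeq (\mathcal O_{\Delta^*}, |\cdot|^2 e^{-2\varphi})$ for some smooth real function $\varphi$ on $\Delta^*$, appealing either to Theorem \ref{p-thm1.6} (noting that a flat bundle is trivially acceptable since $\Theta_h = 0$) or to the elementary fact that every holomorphic line bundle on $\Delta^*$ is trivial. The curvature formula $\sqrt{-1}\,\Theta_h(L) = 2\sqrt{-1}\,\partial\bar\partial\varphi$ combined with flatness of $h$ forces $\partial\bar\partial\varphi = 0$, so that $\varphi$ is harmonic on $\Delta^*$.

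\textbf{Key step.} The decisive point is to write any harmonic function $\varphi$ on $\Delta^*$ as
\[
\varphi(z) = c\log|z| + \re f(z)
\]
for some $c \in \mathbb R$ and some holomorphic function $f$ on $\Delta^*$. I would prove this by observing that harmonicity makes $H(z) := 2\,\partial\varphi/\partial z$ holomorphic on $\Delta^*$, and that the residue of $H$ at $0$ is forced to be real: since $\varphi$ is real and single-valued, $\oint d\varphi = 0$ around a small loop gives that the period of $H(z)\,dz$ is purely imaginary, and this period equals $2\pi i\cdot\mathrm{Res}_0 H$. Calling this residue $c$, the form $\bigl(H(z) - c/z\bigr)\,dz$ is exact on $\Delta^*$, and any primitive $f$ satisfies $\re f = \varphi - c\log|z|$ up to a real constant that can be absorbed into $f$.

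\textbf{Finishing and expected obstacle.} Since $f$ is holomorphic on all of $\Delta^*$, the function $\lambda := e^{f}$ is a nowhere-vanishing holomorphic function on $\Delta^*$. Replacing the trivializing frame by $\lambda$ times itself turns the Hermitian weight into
\[
|\lambda|^2 e^{-2\varphi} = e^{2\re f}\cdot e^{-2c\log|z|-2\re f} = |z|^{-2c},
\]
which is the desired presentation. I do not foresee a genuine obstacle: the argument is a one-move reduction to a classical fact about harmonic functions on the punctured disk, and the only subtle point is the reality of the residue in the harmonic decomposition. As a consistency check, applying Theorem \ref{p-thm1.6}(i) in the new trivialization --- where $\varphi_{\mathrm{new}}(z) = c\log|z|$ --- identifies this $c$ with the invariant $\gamma$ attached to the original trivialization.
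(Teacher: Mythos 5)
Your proof is correct and follows the same route as the paper's: trivialize $L$ over $\Delta^*$, observe that flatness makes the metric weight $\varphi$ harmonic, decompose $\varphi = c\log|z| + \re f$ with $f$ holomorphic on $\Delta^*$ and $c\in\mathbb{R}$, and absorb $\re f$ by the frame change $e^{f}$. The paper's one-line proof simply specializes the machinery of Proposition \ref{p-prop4.1} (taking $N=0$ and $\psi_1=0$), which unwinds to exactly this argument; the decomposition you describe is the paper's Lemma \ref{p-lem3.2}. The only divergence is cosmetic: you establish the decomposition via the residue/period argument for $\partial\varphi/\partial z$ rather than via the universal cover $w\mapsto e^{w}$ as in the paper's proof of Lemma \ref{p-lem3.2}, and your residue argument is correct (single-valuedness of $\varphi$ forces the period of $2\partial\varphi$ to be purely imaginary, hence $\operatorname{Res}_0$ is real). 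Your closing consistency check that $c=\gamma$ in the sense of Theorem \ref{p-thm1.6}(i) is also accurate.
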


In this paper, we introduce a new invariant 
$\gamma({}_a E)$ for studying the structure of 
${}_a E$, and establish the following result:

\begin{thm}[see Definition \ref{p-def7.4}, Corollary \ref{p-cor7.6}, 
Theorem \ref{p-thm7.13}, and Theorem \ref{p-thm12.3}]\label{p-thm1.9}
Let $(E, h)$ be an acceptable vector 
bundle on $\Delta^*$ with $\rank E 
= r$, and let $\{v_1, \ldots, v_r\}$ be a local frame of ${}_a E$ near the origin. Define
\[
\gamma({}_a E) := -\frac{1}{2} \liminf_{z \to 0} \frac{\log \det H(h, \bm v)}{\log |z|},
\]
where $H(h, \bm v)$ is the $r \times r$ matrix 
$\left( h(v_i, v_j) \right)$. Then 
$\gamma({}_a E)$ is a well-defined real-valued invariant of ${}_a E$.

Furthermore, if we let
\[
\Par_a(E, h) =: \{b_1, \ldots, b_r\},
\]
then we have
\[
\gamma({}_a E) = -\frac{1}{2} 
\lim_{z \to 0} \frac{\log \det H(h, \bm v)}{\log |z|} = \sum_{i=1}^r b_i.
\] 
For the precise definition of the parabolic 
weights $\Par_a(E, h)$, see \ref{p-say7.11} below.

Note that if we define 
\[
\{\lambda_1, \ldots, \lambda_k\} 
:= \{ \lambda \in (a-1, a] \mid {}_\lambda E / {}_{<\lambda} E \ne 0 \}
\]
with $\lambda_i \ne \lambda_j$ for $i \ne j$, then
\[
\sum_{i=1}^r b_i = \sum_{i=1}^k \lambda_i 
\dim_{\mathbb{C}} \left( {}_{\lambda_i} E / {}_{<\lambda_i} E \right).
\]
\end{thm}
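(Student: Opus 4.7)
The plan is to reduce everything to Theorem \ref{p-thm1.6} applied to the line bundle $(\det E, \det h)$, via the identity $\det H(h, \bm v)=|\ell|^2_{\det h}$ with $\ell:=v_1\wedge\cdots\wedge v_r$. Since $\mathrm{tr}\sqrt{-1}\Theta_h(E)$ remains bounded with respect to $\omega_P$, the line bundle $(\det E, \det h)$ is again acceptable, so Theorem \ref{p-thm1.6} is at hand on it.

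\textbf{Well-definedness, realness, and $\liminf=\lim$.} First I would check frame independence: a change of frame by $A\in \mathrm{GL}_r(\mathcal O_{\Delta,0})$ multiplies $\det H(h,\bm v)$ by the holomorphic unit $|\det A|^2$, which contributes $o(1)$ to $\log\det H/\log|z|$. Next, Hadamard's inequality combined with $v_i\in({}_a E)_0$ gives $|\ell|_{\det h}\leq\prod_i|v_i|_h=O(|z|^{-ar-\varepsilon})$, so $\ell\in({}_{ar}(\det E))_0$; by Theorem \ref{p-thm1.6}(ii) $\ell$ is meromorphic at $0$, and Theorem \ref{p-thm1.6}(iii) then produces the honest limit
\[
-\tfrac12\lim_{z\to 0}\frac{\log|\ell|^2_{\det h}}{\log|z|}=\gamma_0-\ord_0\ell\in\mathbb R,
\]
where $\gamma_0$ is the $\gamma$ attached to $(\det E,\det h)$ by Theorem \ref{p-thm1.6}(i). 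This proves real-valuedness and $\liminf=\lim$.

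\textbf{Identification $\gamma({}_a E)=\sum b_i$.} The formula $\sum b_i=\sum_i\lambda_i d_i$ is built into the definition of $\Par_a(E,h)$. For the main equality I would pick an adapted local frame $\{v_{i,j}\}$: for each jumping point $\lambda_i\in(a-1,a]$, take $v_{i,j}\in({}_{\lambda_i}E)_0$ lifting a basis of $({}_{\lambda_i}E/{}_{<\lambda_i}E)_0$, so each $v_{i,j}$ has parabolic weight exactly $\lambda_i$ and $|v_{i,j}|_h=O(|z|^{-\lambda_i-\varepsilon})$. Refined Hadamard then yields $|\ell|_{\det h}=O(|z|^{-\sum b_i-\varepsilon})$, hence the upper bound $\gamma({}_a E)\leq\sum b_i$. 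For the matching lower bound I pass to the dual: the identity $\det H(h^\vee,\bm v^*)=\det H(h,\bm v)^{-1}$ combined with the already-established existence of the limit gives $\gamma(({}_a E)^\vee)=-\gamma({}_a E)$. Using the vector-bundle extension of Corollary \ref{p-cor1.7}, which identifies $({}_a E)^\vee={}_{-a+1-\varepsilon}(E^\vee)$ and assigns it parabolic weights $\{-b_1,\ldots,-b_r\}$ for $\varepsilon$ sufficiently small (possible because the $b_i$ are bounded away from $a-1$ by the discreteness of jumping numbers), the upper bound applied to $E^\vee$ reads $-\gamma({}_a E)\leq -\sum b_i$, so $\gamma({}_a E)\geq \sum b_i$ and equality holds.

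\textbf{Main obstacle.} The hardest step will be the vector-bundle version of parabolic duality used at the end of step 3: that the parabolic weights of $({}_a E)^\vee={}_{-a+1-\varepsilon}(E^\vee)$ are exactly the negatives of those of ${}_a E$. Corollary \ref{p-cor1.7} handles the rank-$1$ case; the extension to higher rank requires matching a dual frame of ${}_a E$ with an adapted frame on the $E^\vee$ side and tracking how jumping numbers negate. This compatibility between duality and the parabolic filtration is presumably the content of Corollary \ref{p-cor7.6} and Theorem \ref{p-thm7.13} cited in the theorem statement; with it in hand, the rest of the proof is a clean reduction to Theorem \ref{p-thm1.6}.
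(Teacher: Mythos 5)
Your reduction to $(\det E, \det h)$ and Theorem \ref{p-thm1.6} handles the first two claims correctly and essentially mirrors what the paper does in Theorem \ref{p-thm7.5} and Corollary \ref{p-cor7.6}: frame independence, finiteness, and $\liminf=\lim$ all follow from the acceptability of the determinant line bundle applied to $\ell=v_1\wedge\cdots\wedge v_r$. The upper bound $\gamma({}_aE)\le\sum b_i$ via Hadamard and an adapted frame is also correct (this is Corollary \ref{p-cor7.7} in the paper). And the identity $\sum b_i=\sum\lambda_i\dim_{\mathbb C}({}_{\lambda_i}E/{}_{<\lambda_i}E)$ is indeed just the definition of $\Par_a(E,h)$.

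However, your step for the lower bound $\gamma({}_aE)\ge\sum b_i$ is \emph{circular}. You invoke ``the vector-bundle extension of Corollary \ref{p-cor1.7}'' to assert that the parabolic weights of $({}_aE)^\vee={}_{-a+1-\varepsilon}(E^\vee)$ are $\{-b_1,\dots,-b_r\}$. That is precisely Theorem \ref{p-thm1.12} (= Theorem \ref{p-thm13.2}), and the paper explicitly states that its proof ``rel[ies] on the equality $\gamma({}_aE)=\sum_{i=1}^r b_i$ in Theorem \ref{p-thm1.9}'' --- the very statement you are trying to prove. Unpacking the dependency makes the circularity visible: to show $v_i^\vee\in{}_{-b_i}(E^\vee)$ one must bound $|v_i^\vee|_{h^\vee}$ from above, which via $|v_i^\vee|_{h^\vee}^2=(H(h,\bm v)^{-1})_{ii}$ requires a \emph{lower} bound on $\det H(h,\bm v)$ of the form $\det H\gtrsim |z|^{-2\sum b_j+\varepsilon}$; but that lower bound is equivalent to $\gamma({}_aE)\ge\sum b_j$. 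Your closing remark that the needed duality ``is presumably the content of Corollary \ref{p-cor7.6} and Theorem \ref{p-thm7.13}'' is mistaken: Corollary \ref{p-cor7.6} is the $\liminf=\lim$ statement, not a duality statement, and Theorem \ref{p-thm7.13} \emph{is} the statement $\gamma({}_aE)=\sum b_i$ itself, so neither can serve as an ingredient.

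The inequality $\gamma({}_aE)\ge\sum b_i$ is the genuinely hard part of the paper --- the authors explicitly call it ``the most technically challenging part'' --- and it is proved directly (Step 2 of the proof of Theorem \ref{p-thm12.3}) by a contradiction argument: pull back along a cyclic cover $z=w^m$ with $m$ chosen by Diophantine approximation (Lemma \ref{p-lem12.2}) so that all parabolic weights are pushed into a small interval, then apply Simpson's key lemma (Lemma \ref{q-lem10.1}) to produce a constant section decaying like $|w|^{1/3k}$, which contradicts the small size of the parabolic weights. There is no shortcut via duality here; the duality statement is a downstream consequence, not an input.
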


This theorem plays a central role in our analysis.
We emphasize that the most 
technically challenging part of this paper is the proof of the identity 
\[\gamma ({}_a E) =
\sum 
_{i=1}^rb_i.
\] 

\begin{thm}[Determinant bundles, see Theorem \ref{p-thm7.5}]\label{p-thm1.10}
Let $(E, h)$ be an acceptable vector bundle on $\Delta^*$. Then 
the determinant bundle $(\det E, \det h)$ is an acceptable 
line bundle on $\Delta^*$, and
\[
\det({}_a E) = {}_{\gamma({}_a E)} \det E
\]
holds for every $a \in \mathbb{R}$.
\end{thm}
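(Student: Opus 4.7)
The plan is two-fold: first, verify that $(\det E,\det h)$ is an acceptable line bundle (so that Theorem \ref{p-thm1.6} applies to it); second, identify $\det({}_aE)$ with ${}_{\gamma({}_aE)}\det E$ by producing a common local generator at the origin. The acceptability is routine: the Chern curvature of the determinant is the pointwise trace
\[
\sqrt{-1}\,\Theta_{\det h}(\det E)=\mathrm{tr}\bigl(\sqrt{-1}\,\Theta_h(E)\bigr),
\]
so the uniform bound $|\Theta_h(E)|_{h,\omega_P}\le C$ transfers to a uniform bound on $|\Theta_{\det h}(\det E)|_{\det h,\omega_P}$, at the cost of a constant depending only on $r=\rank E$.

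For the main equality, fix $a\in\mathbb R$ and a local frame $\bm v=\{v_1,\ldots,v_r\}$ of ${}_aE$ near $0$, so that $v_1\wedge\cdots\wedge v_r$ is a local generator of $\det({}_aE)$ at $0$. The Hadamard-type estimate
\[
|v_1\wedge\cdots\wedge v_r|_{\det h}\le\prod_{i=1}^{r}|v_i|_h=O(|z|^{-ra-r\varepsilon})
\]
(for every $\varepsilon>0$) places $v_1\wedge\cdots\wedge v_r$ inside $({}_{ra}\det E)_0$; after trivializing $(\det E,\det h)\simeq(\mathcal O_{\Delta^*},|\cdot|^2 e^{-2\varphi})$ as in Theorem \ref{p-thm1.6} and writing $v_1\wedge\cdots\wedge v_r=g\cdot 1$, part (ii) of that theorem forces $g$ to be meromorphic at $0$. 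The identity
\[
|v_1\wedge\cdots\wedge v_r|_{\det h}^2=\det H(h,\bm v)=|g|^2 e^{-2\varphi}
\]
combined with Theorem \ref{p-thm1.9} then yields
\[
\lim_{z\to 0}\frac{\log(|g|e^{-\varphi})}{\log|z|}=\tfrac12\lim_{z\to 0}\frac{\log\det H(h,\bm v)}{\log|z|}=-\gamma({}_aE),
\]
so by Theorem \ref{p-thm1.6}(iii)--(iv) the section $g$ lies in $({}_{\gamma({}_aE)}\det E)_0$ but in no earlier stalk of the filtration.

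To upgrade membership into generation, I would read $\mathrm{ord}_0 g$ out of the companion identity
\[
\mathrm{ord}_0 g=\lim_{z\to 0}\frac{\log|g|}{\log|z|}=\gamma_{\det}-\gamma({}_aE),
\]
where $\gamma_{\det}$ is the scalar attached to $(\det E,\det h)$ by Theorem \ref{p-thm1.6}(i). Since $\mathrm{ord}_0 g\in\mathbb Z$, this forces $\gamma({}_aE)-\gamma_{\det}\in\mathbb Z$, and hence $\mathrm{ord}_0 g=-\lfloor\gamma({}_aE)-\gamma_{\det}\rfloor$, which by Theorem \ref{p-thm1.6}(iii) is precisely the order of a local generator of ${}_{\gamma({}_aE)}\det E$ at $0$. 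Thus $v_1\wedge\cdots\wedge v_r$ is a simultaneous local generator of both $\det({}_aE)$ and ${}_{\gamma({}_aE)}\det E$ at $0$; since the two line bundles already agree with $\det E$ on $\Delta^*$, the equality $\det({}_aE)={}_{\gamma({}_aE)}\det E$ holds on all of $\Delta$. The genuine technical obstacle has been absorbed into Theorem \ref{p-thm1.9}, namely that $\log\det H(h,\bm v)/\log|z|$ has an honest limit (not merely a liminf) and that this limit is a frame-independent invariant of ${}_aE$; without that input, the above argument would only deliver the inclusion $\det({}_aE)\subset{}_{\gamma({}_aE)}\det E$ and lose control of the reverse direction.
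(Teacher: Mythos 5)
Your approach is correct and takes a genuinely cleaner route than the paper's. The paper re-runs the subharmonic-decomposition argument of Proposition \ref{p-prop4.1} from scratch for $(\det E,\det h)$ in the trivialization by $v_1\wedge\cdots\wedge v_r$, carefully absorbing the possibly singular transition function $e^{g_1}$ into the frame with the help of Lemma \ref{p-lem7.3}. You instead apply the already-established Theorem \ref{p-thm4.4} (= Theorem \ref{p-thm1.6}) to the acceptable line bundle $\det E$ as a black box: the Hadamard estimate puts $v_1\wedge\cdots\wedge v_r$ in ${}_{ra}\det E$, part~(ii) forces the transition function $g$ between $v_1\wedge\cdots\wedge v_r$ and the ``suitable'' trivialization to be meromorphic at $0$, and part~(iii) pins down $\ord_0 g$ in terms of $\gamma({}_aE)$ and $\gamma_{\det}$. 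This avoids repeating the potential-theoretic machinery and is a nicer factoring of the argument.

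There is, however, one misattribution that would be circular if taken at face value. You cite Theorem \ref{p-thm1.9} for the equality $\liminf_{z\to 0}\log\det H(h,\bm v)/\log|z| = \lim_{z\to 0}\log\det H(h,\bm v)/\log|z|$, and your closing sentence claims that without this input you would only get one inclusion. But in the paper's development the identity you cite is Corollary \ref{p-cor7.6}, which is itself deduced from the proof of Theorem \ref{p-thm7.5} --- precisely the statement you are trying to establish --- so it cannot legitimately serve as an external input. Fortunately you do not need it: once you have written $\det H(h,\bm v) = |g|^2 e^{-2\varphi}$ with $g$ meromorphic at $0$ (from Theorem \ref{p-thm1.6}(ii)) and $\varphi(z)/\log|z|\to\gamma_{\det}$ (from Theorem \ref{p-thm1.6}(i)), the limit of $\log\det H(h,\bm v)/\log|z|$ exists automatically and therefore coincides with the $\liminf$ in Definition \ref{p-def7.4}. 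Your own intermediate steps already supply what you tried to import. Drop the appeal to Theorem \ref{p-thm1.9}, replace it by this observation, and your proof is self-contained, non-circular, and in fact yields Corollary \ref{p-cor7.6} as a byproduct, just as the paper's proof does.
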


The proof of Theorem \ref{p-thm1.10} closely follows 
that of Proposition \ref{p-prop1.5} (see Proposition \ref{p-prop4.1}), 
once the well-definedness of $\gamma({}_a E)$ is established. 
By using $\gamma ({}_a L)$, we can reformulate Corollary 
\ref{p-cor1.7} as follows. 

\begin{lem}[Duality for line bundles, see Lemma \ref{p-lem13.1}]\label{p-lem1.11}
Let $(L, h)$ be an acceptable line bundle on $\Delta^*$.  
Let $a\in \mathbb{R}$ be any real number.  
Then we have ${}_a L = {}_{\gamma({}_a L)} L$, 
and $\Par_a(L, h) = \{ \gamma({}_a L) \}$.   

Moreover, if $0 < \varepsilon \ll 1$, then  
\[
\gamma({}_{-a + 1 - \varepsilon}(L^\vee)) = -\gamma({}_a L).
\]  

In particular, the following equality holds:
\[
\left({}_{\gamma({}_a L)} L\right)^\vee = {}_{-\gamma({}_a L)}(L^\vee).
\]
\end{lem}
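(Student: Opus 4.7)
The plan is to assemble three earlier results: Theorem \ref{p-thm1.10} specialized to line bundles, Corollary \ref{p-cor1.7} (duality for ${}_a L$), and the definition of $\gamma$ from Theorem \ref{p-thm1.9}.

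First I would observe that for a line bundle $\det L = L$ and $\det h = h$, so Theorem \ref{p-thm1.10} immediately reduces to ${}_a L = {}_{\gamma({}_a L)} L$. Since $\rank L = 1$, Theorem \ref{p-thm1.9} forces $\Par_a(L,h)$ to be a one-element set whose single element sums to $\gamma({}_a L)$, yielding $\Par_a(L,h) = \{\gamma({}_a L)\}$. This disposes of the first bullet of the lemma.

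For the second assertion, I would pick a local frame $v$ of ${}_a L = {}_{\gamma({}_a L)} L$ near the origin. By Corollary \ref{p-cor1.7}, $({}_a L)^\vee = {}_{-a + 1 - \varepsilon}(L^\vee)$ for every sufficiently small $\varepsilon > 0$, so the dual frame $v^\vee$ is a local frame of ${}_{-a + 1 - \varepsilon}(L^\vee)$. The induced Hermitian metric on $L^\vee$ satisfies $h^\vee(v^\vee, v^\vee) = h(v,v)^{-1}$, so substituting into the definition of $\gamma$ gives
\[
\gamma\bigl({}_{-a + 1 - \varepsilon}(L^\vee)\bigr) = -\tfrac{1}{2}\lim_{z\to 0} \frac{\log h^\vee(v^\vee,v^\vee)}{\log |z|} = \tfrac{1}{2}\lim_{z\to 0} \frac{\log h(v,v)}{\log |z|} = -\gamma({}_a L).
\]
Here I use that Theorem \ref{p-thm1.9} guarantees the $\liminf$ in the definition of $\gamma$ is an actual limit, so the sign flip on the $\log$ passes cleanly from $h$ to $h^\vee$.

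The last identity is then obtained by chaining: applying the first bullet to both $L$ and $L^\vee$, combined with Corollary \ref{p-cor1.7} and the computation just made, gives
\[
\bigl({}_{\gamma({}_a L)} L\bigr)^\vee = ({}_a L)^\vee = {}_{-a + 1 - \varepsilon}(L^\vee) = {}_{\gamma({}_{-a+1-\varepsilon}(L^\vee))}(L^\vee) = {}_{-\gamma({}_a L)}(L^\vee).
\]
I do not anticipate a serious obstacle: the conceptual work has already been done in Theorem \ref{p-thm1.10} and Corollary \ref{p-cor1.7}, and the present lemma is essentially a reformulation of that duality in the language of the invariant $\gamma$. The only mildly nontrivial point is the sign computation for the dual metric, which is a one-line check once the right frame is chosen.
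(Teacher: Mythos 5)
Your proposal is correct, and it is worth contrasting with the paper's proof because the two routes differ in flavor at both key points. The paper proves Lemma \ref{p-lem13.1} by reusing the explicit description of Proposition \ref{p-prop4.1}: choosing the trivialization $({}_a L, h)\simeq(\mathcal O_\Delta, |\cdot|^2e^{-2\varphi_a})$ with $\varphi_a=\varphi+\lfloor a-\gamma\rfloor\log|z|$, it computes $\gamma({}_a L)=\lfloor a-\gamma\rfloor+\gamma$ and reads off both ${}_a L={}_{\gamma({}_a L)}L$ and $\Par_a(L,h)=\{\gamma({}_a L)\}$ from the formula; then it establishes $\gamma({}_{-a+1-\varepsilon}(L^\vee))=-\gamma({}_a L)$ by the floor/ceiling manipulation $\lfloor-a+1-\varepsilon+\gamma\rfloor-\gamma=-\lfloor a-\gamma\rfloor-\gamma$, exactly as in the proof of Corollary \ref{p-cor4.3}. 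You instead obtain the first assertion as the rank-one specialization of Theorem \ref{p-thm1.10} and read the parabolic weight off Theorem \ref{p-thm1.9}; and you derive the sign relation by picking a frame $v$ of ${}_a L$, noting (via Corollary \ref{p-cor1.7}) that the dual frame $v^\vee$ is a frame of ${}_{-a+1-\varepsilon}(L^\vee)$, and using $h^\vee(v^\vee,v^\vee)=h(v,v)^{-1}$ together with Corollary \ref{p-cor7.6} (that the $\liminf$ in the definition of $\gamma$ is a genuine limit). Both proofs are valid, but note two things. First, invoking Theorem \ref{p-thm1.9} (equivalently Theorem \ref{p-thm12.3}) for the parabolic-weight claim is heavier machinery than needed: for line bundles that statement is already contained in the elementary explicit formula ${}_\alpha L=\mathcal O_\Delta\cdot z^{-\lfloor\alpha-\gamma\rfloor}$ from Proposition \ref{p-prop4.1}, and the paper deliberately avoids the deep theorem here. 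Second, your metric-inversion computation for the sign identity is arguably cleaner and more conceptual than the paper's direct floor arithmetic; it is in effect a rank-one preview of the argument the paper gives later in the proof of Theorem \ref{p-thm13.2}, where $H(h^\vee,\bm v^\vee)=H(h,\bm v)^{-1}$ plays exactly this role.
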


In contrast, the proofs of the following 
theorems, namely Theorem \ref{p-thm1.12}, 
Theorem \ref{p-thm1.13}, and 
Theorem \ref{p-thm1.14}, rely on the 
equality $\gamma({}_a E) = \sum_{i=1}^r b_i$ 
in Theorem \ref{p-thm1.9}, and are therefore considerably more involved.

\begin{thm}[Dual bundles, see Theorem \ref{p-thm13.2}]\label{p-thm1.12} 
Let $(E, h)$ be an acceptable vector bundle on $\Delta^*$, 
and let $a$ be any real number. Then,
\[
\left({}_a E\right)^\vee = {}_{-a + 1 - \varepsilon}\left(E^\vee\right)
\]
holds for any sufficiently small $\varepsilon > 0$. 

Moreover, let $\{v_1, \ldots, v_r\}$ be a local frame of ${}_a E$ near the origin, 
compatible with the parabolic filtration, such that 
$v_i \in {}_{b_i} E \setminus {}_{<b_i} E$ for each $i$. 
For each $i$, define
\[ 
v_i^\vee := (-1)^{i-1} \, v_1 \wedge 
\cdots \wedge v_{i-1} \wedge v_{i+1} \wedge 
\cdots \wedge v_r \otimes (v_1 \wedge \cdots \wedge v_r)^{\otimes -1}.
\] 
Then $\{v_1^\vee, \ldots, v_r^\vee\}$ forms a local frame of ${}_{-a + 1 - \varepsilon}(E^\vee)$ 
near the origin, compatible with the parabolic filtration, such that 
\[
v_i^\vee \in {}_{-b_i}(E^\vee) \setminus {}_{<-b_i}(E^\vee)
\]
for each $i$. In particular, we have 
\[
\Par _a(E, h)=\{b_1, \ldots, b_r\} \quad 
\text{ and }\quad \Par _{-a+1-\varepsilon} (E^\vee, h^\vee)=\{ -b_1, \ldots, -b_r\}. 
\]
\end{thm}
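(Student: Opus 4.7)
The strategy is to construct the dual frame $\{v_i^\vee\}$ explicitly, bound its norms from above by Hadamard's inequality combined with the sum formula $\gamma({}_aE) = \sum b_i$ of Theorem \ref{p-thm1.9}, and then complete the equality of sheaves by a Cauchy--Schwarz argument that forces the coefficients of any section in the dual basis to be holomorphic. Note that $(E^\vee, h^\vee)$ is again acceptable (its Chern curvature has the same pointwise $(h^\vee, \omega_P)$-norm as that of $(E, h)$), so all previous results apply to $E^\vee$ as well.

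First I would fix a local frame $\{v_1, \ldots, v_r\}$ of ${}_aE$ near $0$ compatible with the parabolic filtration, so that $v_i \in {}_{b_i}E \setminus {}_{<b_i}E$, and define $v_i^\vee$ by the formula in the statement. The natural pairing $\wedge^{r-1}E \otimes E \to \det E$ identifies $\{v_i^\vee\}$ with the ordinary linear-algebraic dual basis of $\{v_i\}$ in $E^\vee|_{\Delta^*}$.

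Next I would establish $v_i^\vee \in {}_{-b_i}(E^\vee)$ by estimating
\[
|v_i^\vee|_{h^\vee} \;=\; \frac{|v_1 \wedge \cdots \wedge \widehat{v_i} \wedge \cdots \wedge v_r|_h}{|v_1 \wedge \cdots \wedge v_r|_{\det h}}.
\]
Hadamard's inequality applied to the Gram matrix of $\{v_j : j \neq i\}$ gives $|v_1 \wedge \cdots \wedge \widehat{v_i} \wedge \cdots \wedge v_r|_h \leq \prod_{j \neq i}|v_j|_h = O(|z|^{-\sum_{j \neq i} b_j - \eta})$ for every $\eta > 0$, using $v_j \in {}_{b_j}E$. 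For the denominator, Theorem \ref{p-thm1.10} identifies $v_1 \wedge \cdots \wedge v_r$ as a local frame of $\det({}_aE) = {}_{\gamma({}_aE)}\det E$, and Theorem \ref{p-thm1.9} gives $\gamma({}_aE) = \sum b_i$, whence
\[
-\tfrac12 \lim_{z \to 0} \frac{\log |v_1 \wedge \cdots \wedge v_r|_{\det h}^2}{\log |z|} = \sum_{i=1}^{r} b_i.
\]
Dividing yields $|v_i^\vee|_{h^\vee} = O(|z|^{b_i - \eta'})$ for every $\eta' > 0$, so $v_i^\vee \in {}_{-b_i}(E^\vee)$. Since each $b_i$ strictly exceeds $a - 1$, choosing $0 < \varepsilon < \min_i(b_i - (a-1))$ yields $-b_i \leq -a + 1 - \varepsilon$, so $v_i^\vee \in {}_{-a+1-\varepsilon}(E^\vee)$.

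For the reverse inclusion, take any $w \in ({}_{-a+1-\varepsilon}(E^\vee))_0$ and write $w = \sum_j f_j v_j^\vee$, where $f_j = w(v_j)$ is meromorphic at $0$ since $w$ is a meromorphic section and the $v_j$ extend to meromorphic sections. The bound $|f_j(z)| \leq |w|_{h^\vee} |v_j|_h = O(|z|^{a - 1 + \varepsilon - b_j - \eta})$ for every $\eta > 0$, together with $b_j \leq a$ and $\varepsilon > 0$, shows that $\mathrm{ord}_0(f_j) \geq a - 1 + \varepsilon - b_j > -1$; since $\mathrm{ord}_0(f_j)$ is an integer, we conclude $\mathrm{ord}_0(f_j) \geq 0$, i.e.\ $f_j \in \mathcal{O}_{\Delta, 0}$. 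Therefore $\{v_i^\vee\}$ is indeed a local frame of ${}_{-a+1-\varepsilon}(E^\vee)$, proving the sheaf equality. Finally, letting $b''_i$ denote the parabolic weight of $v_i^\vee$, the norm estimate gives $b''_i \leq -b_i$, while applying Theorem \ref{p-thm1.9} to the frame $\{v_i^\vee\}$ via the elementary identity $\det H(h^\vee, \bm v^\vee) = \det H(h, \bm v)^{-1}$ gives $\sum b''_i = \gamma({}_{-a+1-\varepsilon}(E^\vee)) = -\sum b_i$, forcing $b''_i = -b_i$ for each $i$. The main obstacle is this sharp weight identification: Hadamard alone yields only the componentwise upper bound $b''_i \leq -b_i$, and the upgrade to equality relies crucially on the identity $\gamma({}_aE) = \sum b_i$ of Theorem \ref{p-thm1.9}, which the authors identify as the most technical result of the paper.
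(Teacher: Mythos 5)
Your proposal follows essentially the same route as the paper's proof of Theorem~\ref{p-thm13.2}: establishing $v_i^\vee \in {}_{-b_i}(E^\vee)$ by combining a Hadamard-type wedge bound with the determinant-norm asymptotic coming from $\gamma({}_aE)=\sum b_i$ (Theorems~\ref{p-thm1.9} and~\ref{p-thm1.10}), proving the reverse inclusion via the same boundedness / Riemann-removability estimate (the paper's Claim inside the proof of Theorem~\ref{p-thm13.2}), and finally forcing compatibility of $\{v_i^\vee\}$ by combining the componentwise bound $b''_i\le -b_i$ with the sum constraint $\sum b''_i = -\sum b_i$. The presentation differs only cosmetically (pointwise norm estimates vs.\ the paper's sheaf-theoretic decomposition $({}_aE)^\vee = \bigwedge^{r-1}({}_aE)\otimes{}_{-\gamma({}_aE)}(\det E)^\vee$), and your identification of the sum formula as the key technical input is exactly right.

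One logical imprecision in the last step should be repaired. You write that ``applying Theorem~\ref{p-thm1.9} to the frame $\{v_i^\vee\}$'' together with $\det H(h^\vee,\bm v^\vee) = \det H(h,\bm v)^{-1}$ gives $\sum b''_i = \gamma\bigl({}_{-a+1-\varepsilon}(E^\vee)\bigr)$. But Theorem~\ref{p-thm1.9} yields $\gamma = \sum(\text{weights})$ only for a frame already known to be \emph{compatible} with the parabolic filtration, which is precisely the property you are trying to establish for $\{v_i^\vee\}$; invoking it here is circular. The determinant identity does give you $\gamma\bigl({}_{-a+1-\varepsilon}(E^\vee)\bigr) = -\gamma({}_aE) = -\sum b_i$ directly from the definition of $\gamma$ (that quantity may be computed from any local frame, by Lemma~\ref{p-lem7.2} and Corollary~\ref{p-cor7.6}). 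What you then need is the one-sided inequality $\gamma \le \sum b''_i$, valid for an \emph{arbitrary} frame, which is Corollary~\ref{p-cor7.7}. Combined with your bound $b''_i \le -b_i$ this forces all inequalities to be equalities, after which Proposition~\ref{p-prop12.7} (packaged as Corollary~\ref{p-cor12.8}) certifies that $\{v_i^\vee\}$ is compatible and $b''_i = -b_i$. The paper cites Corollary~\ref{p-cor12.8} at exactly this point.
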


As an immediate consequence of 
Theorem \ref{p-thm1.12}, we have: 

\begin{thm}[Weak norm estimate, see Theorem \ref{p-thm13.3}]\label{p-thm1.13}
Let $\{v_1, \ldots, v_r\}$ be a local frame 
of ${}_a E$ around the origin, compatible with the parabolic filtration, such that 
\[
v_i \in {}_{b_i} E \setminus {}_{<b_i} E \quad \text{for every } i.
\] 
We define  
\[
H(h, \bm v') :=\left( h(v_i\cdot |z|^{b_i}, v_j\cdot |z|^{b_j})\right)_{i, j}. 
\]
Then there exist positive constants $C$ and $M$ such that
\[
C^{-1}(-\log |z|)^{-M} I_r \leq H(h, \bm{v}')(z) \leq C(-\log |z|)^M I_r
\]
holds in a neighborhood of the origin, where $I_r$ is the identity matrix of size $r$.
\end{thm}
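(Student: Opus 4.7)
The plan is to reduce the two-sided matrix estimate to the upper bound alone, and then recover the lower bound by duality via Theorem~\ref{p-thm1.12}.

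For the upper bound, the $(i,j)$-entry of $H(h,\bm v')$ equals $|z|^{b_i+b_j}h(v_i,v_j)$, so by Cauchy--Schwarz it suffices to bound the diagonal entries $h(v_i,v_i)|z|^{2b_i}$ by $C(-\log|z|)^M$ in a neighborhood of $0$. The definition of ${}_{b_i}E$ only yields the soft bound $h(v_i,v_i)=O(|z|^{-2b_i-2\varepsilon})$ for every $\varepsilon>0$, which has to be upgraded to a polylogarithmic estimate. Setting
\[
u_i:=\log h(v_i,v_i)+2b_i\log|z|,
\]
and using that $\log|z|$ is pluriharmonic on $\Delta^*$, the standard curvature formula for a holomorphic line subsheaf together with the acceptable hypothesis $|\Theta_h(E)|_{h,\omega_P}\le C$ gives $\sqrt{-1}\,\partial\bar\partial u_i\ge -C'\omega_P$ on $\Delta^*$. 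The soft bound translates into $\limsup_{z\to 0} u_i(z)/(-\log|z|)\le 0$. A Cornalba--Griffiths-type subharmonic growth lemma, of the kind developed in the earlier sections of the paper, then forces $u_i\le M\log(-\log|z|)+C''$ near $0$, which is precisely $h(v_i,v_i)|z|^{2b_i}\le C(-\log|z|)^M$. The off-diagonal entries follow by Cauchy--Schwarz.

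For the lower bound, I invoke Theorem~\ref{p-thm1.12}, which supplies the dual frame $\{v_1^\vee,\ldots,v_r^\vee\}$ of ${}_{-a+1-\varepsilon}(E^\vee)$ compatible with the parabolic filtration, with weights $\{-b_1,\ldots,-b_r\}$. Setting $D:=\mathrm{diag}(|z|^{b_1},\ldots,|z|^{b_r})$, one has $H(h,\bm v')=D\,H(h,\bm v)\,D$ and $H(h^\vee,(\bm v^\vee)')=D^{-1}\,H(h^\vee,\bm v^\vee)\,D^{-1}$, while the standard identity $H(h^\vee,\bm v^\vee)=\overline{H(h,\bm v)^{-1}}$ for a dual basis yields $H(h^\vee,(\bm v^\vee)')=\overline{H(h,\bm v')^{-1}}$. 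Applying the upper bound already established to the acceptable bundle $(E^\vee,h^\vee)$ with the frame $\{v_i^\vee\}$ and weights $\{-b_i\}$ produces $H(h^\vee,(\bm v^\vee)')\le C(-\log|z|)^M I_r$. Since a Hermitian matrix and its complex conjugate share the same spectrum, this rewrites as $H(h,\bm v')^{-1}\le C(-\log|z|)^M I_r$, hence $H(h,\bm v')\ge C^{-1}(-\log|z|)^{-M} I_r$, as required.

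The main obstacle is the diagonal upper bound: one has to upgrade the definitional $|z|^{-\varepsilon}$ slack into a genuine polylogarithmic estimate. This is precisely the step where the acceptability of the curvature is used in an essential way, via a Cornalba--Griffiths-type subharmonic growth lemma. Once that quantitative ingredient is in hand, the rest of the argument is elementary linear algebra combined with the duality statement of Theorem~\ref{p-thm1.12}.
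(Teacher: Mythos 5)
Your proposal is correct and follows essentially the same route as the paper. The ``Cornalba--Griffiths-type subharmonic growth lemma'' you invoke for the polylogarithmic upgrade of the diagonal entries is precisely Lemma~\ref{p-lem5.4} (proved via $\log H_\varepsilon(z)=\log\bigl(|f|^2_{he^{-\chi(-N)}}|z|^{2a+2\varepsilon}\bigr)$ being subharmonic, tending to $-\infty$ at $0$, and the maximum principle), and the lower bound is obtained exactly as in the paper by applying that same estimate to the dual frame of Theorem~\ref{p-thm13.2} and inverting the Gram matrix; the extra complex conjugation you carry on the dual Gram matrix is a harmless convention difference, as you yourself note.
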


\begin{thm}[Tensor products, see Theorem \ref{p-thm16.2}]\label{p-thm1.14}
Let $(E_1, h_1)$ and $(E_2, h_2)$ 
be acceptable vector bundles on $\Delta^*$. 
Then the tensor product bundle $(E_1 \otimes E_2, h_1 \otimes h_2)$ is also acceptable, and
\[
{}_a(E_1 \otimes E_2) = \sum_{a_1 + a_2 \leq a} {}_{a_1} E_1 \otimes {}_{a_2} E_2
\]
holds for any $a \in \mathbb{R}$.
\end{thm}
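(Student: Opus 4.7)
The plan is to handle acceptability first and then to establish the filtration identity by two opposite inclusions. For acceptability, I would invoke the Leibniz formula
\[
\sqrt{-1}\,\Theta_{h_1 \otimes h_2}(E_1 \otimes E_2) = \sqrt{-1}\,\Theta_{h_1}(E_1) \otimes \mathrm{id}_{E_2} + \mathrm{id}_{E_1} \otimes \sqrt{-1}\,\Theta_{h_2}(E_2)
\]
together with the triangle inequality and the identity $|A \otimes \mathrm{id}_{E_2}|_{h_1 \otimes h_2} = \sqrt{\rank E_2}\cdot |A|_{h_1}$ (and its analogue), which yields a uniform pointwise bound on $|\Theta_{h_1 \otimes h_2}(E_1 \otimes E_2)|_{h_1 \otimes h_2, \omega_P}$ from the individual acceptability constants.

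For the easy inclusion $\supseteq$, note that $|u \otimes v|_{h_1 \otimes h_2} = |u|_{h_1} |v|_{h_2}$ makes it immediate that $u \otimes v \in ({}_a(E_1 \otimes E_2))_0$ whenever $u \in ({}_{a_1} E_1)_0$, $v \in ({}_{a_2} E_2)_0$, and $a_1 + a_2 \leq a$; summing such tensors gives the inclusion.

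The main work is the reverse inclusion $\subseteq$, for which the weak norm estimate (Theorem \ref{p-thm1.13}) is the essential tool. I would pick $\mathcal{O}_\Delta$-frames $\{v_1, \ldots, v_r\}$ of $({}_a E_1)_0$ and $\{w_1, \ldots, w_s\}$ of $({}_a E_2)_0$ compatible with the parabolic filtrations, so that $v_i \in {}_{b_i}E_1 \setminus {}_{<b_i}E_1$ and $w_j \in {}_{c_j}E_2 \setminus {}_{<c_j}E_2$ for parabolic weights $b_i, c_j \in (a-1, a]$. Then $\{v_i \otimes w_j\}$ is an $\mathcal{O}_{\Delta^*}$-frame of $E_1 \otimes E_2$ near the origin, and any $f \in ({}_a(E_1 \otimes E_2))_0$ expands uniquely as $f = \sum_{i, j} f_{ij}\, v_i \otimes w_j$ with $f_{ij}$ holomorphic on a punctured neighborhood of $0$. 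Applying Theorem \ref{p-thm1.13} to $(E_1, h_1)$ and $(E_2, h_2)$ separately yields matrix inequalities $H_k \geq C_k^{-1}(-\log|z|)^{-M_k} I$ for the Gram matrices of the scaled frames $(v_i|z|^{b_i})_i$ and $(w_j|z|^{c_j})_j$. The Gram matrix of the scaled tensor frame $\{(v_i|z|^{b_i}) \otimes (w_j|z|^{c_j})\}$ with respect to $h_1 \otimes h_2$ is the Kronecker product $H_1 \otimes H_2$, so it is bounded below by $C^{-1}(-\log|z|)^{-M} I$ for suitable constants. Setting $g_{ij} := f_{ij}|z|^{-b_i - c_j}$, this gives
\[
|f|_{h_1 \otimes h_2}^2 \geq C^{-1}(-\log|z|)^{-M} \sum_{i,j}|g_{ij}|^2.
\]
Combining with $|f|_{h_1 \otimes h_2}^2 = O(|z|^{-2a - \varepsilon})$ forces $|f_{ij}| = O(|z|^{b_i + c_j - a - \varepsilon'})$ for every $\varepsilon' > 0$. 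Hence each $f_{ij}$ is meromorphic at $0$ with $n_{ij} := \ord_0 f_{ij} \geq b_i + c_j - a$. By Corollary \ref{p-cor1.4}(iv), $f_{ij} v_i \in ({}_{b_i - n_{ij}}E_1)_0$, so $f_{ij}\, v_i \otimes w_j \in ({}_{b_i - n_{ij}}E_1 \otimes {}_{c_j}E_2)_0$, a summand on the right-hand side because $(b_i - n_{ij}) + c_j \leq a$. Summing in $i, j$ completes the argument.

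The hard part is the transfer of the weak norm estimate from each factor to the tensor product through the Kronecker product identity for the scaled Gram matrices, together with the extraction of coefficient-level growth bounds from the single global bound on $|f|$. Once these are in place, the parabolic weight bookkeeping via Corollary \ref{p-cor1.4}(iv) closes the proof.
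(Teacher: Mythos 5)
Your proof is correct, and it takes a genuinely different route from the paper's. The paper proves the reverse inclusion by first establishing Proposition~\ref{p-prop16.1}: it works with frames of ${}_0 E_1$, ${}_0 E_2$, shows $\{z^{\lceil b_i+c_j\rceil} v_i\otimes w_j\}$ is a frame of $\mathcal P^{h_1\otimes h_2}_0(E_1\otimes E_2)$ by pairing against the dual tensors $v_i^\vee\otimes w_j^\vee$ via the filtered duality $\mathcal P^{h^\vee}_\ast E^\vee = \Hom(\mathcal P^h_\ast E, \mathcal P^{(0)}_\ast\mathcal O_\Delta(\ast[0]))$ of Proposition~\ref{p-prop15.1}, and then certifies compatibility of that frame with the parabolic filtration through a determinant computation compared against $\sum b_i + c_j - \lceil b_i+c_j\rceil$ and Corollary~\ref{p-cor12.8}. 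Theorem~\ref{p-thm16.2} then follows by shifting the frame via Lemma~\ref{p-lem7.17}. You instead expand $f\in {}_a(E_1\otimes E_2)$ directly in the frame $\{v_i\otimes w_j\}$ and control each coefficient $f_{ij}$ via the weak norm estimate (Theorem~\ref{p-thm1.13}), transferred to the tensor bundle through the observation that the Gram matrix of the rescaled tensor frame is the Kronecker product $H(h_1,\bm v')\otimes H(h_2,\bm w')$, plus the elementary inequality $A\ge\alpha I,\ B\ge\beta I\Rightarrow A\otimes B\ge\alpha\beta I$ for positive semidefinite matrices and $\alpha,\beta\ge 0$. This is more analytic and a bit more direct; the payoff of the paper's more structural approach is that it delivers, as a byproduct, the explicit compatible frame and the parabolic weight set $\{b_i+c_j-\lceil b_i+c_j\rceil\}$ of the tensor prolongation, which is reused in Proposition~\ref{p-prop17.1}. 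Both routes ultimately stand on Theorem~\ref{p-thm12.3} (yours through Theorem~\ref{p-thm1.13} and hence Theorem~\ref{p-thm1.12}; the paper's through Proposition~\ref{p-prop15.1} and Corollary~\ref{p-cor12.8}), so the depth is the same. One small caveat: the step ``$f_{ij}v_i\in ({}_{b_i-n_{ij}}E_1)_0$'' deserves a sentence, since $f_{ij}=z^{n_{ij}}u_{ij}$ with $u_{ij}$ a unit near $0$, so $u_{ij}v_i\in({}_{b_i}E_1)_0$ and then Corollary~\ref{p-cor1.4}(iv) gives the claim; as written you skip the factorization.
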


Finally, we remark that significant 
effort has been made to ensure that this paper is as self-contained as possible.

\medskip 

This paper focuses solely 
on acceptable bundles over the punctured 
disk and does not address any applications. 
There is already extensive 
literature on related topics; see, 
for example, \cite{biquard1}, \cite{biquard2}, \cite{bb}, 
\cite{sabbah-schnell1}, and \cite{sabbah-schnell2}. 
Our selection of references 
reflects the authors' preferences 
and perspective. We apologize for 
omitting many important works and 
refer interested readers to the broader literature.

\medskip

We now outline the organization of the present paper. 
In Section \ref{p-sec2}, we collect some basic definitions 
and state a few elementary properties that follow directly from them. 
In Section \ref{p-sec3}, we prove some preliminary 
lemmas concerning harmonic and holomorphic functions on a punctured disk. 
Section \ref{p-sec4} is 
devoted to the proof of Proposition \ref{p-prop1.5} (see Proposition \ref{p-prop4.1}), 
where we describe the prolongation of acceptable line bundles by 
increasing orders. To the best of the authors' knowledge, 
this treatment is new. 
In Section \ref{p-sec5}, we briefly 
discuss $\overline{\partial}$-equations and derive a growth estimate via the $L^2$-method. 
In Section \ref{p-sec6}, we prove 
Theorem \ref{p-thm1.3}, establishing 
the prolongation of acceptable vector bundles by increasing orders. 
In Section \ref{p-sec7}, we 
introduce a new invariant and prove some 
fundamental properties of prolongations of acceptable bundles. 
In Section \ref{p-sec8}, we briefly 
review the framework of filtered bundles for later use. 
Section \ref{p-sec9} collects 
several elementary inequalities, which will play a crucial 
role in the subsequent section. 
In Section \ref{q-sec10}, we establish Simpson's key lemma, 
which is one of the main ingredients in the proof of 
Theorem \ref{p-thm12.3} given in Section \ref{p-sec12}. 
In Section \ref{p-sec11}, we study the behavior of acceptable bundles 
via cyclic covers. 
Section \ref{p-sec12} is devoted to the proof of Theorem \ref{p-thm12.3}, 
which is one of the most technically involved results in this paper. 
In Section \ref{p-sec13}, we investigate the prolongation of dual vector bundles. 
In Section \ref{p-sec14}, we present some examples of filtered bundles introduced 
in Section \ref{p-sec8}. 
In Section \ref{p-sec15}, we return to the study of 
dual bundles, now within the framework of filtered bundles. 
In Section \ref{p-sec16}, we examine 
the prolongation of tensor products of acceptable bundles, 
again in the context of filtered bundles. 
Finally, in Section \ref{p-sec17}, we 
study Hom bundles from the perspective of filtered bundles.

\medskip 

While certain parts of the exposition 
may be new, and others have been simplified 
or clarified, we believe that all essential 
results are already contained, perhaps implicitly, 
within the substantial works of Simpson and Mochizuki 
(see \cite{simpson1}, \cite{simpson2}, 
\cite{mochizuki1}, \cite{mochizuki2}, 
\cite{mochizuki3}, \cite{mochizuki4}, 
\cite{mochizuki5}, \cite{mochizuki6}, and so on). 
We have cited the most relevant 
references to their works, though 
we do not aim to exhaustively list all related material. 
We nevertheless hope that the present 
paper contributes to making their profound 
and extensive theories more accessible.

\begin{say}[Convention]\label{p-say1.15}
Let $\mathcal{F}$ be a sheaf on a topological space $X$.  
Unless explicitly stated otherwise, we write  
$f \in \mathcal{F}$ to indicate that $f$ is a local section  
$f \in \mathcal{F}(U)$ over some open subset $U \subset X$.  

In this paper, we do not distinguish between holomorphic vector bundles  
on a complex manifold $X$ and the corresponding locally free  
$\mathcal{O}_X$-modules. These are treated as equivalent  
unless stated otherwise.
\end{say}

\begin{ack}\label{p-ack}
The first author was partially 
supported by JSPS KAKENHI Grant Numbers 
JP20H00111, JP21H00974, JP21H04994, JP23K20787. 
The third author was supported by 
JSPS KAKENHI Grant Number JP24KJ1611. 
The authors are deeply grateful 
to Professors Carlos Simpson and 
Takuro Mochizuki for kindly 
answering their questions and for generously 
sharing their private notes (\cite{simpson3} and \cite{mochizuki7}). 
They also wish to thank Hitoshi Fujioka 
and Natsuo Miyatake for helpful discussions. 
They are very thankful to Professors Philip Boalch, Ya Deng, and 
Takahiro Saito for their comments and for sharing valuable information 
on related topics.  
Finally, they are very grateful to Professors 
Hiromichi Takagi, Shin-ichi Matsumura, and 
Takeo Ohsawa for their valuable comments and support.
\end{ack}

\section{Preliminaries}\label{p-sec2}

In this paper, we will almost always 
work over either the punctured disk 
$\Delta^* := \{ z \in \mathbb{C} \mid 0 < |z| < 1 \}$ 
or the unit disk $\Delta := \{ z \in \mathbb{C} \mid |z| < 1 \}$.
Let 
\[
\omega_P := \frac{\sqrt{-1}\,dz \wedge d\overline{z}}{|z|^2 (-\log|z|^2)^2}
\] 
denote the Poincar\'e metric on $\Delta^*$.
Then the pair $(\Delta^*, \omega_P)$ defines a K\"ahler manifold.

Let us recall the definition of {\em{acceptable bundles}} 
on a punctured disk $\Delta^*$ in the sense of Mochizuki. 
As already mentioned in Section \ref{p-sec1}, Simpson treats a more general setting 
in \cite{simpson1} and \cite{simpson2}. 

\begin{defn}[Acceptable bundles]\label{p-def2.1}
Let $E$ be a holomorphic vector bundle on the punctured disk $\Delta^*$, and let $h$ be a Hermitian metric on $E$.  
Then $(E, h)$ admits a {\em Chern connection} $D = D' + \overline{\partial}$, whose curvature form is given by  
\[
\sqrt{-1}\Theta_h(E) := \sqrt{-1} D^2.  
\]
This is a smooth $(1,1)$-form on $\Delta^*$ with values in $\Hom(E, E)$. 

We use the same notation $h$ 
to denote the induced Hermitian metric on 
$\Hom(E, E)$, whenever there is no risk of confusion.

We say that $(E, h)$ is an \emph{acceptable bundle} 
on $\Delta^*$ if the norm of $\sqrt{-1} \Theta_h(E)$ is bounded on $\Delta^*$, 
that is, there exists a constant $C > 0$ such that
\[
|\Theta_h(E)|_{h, \omega_P} \leq C \quad \text{on } \Delta^*,
\]
where $|\bullet|_{h, \omega_P}$ denotes the pointwise 
norm of $\bullet$ with respect to the Hermitian metric 
$h$ and the Poincar\'e metric $\omega_P$.
\end{defn}

Lemma \ref{p-lem2.2} easily follows from the definition. 

\begin{lem}\label{p-lem2.2}
Let $(E, h)$ be an acceptable vector bundle on $\Delta^*$. 
Then the dual bundle $(E^\vee, h^\vee)$ and the 
determinant line bundle $(\det E, \det h)$ are also acceptable. 

Let $(E_1, h_1)$ and $(E_2, h_2)$ be acceptable 
vector bundles on $\Delta^*$. 
Then the tensor product $(E_1 \otimes E_2, h_1 \otimes h_2)$ 
and the Hom bundle $(\Hom(E_1, E_2), h_1^\vee \otimes h_2)$ are acceptable. 
\end{lem}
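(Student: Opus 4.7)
The plan is to reduce every claim in Lemma \ref{p-lem2.2} to the standard functorial identities for curvature under the basic linear-algebraic operations on holomorphic Hermitian bundles, and then simply transfer pointwise norm estimates. Throughout, all norms are with respect to the Poincar\'e metric $\omega_P$ on the $(1,1)$-form part, with the bundle part treated via the indicated Hermitian metric; the acceptability hypothesis for $(E,h)$, $(E_1,h_1)$, $(E_2,h_2)$ gives constants $C$, $C_1$, $C_2$ such that $|\Theta_h(E)|_{h,\omega_P}\leq C$ and $|\Theta_{h_i}(E_i)|_{h_i,\omega_P}\leq C_i$ on $\Delta^*$.

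First I would treat the dual bundle. Under the natural isomorphism $\mathrm{End}(E^\vee)\simeq \mathrm{End}(E)^\vee$, one has the classical identity
\[
\Theta_{h^\vee}(E^\vee) = -{}^{t}\Theta_h(E),
\]
where the transpose is taken fiberwise. Since $h^\vee$ is the dual Hermitian metric, the induced metric on $\mathrm{End}(E^\vee)$ coincides with the one induced by $h$ on $\mathrm{End}(E)$ via this isomorphism, so $|\Theta_{h^\vee}(E^\vee)|_{h^\vee,\omega_P}=|\Theta_h(E)|_{h,\omega_P}$ pointwise, which gives acceptability of $(E^\vee,h^\vee)$ with the same constant $C$.

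Next, for the determinant bundle I would use $\Theta_{\det h}(\det E)=\mathrm{tr}\,\Theta_h(E)$, and observe that $|\mathrm{tr}\,A|\leq \sqrt{\rank E}\,|A|_{h}$ for any Hermitian-valued endomorphism $A$, giving $|\Theta_{\det h}(\det E)|_{\det h,\omega_P}\leq \sqrt{\rank E}\cdot |\Theta_h(E)|_{h,\omega_P}\leq \sqrt{\rank E}\cdot C$. For the tensor product, I would start from the functoriality identity
\[
\Theta_{h_1\otimes h_2}(E_1\otimes E_2)=\Theta_{h_1}(E_1)\otimes \mathrm{id}_{E_2}+\mathrm{id}_{E_1}\otimes \Theta_{h_2}(E_2),
\]
and apply the triangle inequality in the induced metric on $\mathrm{End}(E_1\otimes E_2)$, using that $|\mathrm{id}_{E_i}|_{h_i}=\sqrt{\rank E_i}$, to obtain
\[
|\Theta_{h_1\otimes h_2}(E_1\otimes E_2)|_{h_1\otimes h_2,\omega_P}\leq \sqrt{\rank E_2}\,C_1+\sqrt{\rank E_1}\,C_2.
\]
Finally, for the Hom bundle I would simply invoke the canonical isomorphism $(\mathrm{Hom}(E_1,E_2),h_1^\vee\otimes h_2)\simeq (E_1^\vee\otimes E_2,h_1^\vee\otimes h_2)$ and combine the dual and tensor-product cases already established.

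There is no serious obstacle here; the entire content of the lemma reduces to standard curvature identities together with elementary pointwise norm inequalities, and the Poincar\'e metric $\omega_P$ plays a purely passive role since it is the same on every bundle. The only point requiring mild care is tracking the correct induced Hermitian metric on each of $\mathrm{End}(E)$, $\det E$, $E_1\otimes E_2$, and $\mathrm{Hom}(E_1,E_2)$, so that the curvature formulas and the norm bounds are compatible; once these identifications are made explicit, the estimates propagate immediately.
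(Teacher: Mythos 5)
Your proposal is correct and follows essentially the same route as the paper: reduce everything to the standard curvature identities under duality, tensor product, and the identification $\Hom(E_1,E_2)\simeq E_1^\vee\otimes E_2$, then transfer the pointwise bound. The one small deviation is the determinant bundle, where you argue directly from $\Theta_{\det h}(\det E)=\mathrm{tr}\,\Theta_h(E)$ and the Cauchy--Schwarz bound $|\mathrm{tr}\,A|\leq\sqrt{\rank E}\,|A|_h$, whereas the paper instead observes that $\det E$ is an orthogonal, connection-preserved direct summand of $E^{\otimes\rank E}$ and quotes the already-established tensor-product case; your trace argument is slightly more direct and gives an explicit constant, while the paper's direct-summand observation avoids introducing the trace inequality — both are fine.
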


\begin{proof}[Proof of Lemma \ref{p-lem2.2}]
Since $\Theta_{h^\vee}(E^\vee) = -\Theta_h(E)$ and 
\[
\Theta_{h_1 \otimes h_2}(E_1 \otimes E_2) 
= \Theta_{h_1}(E_1) \otimes \mathrm{Id}_{E_2} + 
\mathrm{Id}_{E_1} \otimes \Theta_{h_2}(E_2),
\]
it follows that both $(E^\vee, h^\vee)$ and 
$(E_1 \otimes E_2, h_1 \otimes h_2)$ are acceptable. 
Using the natural identification $\Hom(E_1, E_2) = E_1^\vee \otimes E_2$, 
we see that the Hom bundle $\Hom(E_1, E_2)$ 
is also acceptable. 
Note that $\det E$ is a direct summand of $E^{\otimes \rank E}$. 
Hence, $(\det E, \det h)$ is acceptable. 
This completes the proof of Lemma \ref{p-lem2.2}.
\end{proof}

The main object of this paper is the prolongation by 
increasing orders. 

\begin{defn}[Prolongation by increasing orders]\label{p-def2.3}
Let $(E, h)$ be an acceptable vector bundle on $\Delta^*$ and let 
$a$ be any real number. 
For any open subset $U$ of $\Delta$, 
we put 
\[
{}_aE(U):=\left\{f\in 
E(U\setminus \{0\})\, 
\middle|\,  |f|_h=O\left(\frac{1}{|z|^{a+\varepsilon}}\right) \text{for 
every $\varepsilon$}\right\},  
\] 
where $|f|_h$ denotes the norm of $f$ with 
respect to the Hermitian metric $h$. 
Then we obtain a sheaf of \( \mathcal{O}_\Delta \)-modules, 
denoted by \( {}_a E \). 
When $a=0$, we usually use ${}^\diamond\! E$ to denote 
${}_0E$. 
\end{defn}

Let us briefly recall the positivity of vector bundles. 
For details, see, for example, \cite[Chapter 10]{demailly1} 
and \cite[Chapter VII, \S 6 Positivity Concepts for Vector Bundles]{demailly}. 

\begin{defn}[Positivity of vector bundles]\label{p-def2.4}
Let $X$ be a complex manifold of dimension one, that is, $\dim X = 1$.  
Let $E$ be a holomorphic vector bundle on $X$, and let $h$ be a Hermitian metric on $E$.  
Let $D$ denote the Chern connection of $(E, h)$, and define the curvature form by
\[
\sqrt{-1} \Theta_h(E) := \sqrt{-1} D^2
\] 
as before. 
Then the curvature form $\sqrt{-1}\Theta_h(E)$ and the metric $h$ induce 
a Hermitian form $\theta_E$ on $T_X \otimes E$.

If $\theta_E$ is positive definite, positive semi-definite, negative, or negative semi-definite,  
then we say that $(E, h)$ (or equivalently, 
$\sqrt{-1}\Theta_h(E)$) is  
\emph{Nakano positive}, \emph{Nakano semipositive}, 
\emph{Nakano negative}, or \emph{Nakano seminegative}, respectively.

Since $\dim X = 1$, Nakano (semi)positivity and (semi)negativity are equivalent to  
\emph{Griffiths} (semi)positivity and (semi)negativity, respectively.

In this paper, we sometimes omit the 
terms \lq\lq Nakano\rq\rq \ and \lq\lq Griffiths\rq\rq \ since we are working 
in dimension one.
\end{defn}

The property established in 
Lemma \ref{p-lem2.5} below is a fundamental feature of 
acceptable bundles.
In fact, it may be said that this is the only 
property of acceptable bundles needed in this paper.

\begin{lem}\label{p-lem2.5}
Let $(E, h)$ be an acceptable vector bundle on $\Delta^*$ such that 
\[
|\Theta_h(E)|_{h, \omega_P} \leq C
\]
holds on $\Delta^*$. Then we have
\[
-C \omega_P \otimes \mathrm{Id}_E  \leq_{\mathrm{Nak}}  
\sqrt{-1} \Theta_h(E) \leq_{\mathrm{Nak}} 
C \omega_P \otimes \mathrm{Id}_E.
\]
Here, $A \leq_{\mathrm{Nak}} B$ means that the Hermitian 
form on $T_{\Delta^*} \otimes E$ induced by $B - A$ and $h$ is Nakano semipositive.
\end{lem}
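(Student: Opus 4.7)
The plan is to pass to a pointwise description of the curvature and reduce the lemma to a spectral statement about an $h$-Hermitian endomorphism. Since $\dim_{\mathbb{C}}\Delta^*=1$ and $\omega_P$ is nowhere vanishing, we may write, globally on $\Delta^*$,
\[
\sqrt{-1}\,\Theta_h(E) \;=\; A(z)\cdot \omega_P,
\]
where $A$ is a smooth section of $\mathrm{End}(E)$. Because $\sqrt{-1}\Theta_h(E)$ is Hermitian (as the curvature of a Chern connection), the endomorphism $A(z)$ is $h$-Hermitian at every point and hence has real eigenvalues $\lambda_1(z),\dots,\lambda_r(z)$.

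Next, I would translate the acceptability bound into a spectral bound on $A$. Working in an $h$-orthonormal local frame, the convention of Definition \ref{p-def2.1} gives
\[
|\Theta_h(E)|_{h,\omega_P}^{2} \;=\; \sum_{i,j}|A_{ij}(z)|^{2} \;=\; \sum_{i=1}^{r}\lambda_{i}(z)^{2},
\]
so the hypothesis forces $|\lambda_i(z)|\le C$ for every $i$ and every $z\in\Delta^*$. Equivalently, the two endomorphisms $A(z)+C\cdot\mathrm{Id}_E$ and $C\cdot\mathrm{Id}_E-A(z)$ are $h$-positive semidefinite throughout $\Delta^*$.

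The final step is the standard identification, available because $\dim_{\mathbb{C}}\Delta^*=1$, between Nakano semipositivity of an $\mathrm{End}(E)$-valued $(1,1)$-form of the shape $B(z)\cdot\omega_P$ and pointwise $h$-positive semidefiniteness of the endomorphism $B(z)$. Indeed, evaluating the Hermitian form $\theta_E$ of Definition \ref{p-def2.4} on a decomposable vector $\partial/\partial z\otimes v\in T_{\Delta^*}\otimes E$ produces $\omega_P(\partial_z,\partial_{\bar z})\cdot\langle B(z)v,v\rangle_h$, whose first factor is strictly positive. Applying this equivalence once to $B=A+C\cdot\mathrm{Id}_E$ and once to $B=C\cdot\mathrm{Id}_E-A$ yields both inequalities of the lemma simultaneously.

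There is no substantive obstacle; the argument is essentially a dictionary between the pointwise norm of Definition \ref{p-def2.1} and the spectral data of $A$. The only item requiring a little care is the matching of normalization constants, so that the constant $C$ in the hypothesis coincides exactly with the $C$ appearing in the conclusion; this holds under the standard conventions $|\omega_P|_{\omega_P}=1$ and Hilbert--Schmidt norm for endomorphisms of a Hermitian vector bundle, and is in any case absorbed into the definition of acceptability.
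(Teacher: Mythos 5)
Your proof is correct, but it takes a genuinely different route from the paper's. The paper fixes a point $x$, works in a holomorphic frame that is $h$-orthonormal at $x$, and bounds the quadratic form $\sum_{\alpha,\beta} R_{\alpha\overline\beta}(x)\,u^\alpha\overline{u^\beta}$ directly by a double application of the Cauchy--Schwarz inequality: this gives $\bigl|\sum R_{\alpha\overline\beta}u^\alpha\overline{u^\beta}\bigr|\le |u|^2\bigl(\sum|R_{\alpha\overline\beta}|^2\bigr)^{1/2}\le C|u|^2$ without ever invoking the spectral theorem or the Hermitian symmetry of the curvature. Your argument instead diagonalizes the $h$-Hermitian endomorphism $A$ with $\sqrt{-1}\Theta_h(E)=A\cdot\omega_P$ and uses the identity $\sum_{i,j}|A_{ij}|^2=\sum_i\lambda_i^2$ (valid precisely because $A$ is Hermitian) to deduce $|\lambda_i|\le C$, which is the conclusion in spectral form. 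Both arguments are elementary linear algebra and both give the sharp constant $C$. What the paper's Cauchy--Schwarz route buys is robustness: it does not rely on Hermitianity and, as the authors note in Remark~\ref{p-rem2.6}, the same computation extends to K\"ahler manifolds of arbitrary dimension, where the curvature cannot be written as $A\cdot\omega_P$ for a single endomorphism $A$. What your spectral route buys is conceptual transparency in the one-dimensional setting: the acceptability bound is literally the statement that the spectrum of $A(z)$ lies in $[-C,C]$, and Nakano semipositivity of $(C\,\mathrm{Id}\pm A)\omega_P$ is read off immediately. Your step translating Nakano semipositivity into $h$-positive semidefiniteness of the endomorphism via Definition~\ref{p-def2.4} is exactly the one-dimensional dictionary the paper also relies on implicitly, so no gap there.
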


\begin{proof}[Proof of Lemma \ref{p-lem2.5}]
For any $x\in \Delta^*$, we take 
a local coordinate $w$ centered at $x$ such that 
\[
\omega_P=\sqrt{-1} dw \wedge d\overline {w} 
\] 
around $x$. Let $\{e_1, \ldots, e_r\}$ be a local 
holomorphic frame of $E$, which is orthonormal at $x$. 
Let $\{e^1, \ldots, e^r\}$ be its 
dual in $E^\vee$. We write 
\[
\sqrt{-1} \Theta_h(E)=R^\beta_\alpha dw\wedge d\overline{w} \otimes 
e^\alpha \otimes e_\beta
\] 
around $x$. 
We put $R_{\alpha \overline \beta}=\sum _\gamma h_{\gamma 
\overline \beta} R^\gamma _\alpha$, where 
$h_{\gamma \overline\beta}:=h(e_\gamma, e_\beta)$. 
Since $(h_{\gamma \overline \beta})$ is the identity matrix at $x$, 
$R_{\alpha \overline \beta}(x)=R^\beta_\alpha (x)$ holds. 
By assumption, 
\[
\sum_{\alpha, \beta} |R_{\alpha \overline \beta}(x)|^2 
=|\Theta_h(E)(x)|^2_{h, \omega_P} \leq C^2. 
\] 
For any $u=\sum _{\alpha} u^\alpha \frac{\partial}{\partial w} 
\otimes e_\alpha$, by using the Cauchy--Schwarz inequality twice, 
\[
\begin{split}
\left|\sum _{\alpha, \beta} R_{\alpha \overline \beta}(x) u^\alpha \overline
{u^\beta}\right|^2&\leq 
\left( \sum _\beta \left|\sum _\alpha  R_{\alpha 
\overline \beta} (x) u^\alpha\right|^2\right) 
\left(\sum _\beta |\overline {u^\beta}|^2\right)\\ 
&\leq \left(\sum _\beta \left(\sum _\alpha |R_{\alpha 
\overline\beta}(x)|^2\right)\left(\sum _\alpha |u^\alpha|^2\right)\right)
\left(\sum _\beta |\overline{u^\beta}|^2\right)
\\ 
&= |u|^4_{h, \omega_P}\cdot \sum _{\alpha, \beta} |R_{\alpha \overline \beta} (x)|^2
\\ 
&\leq |u|^4_{h, \omega_P} \cdot C^2.  
\end{split}
\] 
This implies that 
\[
-C |u|^2_{h, \omega_P} \leq \sum _{\alpha, \beta} 
R_{\alpha\overline \beta} (x) u^\alpha \overline {u^\beta}
\leq C|u|^2_{h, \omega_P}. 
\] 
This is what we wanted. We finish the proof of Lemma \ref{p-lem2.5}. 
\end{proof}

\begin{rem}\label{p-rem2.6}
Although in Lemma \ref{p-lem2.5} 
we considered only the case over the punctured disk, 
the same statement holds over K\"ahler manifolds of arbitrary dimension.
For details, see \cite[Lemma 2.10]{deng-hao}.
\end{rem}

We need the following well-known result in this paper. 

\begin{lem}\label{p-lem2.7}
Let $E$ be a holomorphic vector bundle on a complex manifold $X$ with 
$\dim X=1$ and let $h$ be a smooth Hermitian metric on $E$ such that 
$\sqrt{-1} \Theta_h(E)$ is seminegative. 
Let $s$ be any holomorphic section of $E$ on $X$. 
Then $\log |s|^2_h$ is subharmonic. 
\end{lem}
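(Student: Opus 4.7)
The plan is to prove subharmonicity pointwise, first on the open set $X \setminus s^{-1}(0)$ where $s$ is non-vanishing and $\log |s|^2_h$ is smooth, and then to extend across the (isolated) zero set of $s$ by a local factorization.

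On $X\setminus s^{-1}(0)$ I would carry out the standard direct computation of $\sqrt{-1}\,\partial\bar\partial \log|s|^2_h$. Take a local coordinate $z$ on $X$ and a local holomorphic frame of $E$, and write $D = D' + \bar\partial$ for the Chern connection of $(E,h)$ and $D's = \xi\, dz$ for a local section $\xi$ of $E$. Because $\bar\partial s = 0$ and $D$ is compatible with $h$, one has $\partial |s|^2_h = h(D's, s)$ and $\bar\partial|s|^2_h = h(s, D's)$; since $D^2 s = \bar\partial D's = \Theta_h(E)\,s$ in dimension one, applying $\partial$ to the second equality gives $\partial\bar\partial |s|^2_h = h(D's, D's) + h(s, \Theta_h(E)\,s)$ by the Leibniz rule for Hermitian pairings of form-valued sections. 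Combining this with $\partial\bar\partial\log f = f^{-1}\partial\bar\partial f - f^{-2}\,\partial f\wedge\bar\partial f$ applied to $f = |s|^2_h$, and simplifying, yields the identity
\[
\sqrt{-1}\,\partial\bar\partial\log|s|^2_h = \left[\frac{|s|^2_h\,|\xi|^2_h - |h(\xi,s)|^2}{|s|^4_h}\right] \sqrt{-1}\,dz\wedge d\bar z \;-\; \frac{h\bigl(\sqrt{-1}\Theta_h(E)\,s,\,s\bigr)}{|s|^2_h}.
\]
The first summand is pointwise non-negative by the Cauchy--Schwarz inequality applied to $(\xi, s)$ with respect to $h$. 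The second summand is also non-negative: by Definition \ref{p-def2.4}, seminegativity of $\sqrt{-1}\Theta_h(E)$ means that the induced Hermitian form on $T_X\otimes E$ is negative semi-definite, which in dimension one is equivalent to $h(\sqrt{-1}\Theta_h(E)\,s,\,s) \leq 0$ as a real $(1,1)$-form for every section $s$. Hence $\sqrt{-1}\,\partial\bar\partial\log|s|^2_h \geq 0$ on $X\setminus s^{-1}(0)$, which says exactly that $\log|s|^2_h$ is subharmonic there.

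To extend across a zero $p$ of $s$, I would take a local chart $z$ centered at $p$ together with a local holomorphic frame of $E$, and factor $s = z^k s'$ with $k = \ord_p(s) \geq 1$ and $s'$ a holomorphic section of $E$ with $s'(p)\neq 0$. Then on a neighborhood of $p$,
\[
\log|s|^2_h = 2k\log|z| + \log|s'|^2_h,
\]
where $\log|s'|^2_h$ is smooth on the full neighborhood of $p$ and subharmonic there by the previous paragraph (applied to the non-vanishing section $s'$), while $2k\log|z|$ is the classical subharmonic function on the disk. A sum of subharmonic functions is subharmonic, so $\log|s|^2_h$ is subharmonic on the full neighborhood; doing this at each zero of $s$ gives subharmonicity on all of $X$. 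The only delicate point in the whole argument is the sign bookkeeping for Hermitian pairings of form-valued sections and its compatibility with the seminegativity convention of Definition \ref{p-def2.4}; beyond that, no serious obstacle is anticipated.
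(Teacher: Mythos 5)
Your proof is correct, and the core computation on $X\setminus s^{-1}(0)$ is the same as in the paper: both derive the identity for $\sqrt{-1}\,\partial\bar\partial\log|s|^2_h$, observe that the first two terms combine to a non-negative $(1,1)$-form by the Cauchy--Schwarz inequality, and use seminegativity of $\sqrt{-1}\Theta_h(E)$ to dispose of the curvature term. Where you genuinely diverge is the extension across the zero set of $s$. The paper dispatches this in one line by invoking the removable-singularity principle: a subharmonic function on the complement of a point that is locally bounded above extends to a subharmonic function on the whole disk. You instead factor $s = z^k s'$ locally (which is available precisely because $\dim X = 1$, so zeros are isolated and of finite order) and write $\log|s|^2_h = 2k\log|z| + \log|s'|^2_h$ as a sum of two functions each subharmonic on a full neighborhood of the zero, the second by the computation already done applied to the nonvanishing section $s'$. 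Your route is more explicit and a bit more elementary --- it avoids the potential-theoretic extension theorem entirely, needing only the classical subharmonicity of $\log|z|$ and closure of subharmonicity under sums --- at the cost of a short local factorization argument. Both are standard and equally valid; the paper's version is shorter and generalizes more readily to higher-dimensional settings where the factorization trick is unavailable.
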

We give a proof of Lemma \ref{p-lem2.7} for the sake of completeness although 
it is well known. 

\begin{proof}[Proof of Lemma \ref{p-lem2.7}]

Let $\{\bullet, \bullet\}$ denote the sesquilinear pairing
\[
C^\infty(X, \wedge^p T^\vee_X \otimes E) \times 
C^\infty(X, \wedge^q T^\vee_X \otimes E) \to 
C^\infty(X, \wedge^{p+q} T^\vee_X \otimes \mathbb{C})
\]
induced by the Hermitian metric $h$.

Let $\Omega$ be an open subset of $X$, 
and assume that $E|_\Omega$ is trivialized 
as $\Omega\times \mathbb C^r$ by a $C^\infty$ frame $\{e_\lambda\}$. 
Then for any sections
\[
u = \sum_\lambda u_\lambda \otimes e_\lambda,
\quad 
v = \sum_\mu v_\mu \otimes e_\mu,
\]
we have
\[
\{u, v\} = \sum_{\lambda, \mu} u_\lambda \wedge 
\overline v_\mu \cdot h(e_\lambda, e_\mu).
\]

Let $D = D' + \overline{\partial}$ denote the Chern 
connection associated with $(E, h)$. 
Outside the zero set of $s$, we have
\[
\begin{split}
\sqrt{-1} \partial \overline{\partial} \log |s|^2_h 
&= \sqrt{-1} \frac{\{D' s, D' s\}}{|s|^2_h} 
  - \sqrt{-1} \frac{\{D' s, s\} \wedge \{s, D' s\}}{|s|^4_h} 
  - \frac{\{\sqrt{-1} \Theta_h(E) s, s\}}{|s|^2_h} \\
&\geq - \frac{\{\sqrt{-1} \Theta_h(E) s, s\}}{|s|^2_h}
\end{split}
\]
by the Cauchy--Schwarz inequality. 

Since $\sqrt{-1} \Theta_h(E)$ is assumed 
to be seminegative, it follows that
\[
\sqrt{-1} \partial \overline{\partial} \log |s|^2_h \geq 0
\]
outside the zero set of $s$. That is, 
$\log |s|^2_h$ is subharmonic on $X \setminus \{s = 0\}$. 

Moreover, since $\log |s|^2_h$ is 
locally bounded from above, it extends to a subharmonic function on all of $X$ (see \cite[(3.3.25) Theorem]{noguchi-ochiai}). 

This completes the proof of Lemma \ref{p-lem2.7}.
\end{proof}

\section{Lemmas for functions on a punctured disk}\label{p-sec3} 

In this section, we present several elementary 
lemmas used in the proof of 
Proposition \ref{p-prop1.5} (see also Proposition \ref{p-prop4.1}).  
We begin with a result concerning the Lelong number.  
The following lemma is well 
known; for details, see, for example, 
\cite[2.B.~Lelong Numbers]{demailly1} and 
\cite[Chapter~III, (6.9) Example]{demailly}.

\begin{lem}[Lelong number]\label{p-lem3.1}
Let $u$ be a subharmonic function on $\Delta$. Then we have
\begin{equation}\label{p-eq3.1} 
\lim_{r \to 0} \int_{\Delta(0, r)} 
\frac{\sqrt{-1}}{\pi} \partial \overline{\partial} u 
= \liminf_{z \to 0} \frac{u(z)}{\log |z|}.
\end{equation}
We define
\[
\nu(u, 0) := \liminf_{z \to 0} \frac{u(z)}{\log |z|}
\]
and call it the {\em Lelong number} of $u$ at $0$. 
Note that the expression $\partial \overline{\partial} u$ is 
understood in the sense of currents.

Let $u_1$ and $u_2$ be subharmonic 
functions on $\Delta$. Then $u_1 + u_2$ is also subharmonic on $\Delta$.  
By \eqref{p-eq3.1}, we have the identity
\[
\nu(u_1 + u_2, 0) = \nu(u_1, 0) + \nu(u_2, 0).
\]
\end{lem}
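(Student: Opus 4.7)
The plan is to reduce both sides of (3.1) to the circular mean
\[
M(r) := \frac{1}{2\pi}\int_0^{2\pi} u(re^{i\theta})\,d\theta,
\]
which is finite on $(0,1)$ and, by subharmonicity of $u$, is non-decreasing and convex in $\log r$. The starting point is the Jensen-type identity
\[
M(r_2) - M(r_1) \;=\; \int_{r_1}^{r_2}\frac{n(s)}{s}\,ds \qquad (0 < r_1 < r_2 < 1),
\]
with $n(s) := \int_{\Delta(0,s)}\tfrac{\sqrt{-1}}{\pi}\partial\overline{\partial}u$, obtained from Green's formula on the annulus $\{r_1 < |z| < r_2\}$ applied to the mollifications $u_\varepsilon := u * \rho_\varepsilon$ and passing to the limit $\varepsilon \to 0$. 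Since $\tfrac{\sqrt{-1}}{\pi}\partial\overline{\partial}u$ is a non-negative measure, $n(s)$ is non-decreasing in $s$, so $\gamma := \lim_{s\to 0^+} n(s)$ exists and coincides with the left-hand side of (3.1).

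Choosing $r_0$ small enough that $\gamma \leq n(s) \leq \gamma + \varepsilon$ on $(0, r_0]$, the Jensen identity yields
\[
\gamma \log r + C \;\leq\; M(r) \;\leq\; (\gamma + \varepsilon)\log r + C' \qquad (0 < r < r_0);
\]
dividing by $\log r \to -\infty$ and letting $\varepsilon \to 0$ gives $\lim_{r\to 0^+} M(r)/\log r = \gamma$. Thus (3.1) reduces to $\liminf_{z\to 0} u(z)/\log|z| = \gamma$. The upper bound $\liminf \leq \gamma$ is immediate: for every $r < r_0$ there is $z_r$ on $|z| = r$ with $u(z_r) \geq M(r)$ (the average is dominated by the maximum), and since $\log r < 0$ this gives $u(z_r)/\log r \leq M(r)/\log r \to \gamma$.

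For the lower bound $\liminf \geq \gamma$, I would invoke the Riesz decomposition on $\Delta(0, r_0)$: writing $u(z) = \gamma \log|z| + P(z) + h(z)$, where $\mu$ is the Riesz measure of $u$ (with $\mu(\{0\}) = \gamma$), $P(z) = \int_{w\neq 0}\log|z-w|\,d\nu(w)$ is the logarithmic potential of $\nu := \mu|_{\Delta(0, r_0)\setminus\{0\}}$, and $h$ is harmonic on $\Delta(0, r_0)$. Both $P$ and $h$ are bounded above on $\Delta(0, r_0)$: the harmonic $h$ trivially as a continuous function on a compact set, and $P$ because $\log|z-w| \leq \log(2r_0)$ gives $P(z) \leq \log(2r_0)\cdot \nu(\Delta(0,r_0))$. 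Since $\log|z| \to -\infty$, any function bounded above divided by $\log|z|$ has $\liminf \geq 0$ at the origin; hence $\liminf u(z)/\log|z| \geq \gamma + 0 + 0 = \gamma$. Finally, the additivity $\nu(u_1+u_2, 0) = \nu(u_1, 0) + \nu(u_2, 0)$ follows at once from (3.1) and the linearity of $\int \tfrac{\sqrt{-1}}{\pi}\partial\overline{\partial}(\cdot)$. The main obstacle is rigorously justifying the Jensen-type identity for a general (possibly unbounded below) subharmonic $u$ by mollification, together with identifying $\mu(\{0\}) = \gamma$ via the Riesz representation; once these are established, all remaining steps are elementary.
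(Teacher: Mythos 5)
The paper itself supplies no proof of this lemma; it is stated as a well-known fact with a pointer to Demailly's texts, and your proposal essentially reconstructs the argument found there (Jensen--Lelong formula for the circular mean, monotone mass $n(s)$, Riesz decomposition for the lower bound). The outline is sound, but the displayed two-sided bound on $M(r)$ is transposed: from $M(r) = M(r_0) - \int_r^{r_0} n(s)\,s^{-1}\,ds$ and $\gamma \le n(s) \le \gamma + \varepsilon$ one gets
\[
(\gamma + \varepsilon)\log r + C' \;\le\; M(r) \;\le\; \gamma \log r + C,
\]
not the reverse, because $\log r < 0$ reverses the direction when the larger coefficient multiplies it. Dividing by $\log r < 0$ then gives
$\gamma \le \liminf_{r\to 0} M(r)/\log r \le \limsup_{r\to 0} M(r)/\log r \le \gamma + \varepsilon$,
and letting $\varepsilon \to 0$ yields the limit, exactly as you intended. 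The two technicalities you flag are real but routine: the Jensen identity for a general subharmonic $u$ is obtained by mollifying and using monotone (or dominated) convergence, and the identification $\mu(\{0\}) = \gamma$ is continuity from above of the locally finite Riesz measure. You should also carry out the Riesz decomposition $u = \gamma\log|\cdot| + P + h$ on a slightly smaller disk $\Delta(0,r_1) \Subset \Delta(0,r_0)$ so that $\nu$ is a finite measure and $h$ is harmonic there; the bound $P(z) \le \log(2r_1)\cdot\nu\bigl(\Delta(0,r_1)\bigr)$ then gives the needed upper bound on $P$ regardless of the sign of $\log(2r_1)$. With these corrections the proof is complete and coincides in method with the cited references.
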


We recall the following elementary lemma.

\begin{lem}[Harmonic functions on $\Delta^*$]\label{p-lem3.2}
Let $f$ be a harmonic function on $\Delta^*$. Then there 
exist a holomorphic function $g$ on $\Delta^*$ and a 
real constant $c \in \mathbb{R}$ such that
\[
f(z) = \re g(z) + c \log |z|.
\]
\end{lem}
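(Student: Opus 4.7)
The plan is to recognize $\partial f$ as a holomorphic $(1,0)$-form on $\Delta^*$ and then to subtract off the unique multiple of $dz/z$ needed to kill its period around the origin; what remains will have a holomorphic primitive, and that primitive will furnish the desired function $g$.

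First, since $f$ is real-valued and harmonic, $\partial\bar\partial f = 0$, so $\partial f/\partial z$ is holomorphic on $\Delta^*$. Consider the period
\[
P := \oint_{|z|=r} \frac{\partial f}{\partial z}\, dz = \oint_{|z|=r} \partial f,
\]
which is independent of $r\in(0,1)$ because $d(\partial f) = \bar\partial \partial f = 0$ on $\Delta^*$. The key observation is that $P$ is purely imaginary: since $f$ is single-valued, $\oint_{|z|=r} df = 0$, so $\oint \partial f = -\oint \bar\partial f = -\overline{\oint \partial f}$. I therefore write $P = \pi i\, c$ for a uniquely determined real number $c$.

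Next, I form the holomorphic function
\[
h(z) := 2\,\frac{\partial f}{\partial z} - \frac{c}{z}
\]
on $\Delta^*$ and compute $\oint_{|z|=r} h(z)\, dz = 2P - 2\pi i c = 0$. Because $\Delta^*$ has first Betti number one and this period vanishes, $h$ admits a holomorphic primitive $g$ on $\Delta^*$, namely $g'(z) = 2\partial f/\partial z - c/z$.

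Finally, using $\partial \log|z|/\partial z = 1/(2z)$ and $\partial(\re g)/\partial z = g'(z)/2$, I check that the real-valued function $F := f - \re g - c\log|z|$ satisfies $\partial F/\partial z = 0$, hence (being real) is locally constant on $\Delta^*$, and thus is a real constant by connectedness. Absorbing this constant into $g$ (adding a real constant to a holomorphic function yields a holomorphic function and shifts its real part by the same constant) produces the claimed representation $f = \re g + c\log|z|$. The only mildly subtle point, and the one on which the statement really depends, is the reality of $c$ in the second paragraph; everything else is a straightforward period/primitive calculation.
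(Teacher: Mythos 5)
Your proof is correct, and it takes a genuinely different route from the paper's. The paper lifts $f$ to the universal cover $H = \{\re w < 0\} \xrightarrow{w\mapsto e^w} \Delta^*$, finds a holomorphic $p$ on the simply connected domain $H$ with $\re p = f\circ\pi$, observes that the deck-transformation defect $q(w) = p(w+2\pi\sqrt{-1}) - p(w)$ is a purely imaginary constant $2\pi\sqrt{-1}\,c$ because its real part vanishes, and subtracts $cw$ to make the remainder periodic and hence descend to $g$ on $\Delta^*$. You instead stay on $\Delta^*$ and work directly with the closed holomorphic $(1,0)$-form $\partial f$: its period around $0$ is purely imaginary (your chain $\oint\partial f = -\oint\bar\partial f = -\overline{\oint\partial f}$ is correct, the middle step using $\oint df = 0$ and the last using $\bar\partial f = \overline{\partial f}$, which is where the reality of $f$ enters), and subtracting the unique real multiple $c/z$ of $dz/z$ kills the period, so the difference integrates to a holomorphic $g$. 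Both proofs isolate the same obstruction — the monodromy of the harmonic conjugate — but in dual languages: the paper tracks it upstairs as the additive ambiguity of the conjugate function, you track it downstairs as the residue (period) of $\partial f$. Your version has the mild advantage of avoiding the universal cover and making the role of the residue explicit; the paper's version constructs the conjugate function more directly. The point you flag as subtle, the reality of $c$, is indeed the crux, and your symmetric-period argument handles it cleanly. The final reconciliation via $\partial F/\partial z = 0$ and reality of $F$ is routine and correct.
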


We include a detailed proof of Lemma \ref{p-lem3.2} for completeness.

\begin{proof}[Proof of Lemma \ref{p-lem3.2}]
Consider the universal covering
\[
\pi \colon H := \{ w \in \mathbb{C} \mid \re w < 0 \} \to \Delta^*
\]
given by $\pi(w) = e^w$. Then $f \circ \pi$ 
is a harmonic function on the simply connected 
domain $H$, so there exists a holomorphic function $p(w)$ on $H$ such that
\[
\re p(w) = f \circ \pi(w).
\]

Define
\[
q(w) := p(w + 2\pi \sqrt{-1}) - p(w).
\]
Then $q(w)$ is holomorphic on $H$, and since $\pi(w + 2\pi \sqrt{-1}) = \pi(w)$, we have
\[
\re q(w) = \re p(w + 2\pi \sqrt{-1}) - \re p(w) = f \circ \pi(w+2\pi \sqrt{-1}) 
- f \circ \pi(w) = 0.
\]
Hence, $q(w)$ is a purely imaginary constant, i.e.,
\[
q(w) = 2\pi \sqrt{-1} c
\]
for some real constant $c \in \mathbb{R}$.

Set
\[
r(w) := p(w) - c w.
\]
Then $r(w)$ is holomorphic and satisfies
\begin{equation*}
\begin{split}
r(w + 2\pi \sqrt{-1})& = p(w + 2\pi \sqrt{-1}) 
- c(w + 2\pi \sqrt{-1}) \\&= p(w) + 2\pi \sqrt{-1} c - c w - 2\pi \sqrt{-1} c \\ &=r(w).
\end{split}
\end{equation*}
Thus, $r$ is $2\pi \sqrt{-1}$-periodic and descends to a holomorphic 
function $g(z)$ on $\Delta^*$ such that
\[
g \circ \pi(w) = r(w).
\]
Therefore,
\[
f(z) = \re p(w) = 
\re (r(w) + c w) = \re g(z) + c \log |z|,
\]
where we used that $w = \log z$ for $z \in \Delta^*$. This completes the proof.
\end{proof}

We next state another elementary lemma.

\begin{lem}\label{p-lem3.3}
Let $g$ be a holomorphic function on $\Delta^*$. Assume that
\[
\re g(z) \leq C (-\log |z|)
\]
holds on $\Delta^*$ for some constant 
$C > 0$. Then $g$ extends holomorphically to 
the origin; that is, the origin is a removable singularity of $g$.
\end{lem}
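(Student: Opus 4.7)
The plan is to reduce the statement to a meromorphy assertion via the exponential. First I would introduce the auxiliary function $\phi(z):=e^{g(z)}$, which is holomorphic and nowhere-zero on $\Delta^*$. The hypothesis $\re g(z)\leq C(-\log|z|)$ translates into the pointwise estimate
\[
|\phi(z)|=e^{\re g(z)}\leq |z|^{-C}\quad\text{on }\Delta^*.
\]
Consequently, for any integer $N\geq C$, the function $z^{N}\phi(z)$ is bounded near the origin, and the Riemann removable singularity theorem extends it holomorphically across $0$. In particular, $\phi$ is meromorphic on $\Delta$, with a pole at $0$ of order at most $N$.

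Next, using that $\phi$ has no zeros on $\Delta^*$, I would write
\[
\phi(z)=z^{-k}\psi(z)
\]
for some $k\in\mathbb Z$ and some holomorphic $\psi$ on $\Delta$ with $\psi(0)\neq0$, and then reduce matters to showing $k=0$. The decisive input here is that $g$ itself is single-valued on $\Delta^*$: for any sufficiently small circle $|z|=r$,
\[
0=\oint_{|z|=r}g'(z)\,dz=\oint_{|z|=r}\frac{\phi'(z)}{\phi(z)}\,dz=2\pi\sqrt{-1}\,\mathrm{Res}_{0}\!\left(\frac{\phi'}{\phi}\right)=-2\pi\sqrt{-1}\,k,
\]
so $k=0$. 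Hence $\phi$ extends to a holomorphic function on $\Delta$ with $\phi(0)=\psi(0)\neq 0$.

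To finish, I would pick a local branch $L$ of logarithm defined near the nonzero value $\phi(0)$; the composition $L\circ\phi$ is then holomorphic on a neighborhood of $0\in\Delta$. On a small punctured disk we have $e^{g}=\phi=e^{L\circ\phi}$, so $g-L\circ\phi$ takes values in $2\pi\sqrt{-1}\,\mathbb Z$, and by continuity it is constant. Subtracting this constant from $L\circ\phi$ produces the desired holomorphic extension of $g$ across the origin.

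I expect the only substantive step to be the identity $k=0$. The estimate on $\re g$ alone cannot see the topological obstruction to extending through $0$, since it is invariant under $g\mapsto g+\mathrm{const}$; that obstruction, coming from the nontrivial fundamental group of $\Delta^*$, is precisely what the residue calculation eliminates by exploiting the single-valuedness of $g$. Everything else in the argument is a routine application of Riemann's removable singularity theorem and of the branch-of-logarithm construction.
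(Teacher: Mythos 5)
Your proof is correct, but it takes a genuinely different route from the paper. The paper first asserts (via Casorati--Weierstrass or Picard's big theorem) that $g$ is meromorphic at $0$, writes $g(z)=p(z)/z^{m}$ with $p(0)\neq 0$, and then rules out a pole by a direct quantitative argument: choosing a ray $\theta_0$ along which $p(0)/e^{\sqrt{-1}m\theta_0}$ is positive real, one sees that $\re g$ grows like $a r^{-m}$ along that ray, which contradicts the hypothesized bound $C(-\log r)$. You instead exponentiate: the bound $|e^{g}|\le |z|^{-C}$ plus Riemann's removable singularity theorem makes $\phi=e^{g}$ meromorphic, and then the residue of $\phi'/\phi=g'$ must vanish because $g$ is single-valued, forcing $\phi$ to extend holomorphically with $\phi(0)\neq 0$; a local branch of logarithm then recovers the extension of $g$. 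Your approach has the advantage of using only Riemann's theorem and the residue theorem, avoiding the heavier Casorati--Weierstrass/Picard machinery; it also makes the meromorphy-of-$g$ step explicit, whereas the paper asserts it in one line (and indeed it is not an immediate consequence of Casorati--Weierstrass alone, since $\re g$ is permitted to grow as $z\to 0$ -- the paper's argument would benefit from exactly the kind of logarithmic-derivative reasoning you supply). The paper's approach, once meromorphy is granted, is perhaps more visibly quantitative: the ray argument shows concretely that a pole of order $m\ge 1$ would produce growth of order $r^{-m}$, far exceeding $\log(1/r)$.
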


We also provide a proof of Lemma \ref{p-lem3.3} for the reader's convenience.

\begin{proof}[Proof of Lemma \ref{p-lem3.3}]
By the Casorati--Weierstrass theorem or Picard's big 
theorem, $g$ is meromorphic at $0$. So we may write 
\[
g(z) = \frac{p(z)}{z^m},
\]
where $p(z)$ is holomorphic on $\Delta$ with $p(0) \neq 0$ and $m$ is an integer.

Let $z = r e^{\sqrt{-1}\theta}$. Suppose, for contradiction, 
that $m > 0$. Then we can choose $\theta_0 \in [0, 2\pi)$ such that
\[
\frac{p(0)}{e^{\sqrt{-1} m \theta_0}} \in \mathbb{R}_{> 0}.
\]
Since $p$ is continuous 
and $p(0) \neq 0$, there exists $0 < r_0 \ll 1$ 
such that for all $0 < r < r_0$, the real part of
\[
\frac{p(r e^{\sqrt{-1}\theta_0})}{e^{\sqrt{-1} m \theta_0}}
\]
is greater than some constant $a > 0$. It follows that
\[
\re g(r e^{\sqrt{-1}\theta_0}) = \re 
\left( \frac{p(r e^{\sqrt{-1}\theta_0})}
{r^m e^{\sqrt{-1} m \theta_0}} \right) > \frac{a}{r^m}.
\]
But the assumption gives
\[
\re g(r e^{\sqrt{-1}\theta_0}) \leq C (-\log r).
\]
This is a contradiction for sufficiently small $r$, 
since $r^{-m}$ grows much faster than $-\log r$ as $r \to 0$. Hence, $m \leq 0$. 
This implies that $g$ is holomorphic at $0$. 
\end{proof}

\section{Prolongation of acceptable line bundles}\label{p-sec4}

In this section, we prove 
Proposition \ref{p-prop1.5}, along with Corollaries \ref{p-cor1.7} and 
\ref{p-cor1.8}.  
We recall that for any \( a \in \mathbb{R} \),
\[
\lceil a \rceil := \min \{ n \in \mathbb{Z} \mid n \geq a \} \quad \text{and} \quad 
\lfloor a \rfloor := \max \{ n \in \mathbb{Z} \mid n \leq a \}.
\]

\begin{prop}[Proposition \ref{p-prop1.5}]\label{p-prop4.1} 
Let $(L, h)$ be an acceptable line bundle 
on $\Delta^*$. 
Then ${}_\alpha L$ 
is a holomorphic line bundle 
on $\Delta$ for every $\alpha \in \mathbb R$. 
\end{prop}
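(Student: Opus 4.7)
Since $\Delta^*$ is Stein with $H^2(\Delta^*,\mathbb{Z})=0$, every holomorphic line bundle on it is trivial. I would fix a global trivialization and write $(L,h)\cong (\mathcal O_{\Delta^*},|\cdot|^2 e^{-2\varphi})$ for some $\varphi\in C^\infty(\Delta^*,\mathbb{R})$, so that $\sqrt{-1}\Theta_h(L)=2\sqrt{-1}\partial\bar\partial\varphi$. By Lemma~\ref{p-lem2.5}, acceptability becomes the pointwise bound $\sqrt{-1}\Theta_h(L)=f(z)\omega_P$ with $|f|\leq C$. Since the statement is local on $\Delta$ and ${}_\alpha L|_{\Delta^*}=L$ is already a line bundle, it suffices to analyze ${}_\alpha L$ in a small neighborhood $\Delta(0,\rho)$ of the origin. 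The plan is to show that, after possibly modifying the trivialization by $e\mapsto e^{-g/2}e$ for a suitable holomorphic $g$ on $\Delta^*$, the potential satisfies
\[
\varphi(z)=\gamma\log|z|+o(\log|z|)\quad\text{as }z\to 0
\]
for some $\gamma\in\mathbb{R}$; from this expansion the local structure of ${}_\alpha L$ will be immediate.

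To produce the expansion, I would decompose $f=f^+-f^-$ into positive and negative parts and set $\Theta^\pm:=f^\pm\omega_P$, positive $(1,1)$-forms on $\Delta^*$ dominated by $C\omega_P$. A direct computation gives $\int_{\Delta(0,\rho)}\omega_P<\infty$, so each $\Theta^\pm$ extends to a finite positive Borel measure on $\Delta(0,\rho)$. Standard potential theory (convolution against the Green's function of the disk) produces subharmonic functions $\Phi^\pm$ on $\Delta(0,\rho)$ with $\sqrt{-1}\partial\bar\partial\Phi^\pm=\Theta^\pm$ in the sense of currents. By Lemma~\ref{p-lem3.1},
\[
\nu(\Phi^\pm,0)=\lim_{r\to 0}\frac{1}{\pi}\int_{\Delta(0,r)}\Theta^\pm\leq \frac{C}{\pi}\int_{\Delta(0,r)}\omega_P\longrightarrow 0,
\]
so both Lelong numbers vanish. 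Combined with the local upper boundedness of subharmonic functions, this forces $|\Phi^\pm(z)|\leq\epsilon(-\log|z|)+C_\epsilon$ near $0$ for every $\epsilon>0$. On $\Delta(0,\rho)\setminus\{0\}$ the function $\Phi^+-\Phi^--2\varphi$ is pluriharmonic, so Lemma~\ref{p-lem3.2} yields $\Phi^+-\Phi^--2\varphi=\re g+c\log|z|$ for some holomorphic $g$ on $\Delta(0,\rho)\setminus\{0\}$ and some $c\in\mathbb{R}$. Absorbing $g$ into the trivialization reduces to $\varphi=\tfrac12(\Phi^+-\Phi^-)-\tfrac{c}{2}\log|z|$, which has the required form with $\gamma:=-c/2$.

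With the expansion in hand, a local section of $L$ on $U\setminus\{0\}$ becomes a holomorphic function $f$ on $U\setminus\{0\}$ with $|f|_h=|f|e^{-\varphi}$. The bound on $\varphi$ converts the growth condition $|f|_h=O(|z|^{-\alpha-\epsilon})$ for every $\epsilon>0$ into $|f|\leq C_\epsilon|z|^{\gamma-\alpha-\epsilon}$ for every $\epsilon>0$, so $f$ has at most polynomial growth and extends meromorphically to $0$ by the removable-singularity theorem. For meromorphic $f$ the condition reduces to $\mathrm{ord}_0 f\geq \gamma-\alpha$, that is $\mathrm{ord}_0 f\geq -\lfloor\alpha-\gamma\rfloor$ since orders are integers. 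Therefore, near the origin,
\[
{}_\alpha L\cong \mathcal O_\Delta\bigl(\lfloor\alpha-\gamma\rfloor\cdot[0]\bigr),
\]
which is locally free of rank one. Together with ${}_\alpha L|_{\Delta^*}=L$, this completes the proof.

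The main obstacle is the construction of the subharmonic potentials $\Phi^\pm$ on a disk neighborhood of the origin and the verification that their Lelong numbers at $0$ vanish: this is where acceptability enters essentially, through the finiteness of the Poincar\'e area of $\omega_P$ near the puncture. Once this potential-theoretic input is available, everything else is a routine combination of Lemmas~\ref{p-lem3.1}--\ref{p-lem3.3} and elementary growth estimates.
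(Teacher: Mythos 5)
Your overall strategy — replacing the paper's $\chi(N)$-shifts by the pointwise decomposition $f=f^+-f^-$ of the curvature coefficient, taking logarithmic potentials $\Phi^\pm$ of $\Theta^\pm:=f^\pm\omega_P$, and absorbing the resulting harmonic defect into the trivialization — is a genuine alternative to the paper's route, and your endgame (reading off ${}_\alpha L=\mathcal O_\Delta(\lfloor\alpha-\gamma\rfloor[0])$ from $\varphi=\gamma\log|z|+o(\log|z|)$) matches Step 6 of the paper's proof. But there is a real gap at the pivotal estimate. You assert that the vanishing of the Lelong numbers $\nu(\Phi^\pm,0)=0$, combined with local upper boundedness of subharmonic functions, forces $|\Phi^\pm(z)|\leq\epsilon(-\log|z|)+C_\epsilon$ near $0$ for every $\epsilon>0$. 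This is not a correct inference. By Lemma~\ref{p-lem3.1} the Lelong number equals $\liminf_{z\to 0}\Phi^\pm(z)/\log|z|$, and together with the upper bound $\Phi^\pm\leq M$ it only pins down $\liminf_{z\to 0}\Phi^\pm(z)/\log|z|=0$; it says nothing about $\limsup_{z\to 0}\Phi^\pm(z)/\log|z|$. A subharmonic function with zero Lelong number at the origin may still plunge to $-\infty$ much faster than $\log|z|$ along a subsequence (e.g., a convergent sum $\sum a_n\log|z-\zeta_n|$ with $\zeta_n\to 0$, $\sum a_n<\infty$). Since you need $(\Phi^+-\Phi^-)/\log|z|\to 0$, which requires two-sided control of \emph{both} $\Phi^+$ and $\Phi^-$, the facts you cite do not suffice.

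The gap is fillable, but filling it is precisely the extra content that the paper's $\chi(N)$-shift supplies for free. One fix: since $0\leq\Theta^\pm\leq C\omega_P=\sqrt{-1}\partial\overline\partial\chi(C)$, the functions $\chi(C)-\Phi^\pm$ are also subharmonic on $\Delta(0,\rho)$; by the additivity in Lemma~\ref{p-lem3.1}, $\nu(\Phi^\pm,0)+\nu(\chi(C)-\Phi^\pm,0)=\nu(\chi(C),0)=0$, so both Lelong numbers vanish, and since $\chi(C)/\log|z|\to 0$ you now also get $\limsup\Phi^\pm/\log|z|\leq 0$, hence $\Phi^\pm(z)=o(\log|z|)$. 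Alternatively, you can estimate the logarithmic potential directly, splitting the integral near and away from $z$ and using the explicit density bound on $\Theta^\pm$. Either way, it is this two-sided comparison — not the Lelong number alone — that makes the argument close. The paper's Step 5 organizes exactly this: writing both $2\varphi+\chi(N)$ and $-2\varphi+\chi(N)$ as subharmonic plus harmonic and applying Lemma~\ref{p-lem3.1} to both produces $\liminf(\varphi/\log|z|)=\gamma$ and $\liminf(-\varphi/\log|z|)=-\gamma$ simultaneously, which forces the limit to exist. Your proposal effectively asserts this conclusion without the matching argument.
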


A more precise description of ${}_\alpha L$ is given in Theorem \ref{p-thm4.4} below. 

\begin{proof}[Proof of Proposition \ref{p-prop4.1}]
We will see the behavior of the metric $h$ around 
the origin by taking a suitable trivialization of $L$ on $\Delta^*$ concretely. 
\begin{step}\label{p-step4.1-1}
We put 
\[
\omega_P:=\frac{\sqrt{-1}dz\wedge d\overline z}{|z|^2(-\log|z|^2)^2}
\] 
and 
\[
\chi (N):=-N\log \left(-\log |z|^2\right). 
\] 
We can check that 
\[ 
\sqrt{-1}\partial \overline \partial \chi (N)=N\omega_P. 
\] 
Since $(L, h)$ is an acceptable line bundle on $\Delta^*$, 
there exists $C>0$ such that 
\begin{equation}\label{p-eq4.1}
-C\cdot \omega_P\leq \sqrt{-1}\Theta_h(L)\leq C\cdot \omega_P 
\end{equation} 
holds on $\Delta^*$. 
We fix some positive number $N$ with $N\geq C$. 
We consider Hermitian metrics $he^{-\chi(N)}$ and 
$he^{-\chi(-N)}$ on $L$. 
Then we obtain 
\[ 
\begin{split}
\sqrt{-1}\Theta_{he^{-\chi(N)}}(L)&=
\sqrt{-1}\Theta_h(L)+\sqrt{-1}\partial \overline\partial \chi (N)
\\ &=\sqrt{-1}\Theta_h(L)+N\omega_P\geq 0 
\end{split} 
\] and 
\[ 
\begin{split}
\sqrt{-1}\Theta_{he^{-\chi(-N)}}(L)&=
\sqrt{-1}\Theta_h(L)+\sqrt{-1}\partial \overline\partial \chi (-N)
\\ &=\sqrt{-1}\Theta_h(L)-N\omega_P\leq 0  
\end{split} 
\] 
by \eqref{p-eq4.1}. 
\end{step}
\begin{step}\label{p-step4.1-2}
Since $L$ is a holomorphic line bundle on $\Delta^*$, 
we can trivialize $L$ on $\Delta^*$ (see, for example, \cite[30.3.~Theorem]{forster}).  
Hence, from now,  
we assume $L=\mathcal O_{\Delta^*}$. 
Then we can write 
\[
h=|\cdot|^2e^{-2\varphi}
\]
with some smooth function $\varphi$ on $\Delta^*$. 
We note that 
\[
\sqrt{-1}\Theta_h(L)=\sqrt{-1}\partial \overline \partial 2\varphi 
\] 
on $\Delta^*$. 
\end{step}
\begin{step}\label{p-step4.1-3}
Since 
\[
\int _{\Delta(0, r_0)}\omega_P<\infty
\] 
for every $0<r_0<1$, 
we can see $\omega_P$ a closed positive $(1, 1)$-current on $\Delta$. 
By \eqref{p-eq4.1}, $\sqrt{-1}\Theta_h(L)$ can be seen as a 
$(1, 1)$-current on $\Delta$. Since $\dim_{\mathbb C} \Delta=1$, 
$\sqrt{-1}\Theta_h(L)$ is obviously 
$d$-closed. 
Hence $\sqrt{-1}\Theta_h(L)$ defines a closed $(1, 1)$-current on $\Delta$. 
By Step \ref{p-step4.1-1}, 
\[ 
\sqrt{-1}\Theta_{he^{-\chi(N)}}(L)\quad \text{and}
\quad  
-\sqrt{-1}\Theta_{he^{-\chi(-N)}}(L)
\] are closed 
positive $(1, 1)$-current on $\Delta$. 
This is a very special case of the Skoda--El Mir extension theorem 
(see \cite[(1.18) Theorem]{demailly1} and \cite[Chapter III, \S 2.A]{demailly}). 
\end{step}
\begin{step}\label{p-step4.1-4}
Since we are working on $\Delta$, 
we can find subharmonic functions $\psi_1$ and $\psi_2$ 
on $\Delta$ such that 
\[
\sqrt{-1} \Theta _{he^{-\chi(N)}}(L)=\sqrt{-1} \partial 
\overline \partial 2\psi_1
\] 
and 
\[
-\sqrt{-1} \Theta _{he^{-\chi(-N)}}(L)=\sqrt{-1} \partial 
\overline \partial 2\psi_2. 
\] 
Since $2\varphi+\chi (N)-2\psi_1$ is harmonic 
on $\Delta^*$, by Lemma \ref{p-lem3.2}, 
we can write 
\begin{equation}\label{p-eq4.2}
2\varphi +\chi (N)=2\psi_1+c_1\log |z|^2+2\re g_1(z)
\end{equation}
for some holomorphic 
function $g_1$ on $\Delta^*$ and 
some $c_1\in \mathbb R$. 
Similarly, we can write 
\begin{equation}\label{p-eq4.3}
-2\varphi +\chi (N)=2\psi_2+c_2\log |z|^2+2\re g_2(z)
\end{equation}
for some holomorphic function $g_2$ on $\Delta^*$ and 
some $c_2\in \mathbb R$. 
For the details, see, for example, \cite[Chapter III, \S1.C]{demailly}. 
\end{step}
\begin{step}\label{p-step4.1-5}
By multiplying $e^{g_1(z)}$, 
we take a different trivialization of $L$. 
Then $h$ becomes $|\cdot |^2e^{-2\varphi+2\re g_1}$. 
Hence, by considering this new trivialization of $L$ on 
$\Delta^*$, that is, by replacing 
$-2\varphi+2\re g_1$ with $-2\varphi$, we may assume that 
\begin{equation}\label{p-eq4.4}
2\varphi +\chi(N)=2\psi_1+c_1\log |z|^2
\end{equation} 
holds. In this case, 
\[
2\chi (N)=2\psi_1+2\psi_2+(c_1+c_2)\log |z|^2+2\re g_2(z) 
\] 
holds by \eqref{p-eq4.3} and \eqref{p-eq4.4}. 
Note that $\chi(N)$, $\psi_1$, $\psi_2$, and $\log |z|^2$ are subharmonic 
functions on $\Delta$. 
We have 
\[
\frac{-\re g_2(z)}{\log |z|}=\frac{\psi_1(z)}{\log |z|}
+\frac{\psi_2(z)}{\log |z|}+c_1+c_2
+\frac{-\chi(N)}{\log|z|}. 
\] 
Therefore, we obtain 
\[
\begin{split}
\liminf_{z\to 0}\frac{-\re g_2(z)}{\log |z|}&\geq 
\liminf_{z\to 0}\frac{\psi_1(z)}{\log |z|}
+\liminf_{z\to 0}\frac{\psi_2(z)}{\log |z|}
+c_1+c_2
+\liminf_{z\to 0}\frac{-\chi(N)}{\log|z|}
\\ &=\nu(\psi_1, 0)+\nu(\psi_2, 0)+c_1+c_2. 
\end{split}
\] 
Thus there exists some $C>0$ such that 
\[
\frac{-\re g_2(z)}{\log |z|}\geq -C
\] 
holds over some open neighborhood of $0$. 
This implies that 
\[
\re\left(-g_2(z)\right)\leq C\left(-\log |z|\right)
\] 
holds around $0$. 
By Lemma \ref{p-lem3.3}, we see that 
$g_2$ is holomorphic on $\Delta$. 
Therefore, $\re g_2(z)$ is a harmonic function on $\Delta$. 
Hence, by replacing $\psi_2$ with $\psi_2-\re g_2(z)$, 
we may further assume that  
\begin{equation}\label{p-eq4.5}
-2\varphi +\chi (N)=2\psi_2+c_2\log |z|^2
\end{equation} 
holds. By \eqref{p-eq4.4} and \eqref{p-eq4.5}, 
we have 
\[
2\chi (N)=2\psi_1+2\psi_2+(c_1+c_2)\log |z|^2. 
\] 
Thus, we obtain 
\begin{equation}\label{p-eq4.6}
\nu_1+\nu _2+(c_1+c_2)=0 
\end{equation} 
by Lemma \ref{p-lem3.1}, 
where $\nu_1:=\nu (\psi_1, 0)$ and $\nu_2:=\nu(\psi_2, 0)$. 
We put $\gamma:=\nu_1+c_1$. 
Then $\nu_2+c_2 =-\gamma$ by \eqref{p-eq4.6}. 

By \eqref{p-eq4.4}, we have 
\[
\liminf_{z\to 0} \frac{\varphi(z)}{\log |z|} =\nu_1+c_1=\gamma. 
\]
By 
\eqref{p-eq4.5}, we have 
\[
\liminf_{z\to 0} \frac{-\varphi(z)}{\log |z|} =\nu_2+c_2=-\gamma. 
\]
Therefore, we obtain 
\[
\gamma =\liminf_{z\to 0} \frac{\varphi(z)}{\log |z|} 
\leq \limsup_{z\to 0} \frac{\varphi(z)}{\log |z|} =\gamma. 
\] 
Hence, we finally obtain 
\begin{equation}\label{p-eq4.7}
\gamma =\lim_{z\to 0} \frac{\varphi(z)}{\log |z|}. 
\end{equation} 
\end{step} 

\begin{step}\label{p-step4.1-6} 
In this final step, we will prove the following claim. 

\begin{claim}\label{p-claim} 
The following equality 
\[
{}_\alpha L=\mathcal O_\Delta\cdot z^{-\lfloor \alpha -\gamma\rfloor}
\] 
holds, that is, ${}_\alpha L$ is generated by 
$z^{-\lfloor \alpha -\gamma \rfloor}$. 
\end{claim}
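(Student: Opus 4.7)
The plan is to convert the asymptotic identity $\gamma = \lim_{z\to 0} \varphi(z)/\log|z|$ from \eqref{p-eq4.7} into explicit two-sided bounds for $e^{-\varphi}$, and then extract the claim by a short Laurent-coefficient analysis. All the hard PDE-theoretic work has already been done in Steps \ref{p-step4.1-1}--\ref{p-step4.1-5}; what remains is essentially elementary.

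First I would record the analytic input. Since $\log |z| < 0$ near the origin, \eqref{p-eq4.7} immediately gives, for every $\varepsilon > 0$ and $|z|$ sufficiently small,
\[
(\gamma + \varepsilon)\log |z| < \varphi(z) < (\gamma - \varepsilon)\log |z|,
\]
and exponentiating yields the two-sided bound
\[
|z|^{-\gamma + \varepsilon} \leq e^{-\varphi(z)} \leq |z|^{-\gamma - \varepsilon}
\]
on a punctured neighborhood of $0$. These are the only analytic facts needed below.

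Next, set $k := \lfloor \alpha - \gamma \rfloor$, so that $k + \gamma \leq \alpha$. The upper bound gives $|z^{-k}|\, e^{-\varphi} \leq |z|^{-k - \gamma - \varepsilon} \leq |z|^{-\alpha - \varepsilon}$ near $0$ for every $\varepsilon > 0$, proving that $z^{-k}$ is a local section of ${}_\alpha L$ at the origin and hence that $\mathcal O_\Delta \cdot z^{-k} \subseteq {}_\alpha L$. For the reverse inclusion, I would take $f \in ({}_\alpha L)_0$ and combine the definition $|f|\, e^{-\varphi} = O(|z|^{-\alpha - \varepsilon})$ with the lower bound on $e^{-\varphi}$ to obtain $|f(z)| = O(|z|^{\gamma - \alpha - \varepsilon})$ for every $\varepsilon > 0$. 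This polynomial bound rules out an essential singularity at $0$, so $f$ is meromorphic there; applying the Cauchy estimates on small circles $|z| = r$ to the Laurent coefficients $a_n$ of $f$ and letting $r \to 0$ forces $a_n = 0$ whenever $n < \gamma - \alpha - \varepsilon$. Taking $\varepsilon \to 0^+$ and using that pole orders are integers, this yields
\[
\mathrm{ord}_0 f \geq \lceil \gamma - \alpha \rceil = -\lfloor \alpha - \gamma \rfloor = -k,
\]
i.e., $f \in \mathcal O_\Delta \cdot z^{-k}$.

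I do not anticipate any genuine obstacle here. The only minor point requiring care is the boundary case $\alpha - \gamma \in \mathbb{Z}$, where one must check that the strict inequality $n < \gamma - \alpha - \varepsilon$ still forces $a_n = 0$ for all $n \leq -k - 1$; this is automatic since $\varepsilon > 0$ is arbitrary. The real substance of Proposition \ref{p-prop4.1} lies entirely in Steps \ref{p-step4.1-1}--\ref{p-step4.1-5}, which produced the existence of $\gamma$ via the Skoda--El Mir extension theorem and the harmonic decomposition of $\varphi$; once \eqref{p-eq4.7} is in hand, the Claim itself is a quick corollary.
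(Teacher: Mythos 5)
Your proof is correct and follows essentially the same route as the paper: both translate \eqref{p-eq4.7} into the two-sided bound $|z|^{-\gamma+\varepsilon}\leq e^{-\varphi}\leq|z|^{-\gamma-\varepsilon}$ near the origin and then finish by elementary arithmetic, the paper via a fixed $\varepsilon$ and a removable-singularity argument, you via the equivalent Cauchy-estimate control of Laurent coefficients. The only minor slip is the exponent in the intermediate bound (it should be $O(|z|^{\gamma-\alpha-2\varepsilon})$ rather than $O(|z|^{\gamma-\alpha-\varepsilon})$), but since $\varepsilon>0$ is arbitrary this is harmless.
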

We give a detailed proof of Claim for the sake of completeness. 
\begin{proof}[Proof of Claim] 
We put 
$m_\alpha :=\lfloor \alpha -\gamma\rfloor$. 
Then we have 
\[
m_\alpha \leq \alpha -\gamma <m_\alpha +1. 
\] 
Throughout this proof, we will freely shrink $\Delta$ around $0$. 

First, we will prove the inclusion $\mathcal O_\Delta\cdot z^{-m_\alpha} \subset 
{}_\alpha L$. 
Let $f$ be any local section of $\mathcal O_\Delta\cdot z^{-m_\alpha}$, 
By \eqref{p-eq4.7}, for any $\varepsilon >0$, 
we have 
\[
\gamma -\varepsilon \leq \frac{\varphi(z)}{\log |z|} =
\frac{-\varphi(z)}{-\log |z|} \leq \gamma +\varepsilon 
\] 
around $0$. 
Therefore, we obtain 
\[
(-\gamma +\varepsilon) \log |z| \leq -\varphi(z) 
\leq (-\gamma -\varepsilon) \log |z|
\] 
on some open neighborhood of $0$. 
Thus, we have 
\[
|z|^{-\gamma +\varepsilon} \leq e^{-\varphi(z)} \leq |z|^{-\gamma -\varepsilon}
\] 
around $0$. 
Hence we have 
\[
\begin{split}
|f|_h |z|^{\alpha +\varepsilon} &=|f| e^{-\varphi(z)} |z|^{\alpha +\varepsilon} 
\\ & \leq |f| |z|^{-\gamma -\varepsilon} |z| ^{\alpha +\varepsilon} \\ 
&= |f| |z|^{\alpha -\gamma} \\ 
&\leq |f| |z|^{m_\alpha} \\&\leq C
\end{split} 
\] 
around $0$ since $f$ is a local section of $\mathcal O_\Delta\cdot 
z^{-m_\alpha}$. Here we used 
$|z|^{m_\alpha} \geq |z|^{\alpha -\gamma}$ since $m_\alpha 
\leq \alpha -\gamma$ and $|z|<1$. 
Thus, we see that $f$ is in ${}_\alpha L$. This is what we wanted. 

From now, we will prove the opposite inclusion 
${}_\alpha L\subset \mathcal O_\Delta\cdot z^{-m_\alpha}$. 
Let $f$ be any local section of ${}_\alpha L$. 
Since $\alpha -\gamma <m_\alpha +1$, 
we have 
$m_\alpha +1-(\alpha -\gamma) >0$. We put 
\[
\varepsilon :=\frac{m_\alpha +1-(\alpha -\gamma)}{3}>0. 
\] 
Then $\alpha -\gamma +2\varepsilon =m_\alpha +1-\varepsilon$. 
By shrinking $\Delta$ suitably, 
there exists some constant $C>0$ such that 
\[
|f|_h |z|^{\alpha +\varepsilon} =|f|e^{-\varphi(z)} |z|^{\alpha +\varepsilon} 
\leq C 
\] 
holds for $z\in \Delta^*$. As before, 
we may assume that 
\[
|z|^{-\gamma +\varepsilon} \leq e^{-\varphi(z)} 
\leq |z|^{-\gamma -\varepsilon}
\] 
holds around $0$. 
Therefore, we obtain 
\[
\begin{split}
|f| |z|^{m_\alpha +1-\varepsilon} &= |f| |z|^{\alpha -\gamma +2\varepsilon} \\ 
&=|f| |z|^{-\gamma +\varepsilon} |z| ^{\alpha +\varepsilon} 
\\ 
&\leq |f| e^{-\varphi(z)} |z|^{\alpha +\varepsilon} \\ 
&\leq C 
\end{split}
\] 
around $0$. 
This means that $z^{m_\alpha}f$ is holomorphic at $0$, that is, 
$f$ is in $\mathcal O_\Delta \cdot z^{-m_\alpha}$. 
This is what we wanted. 

Hence we have ${}_\alpha L=\mathcal O_\Delta\cdot z^{-m_\alpha}$. 
We finish the proof of Claim. 
\end{proof}
\end{step}
In particular, ${}_\alpha L$ is a holomorphic line bundle 
on $\Delta$. 
This completes the proof of Proposition \ref{p-prop4.1}. 
\end{proof}

Although we do not use the following observation in this paper, 
we record it here for possible future use.

\begin{rem}\label{p-rem4.2}
By \eqref{p-eq4.1} and Lemma \ref{p-lem3.1}, 
we can easily verify that $\nu_1 = \nu_2 = 0$ in the 
proof of Proposition \ref{p-prop4.1}, since 
the Lelong number $\nu(\chi (N), 0)$ of $\chi (N)$ at $0$ is zero. 
Therefore, we have $\gamma = c_1 = -c_2$. 
Hence,
\[
2\varphi + \chi(N) = 2\psi_1 + \gamma \log |z|^2
\quad \text{and} \quad
-2\varphi + \chi(N) = 2\psi_2 - \gamma \log |z|^2.
\]
In particular, we obtain $\psi_1 + \psi_2 = \chi(N)$. 
Thus,
\[
h = |\cdot|^2 e^{-2\varphi}
= \frac{|\cdot|^2}{|z|^{2\gamma}} e^{-2\psi_1 + \chi(N)}
= \frac{|\cdot|^2}{|z|^{2\gamma}} e^{-\psi_1 + \psi_2},
\]
and
\[
h^\vee = |\cdot|^2 e^{2\varphi}
= |\cdot|^2 |z|^{2\gamma} e^{-2\psi_2 + \chi(N)}
= |\cdot|^2 |z|^{2\gamma} e^{-\psi_2 + \psi_1}.
\]
\end{rem}

Let us prove Corollary \ref{p-cor1.7}. 

\begin{cor}[Corollary \ref{p-cor1.7}]\label{p-cor4.3}
Let $(L, h)$ be an acceptable line bundle on $\Delta^*$. 
Then, for every $\alpha \in \mathbb R$, 
\[
\left({}_\alpha L\right)^\vee ={}_{-\alpha +1-\varepsilon}\left(L^\vee \right)
\] 
holds for all sufficiently small $\varepsilon>0$. 
\end{cor}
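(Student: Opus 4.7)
The plan is to reduce the corollary to a straightforward floor-function identity by invoking the explicit description of ${}_\alpha L$ established in the proof of Proposition~\ref{p-prop4.1}. Fix a trivialization $(L,h) \simeq (\mathcal{O}_{\Delta^*}, |\cdot|^2 e^{-2\varphi})$ of the kind constructed there, and let $\gamma = \lim_{z\to 0} \varphi(z)/\log|z|$. From the Claim in Step~\ref{p-step4.1-6} we have
\[
{}_\alpha L = \mathcal{O}_\Delta \cdot z^{-\lfloor \alpha - \gamma \rfloor}.
\]
Since $(L^\vee, h^\vee)$ is acceptable by Lemma~\ref{p-lem2.2}, the dual trivialization gives $(L^\vee, h^\vee) \simeq (\mathcal{O}_{\Delta^*}, |\cdot|^2 e^{2\varphi})$; that is, the weight function for $L^\vee$ is $-\varphi$, and the corresponding invariant is $\lim_{z\to 0} (-\varphi(z))/\log|z| = -\gamma$. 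Applying Proposition~\ref{p-prop4.1} to $(L^\vee, h^\vee)$ therefore yields
\[
{}_\beta L^\vee = \mathcal{O}_\Delta \cdot z^{-\lfloor \beta + \gamma \rfloor}
\]
for every $\beta \in \mathbb{R}$.

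With these descriptions, both sides of the desired equality are free $\mathcal{O}_\Delta$-modules, and the claim reduces to the numerical identity
\[
\lfloor \alpha - \gamma \rfloor = -\lfloor -\alpha + 1 - \varepsilon + \gamma \rfloor
\]
for all sufficiently small $\varepsilon > 0$, since then the generator $z^{\lfloor \alpha - \gamma \rfloor}$ of $({}_\alpha L)^\vee$ matches the generator $z^{-\lfloor -\alpha + 1 - \varepsilon + \gamma \rfloor}$ of ${}_{-\alpha + 1 - \varepsilon}(L^\vee)$. Setting $t := \alpha - \gamma$ and writing $t = n + \delta$ with $n = \lfloor t \rfloor$ and $0 \le \delta < 1$, we have $1 - \varepsilon - t = -n + (1 - \delta - \varepsilon)$. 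If $\delta > 0$ we require $\varepsilon < 1 - \delta$, and if $\delta = 0$ we require $\varepsilon < 1$; in either case $0 < 1 - \delta - \varepsilon < 1$, so $\lfloor 1 - \varepsilon - t \rfloor = -n$, which gives the identity.

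No genuine obstacle arises here: the only subtlety is that the identity fails when $\alpha - \gamma \in \mathbb{Z}$ and one naively takes $\varepsilon = 0$ (which is precisely why the shift $-\varepsilon$ is needed), and this is handled by the case analysis above. Thus, once the explicit formulas for ${}_\alpha L$ and ${}_\beta L^\vee$ are in hand, the corollary follows immediately, and this is the proof we will write.
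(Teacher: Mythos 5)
Your proof is correct and follows essentially the same route as the paper: both derive the explicit generators ${}_\alpha L = \mathcal{O}_\Delta \cdot z^{-\lfloor \alpha - \gamma \rfloor}$ and ${}_\beta (L^\vee) = \mathcal{O}_\Delta \cdot z^{-\lfloor \beta + \gamma \rfloor}$ from the Claim in Step~\ref{p-step4.1-6}, and then reduce the statement to the floor identity $\lfloor \alpha - \gamma \rfloor = -\lfloor -\alpha + 1 - \varepsilon + \gamma \rfloor$ for $0<\varepsilon\ll1$. The only cosmetic difference is that the paper verifies the identity via $-\lfloor x\rfloor = \lceil -x\rceil$ in one line, whereas you write $\alpha-\gamma = n+\delta$ and argue by cases on $\delta$, which is a bit longer but equally valid.
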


\begin{proof}[Proof of Corollary \ref{p-cor4.3}] 
In the proof of Proposition \ref{p-prop4.1}, 
the metric of $L^\vee$ is $|\cdot|^2e^{2\varphi}$. 
We replace $\varphi$ with $-\varphi$ and use the same 
argument as in the proof of Proposition \ref{p-prop4.1}. 
More precisely, for $L$, we used \eqref{p-eq4.5} in the proof of 
Proposition \ref{p-prop4.1}. For $L^\vee$, it is 
sufficient to use \eqref{p-eq4.4}. 
Then ${}_\beta\left(L^\vee\right)$ is generated by 
$z^{-\lfloor \beta+\gamma\rfloor}$ for every $\beta\in \mathbb R$. 
We put $\beta=-\alpha +1-\varepsilon$. 
If $0<\varepsilon \ll 1$, then 
\[
\begin{split}
-\lfloor \beta+\gamma\rfloor 
&=-\lfloor -\alpha+1-\varepsilon +\gamma\rfloor \\ 
& =\lceil \alpha -1+\varepsilon -\gamma\rceil \\ 
& =\lfloor \alpha-\gamma\rfloor. 
\end{split} 
\] 
Hence we obtain the desired equality ${}_{-\alpha +1-\varepsilon} \left(L^\vee\right) 
=\left({}_{\alpha} L\right)^{\vee}$ for $0<\varepsilon \ll 1$. 
\end{proof}

By the proof of Proposition \ref{p-prop4.1}, 
Corollary \ref{p-cor1.8} is almost obvious. 

\begin{proof}[Proof of Corollary \ref{p-cor1.8}]
In \eqref{p-eq4.4} in the proof of Proposition \ref{p-prop4.1}, 
we can make $N=0$, $\psi_1=0$, and $c_1=c$. 
Then $e^{-2\varphi}=\frac{1}{|z|^{2c}}$. 
This is what we wanted. 
\end{proof}

For the reader's convenience, we 
summarize Proposition \ref{p-prop4.1} 
along with its proof.
To the best of the authors' knowledge, 
the following explicit description appears to be new. 

\begin{thm}[Theorem \ref{p-thm1.6}]\label{p-thm4.4} 
Let $(L, h)$ be an acceptable line bundle on $\Delta^*$. 
By taking a suitable trivialization 
\[
(L, h)\simeq \left(\mathcal O_{\Delta^*}, |\cdot|^2e^{-2\varphi}\right), 
\] 
we have the following properties. 
\begin{itemize}
\item[(i)] The limit 
\[
\gamma :=\lim_{z\to 0} \frac{\varphi(z)}{\log |z|}\in \mathbb R
\] 
exists. 
\item[(ii)] Let $f$ be a holomorphic function on $\Delta(0, r)^*$ for some $0<r<1$, 
where $\Delta(0, r)^*:=\{z\in \mathbb C\mid 0<|z|<r\}$. 
Then $f\in ({}_\alpha L)_0$ holds for some $\alpha \in \mathbb R$ if and only if 
$f$ is meromorphic at $0$, where $({}_\alpha L)_0$ 
denotes the stalk of ${}_\alpha L$ at $0 \in \Delta$. 
\item[(iii)] Let $f$ be a meromorphic function on some open neighborhood 
of $0$ and let $\alpha$ be any real number. 
Then $f\in ({}_\alpha L)_0$ holds if and only if 
\begin{equation}\label{p-eq4.8}
\lim _{z\to 0} \frac{\log \left(|f|e^{-\varphi}\right)}{\log |z|} \geq -\alpha. 
\end{equation} 
Note that 
\[
\lim _{z\to 0} \frac{\log \left(|f|e^{-\varphi}\right)}{\log |z|}=\ord _0 f-\gamma 
\] holds. 
Therefore, \eqref{p-eq4.8} is equivalent to 
\[
\ord _0 f\geq -\lfloor \alpha -\gamma\rfloor. 
\]
\item[(iv)] Let $f$ be a meromorphic function on some open 
neighborhood of $0$ and let $\alpha$ be any real number. 
Then $f\not\in ({}_\alpha L)_0$ holds if and only if 
\begin{equation}\label{p-eq4.9}
\lim _{z\to 0} \frac{\log \left(|f|e^{-\varphi}\right)}{\log |z|}<-\alpha. 
\end{equation} 
Note that \eqref{p-eq4.9} implies that 
\[
|f|e^{-\varphi} >\frac{1}{|z|^\alpha}
\] 
holds on some small open neighborhood of $0$. 
\end{itemize}
\end{thm}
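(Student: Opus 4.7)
The plan is to leverage the detailed proof of Proposition \ref{p-prop4.1} rather than redo any analysis: after Steps \ref{p-step4.1-1}--\ref{p-step4.1-5} we already have a trivialization $(L,h)\simeq (\mathcal O_{\Delta^*},|\cdot|^2 e^{-2\varphi})$ for which the limit
\[
\gamma=\lim_{z\to 0}\frac{\varphi(z)}{\log|z|}
\]
exists (this is exactly \eqref{p-eq4.7}), and after Step \ref{p-step4.1-6} we know the explicit formula ${}_\alpha L=\mathcal O_\Delta\cdot z^{-\lfloor \alpha-\gamma\rfloor}$. So (i) is immediate, and the remaining items (ii)--(iv) are to be derived purely from these two facts, essentially as bookkeeping.

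For (ii), I would first argue the easy converse: if $f$ is meromorphic at $0$ with $\ord_0 f=-m$, then $f\in \mathcal O_\Delta\cdot z^{-m}$, so choosing any $\alpha\in\mathbb R$ with $\lfloor \alpha-\gamma\rfloor\ge m$ (for instance $\alpha=m+\gamma+1$) places $f$ in $({}_\alpha L)_0$. Conversely, any local section of $({}_\alpha L)_0$ lies in $\mathcal O_\Delta\cdot z^{-\lfloor\alpha-\gamma\rfloor}$ by the Claim in Step \ref{p-step4.1-6}, hence is meromorphic at $0$.

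For (iii), the computational identity
\[
\lim_{z\to 0}\frac{\log(|f|e^{-\varphi})}{\log|z|}
=\lim_{z\to 0}\frac{\log|f|}{\log|z|}+\lim_{z\to 0}\frac{-\varphi(z)}{\log|z|}
=\ord_0 f-\gamma
\]
is an immediate consequence of (i) together with the fact that for a meromorphic $f$ with $\ord_0 f=n$, the function $\log|f|/\log|z|\to n$. Given this identity, the condition \eqref{p-eq4.8} is equivalent to $\ord_0 f\ge \gamma-\alpha$, and since $\ord_0 f\in\mathbb Z$, this is in turn equivalent to $\ord_0 f\ge -\lfloor \alpha-\gamma\rfloor$, which by Claim in Step \ref{p-step4.1-6} is equivalent to $f\in({}_\alpha L)_0$.

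Part (iv) is then simply the contrapositive of (iii); the only substantive point, which is the step I expect to require the most care, is deducing the pointwise estimate $|f|e^{-\varphi}>|z|^{-\alpha}$ near $0$ from the strict inequality \eqref{p-eq4.9}. For this I would choose $\delta>0$ with $\lim_{z\to 0}\log(|f|e^{-\varphi})/\log|z|<-\alpha-\delta$ (the existence of such $\delta$ uses that the limit exists and is strictly less than $-\alpha$), pass to a sufficiently small punctured neighborhood of $0$ on which the same strict inequality holds pointwise, and then multiply through by $\log|z|<0$ to flip the inequality, obtaining $\log(|f|e^{-\varphi})>(-\alpha-\delta)\log|z|>-\alpha\log|z|$, so that $|f|e^{-\varphi}>|z|^{-\alpha}$ on that neighborhood as required.
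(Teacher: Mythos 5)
Your proposal is correct and follows the paper's own route exactly: the paper treats Theorem \ref{p-thm4.4} as a summary of Proposition \ref{p-prop4.1} together with its proof, and your argument simply makes explicit how (i) follows from \eqref{p-eq4.7}, while (ii)--(iv) are read off from the formula ${}_\alpha L=\mathcal O_\Delta\cdot z^{-\lfloor\alpha-\gamma\rfloor}$ of Claim together with the elementary identity $\lim_{z\to0}\log|f|/\log|z|=\ord_0 f$. The bookkeeping in each part, including the flip of the inequality after multiplying by $\log|z|<0$ in (iv), is carried out correctly.
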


From the above description 
of ${}_\alpha L$, the following result is immediate.
We state it explicitly for later use.

\begin{cor}\label{p-cor4.5}
Let $(L, h)$ be an acceptable line bundle on $\Delta^*$.
If $f \in ({}_\alpha L)_0$, then there exists $\beta < \alpha$ such that
$f \notin ({}_\beta L)_0$.
\end{cor}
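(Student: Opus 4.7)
The plan is to reduce everything to the explicit computation of ${}_\alpha L$ established in the Claim within the proof of Proposition \ref{p-prop4.1} and summarized in Theorem \ref{p-thm4.4}. After fixing the trivialization $(L,h) \simeq (\mathcal O_{\Delta^*}, |\cdot|^2 e^{-2\varphi})$ provided there, one has the concrete identification ${}_\alpha L = \mathcal O_\Delta \cdot z^{-\lfloor \alpha - \gamma\rfloor}$, where $\gamma = \lim_{z\to 0}\varphi(z)/\log|z|$. Consequently, a local section $f$ lies in $({}_\alpha L)_0$ precisely when $f$ is meromorphic at $0$ with $\ord_0 f \geq -\lfloor \alpha - \gamma\rfloor$.

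Granting this, the proof is short. One may assume $f \neq 0$, since for $f=0$ the statement is vacuous ($0$ lies in every $({}_\beta L)_0$). Set $m := \ord_0 f \in \mathbb Z$. The hypothesis $f \in ({}_\alpha L)_0$ yields $-m \leq \lfloor \alpha - \gamma \rfloor \leq \alpha - \gamma$, so in particular $\gamma - m \leq \alpha$. Choose $\beta := \gamma - m - 1$; then $\beta \leq \alpha - 1 < \alpha$, and $\lfloor \beta - \gamma \rfloor = -m - 1$, so that $-\lfloor \beta - \gamma \rfloor = m+1 > m = \ord_0 f$. By Theorem \ref{p-thm4.4}(iii) this forces $f \notin ({}_\beta L)_0$, as required.

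There is no substantive obstacle: the corollary amounts to the observation that the step function $\alpha \mapsto \lfloor \alpha - \gamma \rfloor$ drops strictly at each lattice point of $\gamma + \mathbb Z$, so every nonzero local section of ${}_\alpha L$ has a well-defined maximal weight $\gamma - m \in \mathbb R$, and any real number strictly below this weight furnishes a witness $\beta$. All the real work has already been done in proving Proposition \ref{p-prop4.1}.
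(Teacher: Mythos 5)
Your proof is correct and takes essentially the same route as the paper: the paper says the corollary is ``immediate'' from the explicit description ${}_\alpha L = \mathcal O_\Delta \cdot z^{-\lfloor \alpha - \gamma \rfloor}$ in Theorem~\ref{p-thm4.4}, and you have simply made that implicit argument explicit by choosing $\beta = \gamma - \ord_0 f - 1$. The only minor quibble is your phrasing that the $f=0$ case is ``vacuous'': in fact the conclusion genuinely fails for $f = 0$ since $0$ lies in every $({}_\beta L)_0$, so the corollary implicitly assumes $f \neq 0$ --- your decision to restrict to nonzero $f$ is right, just for a slightly different reason than stated.
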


We close this section with an important remark. 

\begin{rem}\label{p-rem4.6}
We consider 
\[
(L, h)=\left(\mathcal O_{\Delta^*}, \frac{|\cdot |^2}{|z|^{2c}}\right), 
\] 
that is, $h=|\cdot |^2e^{-2\varphi}$ with 
$\varphi=c\log |z|$. 
In this case, we have 
\[
\sqrt{-1}\Theta_h(L)=\sqrt{-1}\partial 
\overline \partial (2\varphi)=0
\] 
on $\Delta^*$. Note that we can see $\log |z|$ as a 
subharmonic function on $\Delta$ and 
that $\sqrt{-1} \partial \overline \partial (2\varphi)$ is not zero 
as a current on $\Delta$. 
\end{rem}

\section{On growth estimates}\label{p-sec5}

In this section, we present the minimal 
analytic results needed in later sections, 
for the reader's convenience.  
We begin with a discussion of the $\overline{\partial}$-equation, 
from which we derive a growth estimate via $L^2$-methods.

\begin{say}[Setting]\label{p-say5.1}
Let $g$ be the K\"ahler metric on $\Delta^*$ defined 
by $\omega_P$.  
Note that $\Delta^*$ is a complete K\"ahler manifold, 
even though $g$ itself is not complete.  
Moreover, $\Delta^*$ is a Stein manifold.

Let $(E, h)$ be an acceptable vector bundle on $\Delta^*$.  
Then, by Lemma \ref{p-lem2.5}, there 
exists a positive real number 
$N_0$ such that for every $N \geq N_0$, 
$\sqrt{-1}\Theta_{he^{-\chi(N)}}(E)$ is Nakano semipositive, and 
$\sqrt{-1}\Theta_{he^{-\chi(-N)}}(E)$ is Griffiths seminegative. 

For simplicity, we also denote by $g$ 
the metric on $K_{\Delta^*}^{\otimes -1}$ 
induced by $\omega_P$, whenever no confusion arises.  
Note that the line bundle $(K_{\Delta^*}^{\otimes -1}, g)$ is acceptable.  
Therefore, by Lemma \ref{p-lem2.2}, 
the vector bundle $E \otimes K_{\Delta^*}^{\otimes -1}$ is also acceptable.

Hence, we can choose a 
sufficiently large positive integer $N \geq N_0$ such that
\[
\sqrt{-1} \Theta_{hge^{-\chi (N)}}(E \otimes K_{\Delta^*}^{\otimes -1}) 
- \omega_P \otimes \mathrm{Id}_E
\]
is Nakano semipositive, again by Lemma \ref{p-lem2.5}.
\end{say}

Lemma \ref{p-lem5.2} is a straightforward application of the $\overline{\partial}$-equation.

\begin{lem}\label{p-lem5.2}
Let $(E, h)$ be an acceptable vector bundle on $\Delta^*$.  
Let $e$ be any element of $E_{z_0}$ for some point $z_0 \in \Delta^*$.  
Assume that
\[
\sqrt{-1} \Theta_{h g e^{-\chi(N)}}(E \otimes K_{\Delta^*}^{\otimes -1})
- \omega_P \otimes \mathrm{Id}_E
\]
is Nakano semipositive.  
Then there exists a holomorphic section 
$v(z)$ of $E$ on $\Delta^*$ such that $v(z_0) = e$ and
\[
\|v\|^2_{h e^{-\chi(N)}} := \int_{\Delta^*} |v|^2_{h e^{-\chi(N)}} \, \omega_P < \infty.
\]
\end{lem}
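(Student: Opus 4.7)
The plan is to realize $v$ as $v = \tilde v - u$, where $\tilde v$ is a smooth compactly supported extension of $e$ and $u$ is an $L^2$-solution of $\overline{\partial} u = \overline{\partial} \tilde v$ obtained from the H\"ormander--Demailly $L^2$-existence theorem on the complete K\"ahler manifold $(\Delta^*, \omega_P)$, twisted by a singular weight that forces $u(z_0) = 0$.

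Concretely, I would choose a local holomorphic trivialization of $E$ on a small disk $U \Subset \Delta^*$ around $z_0$ and let $s$ be the local holomorphic section with $s(z_0) = e$; pick a cutoff $\rho \in C^\infty_c(U)$ with $\rho \equiv 1$ near $z_0$, and set $\tilde v := \rho s$, extended by zero to $\Delta^*$. Then $\tilde v$ is smooth on $\Delta^*$ with $\tilde v(z_0) = e$, and $f := \overline{\partial} \tilde v = (\overline{\partial} \rho)\,s$ is a smooth, compactly supported $E$-valued $(0,1)$-form that vanishes identically near $z_0$. Identifying $E$-valued $(0,1)$-forms on the one-dimensional complex manifold $\Delta^*$ with $(E \otimes K_{\Delta^*}^{\otimes -1})$-valued $(1,1)$-forms, $f$ becomes a $\overline{\partial}$-closed top-degree form with values in $F := E \otimes K_{\Delta^*}^{\otimes -1}$.

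Next, introduce the plurisubharmonic weight $\phi(z) := \log|z - z_0|^2$ and apply Demailly's $L^2$-existence theorem to $F$ equipped with the Hermitian metric $hge^{-\chi(N)}$ and weight $e^{-\phi}$. By the standing hypothesis of the lemma,
\[
\sqrt{-1}\,\Theta_{hge^{-\chi(N)}}(F) - \omega_P \otimes \mathrm{Id}_F
\]
is Nakano semipositive, and adding the nonnegative $(1,1)$-current $\sqrt{-1}\,\partial\overline{\partial}\phi \otimes \mathrm{Id}_F$ preserves this lower bound. Since $f$ has compact support disjoint from $z_0$, its twisted $L^2$-norm is finite, so the theorem produces a measurable section $u$ of $F \otimes K_{\Delta^*} \simeq E$ with $\overline{\partial} u = f$ and
\[
\int_{\Delta^*} |u|^2_{he^{-\chi(N)}}\,|z - z_0|^{-2}\, \omega_P < \infty.
\]
If one prefers not to invoke singular weights directly, the same conclusion is obtained by solving the $\overline{\partial}$-equation on $\{z\in\Delta^* : |z-z_0|>\varepsilon\}$ with smooth weight and extracting a weak limit as $\varepsilon \to 0$ from the uniform $L^2$ bound.

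Setting $v := \tilde v - u$ gives $\overline{\partial} v = 0$, so $v$ is a holomorphic section of $E$ on $\Delta^*$. Near $z_0$ the function $u$ is holomorphic (because $f$ vanishes there), and the weighted integrability above together with the local non-integrability of $|z - z_0|^{-2}$ with respect to $\omega_P$ forces $u(z_0) = 0$; hence $v(z_0) = e$. For the global $L^2$ bound, $\tilde v$ has compact support and contributes a finite integral, while $|z - z_0|^{-2}$ is bounded below by a positive constant on $\Delta^*$ (since $|z - z_0| < 2$), so
\[
\int_{\Delta^*} |u|^2_{he^{-\chi(N)}}\, \omega_P \;\leq\; 4 \int_{\Delta^*} |u|^2_{he^{-\chi(N)}}\,|z - z_0|^{-2}\, \omega_P \;<\; \infty,
\]
yielding $\|v\|^2_{he^{-\chi(N)}} < \infty$ as required. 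The main technical point is ensuring that H\"ormander's theorem applies despite the logarithmic singularity of $\phi$; this is precisely the content of Demailly's singular $L^2$-theorem, or, equivalently, of the exhaustion argument described above.
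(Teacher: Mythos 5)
Your proof is correct and takes essentially the same route as the paper's, namely realizing $v$ as a cutoff extension of $e$ corrected by an $L^2$-solution of a $\overline{\partial}$-equation, with the curvature hypothesis providing the needed positivity. The only difference is cosmetic: the paper divides the $\overline{\partial}$-datum by $z-z_0$, solves with a smooth weight (citing Demailly, Chapter VIII, Theorem 6.1), and then multiplies the solution $w$ back by $z-z_0$ to get a correction vanishing at $z_0$; you instead keep the undivided datum and twist by the singular weight $\phi = \log|z-z_0|^2$ to force vanishing. These are equivalent under the substitution $u = (z-z_0)w$, but the paper's formulation stays entirely within the smooth $L^2$-theory, whereas yours requires invoking the singular-weight (or exhaustion-limit) version of the existence theorem, which you correctly flag. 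Your final global $L^2$ bound and the argument that the weighted integrability forces $u(z_0)=0$ are both sound; the paper obtains the same two facts from $|z-z_0|\le 2$ and from $(z-z_0)w$ manifestly vanishing at $z_0$.
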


\begin{proof}[Proof of Lemma \ref{p-lem5.2}]
Take a local holomorphic section $u(z)$ of $E$ defined near $z_0$ 
such that $u(z_0) = e$. 
More precisely, 
$u(z) \in \Gamma(U, E)$ for some open neighborhood $U$ of $z_0$.  
Choose a smooth function $\rho$ 
on $\Delta^*$ such that $\rho \geq 0$, 
$\supp \rho \Subset U$, and $\rho = 1$ on some open neighborhood of $z_0$.

Consider the smooth $E$-valued $(0,1)$-form with compact support:
\[
\frac{\overline{\partial}(\rho(z) u(z))}{z - z_0}.
\] 
It is clearly $\overline{\partial}$-closed and can be regarded as a smooth $(0,1)$-form on $\Delta^*$ with values in  
\[
K_{\Delta^*} \otimes \bigl(E \otimes K_{\Delta^*}^{\otimes -1}\bigr).
\] 

Since 
\[
\sqrt{-1} \Theta_{h g e^{-\chi(N)}}(E \otimes K_{\Delta^*}^{\otimes -1}) 
- \omega_P \otimes \mathrm{Id}_E
\]
is Nakano semipositive by assumption, 
the $\overline{\partial}$-equation can be solved in the $L^2$ sense.

Thus, we can find a measurable $E$-valued function $w(z)$ such that
\[
\int_{\Delta^*} |w(z)|^2_{h e^{-\chi(N)}}  \omega_P < \infty
\] 
and that 
\[
\overline{\partial} w(z) = \frac{\overline{\partial}(\rho(z) u(z))}{z - z_0}.
\]
For details, see for example \cite[Chapter VIII, (6.1) Theorem]{demailly}.

Define
\[
v(z) := \rho(z) u(z) - (z - z_0) w(z).
\]
Then $v(z)$ is holomorphic on $\Delta^*$, satisfies $v(z_0) = e$, and
\[
\|v\|^2_{h e^{-\chi(N)}} = \int_{\Delta^*} 
|v(z)|^2_{h e^{-\chi(N)}} \ \omega_P < \infty.
\]
This completes the proof of Lemma \ref{p-lem5.2}.
\end{proof}

Lemma \ref{p-lem5.3} is a straightforward consequence of the mean value inequality 
for subharmonic functions.

\begin{lem}\label{p-lem5.3}
Let $(E, h)$ be an acceptable vector bundle on $\Delta^*$.  
Let $N$ be a positive integer such that  
$\sqrt{-1} \Theta_{he^{-\chi(N)}}(E)$ is Nakano semipositive and  
$\sqrt{-1} \Theta_{he^{-\chi(-N)}}(E)$ is Griffiths seminegative.  

Suppose that a holomorphic section $v$ of $E$ satisfies  
\[
\|v\|^2_{he^{-\chi(N)}} := \int_{\Delta^*} |v|^2_{he^{-\chi(N)}} \, \omega_P < \infty.
\]  
Then, for every $\varepsilon > 0$, there exists a constant $C_\varepsilon > 0$ such that  
\[
|v(z)|_h \leq C_\varepsilon \cdot \frac{1}{|z|^\varepsilon}
\]  
holds on $\Delta(0, r)^* := \{ z \in \mathbb{C} 
\mid 0 < |z| < r \}$ for some sufficiently small $r > 0$.
\end{lem}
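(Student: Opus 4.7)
The plan is to combine Lemma \ref{p-lem2.7}, the mean value inequality for subharmonic functions, and the $L^2$-hypothesis. Since $\sqrt{-1}\Theta_{he^{-\chi(-N)}}(E)$ is Griffiths (equivalently Nakano, as $\dim\Delta^*=1$) seminegative, Lemma \ref{p-lem2.7} applied to the Hermitian bundle $(E, he^{-\chi(-N)})$ gives that $\log|v|^2_{he^{-\chi(-N)}}$ is subharmonic on $\Delta^*$. Composing with the convex increasing function $t\mapsto e^t$, the function
\[
|v|^2_{he^{-\chi(-N)}} \;=\; |v|^2_h\,(-\log|z|^2)^{-N}
\]
is itself subharmonic on $\Delta^*$.

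For $z_0\in \Delta^*$ close to the origin, I would apply the mean value inequality on the disk $D(z_0, |z_0|/2)\subset \Delta^*$, giving
\[
|v(z_0)|^2_h \,(-\log|z_0|^2)^{-N} \;\leq\; \frac{4}{\pi|z_0|^2}\int_{D(z_0,|z_0|/2)} |v|^2_h\,(-\log|z|^2)^{-N}\, dA.
\]
The crucial observation is that on this disk both $|z|$ and $-\log|z|^2$ are comparable to $|z_0|$ and $-\log|z_0|^2$, respectively, with multiplicative constants tending to $1$ as $z_0\to 0$, since an additive shift by $\log(9/4)$ becomes negligible next to the divergent quantity $-\log|z_0|^2$.

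The remaining step is a routine comparison of weights. Writing
\[
(-\log|z|^2)^{-N} \;=\; |z|^2\,(-\log|z|^2)^{2-N}\cdot \frac{(-\log|z|^2)^{N-2}}{|z|^2},
\]
I would bound the first factor pointwise on $D(z_0,|z_0|/2)$ by a constant multiple of $|z_0|^2\,(-\log|z_0|^2)^{2-N}$, and then invoke the $L^2$-hypothesis in the form
\[
\int_{\Delta^*} |v|^2_h\cdot \frac{(-\log|z|^2)^{N-2}}{|z|^2}\, dA\;<\;\infty,
\]
obtained from the explicit expressions $\omega_P = \tfrac{\sqrt{-1}\,dz\wedge d\overline z}{|z|^2(-\log|z|^2)^2}$ and $e^{-\chi(N)}=(-\log|z|^2)^N$. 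After simplification this yields
\[
|v(z_0)|^2_h \;\leq\; C\,\|v\|^2_{he^{-\chi(N)}}\cdot (-\log|z_0|^2)^{2-N},
\]
which is in fact considerably stronger than the claimed bound: since $(-\log|z|^2)^{-k}=O(|z|^{-\varepsilon})$ for every $\varepsilon>0$ and every real $k$, the estimate $|v(z)|_h \leq C_\varepsilon/|z|^\varepsilon$ follows immediately. The main (but elementary) obstacle is the bookkeeping of two logarithmic weights of opposite sign on the small disk; the underlying mechanism -- subharmonicity from seminegative curvature, the mean value inequality, and slow variation of $-\log|z|^2$ near the origin -- is entirely standard.
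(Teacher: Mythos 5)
Your proposal is correct and follows essentially the same route as the paper's proof: subharmonicity of $|v|^2_{he^{-\chi(-N)}}$ (the paper instead applies the mean value inequality to $\log|v|^2_{he^{-\chi(-N)}}$ and then Jensen's inequality, which amounts to the same bound), the mean value inequality on $D(z_0,|z_0|/2)$, comparison of the Poincar\'e-weighted $L^2$ integrand with the pointwise weight, and the slow growth of $-\log|z|^2$ near the origin. One arithmetic slip: in your displayed weight decomposition the first factor should be $|z|^2(-\log|z|^2)^{2-2N}$, not $|z|^2(-\log|z|^2)^{2-N}$ (as written the right-hand side collapses to $1$ rather than $(-\log|z|^2)^{-N}$), and it is the corrected exponent that produces the estimate $|v(z_0)|^2_h \le C\,\|v\|^2_{he^{-\chi(N)}}\,(-\log|z_0|^2)^{2-N}$ you state.
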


We include a proof of Lemma \ref{p-lem5.3} for completeness.

\begin{proof}[Proof of Lemma \ref{p-lem5.3}]
\setcounter{step}{0}
In this proof, each $C_i$ denotes a positive constant 
for every $i$. 

\begin{step}\label{p-step5.3-1}
By assumption, the bundle $(E, he^{-\chi(-N)})$ is Griffiths seminegative.  
Hence, by Lemma \ref{p-lem2.7}, the function  
\[
\log |s|_{he^{-\chi(-N)}}
\]  
is subharmonic for any holomorphic section $s$ of $E$ on $\Delta^*$.  
In particular, this applies to $v$, so we may use the mean value inequality for $\log |v|^2_{he^{-\chi(-N)}}$.
\end{step}

\begin{step}\label{p-step5.3-2}
Fix $z \in \Delta^*$ with $0 < |z| < \tfrac{1}{4}$. Then:
\begin{equation}\label{p-eq5.1}
\begin{split}
\log |v(z)|^2_{he^{-\chi(-N)}} 
&\leq \frac{4}{\pi |z|^2} \int_{|w - z| \leq \frac{|z|}{2}} 
\log |v(w)|^2_{he^{-\chi(-N)}} \, dw \wedge d\overline{w} \\
&\leq \log \left( \frac{4}{\pi |z|^2} \int_{|w - z| \leq \frac{|z|}{2}} 
|v(w)|^2_{he^{-\chi(-N)}} \, dw \wedge d\overline{w} \right) \\
&\leq \log \left( C_1 \int_{|w - z| \leq \frac{|z|}{2}} 
\frac{1}{|w|^2} |v(w)|^2_{he^{-\chi(-N)}} \, dw \wedge d\overline{w} \right) \\
&\leq \log C_2 + \log \int_{|w - z| \leq \frac{|z|}{2}} 
|v(w)|^2_{he^{-\chi(N)}} \, \omega_P \\
&\leq C_3 + \log \|v\|^2_{he^{-\chi(N)}}.
\end{split}
\end{equation}

Here, the first inequality is the 
mean value inequality for subharmonic functions,  
and the second follows from Jensen's inequality.  
\end{step}

\begin{step}\label{p-step5.3-3}
Using the estimate \eqref{p-eq5.1}, we obtain:
\[
\begin{split}
|v(z)|_h 
&= |v(z)|_{he^{-\chi(-N)}} \cdot (-\log |z|^2)^{N/2} \\
&\leq C_4 \|v\|_{he^{-\chi(N)}} \cdot \frac{1}{|z|^\varepsilon}
\end{split}
\]
for some constant $C_4 > 0$ and any 
given $\varepsilon > 0$. 
This completes the desired estimate.
\end{step}

The proof of Lemma \ref{p-lem5.3} is now complete.
\end{proof}

The following lemma is also a consequence of 
subharmonicity. We will repeatedly use it in subsequent sections. 

\begin{lem}[{cf.~\cite[Lemma 21.2.7]{mochizuki4}}]\label{p-lem5.4}
Let $(E, h)$ be an acceptable vector bundle on 
$\Delta^*$. 
Let $f$ be a holomorphic section of $E$ on $\Delta^*$ such that 
\[
|f|_h=O\left( \frac{1}{|z|^{a+\varepsilon}}\right)
\] 
for any $\varepsilon >0$. 
We assume that $(E, he^{-\chi(-N)})$ is Griffiths seminegative. We put 
\[
H(z):=
|f|^2_{he^{-\chi(-N)}}|z|^{2a}=
|f|^2_h|z|^{2a}\left(-\log |z|^2\right)^{-N}. 
\] 
Then $H(z)$ is bounded on $\Delta(0, r_0)$ for any $0<r_0<1$. 
\end{lem}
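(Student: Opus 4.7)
The plan is to combine the subharmonicity of $\log H$ with Hadamard's three-circles inequality, exploiting the fact that the growth hypothesis $|f|_h = O(|z|^{-a-\varepsilon})$ is available for every $\varepsilon > 0$. First I would use that $(E, he^{-\chi(-N)})$ is Griffiths seminegative by assumption; in complex dimension one this coincides with Nakano seminegativity, so Lemma \ref{p-lem2.7} applied to the holomorphic section $f$ shows that $\log |f|^2_{he^{-\chi(-N)}}$ is subharmonic on $\Delta^*$. Since $2a\log|z|$ is harmonic on $\Delta^*$, the function
\[
u(z) := \log H(z) = \log |f|^2_{he^{-\chi(-N)}} + 2a\log|z|
\]
is subharmonic on $\Delta^*$.

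Next I would encode the growth hypothesis quantitatively. For each $\varepsilon > 0$ there exists $C_\varepsilon > 0$ such that
\[
u(z) \leq 2\log C_\varepsilon + 2\varepsilon(-\log|z|) - N\log(-\log|z|^2)
\]
on $\Delta^*$; near the origin the last term is negative, so writing $M(\rho) := \sup_{|z|=\rho} u(z)$ we obtain $M(\rho) \leq 2\log C_\varepsilon + 2\varepsilon(-\log\rho)$ for all sufficiently small $\rho > 0$. Hadamard's three-circles theorem, applied to the subharmonic function $u$ on the annulus $\rho < |z| < R$ with $0 < \rho < r < R < 1$, then gives
\[
M(r) \leq \frac{\log(R/r)}{\log(R/\rho)} M(\rho) + \frac{\log(r/\rho)}{\log(R/\rho)} M(R).
\]
Substituting the bound on $M(\rho)$ and letting $\rho \to 0^+$, the prefactor $\log(R/r)/\log(R/\rho)$ kills the constant term $2\log C_\varepsilon$, converts $2\varepsilon(-\log\rho)$ into $2\varepsilon \log(R/r)$ in the limit (because $(-\log\rho)/\log(R/\rho) \to 1$), while $\log(r/\rho)/\log(R/\rho) \to 1$. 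Thus $M(r) \leq 2\varepsilon \log(R/r) + M(R)$ for every $\varepsilon > 0$, and sending $\varepsilon \to 0$ yields $M(r) \leq M(R)$.

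Finally, given $0 < r_0 < 1$, I would pick any $R \in (r_0, 1)$. Since $u$ is upper semicontinuous on the compact circle $\{|z| = R\} \subset \Delta^*$, one has $M(R) < \infty$, and the previous step gives $u \leq M(R)$ throughout $\Delta(0, R) \setminus \{0\}$; exponentiating, $H \leq e^{M(R)}$ on $\Delta(0, r_0) \setminus \{0\}$, which is the desired bound. The main obstacle is the $\varepsilon \to 0$ step: the pointwise growth estimate controls $M(\rho)$ only up to an error of order $\varepsilon(-\log\rho)$, which itself diverges as $\rho \to 0$, so subharmonicity alone does not produce a uniform bound. Hadamard's three-circles inequality is precisely the right tool because its vanishing prefactor $\log(R/r)/\log(R/\rho) \sim \log(R/r)/(-\log\rho)$ exactly offsets the $-\log\rho$ divergence of the $\varepsilon$-error, permitting $\varepsilon$ to be eliminated in the final step.
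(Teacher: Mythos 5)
Your argument is correct, but it proceeds by a somewhat different mechanism than the paper's. You keep the function $u=\log H$ fixed, apply Hadamard's three-circles inequality on a shrinking annulus, and show that the $\varepsilon$-loss in the growth hypothesis is killed by the vanishing three-circles prefactor as $\rho\to 0^+$; the conclusion $M(r)\le M(R)$ then follows after $\varepsilon\to 0$. The paper instead folds the $\varepsilon$ into the \emph{function}: it replaces $H$ by $H_\varepsilon(z):=H(z)|z|^{2\varepsilon}$, observes that $\log H_\varepsilon$ (still subharmonic on $\Delta^*$ since $\log|z|$ is harmonic) tends to $-\infty$ at the origin, hence extends to a subharmonic function on the full disk $\Delta$, applies the ordinary maximum principle there to get $H_\varepsilon(z)\le\max_{|w|=r_0}H_\varepsilon(w)\le\max_{|w|=r_0}H(w)$, and then lets $\varepsilon\to 0$ pointwise. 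The paper's route avoids the annulus limit entirely and is slightly shorter, at the cost of invoking the removable-singularity theorem for subharmonic functions; your route stays on $\Delta^*$ and makes the mechanism — the $\log(R/r)/(-\log\rho)$ prefactor exactly offsetting the $\varepsilon(-\log\rho)$ error — explicit. Both are legitimate and essentially equally elementary.
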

\begin{proof}[Proof of Lemma \ref{p-lem5.4}]
We put 
$H_{\varepsilon} (z):=H(z)|z|^{2\varepsilon}$ for any $\varepsilon >0$. 
Note that $\log H_{\varepsilon}(z)$ is subharmonic on $\Delta^*$ 
by Lemma \ref{p-lem2.7}. 
By assumption, we have 
$\lim_{z\to 0} \log H_{\varepsilon}(z)=-\infty$. 
Hence $\log H_{\varepsilon}(z)$ is subharmonic on $\Delta$ 
(see \cite[(3.3.25) Theorem]{noguchi-ochiai}).
Therefore, we have 
\begin{equation}\label{p-eq5.2}
\max_{|z|\leq r_0} |H_{\varepsilon}(z)|
=\max_{|z|=r_0} H_{\varepsilon}(z). 
\end{equation} 
Note that $H(z)$ is a continuous function on $|z|=r_0$ and 
that $H_{\varepsilon _1}(z)\leq H_{\varepsilon_2}(z)$ holds on $|z|=r_0$ for 
$0\leq \varepsilon _2\leq \varepsilon _1\leq 1$. 
By taking the limit for $\varepsilon \to 0$, 
we obtain that $H(z)$ is bounded on $\Delta(0, r_0)$ by 
\eqref{p-eq5.2}. 
We finish the proof of Lemma \ref{p-lem5.4}.  
\end{proof}

\section{Prolongation of acceptable vector bundles}\label{p-sec6}

In this section, we establish Theorem \ref{p-thm1.3}. 

\begin{proof}[Proof of Theorem \ref{p-thm1.3}]

In the following proof, we will use Proposition \ref{p-prop1.5} 
(see Proposition \ref{p-prop4.1}), 
which is a special case of Theorem \ref{p-thm1.3}. 

\setcounter{step}{0}
\begin{step}\label{p-step1.3-1}
Let $(E, h)$ be the given acceptable vector bundle on $\Delta^*$ and 
let $\alpha$ be any real number. 
We put $E':=E$ and $h':=h\cdot |z|^{2\alpha}$ and 
consider $(E', h')$. Then we have 
\[
\sqrt{-1}\Theta_h(E)=\sqrt{-1}\Theta_{h'}(E')
\] 
on $\Delta^*$. Hence $(E', h')$ is also an acceptable 
vector bundle on $\Delta^*$. 
By the definition of $(E', h')$, 
${}_\alpha E={}_0 E'$ obviously holds. Therefore, it is sufficient to 
prove that ${}^\diamond\! E={}_0 E$ is a holomorphic 
vector bundle on $\Delta$. By definition, 
${}^\diamond\!E$ is a torsion-free sheaf on $\Delta$. 
Thus, it is sufficient to prove that ${}^\diamond\! E$ is coherent 
since ${}^\diamond\! E$ 
is a sheaf on $\Delta$. 
\end{step}

\begin{step}\label{p-step1.3-2}
Let $z_0 \in \Delta^*$ be any point. 
Let $\{e_1, \ldots, e_k\}$ be a basis of 
the fiber $E_{z_0}$, where $k = \dim_{\mathbb{C}} E_{z_0} = \rank E$. 

From now on, we allow ourselves to shrink 
the unit disk $\Delta$ and replace it with a smaller disk of the form
\[
\Delta(0, r) := \{ z \in \mathbb{C} \mid |z| < r \}
\]
for some $0 < r < 1$, without explicitly mentioning it.

By Lemmas \ref{p-lem5.2} and \ref{p-lem5.3}, 
for each $i$, we can find a holomorphic section 
$v_i(z)$ of $E$ on $\Delta^*$ such 
that $v_i(z_0) = e_i$ and
\[
|v_i(z)|_h = O\left(\frac{1}{|z|^\varepsilon}\right)
\]
for every $\varepsilon > 0$. 
In other words, $v_i \in \Gamma(\Delta, {}^\diamond\! E)$ for all $i$.
\end{step}

\begin{step}\label{p-step1.3-3}
We put $L:=\det (E)$. 
Then $L$ is an acceptable line bundle on $\Delta^*$ by Lemma \ref{p-lem2.2}. 
Since 
\[
\left(v_1\wedge \cdots \wedge v_k\right)(z_0)\ne 0, 
\] 
$v_1\wedge \cdots \wedge v_k$ is a nontrivial holomorphic 
section of ${}^\diamond\! L$. We fix a trivialization 
\[
{}^\diamond\! L=\mathcal O_{\Delta}\cdot \mathbf e. 
\] 
Then we can write 
\[
v_1\wedge \cdots \wedge v_k =a(z)\mathbf e
\] 
for some holomorphic function $a(z)$ on $\Delta$. 
We put $l:=\ord _0 a(z)\geq 0$. 
\end{step}
\begin{step}\label{p-step1.3-4} 
Since $a(z)$ is a holomorphic function on $\Delta$,
we may assume that $a(z)\not=0$ for all $z \in \Delta^*$
by shrinking $\Delta$ around $0$.
Then
$\left(v_1\wedge \cdots \wedge v_k\right)(z)\ne 0$
for all $z \in \Delta^*$.
Therefore the morphism
\begin{equation}
\label{p-eq6.1}
\mathcal{O}_{\Delta}^{\oplus k} \rightarrow {}^\diamond\! E
\end{equation}
defined by $v_1, \dots, v_k$ is isomorphic over $\Delta^*$. 
\end{step}
\begin{step}\label{p-step1.3-5}
Let $s$ be any local section of ${}^\diamond\!E$ around $0$. 
Since the morphism \eqref{p-eq6.1} is isomorphic over $\Delta^*$, 
we can write 
\[
s(z)=\sum _{i=1}^k s_i(z) v_i(z)
\] 
such that $s_i(z)$ is a holomorphic function on $\Delta^*$ for 
every $i$. 
Since 
\[
\begin{split}
s\wedge v_2\wedge \cdots \wedge v_k&= s_1 (z) v_1\wedge 
\cdots \wedge v_k \\ 
&=s_1(z) a(z) \mathbf e
\end{split} 
\] 
is a holomorphic section of ${}^\diamond\!L$, 
we have $\ord _0s_1(z)\geq -l$. 
Similarly, we obtain $\ord _0s_i(z)\geq -l$ for 
every $i$. 
This implies that 
\begin{equation}\label{p-eq6.2} 
{}^\diamond\!E\subset \bigoplus _{i=1}^k \mathcal O_{\Delta} 
\cdot \frac{v_i}{z^l_i}\subset j_*E, 
\end{equation} 
where $j\colon \Delta^*\hookrightarrow \Delta$. 
By definition, $\left({}^\diamond\!E\right)|_{\Delta^*}=E$ holds. 
Since we have 
\[
{}^\diamond\!E\subset \bigoplus ^k \mathcal O_\Delta (l [0])
\] 
by \eqref{p-eq6.2}, 
the stalk $\left({}^\diamond\! E\right)_0$ is a finitely 
generated $\mathcal O_{\Delta, 0}$-module. 
Then, by shrinking $\Delta$ around $0$ if necessary,  
we obtain a morphism 
\[
\mathcal{O}_{\Delta}^{\oplus n} \rightarrow {}^\diamond\!E
\]
for some positive integer $n$,  
which induces a surjection on the stalk at $0$. 
The direct sum of this morphism with the morphism~\eqref{p-eq6.1} 
is surjective over the entire disk $\Delta$. 
Hence, ${}^\diamond\!E$ is locally finitely generated over $\Delta$.  
This implies that ${}^\diamond\!E$ is a coherent $\mathcal{O}_\Delta$-module. 
\end{step}
We finish the proof of Theorem \ref{p-thm1.3}. 
\end{proof}

\section{Basic properties}\label{p-sec7}

In this section, we introduce a new invariant $\gamma ({}_\alpha E)$ and 
discuss basic properties of ${}_\alpha E$ and $\gamma ({}_\alpha E)$. 

\begin{say}[Setting]\label{p-say7.1}

Let $(E, h)$ be an acceptable vector bundle over $\Delta^*$ with 
$\rank E = r$. 
Let $\bm{v} := \{v_1, \ldots, v_r\}$ be a local 
frame of ${}_\alpha E$ defined on some open 
neighborhood of $0$. We consider the $r \times r$ matrix 
\[
H(h, \bm{v}) := \left(h(v_i, v_j)\right)_{i,j}.
\] 
More precisely, $H(h, \bm{v})$ is 
an $r \times r$ Hermitian matrix-valued 
function on $\Delta^*$. Hence, we sometimes 
write $H(h, \bm{v})(z)$ to denote the value of $H(h, \bm{v})$ at $z \in \Delta^*$. 
If there is no risk of confusion, we may simplify the 
notation by writing $H(\bm{v})$ and $H(\bm{v})(z)$ 
in place of $H(h, \bm{v})$ and $H(h, \bm{v})(z)$, respectively.

We have 
\[
|h(v_i, v_j)|\leq |v_i|_h|v_j|_h=O\left(\frac{1}{|z|^{2\alpha +\varepsilon}}\right)
\] 
for any $\varepsilon >0$. 
This means that for any $\varepsilon >0$ there exists some $C_\varepsilon>0$ 
such that 
\[
\det H(\bm v)=|\det H(\bm v)|\leq C_{\varepsilon} 
{|z|{^{-2\alpha r-\varepsilon}}}. 
\]
Thus we obtain 
\[
\log \det H(\bm v)\leq \log C_{\varepsilon} 
-(2\alpha r+\varepsilon)\log |z|. 
\] 
Hence we have 
\[
\frac{\log \det H(\bm v)}{\log |z|} \geq 
\frac{\log C_\varepsilon}{\log |z|} -
(2\alpha r+\varepsilon). 
\]
Therefore, 
\[ 
\liminf_{z\to 0} \frac{\log\det H(\bm v)}{\log |z|}
\] 
satisfies  
\[
\liminf_{z\to 0} \frac{\log\det H(\bm v)}{\log |z|}\geq -2\alpha r-\varepsilon. 
\] 
Since $\varepsilon >0$ is arbitrary, we obtain 
\begin{equation}\label{p-eq7.1}
\liminf_{z\to 0} \frac{\log\det H(\bm v)}{\log |z|}\geq -2\alpha r>-\infty. 
\end{equation}
\end{say}

\begin{lem}\label{p-lem7.2}
In the above definition, 
\[
\liminf_{z\to 0} \frac{\log\det H(\bm v)}{\log |z|}
\] 
is independent of the choice of the frame $\bm v=\{v_1, \ldots, v_r\}$ 
of ${}_\alpha E$. 
\end{lem}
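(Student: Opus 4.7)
The plan is to reduce the statement to a routine change-of-basis computation, exploiting the fact that, by Theorem~\ref{p-thm1.3}, the sheaf ${}_\alpha E$ is a holomorphic vector bundle on $\Delta$, so any two local frames around $0$ are related by a matrix of holomorphic functions that is invertible at $0$.

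First I would take a second local frame $\bm w = \{w_1, \ldots, w_r\}$ of ${}_\alpha E$ on a common open neighborhood $U$ of $0$, and write $w_j = \sum_{i=1}^r v_i A_{ij}$ for a matrix $A = (A_{ij})$ of holomorphic functions on $U$. Because both $\bm v$ and $\bm w$ are bases at every point (after shrinking $U$), the matrix $A$ is invertible on $U$; in particular $\det A(0) \neq 0$.

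Second, a direct expansion using the sesquilinearity of $h$ yields
\[
H(h, \bm w) \;=\; A^{\top}\, H(h, \bm v)\, \overline{A},
\qquad
\det H(h, \bm w) \;=\; |\det A|^{2}\, \det H(h, \bm v),
\]
and therefore
\[
\log \det H(h, \bm w) \;=\; \log \det H(h, \bm v) \;+\; 2\log |\det A|.
\]

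Third, since $\det A$ is holomorphic and non-vanishing at $0$, the function $\log|\det A|$ is bounded on a sufficiently small neighborhood of $0$, so
\[
\lim_{z\to 0} \frac{2\log|\det A(z)|}{\log|z|} \;=\; 0.
\]
Combined with the finiteness of the liminf guaranteed by \eqref{p-eq7.1}, this immediately gives
\[
\liminf_{z\to 0} \frac{\log\det H(h,\bm w)}{\log|z|} \;=\; \liminf_{z\to 0} \frac{\log\det H(h,\bm v)}{\log|z|},
\]
which is the desired invariance. There is no serious obstacle here: the only substantive input is that local frames of the holomorphic vector bundle ${}_\alpha E$ differ by a biholomorphic transition matrix (a consequence of Theorem~\ref{p-thm1.3}), and an additive perturbation whose ratio with $\log|z|$ tends to $0$ does not affect the liminf.
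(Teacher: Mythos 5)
Your proof is correct and takes essentially the same route as the paper: write the change-of-basis matrix $A$, use $\det H(h,\bm w) = |\det A|^2 \det H(h,\bm v)$, and observe that $\det A(0)\neq 0$ makes the extra $2\log|\det A|/\log|z|$ term vanish in the limit. The only cosmetic difference is that you explicitly note the liminf is finite (which is not strictly needed, since an additive perturbation tending to $0$ preserves any liminf), while the paper does not bother.
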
 
\begin{proof}[Proof of Lemma \ref{p-lem7.2}]
Let $\bm w:=\{w_1, \ldots, w_r\}$ be another frame of ${}_\alpha E$ around 
$0$. Then we can write 
\[
(w_1, \ldots, w_r)=(v_1, \ldots, v_r)A(z)
\] 
around $0$, where $A(z)$ is an invertible $r\times r$ matrix. 
Thus we have 
\[
\det H(\bm w)=|\det A(z)|^2 \det H(\bm v). 
\] 
Hence 
\[
\frac{\log \det H(\bm w)}{\log |z|} 
=\frac{2|\det A(z)|}{\log |z|} +
\frac{\log \det H(\bm v)}{\log |z|}. 
\] 
Since $\det A(0)\ne 0$, we obtain 
\[
\liminf_{z\to 0} \frac{\log\det H(\bm w)}{\log |z|}=
\liminf_{z\to 0} \frac{\log\det H(\bm v)}{\log |z|}. 
\] 
This is what we wanted. We finish the proof of Lemma \ref{p-lem7.2}. 
\end{proof}

We can prove the following lemma. 

\begin{lem}\label{p-lem7.3}
\[
-\infty<\liminf _{z\to 0} \frac{\log \det H(\bm v)}{\log |z|}<\infty. 
\]
\end{lem}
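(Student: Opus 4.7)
The lower bound $-\infty<\liminf_{z\to 0}\log\det H(\bm v)/\log|z|$ is already recorded in \eqref{p-eq7.1}, so only the upper bound remains. The plan is to reduce to the rank-one case by passing to the determinant and then to invoke Theorem~\ref{p-thm4.4}.

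First, I would set $L:=\det E$ with the determinant metric $\det h$; by Lemma~\ref{p-lem2.2}, $(L,\det h)$ is an acceptable line bundle, so Theorem~\ref{p-thm4.4} applies. Next, I fix a trivialization of $L$ near $0$ in which $\det h=|\cdot|^{2}e^{-2\varphi}$ and the limit $\gamma:=\lim_{z\to 0}\varphi(z)/\log|z|\in\mathbb{R}$ exists. Under this trivialization, $v_1\wedge\cdots\wedge v_r$ corresponds to a holomorphic function $f$ on some punctured neighborhood of $0$, with $f\not\equiv 0$ since $\bm v$ is a frame of ${}_\alpha E$ at $0$; the standard identity $|v_1\wedge\cdots\wedge v_r|^{2}_{\det h}=\det H(\bm v)$ gives
\[
\det H(\bm v)(z)\,=\,|f(z)|^{2}e^{-2\varphi(z)}.
\]
Hadamard's inequality applied to the positive semidefinite Gram matrix $H(\bm v)$, combined with the acceptability estimate $|v_i|_h=O(|z|^{-\alpha-\varepsilon})$ for every $\varepsilon>0$, then yields $\det H(\bm v)(z)\leq\prod_{i=1}^{r}|v_i(z)|^{2}_{h}=O(|z|^{-2r\alpha-\varepsilon})$ for every $\varepsilon>0$. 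Hence $v_1\wedge\cdots\wedge v_r\in({}_{r\alpha}L)_0$, and by Theorem~\ref{p-thm4.4}(ii) I conclude that $f$ is meromorphic at $0$.

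Finally, I would write $f(z)=z^{m}\tilde f(z)$ with $m=\ord_0 f\in\mathbb{Z}$ and $\tilde f$ holomorphic near $0$, $\tilde f(0)\neq 0$. Then
\[
\frac{\log\det H(\bm v)(z)}{\log|z|}\,=\,2m+\frac{2\log|\tilde f(z)|}{\log|z|}-\frac{2\varphi(z)}{\log|z|},
\]
and as $z\to 0$ the middle term tends to $0$ (since $\log|\tilde f|$ is bounded near $0$) while the last term tends to $2\gamma$ by Theorem~\ref{p-thm4.4}(i). Therefore the whole expression admits the \emph{finite limit} $2(m-\gamma)\in\mathbb{R}$, which is strictly stronger than the finiteness of the $\liminf$ asserted by the lemma. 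I do not foresee a real obstacle in this argument: once the determinant trick and Theorem~\ref{p-thm4.4} are in hand, the only mildly delicate point is confirming that $f$ is meromorphic rather than essentially singular at $0$, and that is precisely the content of Theorem~\ref{p-thm4.4}(ii) together with the Hadamard estimate.
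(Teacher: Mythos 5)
Your proof is correct and follows essentially the same route as the paper: reduce to the acceptable determinant line bundle $\det E$ (Lemma~\ref{p-lem2.2}) and apply the Section~\ref{p-sec4} description of prolongations of acceptable line bundles. The only difference is cosmetic: the paper invokes Corollary~\ref{p-cor4.5} together with Theorem~\ref{p-thm4.4}(iv) to obtain the one-sided estimate $|v_1\wedge\cdots\wedge v_r|\geq C/|z|^d$ directly, whereas you apply Theorem~\ref{p-thm4.4}(ii) to establish meromorphicity and then compute the exact limit $2(\ord_0 f-\gamma)$, which is the stronger statement that the paper records a little later as Corollary~\ref{p-cor7.6}.
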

\begin{proof}[Proof of Lemma \ref{p-lem7.3}]
By \eqref{p-eq7.1}, we have already checked the left inequality. 
Hence it is sufficient to 
prove the right inequality. 
Since $(\det E, \det h)$ is an acceptable line bundle on $\Delta^*$ by 
Lemma \ref{p-lem2.2}, 
we can freely use Theorem \ref{p-thm4.4}. 

By the above observation, we have 
\[
v_1\wedge \cdots \wedge v_r\in {}_{\alpha r} \det E.  
\] 
By Corollary \ref{p-cor4.5}, 
there exists some real number $\beta<\alpha r$ 
such that 
$v_1\wedge \cdots \wedge v_r\not\in {}_\beta \det E$. 
Hence, by Theorem \ref{p-thm4.4} (iv), we can take $d$ and $C>0$ such that 
\[
|\det H(\bm v)|^{1/2} =|v_1\wedge \cdots \wedge v_r| \geq 
\frac{C}{|z|^d} 
\] 
holds around $0$. 
This implies 
\[
\frac{1}{2} \log \det H(\bm v) \geq  -d\cdot  \log |z| +\log C
\] 
Hence 
\[
\liminf _{z\to 0} \frac{\log \det H(\bm v)}{\log |z|} \leq -2 d<\infty. 
\] 
This is what we wanted. We finish the proof of Lemma \ref{p-lem7.3}. 
\end{proof}

Having completed the necessary preparations, we now define $\gamma({}_\alpha E)$.

\begin{defn}\label{p-def7.4}
We put 
\[
\gamma ({}_\alpha E):=-\frac{1}{2} \liminf_{z\to 0} 
\frac{\log\det H(h, \bm v)}{\log |z|} 
\in \mathbb R. 
\]
\end{defn}

The following theorem is the main theorem of this section. 

\begin{thm}[Determinant bundles, see Theorem \ref{p-thm1.10}]\label{p-thm7.5}
Let $(E, h)$ be an acceptable vector bundle on $\Delta^*$. 
Let $\alpha$ be any real number. 
Then 
\[
\det ({}_\alpha E)={}_{\gamma ({}_\alpha E)} \det E
\] 
holds. 
\end{thm}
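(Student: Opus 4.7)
Since $(\det E, \det h)$ is acceptable by Lemma \ref{p-lem2.2}, I would begin by invoking Theorem \ref{p-thm4.4} to fix a trivialization $(\det E, \det h) \simeq (\mathcal O_{\Delta^*}, |\cdot|^2 e^{-2\varphi})$ such that the limit $\gamma := \lim_{z \to 0} \varphi(z)/\log|z|$ exists. Under this choice, the Claim inside the proof of Proposition \ref{p-prop4.1} gives the explicit description ${}_\beta(\det E) = \mathcal O_\Delta \cdot z^{-\lfloor \beta - \gamma \rfloor}$ for every $\beta \in \mathbb R$. Next I would pick a local frame $\bm v = \{v_1, \ldots, v_r\}$ of ${}_\alpha E$ near the origin and set $s := v_1 \wedge \cdots \wedge v_r$, so that $\det({}_\alpha E)$ is locally generated by $s$ as an $\mathcal O_\Delta$-module, and write $s = f(z)\,\mathbf e$ under the trivialization.

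The first key observation is that $f$ is automatically meromorphic at $0$. Indeed, from $|v_i|_h = O(|z|^{-\alpha - \varepsilon})$ and the pointwise inequality $|v_1 \wedge \cdots \wedge v_r|_{\det h} \leq \prod_i |v_i|_h$, one obtains $s \in ({}_{r\alpha}\det E)_0$, and Theorem \ref{p-thm4.4}(ii) applied to the acceptable line bundle $(\det E, \det h)$ then forces $f$ to be meromorphic at the origin. Set $m := \ord_0 f \in \mathbb Z$.

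The second step is the computation of $\gamma({}_\alpha E)$ from this setup. Since $\det H(h, \bm v) = |s|^2_{\det h} = |f|^2 e^{-2\varphi}$, one has
\[
\frac{\log \det H(h, \bm v)}{\log|z|} \;=\; 2\,\frac{\log|f|}{\log|z|} \;-\; 2\,\frac{\varphi}{\log|z|},
\]
and as $z \to 0$ the right-hand side converges to $2m - 2\gamma$ (the first term because $f$ is meromorphic of order $m$, the second by Theorem \ref{p-thm4.4}(i)). Hence the liminf in Definition \ref{p-def7.4} is in fact a genuine limit, and $\gamma({}_\alpha E) = \gamma - m$; in particular $\gamma({}_\alpha E) - \gamma = -m \in \mathbb Z$, so $\lfloor \gamma({}_\alpha E) - \gamma \rfloor = -m$.

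Combining the two steps, I would conclude
\[
\det({}_\alpha E)_0 \;=\; \mathcal O_{\Delta, 0} \cdot f(z)\,\mathbf e \;=\; \mathcal O_{\Delta,0} \cdot z^{m} \mathbf e \;=\; \mathcal O_{\Delta,0} \cdot z^{-\lfloor \gamma({}_\alpha E) - \gamma \rfloor} \mathbf e \;=\; \bigl({}_{\gamma({}_\alpha E)} \det E\bigr)_0,
\]
which is the desired equality. I do not expect a genuinely delicate analytic obstacle at this stage: all the hard analytic input is already packaged into Theorem \ref{p-thm4.4}, and the point of the argument is simply that $\gamma({}_\alpha E) - \gamma$ is forced to be an integer precisely because $\ord_0 f$ is, so the floor in the formula does nothing. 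The truly delicate part of the theory, namely the identification of $\gamma({}_\alpha E)$ with the sum of parabolic weights, lies elsewhere (Theorem \ref{p-thm1.9}) and is not needed for the present statement.
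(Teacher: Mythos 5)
Your proof is correct, and it takes a genuinely more economical route than the paper's. The paper trivializes $\det E$ directly by the section $s=v_1\wedge\cdots\wedge v_r$, so that the metric becomes $|\cdot|^2e^{-2\varphi_\alpha}$ with $\varphi_\alpha=-\tfrac12\log\det H(\bm v)$, and then re-runs the entire subharmonic-decomposition machinery of Proposition \ref{p-prop4.1} (Lemmas \ref{p-lem3.1}--\ref{p-lem3.3}, the $\psi_1,\psi_2,c_1,c_2$ bookkeeping, two applications of Lemma \ref{p-lem3.3} to kill the harmonic remainders) in order to show that $\lim_{z\to 0}\varphi_\alpha/\log|z|=\gamma({}_\alpha E)$, which then yields the explicit formula ${}_\beta(\det E)=\mathcal O_\Delta z^{-\lfloor\beta-\gamma({}_\alpha E)\rfloor}v_1\wedge\cdots\wedge v_r$ and hence the result. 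You instead fix the already-normalized trivialization $\mathbf e$ supplied by Theorem \ref{p-thm4.4}, write $s=f\mathbf e$, and reduce everything to the pair of facts that $f$ is meromorphic (from Theorem \ref{p-thm4.4}(ii) via the Hadamard bound $|s|_{\det h}\le\prod_i|v_i|_h$) and that $\gamma({}_\alpha E)-\gamma=-\ord_0 f\in\mathbb Z$; the floor in ${}_\beta(\det E)=\mathcal O_\Delta z^{-\lfloor\beta-\gamma\rfloor}\mathbf e$ then disappears. Your version cites Theorem \ref{p-thm4.4} as a black box where the paper essentially reproves it in situ; the paper's approach has the side benefit of producing the frame-specific formula \eqref{p-eq7.12} verbatim, which it then quotes in Corollary \ref{p-cor7.7}, but your computation delivers the same byproducts (Corollary \ref{p-cor7.6} in particular, since you show the liminf is a limit) with less repetition.
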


We give a detailed proof of Theorem \ref{p-thm7.5}, 
which is essentially the same as the proof of Proposition 
\ref{p-prop4.1}. 

\begin{proof}[Proof of Theorem \ref{p-thm7.5}]
Let $\bm v:=\{v_1, \ldots, v_r\}$ be a frame of ${}_\alpha E$ on $\Delta$. 
We put 
\[
H(h, \bm v)(z):=\left(h(v_i, v_j)(z)\right)_{i, j}. 
\] 
We note that 
\[
\det ({}_\alpha E)=\mathcal O_\Delta v_1\wedge \cdots \wedge v_r
\] 
and 
\begin{equation}\label{p-eq7.2}
\det E=\mathcal O_{\Delta^*}v_1\wedge \cdots \wedge v_r\simeq 
\mathcal O_{\Delta^*}. 
\end{equation} 
We consider $L:=\det E$. Let $h_L$ be the induced metric on $L$. 
Note that $(L, h_L)$ is an acceptable line bundle on $\Delta^*$ 
by Lemma \ref{p-lem2.2} since $(E, h)$ is acceptable. 
By using the trivialization \eqref{p-eq7.2}, 
we argue as in the proof of Proposition \ref{p-prop4.1}. 
In this setting, 
\[
h_L=|\cdot |^2 e^{-2\varphi_\alpha}
\] 
with 
\begin{equation}\label{p-eq7.3}
\begin{split}
e^{-2\varphi_\alpha} &=h_L(v_1\wedge \cdots \wedge v_r, v_1\wedge \cdots \wedge v_r) 
\\&=\det H(\bm v). 
\end{split}
\end{equation} 
Thus 
\[
\varphi_\alpha=-\frac{1}{2} \log \det H(\bm v). 
\] 
Therefore, 
\begin{equation}\label{p-eq7.4}
\liminf_{z\to 0} \frac{-\varphi_\alpha(z)}{\log |z|} 
=\frac{1}{2} \liminf_{z\to 0} \frac{\log \det H(\bm v)}{\log |z|} 
=-\gamma ({}_\alpha E). 
\end{equation} 
Note that $(L, h_L)$ is an acceptable line bundle on $\Delta^*$. 
As in the proof of Proposition \ref{p-prop4.1}, we can write 
\begin{equation}\label{p-eq7.5}
2\varphi_\alpha +\chi (N)=2\psi_1+c_1\log |z|^2+2\re g_1(z)
\end{equation}
and 
\begin{equation}\label{p-eq7.6}
-2\varphi_\alpha +\chi (N)=2\psi_2+c_2\log |z|^2+2\re g_2(z), 
\end{equation} 
where $\psi_1$ and $\psi_2$ are subharmonic functions on $\Delta$ and 
$g_1$ and $g_2$ are holomorphic functions on $\Delta^*$. 
By \eqref{p-eq7.5}, we have 
\[
-\re g_1(z)=\psi_1+c_1\log |z|-\varphi_\alpha -\chi (N). 
\]
Hence we have 
\[
\frac{-\re g_1(z)}{\log |z|}=\frac{\psi_1(z)}{\log |z|}
+c_1+\frac{-\varphi_\alpha(z)}{\log |z|}
+\frac{-\chi(N)}{\log|z|}. 
\] 
Therefore, we obtain 
\[
\begin{split}
\liminf_{z\to 0}\frac{-\re g_1(z)}{\log |z|}&\geq 
\liminf_{z\to 0}\frac{\psi_1(z)}{\log |z|}
+c_1+\liminf_{z\to 0}\frac{-\varphi_\alpha(z)}{\log|z|}
+\liminf_{z\to 0}\frac{-\chi(N)}{\log|z|}
\\ &=\nu(\psi_1, 0)+c_1-\gamma({}_\alpha E) \\ 
&>-\infty. 
\end{split}
\] 
Here we used Lemma \ref{p-lem7.3}. 
Thus there exists some $C>0$ such that 
\[
\frac{-\re g_1(z)}{\log |z|}\geq -C
\] 
holds over some open neighborhood of $0$. 
This implies that 
\[
\re\left(-g_1(z)\right)\leq C\left(-\log |z|\right)
\] 
holds around $0$. 
By Lemma \ref{p-lem3.3}, we see that 
$g_1$ is holomorphic on $\Delta$. 
Hence we have 
\[\nu_1+c_1-\gamma ({}_\alpha E)\leq 0=\liminf_{z\to 0} \frac{-\re g_1(z)}{\log |z|}, 
\] 
where $\nu_1:=\nu(\psi_1, 0)$. 
Note that $e^{g_1(z)}$ is a nowhere vanishing holomorphic 
function on $\Delta$. 
By replacing $v_1$ with $e^{g_1(z)}v_1$,  
we may assume that
\begin{equation}\label{p-eq7.7}
2\varphi_\alpha + \chi(N) = 2\psi_1 + c_1 \log |z|^2
\end{equation}
and
\begin{equation}\label{p-eq7.8}
-2\varphi_\alpha + \chi(N) = 2\psi_2 + c_2 \log |z|^2 + 2\re g_2(z)
\end{equation}
hold, after replacing $g_2$ accordingly. 
By \eqref{p-eq7.7} and \eqref{p-eq7.8}, we have 
\[
2\chi (N)=2\psi_1+2\psi_2+(c_1+c_2)\log |z|^2+2\re g_2(z) 
\] 
holds. 
Note that $\chi(N)$, $\psi_1$, $\psi_2$, and $\log |z|^2$ are subharmonic 
functions on $\Delta$. 
We have 
\[
\frac{-\re g_2(z)}{\log |z|}=\frac{\psi_1(z)}{\log |z|}
+\frac{\psi_2(z)}{\log |z|}+c_1+c_2
+\frac{-\chi(N)}{\log|z|}. 
\] 
Therefore, we obtain 
\[
\begin{split}
\liminf_{z\to 0}\frac{-\re g_2(z)}{\log |z|}&\geq 
\liminf_{z\to 0}\frac{\psi_1(z)}{\log |z|}
+\liminf_{z\to 0}\frac{\psi_2(z)}{\log |z|}
+c_1+c_2
+\liminf_{z\to 0}\frac{-\chi(N)}{\log|z|}
\\ &=\nu(\psi_1, 0)+\nu(\psi_2, 0)+c_1+c_2. 
\end{split}
\] 
Thus there exists some $C>0$ such that 
\[
\frac{-\re g_2(z)}{\log |z|}\geq -C
\] 
holds over some open neighborhood of $0$. 
This implies that 
\[
\re\left(-g_2(z)\right)\leq C\left(-\log |z|\right)
\] 
holds around $0$. 
By Lemma \ref{p-lem3.3}, we see that 
$g_2$ is holomorphic on $\Delta$. 
In particular, $\re g_2(z)$ is harmonic on $\Delta$. 
By replacing $\psi_2$ with $\psi_2 +\re g_2(z)$, we can finally assume that 
\begin{equation}\label{p-eq7.9}
2\varphi_\alpha +\chi (N)=2\psi_1+c_1\log |z|^2
\end{equation}
and 
\begin{equation}\label{p-eq7.10}
-2\varphi_\alpha +\chi (N)=2\psi_2+c_2\log |z|^2
\end{equation} 
hold. 
By \eqref{p-eq7.10}, we obtain 
\[
-\gamma ({}_\alpha E)=\nu_2+c_2,  
\] 
where $\nu_2:=\nu(\psi_2, 0)$. 
By \eqref{p-eq7.9} and \eqref{p-eq7.10}, we have 
\[
\chi (N)=\psi_1+\psi_2+(c_1+c_2)\log |z|. 
\] 
Therefore, by Lemma \ref{p-lem3.1}, we obtain 
\[
0=\nu_1+\nu_2+c_1+c_2. 
\] 
This means that 
\[\nu _1+c_1=-(\nu_2+c_2)=\gamma ({}_\alpha E). 
\]
By \eqref{p-eq7.9}, 
we have 
\begin{equation}\label{p-eq7.11}
\liminf_{z\to 0} \frac{\varphi_\alpha(z)}{\log |z|}=\nu_1+c_1 =
\gamma ({}_\alpha E).   
\end{equation} 
Thus, by \eqref{p-eq7.4} and \eqref{p-eq7.11}, we get 
\[
\lim_{z\to 0} \frac{\varphi_\alpha(z)}{\log |z|}=
\gamma ({}_\alpha E), \quad \lim _{z\to 0} \frac{\psi_1(z)}{\log |z|}=\nu_1, 
\quad \text{and}\quad 
\lim _{z\to 0} \frac{\psi_2(z)}{\log |z|}=\nu_2.  
\] 
As in the proof of Proposition \ref{p-prop4.1}, we obtain 
\begin{equation}\label{p-eq7.12}
{}_\beta L={}_\beta (\det E)=\mathcal O_{\Delta} \cdot 
z^{-\lfloor \beta -\gamma ({}_\alpha E)\rfloor}v_1\wedge \cdots 
\wedge v_r. 
\end{equation} 
In particular, 
\[
\det ({}_\alpha E)={}_{\gamma ({}_\alpha E)}\det E. 
\] 
We finish the proof of Theorem \ref{p-thm7.5}. 
\end{proof}

As a byproduct of the proof of Theorem \ref{p-thm7.5}, 
we have the following useful result, that is, 
we can replace $\liminf$ with $\lim$ in the definition of 
$\gamma ({}_\alpha E)$. 

\begin{cor}\label{p-cor7.6}
In the same setting as in \ref{p-say7.1}, we have 
\[
\gamma({}_\alpha E)=-\frac{1}{2}\lim _{z\to 0} \frac{\log \det H(\bm v)}{\log {|z|}}. 
\]
\end{cor}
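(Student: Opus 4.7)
The plan is to read the corollary off directly from the proof of Theorem \ref{p-thm7.5} itself. Recall that in that proof we set $\varphi_\alpha := -\tfrac{1}{2}\log \det H(\bm v)$, so the assertion is equivalent to showing that the limit $\lim_{z\to 0} \varphi_\alpha(z)/\log|z|$ exists and equals $\gamma({}_\alpha E)$; that is, the $\limsup$ agrees with the $\liminf$ used in Definition \ref{p-def7.4}.

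To obtain this, I would invoke both normalized identities \eqref{p-eq7.9} and \eqref{p-eq7.10} produced in the proof of Theorem \ref{p-thm7.5}. The first, combined with Lemma \ref{p-lem3.1} applied to the subharmonic function $\psi_1$ on $\Delta$, gives
\[
\liminf_{z\to 0}\frac{\varphi_\alpha(z)}{\log|z|} = \nu_1 + c_1 = \gamma({}_\alpha E).
\]
Applying the analogous Lelong-number computation to \eqref{p-eq7.10} (which differs from \eqref{p-eq7.9} only by the sign of $\varphi_\alpha$) yields
\[
\liminf_{z\to 0}\frac{-\varphi_\alpha(z)}{\log|z|} = \nu_2 + c_2 = -\gamma({}_\alpha E).
\]
Using the elementary identity $\liminf(-f) = -\limsup f$, this rewrites as $\limsup_{z\to 0}\varphi_\alpha(z)/\log|z| = \gamma({}_\alpha E)$, and so the $\liminf$ and $\limsup$ coincide, forcing the limit to exist.

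Substituting back $\varphi_\alpha = -\tfrac{1}{2}\log\det H(\bm v)$ then gives exactly
\[
\gamma({}_\alpha E) = -\frac{1}{2}\lim_{z\to 0}\frac{\log\det H(\bm v)}{\log|z|},
\]
as desired. There is essentially no new technical obstacle: the whole content is already embedded in the proof of Theorem \ref{p-thm7.5}, and the corollary simply exploits the symmetry between treating $(\det E, \det h)$ via \eqref{p-eq7.9} and via \eqref{p-eq7.10}, which together pin down both the $\liminf$ and the $\limsup$ of $\varphi_\alpha/\log|z|$ to the same value $\gamma({}_\alpha E)$. If anything, the only point requiring care is simply noting that Lemma \ref{p-lem3.1} applies equally to $\psi_2$ as to $\psi_1$ after the normalization carried out in the earlier proof.
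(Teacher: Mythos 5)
Your proposal is correct and is essentially the same as the paper's own proof: Corollary \ref{p-cor7.6} is obtained in the paper by pointing to the identity $\gamma({}_\alpha E)=\lim_{z\to 0}\varphi_\alpha(z)/\log|z|$ already established inside the proof of Theorem \ref{p-thm7.5}, and you are spelling out that identity via \eqref{p-eq7.9}, \eqref{p-eq7.10}, and Lemma \ref{p-lem3.1}. The only small imprecision is attributional: the equalities $\nu_1+c_1=\gamma({}_\alpha E)$ and $\nu_2+c_2=-\gamma({}_\alpha E)$ do not follow from \eqref{p-eq7.9} or \eqref{p-eq7.10} plus Lemma \ref{p-lem3.1} in isolation --- one also needs the Lelong-number additivity $\nu_1+\nu_2+c_1+c_2=0$ (from the sum $\chi(N)=\psi_1+\psi_2+(c_1+c_2)\log|z|$) together with the identification $\liminf(-\varphi_\alpha/\log|z|)=-\gamma({}_\alpha E)$ coming from \eqref{p-eq7.4}; but since you are explicitly invoking the proof of Theorem \ref{p-thm7.5}, where all of this is carried out, the argument goes through.
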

\begin{proof}[Proof of Corollary \ref{p-cor7.6}]
By the proof of Lemma \ref{p-lem7.2}, 
we may assume that $\bm v$ is a frame of 
${}_\alpha E$ on $\Delta$. 
In the proof of Theorem \ref{p-thm7.5}, 
we have 
\[
\gamma ({}_\alpha E)=\lim_{z\to 0} \frac{\varphi_\alpha(z)}{\log |z|}=-\frac{1}{2}
\lim _{z\to 0} \frac{\log \det H(\bm v)}{\log |z|}. 
\] 
This is what we wanted. 
\end{proof}

For later use, we explicitly state the following result,  
which is an immediate consequence of Theorem~\ref{p-thm7.5} and its proof.

\begin{cor}\label{p-cor7.7}
Let $(E, h)$ be an acceptable vector bundle over 
$\Delta^*$ with $\rank E = r$.  
Let $\bm v:=\{v_1, \ldots, v_r\}$ be a local frame 
of ${}_\alpha E$, defined over an open neighborhood of $0$, such that  
$v_i \in {}_{b_i} E$ for each $i$.  
Then we have 
\begin{equation}\label{p-eq7.13}
\gamma({}_\alpha E) \leq \sum_{i=1}^r b_i,
\end{equation} 
and 
\begin{equation}\label{p-eq7.14}
\gamma\left({}_{\gamma ({}_\alpha E)} \det E\right) = \gamma ({}_\alpha E).
\end{equation}
\end{cor}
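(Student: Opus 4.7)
The plan is to prove the two statements separately, as each is essentially an unwinding of the definition of $\gamma$ once the right tool is applied.

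For the inequality $\gamma({}_\alpha E) \leq \sum_{i=1}^r b_i$, the key observation is that the Gram matrix $H(h, \bm v) = (h(v_i, v_j))$ is positive definite Hermitian, so Hadamard's inequality yields
\[
\det H(h, \bm v) \;\leq\; \prod_{i=1}^r h(v_i, v_i) \;=\; \prod_{i=1}^r |v_i|_h^2.
\]
Since $v_i \in {}_{b_i} E$, for every $\varepsilon > 0$ there exists $C_\varepsilon > 0$ with $|v_i|_h^2 \leq C_\varepsilon |z|^{-2b_i - \varepsilon}$ on a punctured neighborhood of $0$. Multiplying over $i$ gives $\det H(h, \bm v) \leq C'_\varepsilon |z|^{-2\sum b_i - r\varepsilon}$. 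Taking logarithms and dividing by $\log|z|$ (which is negative near $0$, so the inequality flips), then taking $\liminf$ as $z \to 0$, yields
\[
\liminf_{z \to 0} \frac{\log \det H(h, \bm v)}{\log |z|} \;\geq\; -2\sum_{i=1}^r b_i - r\varepsilon.
\]
Letting $\varepsilon \to 0$ and recalling Definition \ref{p-def7.4}, we get $\gamma({}_\alpha E) \leq \sum b_i$.

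For the equality $\gamma({}_{\gamma({}_\alpha E)}\det E) = \gamma({}_\alpha E)$, I would first invoke Theorem \ref{p-thm7.5} to rewrite the left-hand side as $\gamma(\det({}_\alpha E))$. Since $\bm v$ is a local frame of ${}_\alpha E$ near $0$ (of rank $r$), the wedge $w := v_1 \wedge \cdots \wedge v_r$ is a local frame of the line bundle $\det({}_\alpha E)$ near $0$. The induced Gram ``matrix'' for this one-element frame is the scalar
\[
H(\det h,\, w) \;=\; (\det h)(v_1\wedge\cdots\wedge v_r,\; v_1\wedge\cdots\wedge v_r) \;=\; \det H(h, \bm v).
\]
Applying Definition \ref{p-def7.4} to $\det({}_\alpha E)$ with the frame $w$ therefore gives
\[
\gamma(\det({}_\alpha E)) \;=\; -\tfrac{1}{2}\liminf_{z\to 0} \frac{\log \det H(h, \bm v)}{\log|z|} \;=\; \gamma({}_\alpha E),
\]
which is exactly the claim. (Corollary \ref{p-cor7.6} also ensures the $\liminf$ is a genuine limit here, but this is not needed for the identity itself.)

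There is no genuine obstacle: both assertions are direct consequences once the correct algebraic fact is applied. If anything, the only point requiring a moment's care is the flip of inequalities when dividing by $\log|z|<0$ in the first part, and the identification $\det({}_\alpha E) = {}_{\gamma({}_\alpha E)}\det E$ from Theorem \ref{p-thm7.5} in the second part, which lets us translate the invariant of the abstract subsheaf into a concrete computation with the wedge frame.
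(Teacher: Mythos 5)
Your proof is correct. For the inequality \eqref{p-eq7.13}, you take a genuinely different and more elementary route than the paper: Hadamard's inequality applied to the positive definite Gram matrix,
\[
\det H(h,\bm v)\ \le\ \prod_{i=1}^r h(v_i,v_i)=\prod_{i=1}^r|v_i|_h^2,
\]
combined with the growth bounds $|v_i|_h=O(|z|^{-b_i-\varepsilon})$ from $v_i\in{}_{b_i}E$, directly yields
\[
\liminf_{z\to 0}\frac{\log\det H(h,\bm v)}{\log|z|}\ \ge\ -2\sum_{i=1}^r b_i,
\]
and hence $\gamma({}_\alpha E)\le\sum b_i$ by Definition~\ref{p-def7.4}. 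The paper instead observes $v_1\wedge\cdots\wedge v_r\in{}_{\sum b_i}\det E$ and deduces $\lfloor\sum b_i-\gamma({}_\alpha E)\rfloor\ge 0$ from the explicit lattice description ${}_\beta\det E=\mathcal O_\Delta\cdot z^{-\lfloor\beta-\gamma({}_\alpha E)\rfloor}\,v_1\wedge\cdots\wedge v_r$ established in \eqref{p-eq7.12} during the proof of Theorem~\ref{p-thm7.5}. Your Hadamard argument is self-contained relative to that machinery: it needs only the definition of $\gamma$ and the Gram-determinant inequality, not the classification of ${}_\beta\det E$. Both give the same inclusion ${}_{\gamma({}_\alpha E)}\det E\subset{}_{\sum b_i}\det E$ afterward. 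For the equality \eqref{p-eq7.14}, your approach coincides with the paper's: invoke Theorem~\ref{p-thm7.5} to identify ${}_{\gamma({}_\alpha E)}\det E=\det({}_\alpha E)$, take the frame $v_1\wedge\cdots\wedge v_r$, and use $(\det h)(v_1\wedge\cdots\wedge v_r,v_1\wedge\cdots\wedge v_r)=\det H(h,\bm v)$ to read off $\gamma$ of the line bundle from the same $\liminf$; the paper expresses this in terms of the potential $\varphi_\alpha$ satisfying $e^{-2\varphi_\alpha}=\det H(\bm v)$ from \eqref{p-eq7.3}, which is the identical computation.
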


\begin{proof}[Proof of Corollary~\ref{p-cor7.7}]
Since $v_i \in {}_{b_i} E$ for each $i$,  
we have 
\[
v_1 \wedge \cdots \wedge v_r \in {}_{\sum_{i=1}^r b_i} \det E.
\]  
On the other hand, from \eqref{p-eq7.12} in the proof of Theorem~\ref{p-thm7.5}, we obtain
\[
\begin{split}
{}_{\gamma ({}_\alpha E)} \det E &= \mathcal O_\Delta \cdot 
v_1 \wedge \cdots \wedge v_r, \quad \text{and} \\
{}_{\sum_{i=1}^r b_i} \det E &= \mathcal O_\Delta 
\cdot z^{-\lfloor \sum_{i=1}^r b_i - \gamma({}_\alpha E) \rfloor} \, v_1 \wedge \cdots \wedge v_r.
\end{split}
\] 
Therefore, it follows that
\[
\sum_{i=1}^r b_i \geq \gamma ({}_\alpha E)
\quad \text{and} \quad
{}_{\gamma({}_\alpha E)} \det E \subset {}_{\sum_{i=1}^r b_i} \det E.
\]
This yields \eqref{p-eq7.13}.

We note that 
${}_{\gamma ({}_\alpha E)} \det E=\det ({}_\alpha E)$ by Theorem 
\ref{p-thm7.5}. 
From \eqref{p-eq7.3} in the proof of Theorem~\ref{p-thm7.5}, we obtain
\[
\gamma \left({}_{\gamma ({}_\alpha E)} \det E\right) 
= \liminf_{z \to 0} \frac{\varphi_\alpha (z)}{\log |z|}
= -\frac{1}{2} \liminf_{z \to 0} \frac{\log \det H(\bm v)}{\log |z|}
= \gamma ({}_\alpha E).
\]
This proves \eqref{p-eq7.14}.

This completes the proof of Corollary~\ref{p-cor7.7}.
\end{proof}

From now, we discuss some 
basic properties of $\gamma ({}_\alpha E)$ and 
${}_\alpha E$. We note that ${}_\alpha E=E$ holds on $\Delta^*$ by definition. 

\begin{lem}\label{p-lem7.8}
For $\alpha \leq \beta$, we have the following properties. 
\begin{itemize}
\item[(i)] ${}_\alpha E\subset {}_\beta E$ holds. 
\item[(ii)] $\gamma ({}_\alpha E)\leq \gamma 
({}_\beta E)$, and $\gamma ({}_\beta E)-\gamma ({}_\alpha E)\in 
\mathbb Z_{\geq 0}$. 
\item[(iii)] ${}_\alpha E={}_\beta E$ if and only if 
$\gamma({}_\alpha E)=\gamma ({}_\beta E)$. 
\item[(iv)] ${}_{\alpha +1}E={}_\alpha E\otimes \mathcal O_{\Delta}([0])$. 
\item[(v)] $\gamma ({}_{\alpha +1}E)-\gamma ({}_\alpha E)=\rank E$. 
\end{itemize}
\end{lem}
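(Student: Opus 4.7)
Parts (i) and (iv) follow directly from the definitions. For (i), if $|f|_h = O(|z|^{-(\alpha+\varepsilon)})$ for every $\varepsilon>0$ and $\alpha\leq \beta$, then trivially $|f|_h = O(|z|^{-(\beta+\varepsilon)})$. For (iv), the map $f\mapsto z^{-1}f$ is a bijection ${}_\alpha E(U\setminus\{0\})\to {}_{\alpha+1}E(U\setminus\{0\})$ because $|z^{-1}f|_h=|z|^{-1}|f|_h$; this induces the claimed isomorphism ${}_{\alpha+1}E = {}_\alpha E\otimes \mathcal O_\Delta([0])$.

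For (ii), my plan is to reduce to the explicit description of acceptable line bundles from Proposition \ref{p-prop4.1} / Theorem \ref{p-thm4.4} via the determinant theorem (Theorem \ref{p-thm7.5}). Fix a trivialization $\det E\simeq \mathcal O_{\Delta^*}$ for which the induced metric takes the form $|\cdot|^2 e^{-2\varphi}$, and set $\gamma_0:=\lim_{z\to 0}\varphi(z)/\log|z|$. By the proof of Proposition \ref{p-prop4.1}, for every $a\in\mathbb R$,
\[
{}_a\det E = \mathcal O_\Delta\cdot z^{-\lfloor a-\gamma_0\rfloor}.
\]
By Theorem \ref{p-thm7.5}, $\det({}_\alpha E)={}_{\gamma({}_\alpha E)}\det E=\mathcal O_\Delta\cdot z^{-m_\alpha}$ with $m_\alpha:=\lfloor \gamma({}_\alpha E)-\gamma_0\rfloor\in\mathbb Z$. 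Applying Corollary \ref{p-cor7.6} to the frame $z^{-m_\alpha}$ gives
\[
\gamma\bigl({}_{\gamma({}_\alpha E)}\det E\bigr) = -\tfrac{1}{2}\lim_{z\to 0}\frac{\log\bigl(|z|^{-2m_\alpha}e^{-2\varphi}\bigr)}{\log|z|} = m_\alpha+\gamma_0,
\]
while Corollary \ref{p-cor7.7} identifies the left-hand side with $\gamma({}_\alpha E)$. Hence $\gamma({}_\alpha E)=m_\alpha+\gamma_0\in\gamma_0+\mathbb Z$, and the same holds for $\beta$. By (i), $\det({}_\alpha E)\subset \det({}_\beta E)$ inside $j_*\det E$, so $m_\alpha\leq m_\beta$; therefore $\gamma({}_\beta E)-\gamma({}_\alpha E)=m_\beta-m_\alpha\in\mathbb Z_{\geq 0}$, which proves (ii).

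The implication $\Rightarrow$ in (iii) is immediate. Conversely, if $\gamma({}_\alpha E)=\gamma({}_\beta E)$, then $m_\alpha=m_\beta$ and hence $\det({}_\alpha E)=\det({}_\beta E)$ as subsheaves of $j_*\det E$. Near $0$, the inclusion ${}_\alpha E\hookrightarrow {}_\beta E$ of locally free $\mathcal O_\Delta$-modules of rank $r$ is given by an $r\times r$ matrix $A(z)$ over $\mathcal O_{\Delta,0}$, and the induced map on determinants is multiplication by $\det A(z)$. Equality of determinants forces $\det A(z)\in \mathcal O_{\Delta,0}^\times$, hence $A(z)$ is invertible and ${}_\alpha E={}_\beta E$.

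Finally, (v) follows from (iv) combined with the formula $\gamma({}_\alpha E)=m_\alpha+\gamma_0$ established above: using the standard identity $\det(V\otimes L)=\det V\otimes L^{\otimes \rank V}$ for a line bundle $L$, part (iv) yields $\det({}_{\alpha+1}E)=\det({}_\alpha E)\otimes \mathcal O_\Delta(r[0])$, so in the fixed trivialization $m_{\alpha+1}=m_\alpha+r$, and therefore $\gamma({}_{\alpha+1}E)-\gamma({}_\alpha E)=r=\rank E$. The principal technical point is the integrality assertion in (ii), which rests on the explicit formula for ${}_a\det E$ derived in the proof of Proposition \ref{p-prop4.1}; once this is available, the remaining assertions reduce to routine bookkeeping.
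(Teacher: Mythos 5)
Your proof is correct, and there is no circularity: Theorem \ref{p-thm7.5} and Corollaries \ref{p-cor7.6} and \ref{p-cor7.7} are established in the paper before Lemma \ref{p-lem7.8}, and none of them depend on it. However, your route for (ii), (iii), and (v) is substantially more indirect than the paper's. The paper argues entirely with the transition matrix: writing a frame $\bm w$ of ${}_\alpha E$ in terms of a frame $\bm v$ of ${}_\beta E$ as $\bm w = \bm v\, A(z)$, it observes that $A(z)$ is holomorphic near $0$ and invertible on $\Delta^*$, so $\det A(z) = z^m f(z)$ with $m \in \mathbb Z_{\geq 0}$ and $f(0)\neq 0$; the relation $\det H(\bm w) = |\det A|^2 \det H(\bm v)$ then immediately gives $\gamma({}_\alpha E) = \gamma({}_\beta E) - m$, from which (ii) and (iii) (since $m=0$ iff $A(0)$ is invertible) and (v) (using the frame $\bm w/z$) all drop out in a few lines. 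You instead pass to $\det E$, invoke Theorem \ref{p-thm7.5} to identify $\det({}_\alpha E)$ with ${}_{\gamma({}_\alpha E)}\det E$, use the explicit line-bundle description from the proof of Proposition \ref{p-prop4.1} to get $m_\alpha = \lfloor \gamma({}_\alpha E) - \gamma_0\rfloor$, and then use Corollary \ref{p-cor7.7} to force $\gamma({}_\alpha E) - \gamma_0 \in \mathbb Z$. This works and is logically sound, but it imports the full determinant-bundle machinery to prove facts that the paper derives by elementary matrix bookkeeping; you do not gain anything extra by the detour, and the transition-matrix argument is also exactly what you need anyway to finish (iii). A very minor stylistic point: for the line bundle ${}_a\det E$ you cite Corollary \ref{p-cor7.6} to replace $\liminf$ by $\lim$, but in the rank-one case this is already contained in Theorem \ref{p-thm4.4}(i), so the heavier corollary is not really needed there.
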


\begin{proof}[Proof of Lemma \ref{p-lem7.8}]
It is obvious that (i) holds by definition. 
Let $\bm w:=\{w_1, \ldots, w_r\}$ be a frame of ${}_\alpha E$ on 
$\Delta$ and let $\bm v:=\{v_1, \ldots, v_r\}$ be a frame of ${}_\beta E$ on $\Delta$. 
Thus we can write 
\[
(w_1, \ldots, w_r)=(v_1, \ldots, v_r)A(z)
\] 
where $A(z)$ is an $r\times r$ matrix. 
By definition, $A(z)$ is invertible on $\Delta^*$. 
Hence $\det A(z)\ne 0$ for every $z\in \Delta^*$. 
Then we can write 
\[
\det A(z)=z^mf(z)
\] 
for some $m\in \mathbb Z_{\geq 0}$ such that $f(0)\ne 0$. 
In this setting, 
we obtain 
\[
\begin{split}
\det H(\bm w)(z) &=\det H(\bm v)(z)|\det A(z)|^2\\ 
&= \det H(\bm v)(z) |z|^{2m}|f(z)|^2.  
\end{split} 
\] 
Therefore, we have 
\[
\frac{\log \det H(\bm w)(z)}{\log |z|} 
=\frac{\log \det H(\bm v)(z)}{\log |z|}+2m 
+2\frac{\log|f(z)|}{\log |z|}. 
\] 
Thus we obtain 
\[
\begin{split}
\gamma ({}_\alpha E)&= 
-\frac{1}{2} \lim_{z\to 0} \frac{\log \det H(\bm w)(z)}{\log |z|}\\ 
&=-\frac{1}{2} \lim_{z\to 0} \frac{\log \det H(\bm v)(z)}{\log |z|}
-m-\lim _{z\to 0} \frac{\log |f(z)|}{\log |z|}\\ 
&=\gamma ({}_\beta E)-m. 
\end{split}
\]
This implies 
\[
\gamma ({}_\beta E)-\gamma ({}_\alpha E)=m\in \mathbb Z_{\geq 0}. 
\] 
Thus we have (ii). 
By the above argument, ${}_\alpha E={}_\beta E$ if and only if 
$\det A(0)\ne 0$, 
Moreover, $\det A(0)\ne 0$ if and only if $m=0$. 
Thus, ${}_\alpha E={}_\beta E$ if and only if 
$\gamma ({}_\alpha E)=\gamma ({}_\beta E)$. This is (iii). 
Since $\bm w=\{w_1, \ldots, w_r\}$ is a frame of ${}_\alpha E$, 
we can easily check that 
\[
\frac{\bm w}{z}=\left\{\frac{w_1}{z}, \ldots, \frac{w_r}{z}\right\}
\] 
is a frame of ${}_{\alpha +1}E$ on $\Delta^*$. 
Thus we can directly check that 
\[
{}_{\alpha +1} E={}_\alpha E\otimes \mathcal O_\Delta([0]) 
\] 
and 
\[
\gamma ({}_{\alpha +1}E)=\gamma ({}_\alpha E)+r.
\]  
Thus we obtain (iv) and (v). We finish the proof of Lemma \ref{p-lem7.8}. 
\end{proof}

\begin{lem}\label{p-lem7.9}
Let $(E, h)$ be an acceptable vector bundle on $\Delta^*$. 
Then, for every $\alpha \in \mathbb R$, 
\[
{}_\alpha E=\bigcap _{\beta>\alpha} {}_\beta E
\] 
holds. 
\end{lem}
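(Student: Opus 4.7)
The plan is to verify both inclusions directly from the very definition of ${}_\alpha E$ in Definition \ref{p-def2.3}, without invoking any of the deeper structural results (such as Theorem \ref{p-thm1.3} or Theorem \ref{p-thm7.5}). The inclusion ${}_\alpha E \subset \bigcap_{\beta > \alpha} {}_\beta E$ is immediate from Lemma \ref{p-lem7.8}(i), which gives ${}_\alpha E \subset {}_\beta E$ for every $\beta \geq \alpha$.

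For the reverse inclusion, I would fix an open subset $U \subset \Delta$ and a section $f \in \bigl(\bigcap_{\beta > \alpha} {}_\beta E\bigr)(U)$, and show that $f \in {}_\alpha E(U)$. Unwinding the definition, $f$ lies in ${}_\alpha E(U)$ precisely when, for every $\varepsilon > 0$, one has $|f|_h = O(|z|^{-\alpha - \varepsilon})$ on a punctured neighborhood of $0$ in $U$. Given such an $\varepsilon$, the key move is to split it: set $\beta := \alpha + \varepsilon/2$ and $\varepsilon' := \varepsilon/2$. Since $\beta > \alpha$, by hypothesis $f \in {}_\beta E(U)$, and hence by the definition of ${}_\beta E$ applied with $\varepsilon'$, one gets
\[
|f|_h = O\!\left(\frac{1}{|z|^{\beta + \varepsilon'}}\right) = O\!\left(\frac{1}{|z|^{\alpha + \varepsilon}}\right).
\]
As $\varepsilon > 0$ was arbitrary, this shows $f \in {}_\alpha E(U)$, completing the proof.

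There is no real obstacle here: the statement is a purely formal consequence of the quantifier structure \textquotedblleft for every $\varepsilon > 0$\textquotedblright\ in Definition \ref{p-def2.3}, combined with monotonicity in the subscript. The argument does not require acceptability beyond what is already implicit in the well-definedness of ${}_\alpha E$, and the only subtlety is the standard trick of replacing $\varepsilon$ by a slightly smaller $\beta - \alpha$ plus $\varepsilon'$ with $\beta - \alpha + \varepsilon' \leq \varepsilon$.
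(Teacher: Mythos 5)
Your proof is correct and follows essentially the same route as the paper: the nontrivial inclusion is obtained by splitting the given $\varepsilon$ into $\beta-\alpha$ and a remainder, then invoking the definition of ${}_\beta E$. The paper chooses $\beta'$ and $\varepsilon'$ with $\beta'+\varepsilon'<\alpha+\varepsilon$ and then uses $|z|<1$ to absorb the slack, whereas you pick them so that $\beta+\varepsilon'=\alpha+\varepsilon$ exactly, which is a minor simplification but not a different argument.
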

\begin{proof}[Proof of Lemma \ref{p-lem7.9}]
Since ${}_\alpha E\subset {}_\beta E$ for $\beta>\alpha$ by Lemma \ref{p-lem7.8} (i), 
we have 
\[
{}_\alpha E\subset \bigcap _{\beta>\alpha} {}_\beta E. 
\] 
From now, we will prove the opposite inclusion. 
We take $v\in \bigcap _{\beta>\alpha} {}_\beta E$. 
Let $\varepsilon$ be any positive real number. 
We can take $\beta'$ such that 
$\alpha <\beta'<\alpha +\varepsilon$ and 
$\varepsilon'$ such that $0<\varepsilon'<\alpha +\varepsilon -\beta$. 
There exists some open neighborhood $U$ of $0$ such that 
$v\in \Gamma (U, {}_{\beta'} E)$. 
Then 
\[|v|_h|z|^{\beta'+\varepsilon'}<C
\] 
holds for some positive real number $C$. 
Hence we have 
\[
|v|_h|z|^{\alpha +\varepsilon}=|v|_h|z|^{\beta'+\varepsilon'}
|z|^{\alpha +\varepsilon -\beta'-\varepsilon'}<C
\] 
since $\alpha+\varepsilon-\beta'-\varepsilon'>0$ and $|z|<1$. 
This implies 
\[
|v|_h=O\left(\frac{1}{|z|^{\alpha +\varepsilon}}\right). 
\] 
Thus we obtain $v\in {}_\alpha E$. 
We finish the proof of Lemma \ref{p-lem7.9}. 
\end{proof}

\begin{lem}\label{p-lem7.10}
Let $(E, h)$ be an acceptable vector bundle on $\Delta^*$. 
Then, for every $\alpha \in \mathbb R$, 
there exists $\delta>0$ such that 
\[
{}_\alpha E={}_{\alpha +\varepsilon}E
\] 
holds for every $\varepsilon \in [0, \delta)$. 
\end{lem}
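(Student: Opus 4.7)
The plan is to combine the monotonicity and integer-valuedness of $\gamma({}_\bullet E)$ established in Lemma \ref{p-lem7.8} with the right-continuity statement of Lemma \ref{p-lem7.9}.

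First, I would consider the function $\varphi \colon (0, 1] \to \mathbb Z_{\geq 0}$ defined by
\[
\varphi(\varepsilon) := \gamma({}_{\alpha+\varepsilon}E) - \gamma({}_\alpha E).
\]
By Lemma \ref{p-lem7.8}(ii), this is a non-decreasing, non-negative, integer-valued function, and by Lemma \ref{p-lem7.8}(v) combined with (ii), it is bounded above by $r = \rank E$. Hence $\varphi$ takes only finitely many values on $(0, 1]$. Being non-decreasing and discrete-valued, $\varphi$ must stabilize near $0$: there exist $\varepsilon_0 \in (0, 1]$ and an integer $k \geq 0$ such that $\varphi(\varepsilon) = k$ for every $\varepsilon \in (0, \varepsilon_0]$.

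Next, by Lemma \ref{p-lem7.8}(iii), the constancy of $\gamma({}_{\alpha+\varepsilon}E)$ on $(0, \varepsilon_0]$ implies that the sheaf ${}_{\alpha+\varepsilon}E$ is itself constant on this interval; call this common sheaf $F$. I would then invoke Lemma \ref{p-lem7.9} to compute
\[
{}_\alpha E = \bigcap_{\beta > \alpha} {}_\beta E.
\]
Since ${}_\beta E = F$ for $\beta \in (\alpha, \alpha + \varepsilon_0]$, and ${}_\beta E \supset F$ for $\beta > \alpha + \varepsilon_0$ by Lemma \ref{p-lem7.8}(i), this intersection equals $F$. Therefore $F = {}_\alpha E$, which forces $\gamma(F) = \gamma({}_\alpha E)$ and hence $k = 0$. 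Applying Lemma \ref{p-lem7.8}(iii) once more, we conclude ${}_{\alpha+\varepsilon}E = {}_\alpha E$ for every $\varepsilon \in [0, \varepsilon_0]$, so $\delta := \varepsilon_0$ works.

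There is no substantial obstacle in this argument: it is essentially a pigeonhole observation fed into the right-continuity of Lemma \ref{p-lem7.9}. The only point that requires a moment of care is verifying that the intersection $\bigcap_{\beta > \alpha} {}_\beta E$ taken over \emph{all} $\beta > \alpha$ coincides with the sheaf $F$ obtained from the small interval $(0, \varepsilon_0]$; this uses Lemma \ref{p-lem7.8}(i) to handle the terms with $\beta > \alpha + \varepsilon_0$, which only enlarge (and hence do not cut down) the intersection.
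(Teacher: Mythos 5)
Your proof is correct and follows essentially the same route as the paper's: the integer-valued, non-decreasing, bounded function $\gamma({}_{\alpha+\varepsilon}E)$ must stabilize on some interval $(0, \delta)$, Lemma \ref{p-lem7.8}(iii) converts this to constancy of the sheaves on that interval, and Lemma \ref{p-lem7.9} then identifies the common sheaf with ${}_\alpha E$. The only difference is that you spell out the step $\bigcap_{\varepsilon \in (0,\delta)} {}_{\alpha+\varepsilon}E = \bigcap_{\beta>\alpha} {}_\beta E$ via monotonicity, which the paper leaves implicit.
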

\begin{proof}[Proof of Lemma \ref{p-lem7.10}]
Note that $\gamma ({}_{\alpha +\varepsilon}E)-\gamma ({}_\alpha E)$ 
is a $\mathbb Z$-valued 
non-decreasing function for $\varepsilon \in \mathbb R$. 
As we already proved in Theorem \ref{p-lem7.8} (v), 
$\gamma ({}_{\alpha +1}E)-\gamma ({}_\alpha E)=\rank E$. 
Hence there exists $\delta>0$ such that 
$\gamma (_{\alpha +\varepsilon}E)$ is constant for every 
$\varepsilon \in (0, \delta)$. 
Since 
\[
\bigcap _{\varepsilon \in (0, \delta)}{}_{\alpha +\varepsilon}E=
\bigcap _{\beta>\alpha} {}_{\beta} E={}_\alpha E   
\] 
by Lemma \ref{p-lem7.9}, 
we have ${}_{\alpha +\varepsilon} E={}_\alpha E$ holds for 
$\varepsilon \in (0, \delta)$. 
We finish the proof of Lemma \ref{p-lem7.10}. 
\end{proof}

\begin{say}[Parabolic weights]\label{p-say7.11}
We set
\[
\Parr_\alpha(E, h) := \left\{ \lambda \in (\alpha - 1, \alpha] \,\middle|\,
{}_\lambda E / {}_{<\lambda}E \neq 0 \right\},
\]
where
\[
{}_{<\lambda} E := \bigcup_{\mu < \lambda} {}_\mu E \subset {}_\lambda E.
\]
Then we obtain
\[
\Parr_\alpha(E, h) = \{ \lambda_1, \ldots, \lambda_k \},
\]
with $\lambda_i \neq \lambda_j$ for $i \neq j$.

If there is no risk of confusion, 
we simply write $\Parr_\alpha(E)$ or $\Parr({}_\alpha E)$ instead of $\Parr_\alpha(E, h)$.

We set
\[
l_i := \dim_{\mathbb{C}}\left({}_{\lambda_i}E / {}_{<\lambda_i}E\right).
\]
Then we can verify, by Lemma~\ref{p-lem7.8} (v), that
\begin{equation}\label{p-eq7.15}
\sum_{i=1}^k l_i = r = \rank E.
\end{equation}

Thus, we define
\[
\Par_\alpha(E, h) := \{\underbrace{\lambda_1, \ldots, \lambda_1}_{l_1\text{ times}}, \ldots,
\underbrace{\lambda_k, \ldots, \lambda_k}_{l_k\text{ times}}\}.
\]
If there is no risk of confusion, 
we write $\Par_\alpha(E)$ or $\Par({}_\alpha E)$ for $\Par_\alpha(E, h)$.

Furthermore, if the multiplicity 
of $\lambda_i$ is not important in the 
context, we may also use $\Par_\alpha(E, h)$ to 
denote $\Parr_\alpha(E, h)$.
\end{say}

\begin{lem}\label{p-lem7.12}
For every $i$, we 
have 
\[\gamma ({}_{\lambda_i}E)-\gamma ({}_{\lambda_i -\varepsilon}E)=l_i
\] 
for $0<\varepsilon \ll 1$. 
Therefore, we have 
\[
\gamma ({}_\beta E)-\gamma ({}_\alpha E)=\dim _{\mathbb C} 
\left({}_\beta E/{}_\alpha E\right)
\] 
for every $\beta\geq \alpha$. 
\end{lem}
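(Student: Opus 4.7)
My plan is to establish the general formula $\gamma({}_\beta E) - \gamma({}_\alpha E) = \dim_{\mathbb C}({}_\beta E / {}_\alpha E)$ first, and then deduce the identity at each parabolic weight $\lambda_i$ as a special case.

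Recall from the proof of Lemma \ref{p-lem7.8}(ii) the following setup: if $\bm w = \{w_1, \ldots, w_r\}$ and $\bm v = \{v_1, \ldots, v_r\}$ are local frames of ${}_\alpha E$ and ${}_\beta E$ near $0$, and $(w_1, \ldots, w_r) = (v_1, \ldots, v_r) A(z)$ with $\det A(z) = z^m f(z)$, $f(0) \neq 0$, then that proof already yields $\gamma({}_\beta E) - \gamma({}_\alpha E) = m$. So it suffices to verify $\dim_{\mathbb C}({}_\beta E / {}_\alpha E) = m = \ord_0 \det A(z)$. Since ${}_\alpha E = {}_\beta E$ on $\Delta^*$, the quotient is a torsion sheaf supported only at $0$, and at the stalk it is isomorphic to $\mathcal O_{\Delta,0}^{\oplus r} / A(z)\,\mathcal O_{\Delta,0}^{\oplus r}$. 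Because $\mathcal O_{\Delta, 0}$ is a discrete valuation ring with uniformizer $z$ and residue field $\mathbb C$, Smith normal form applied to $A(z)$ produces elementary divisors $z^{d_1}, \ldots, z^{d_r}$ with $\sum_{i} d_i = \ord_0 \det A(z) = m$. Hence the stalk is isomorphic to $\bigoplus_i \mathcal O_{\Delta, 0}/(z^{d_i})$, whose $\mathbb C$-dimension is $m$, which establishes the general formula.

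For the first assertion, I would invoke the discreteness of the set of parabolic weights. To see this discreteness, I would note that Lemma \ref{p-lem7.8}(ii) makes $\gamma({}_\mu E)$ non-decreasing in $\mu$ with all differences lying in $\mathbb Z$, while Lemma \ref{p-lem7.10} makes $\gamma$ right-continuous in $\mu$. Consequently $\gamma$ takes values in a single coset of $\mathbb Z$ and is locally constant away from its set of left-discontinuities, which are precisely the parabolic weights; by Lemma \ref{p-lem7.8}(v) at most $r$ such jumps occur in any unit interval. Thus, for $0 < \varepsilon \ll 1$, the half-open interval $[\lambda_i - \varepsilon, \lambda_i)$ contains no parabolic weight, so $\gamma$ is constant on it, and Lemma \ref{p-lem7.8}(iii) gives ${}_\mu E = {}_{\lambda_i - \varepsilon} E$ for every such $\mu$, whence ${}_{<\lambda_i} E = {}_{\lambda_i - \varepsilon} E$. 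Substituting into the general formula yields
\[
\gamma({}_{\lambda_i} E) - \gamma({}_{\lambda_i - \varepsilon} E)
= \dim_{\mathbb C}\bigl({}_{\lambda_i} E / {}_{<\lambda_i} E\bigr) = l_i.
\]

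The crux of the argument, and the only input beyond the lemmas already established, is the purely algebraic identity that the length of $\mathcal O_{\Delta,0}^{\oplus r}/A\,\mathcal O_{\Delta,0}^{\oplus r}$ equals $\ord_0 \det A$ over the DVR $\mathcal O_{\Delta, 0}$. Once this identity is in hand, it reconciles the combinatorial description of $\gamma$ extracted from the proof of Lemma \ref{p-lem7.8} with the $\mathbb C$-dimension of the cokernel, and no analytic obstacle remains; the only bookkeeping point to be careful about is the identification ${}_{\lambda_i - \varepsilon} E = {}_{<\lambda_i} E$ for small $\varepsilon$ via discreteness of weights.
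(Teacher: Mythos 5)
Your argument is correct, and it takes a genuinely different (and arguably more streamlined) route than the paper's proof. The paper works locally at each $\lambda_i$: from $\dim_{\mathbb C}({}_{\lambda_i}E/{}_{\lambda_i-\varepsilon}E)=l_i$ it deduces only $\rank A(0)=r-l_i$, hence (by counting the number of elementary divisors $z^{d_j}$ with $d_j\geq 1$) merely the \emph{inequality} $m_i\geq l_i$; it then invokes Lemma~\ref{p-lem7.8}~(v) and \eqref{p-eq7.15} to sum these inequalities over the jumps in a unit interval and squeeze out the equalities $m_i=l_i$ simultaneously. You instead use the full Smith normal form bookkeeping: the $\mathbb C$-length of the cokernel $\mathcal O_{\Delta,0}^{\oplus r}/A\,\mathcal O_{\Delta,0}^{\oplus r}$ over the DVR $\mathcal O_{\Delta,0}$ is $\sum d_j=\ord_0\det A$, which together with the computation in the proof of Lemma~\ref{p-lem7.8}~(ii) already gives the second, general formula $\gamma({}_\beta E)-\gamma({}_\alpha E)=\dim_{\mathbb C}({}_\beta E/{}_\alpha E)$ for arbitrary $\alpha\leq\beta$, with no summation trick. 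The first assertion then falls out as a corollary once you identify ${}_{<\lambda_i}E={}_{\lambda_i-\varepsilon}E$ for small $\varepsilon$ via the discreteness of the parabolic weights (which you justify correctly using Lemma~\ref{p-lem7.8}~(ii)/(v) and Lemma~\ref{p-lem7.10}). The trade-off is purely expository: the paper avoids stating the length-equals-determinant-order identity explicitly and gets by with the cruder rank-of-$A(0)$ observation plus a clever global consistency check, whereas your approach leans harder on the DVR structure theory but in return yields the general additivity statement directly rather than as an afterthought.
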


\begin{proof}[Proof of Lemma \ref{p-lem7.12}]
We fix a sufficiently small positive real number $\varepsilon$ such that 
\[\dim _{\mathbb C} ({}_{\lambda_i}E/{}_{\lambda_i-\varepsilon} E)=l_i.
\] 
Let $\bm v:=\{v_1, \ldots, v_r\}$ be a frame 
of ${}_{\lambda_i} E$ around $0$ and let 
$\bm w:=\{w_1, \ldots, w_r\}$ be a frame of 
${}_{\lambda_i-\varepsilon} E$ around $0$. 
Then we can write 
\[
(w_1, \ldots, w_r)=(v_1, \ldots, v_r)A(z)
\] 
for some $r\times r$ matrix $A(z)$ around $0$. 
Since $\dim _{\mathbb C} ({}_{\lambda_i}E/{}_{\lambda_i-\varepsilon} E)=l_i$, 
we obtain $\rank A(0)=r-l_i$. 
According to the theory of elementary divisors in the ring $\mathbb{C}\{z\}$, 
we can write 
\[
\det A(z)=z^{m_i}f(z)
\] 
such that $m_i\geq l_i$ and $f(z)$ is holomorphic with $f(0)\ne 0$. 
Hence we can prove that 
\[
\gamma ({}_{\lambda_i} E)-\gamma ({}_{\lambda_i-\varepsilon} E)
=m_i\geq l_i. 
\] 
Thus, by Lemma \ref{p-lem7.8} (v) and 
\eqref{p-eq7.15}, we obtain 
\[
r=\gamma ({}_a E)-\gamma ({}_{a-1}E)=\sum _{i=1}^k m_i \geq 
\sum _{i=1}^k l_i=r. 
\] 
This implies that $m_i=l_i$ for every $i$, that is, 
\[\gamma ({}_{\lambda_i}E)-\gamma ({}_{\lambda_i -\varepsilon}E)=l_i. 
\] 
This is what we wanted. We finish the proof of Lemma \ref{p-lem7.12}. 
\end{proof}

We will prove the following important formula in 
Section \ref{p-sec12}, which plays a crucial role for 
the study of ${}_{\alpha} E$. 
The proof of Theorem \ref{p-thm7.13} is much more difficult than the 
argument in this section. 

\begin{thm}[see Theorem \ref{p-thm1.9} and 
Theorem \ref{p-thm12.3} below]\label{p-thm7.13}
Let $E$ be an acceptable vector bundle on $\Delta^*$. 
Then the following equality 
\[
\gamma ({}_\alpha E)=\sum _{\lambda_i\in \Parr ({}_\alpha E)} 
\lambda_i\dim_\mathbb C \left({}_{\lambda_i} E/{}_{<\lambda_i} E\right)
\] 
holds. 
\end{thm}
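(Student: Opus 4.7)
The plan is to prove the two inequalities
\[
\gamma({}_\alpha E) \leq \sum_{j=1}^k \lambda_j l_j \quad \text{and}\quad \gamma({}_\alpha E) \geq \sum_{j=1}^k \lambda_j l_j
\]
separately, where $l_j := \dim_{\mathbb{C}}({}_{\lambda_j}E/{}_{<\lambda_j}E)$.

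For the upper bound, I would choose a local frame $\bm{v}=\{v_1,\ldots,v_r\}$ of ${}_\alpha E$ around the origin that is \emph{compatible with the parabolic filtration}. Such a frame is obtained by iteratively lifting bases of the successive quotients ${}_{\lambda_j}E/{}_{<\lambda_j}E$, yielding $v_i\in{}_{b_i}E\setminus{}_{<b_i}E$ where the multiset $\{b_1,\ldots,b_r\}$ coincides with $\Par_\alpha(E,h)$. Consequently $\sum_{i=1}^r b_i=\sum_{j=1}^k \lambda_j l_j$, so Corollary~\ref{p-cor7.7} immediately gives $\gamma({}_\alpha E)\leq \sum_j \lambda_j l_j$.

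The lower bound is the substantive part. By Theorem~\ref{p-thm7.5} and equation \eqref{p-eq7.12} in Corollary~\ref{p-cor7.7}, the element $v_1\wedge\cdots\wedge v_r$ generates $\det({}_\alpha E)={}_{\gamma({}_\alpha E)}\det E$, so via Theorem~\ref{p-thm4.4}(iv) applied to the acceptable line bundle $(\det E,\det h)$, the lower bound is equivalent to the estimate
\[
\det H(h,\bm v)(z) \geq C\,|z|^{-2\sum_j \lambda_j l_j}\,(-\log|z|)^{-M}
\]
on some punctured neighborhood of $0$, for constants $C,M>0$. Equivalently, writing $\bm v':=\{|z|^{b_i}v_i\}$, the Gram matrix $H(h,\bm v')$ must have determinant bounded below by an inverse polylogarithmic factor. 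My strategy would be to reduce, via a cyclic cover $z=w^n$ with $n$ chosen so that $n\lambda_j\in\mathbb Z$ for every $j$ (the apparatus developed in Section~\ref{p-sec11}), to the case of integer parabolic weights, and then to tensor with an appropriate power of $\mathcal{O}_\Delta([0])$ via Lemma~\ref{p-lem7.8}(iv)(v) to collapse the picture to a single parabolic weight equal to $0$ with multiplicity $r$; in other words, to the case $\Par_\alpha(E,h)=\{0\}$, where the desired inequality reads
\[
\det H(h,\bm v)(z)\geq C(-\log|z|)^{-M}
\]
for a holomorphic frame $\bm v$ of ${}^\diamond E$ near the origin. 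This reduced statement is exactly the role of Simpson's key lemma established in Section~\ref{q-sec10}: applied to the Gram--Schmidt iterates of the frame $\bm v$, it rules out any of the $v_i$ collapsing faster than polylogarithmically relative to the parabolic filtration, and feeding it back through the determinant computation completes the proof.

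The hard part, as emphasized in the introduction, is precisely this lower bound in the unnormalized setting. The upper bound is a quick Cauchy--Schwarz estimate on the $h$-pairings, but the reverse inequality cannot invoke Theorem~\ref{p-thm1.12} or the weak norm estimate of Theorem~\ref{p-thm1.13} without circularity, since both of those results depend on the very identity $\gamma({}_\alpha E)=\sum_i b_i$ we are trying to establish. The technical heart therefore lies in (i) checking that the cyclic-cover reduction behaves well with respect to $\gamma(\bullet)$ and the parabolic filtration, and (ii) adapting Simpson's key lemma to control $\det H(h,\bm v)$ from below; these are the issues that occupy Sections~\ref{p-sec11} and \ref{p-sec12}.
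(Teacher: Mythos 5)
Your overall architecture — upper bound via Corollary~\ref{p-cor7.7}, lower bound via cyclic covers plus Simpson's key lemma from Section~\ref{q-sec10} — matches the paper's proof of Theorem~\ref{p-thm12.3}, and you correctly identify that invoking Theorems~\ref{p-thm1.12} or~\ref{p-thm1.13} would be circular. However, there is a genuine gap in your reduction step that the paper's argument is specifically engineered to avoid.

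You propose a cyclic cover $z=w^n$ with $n$ chosen so that $n\lambda_j\in\mathbb Z$ for every $j$. Such an $n$ exists only when all parabolic weights $\lambda_j$ are rational; nothing in the acceptability hypothesis forces this, and irrational weights are the generic situation (Corollary~\ref{p-cor1.8} already produces examples with arbitrary real $c$). Your reduction therefore collapses in the irrational case, which is precisely why Section~\ref{p-sec12} opens with the Diophantine approximation lemma (Lemma~\ref{p-lem12.2}): instead of making the pulled-back weights into exact integers, the paper chooses $m\leq q$ so that $\delta_m(b)<\tfrac{1}{6\,\rank E}$, i.e.\ the $mb_i$ are merely \emph{close} to integers. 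After pulling back by $z=w^m$, multiplying the metric by $|w|^{2\max_i b_i'}$, and rescaling, the parabolic weights of $(E',h')$ land in $(-\tfrac{1}{3\,\rank E},0]$, and the assumption $\gamma({}_aE)<\sum b_i$ (amplified by the large factor $m$) yields $|\det h'|\leq C'|w|$. Lemma~\ref{q-lem10.1} then produces a constant section $e'$ with $|e'(w)|_{h'}\leq C''|w|^{1/(3\,\rank E)}$, which would force a parabolic weight $\leq -\tfrac{1}{3\,\rank E}$, contradicting the interval bound. Your sketch of the lower bound also blurs the mechanism of Simpson's lemma: it is not applied to ``Gram--Schmidt iterates'' to bound $\det H(h,\bm v)$ from below, but used once, by contradiction, to rule out the strict inequality $\gamma({}_aE)<\sum b_i$ via the existence of a too-fast-decaying constant section. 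To close the gap you should replace the exact-integer cyclic cover by the $\delta_m$-approximation argument and rephrase the lower bound as a contradiction rather than a direct determinant estimate.
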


Sections \ref{p-sec9} through \ref{p-sec12} will be devoted to the proof of Theorem 
\ref{p-thm7.13}. 

\begin{defn}\label{p-def7.14}
Let $\bm v=\{v_1, \ldots, v_r\}$ be a frame of ${}_\alpha E$, 
that is, 
\[
{}_\alpha E=\bigoplus _{i=1}^r\mathcal O_\Delta\cdot v_i. 
\] 
If there exists a decomposition 
\[
\bm v=\bigsqcup _{\alpha -1< \beta\leq \alpha}\bm v_{\beta}
\] 
such that $\bm v_\beta$ is a tuple of sections of ${}_\beta E$ 
and that $\bm v_\beta$ induces a basis of ${}_\beta E/{}_{<\beta} E$, then 
$\bm v$ is called a frame of ${}_\alpha E$ 
{\em{compatible with the parabolic filtration}}. 
Note that $\bm v_\beta=\emptyset$ if $\beta \not \in \Par _{\alpha} (E, h)$. 
\end{defn}

\begin{rem}\label{p-rem7.15}
Since ${}_\alpha E$ is a holomorphic vector bundle on $\Delta$, we can 
always take a trivialization (see, for example, \cite[30.4.~Theorem]{forster}). 
Therefore, 
there exists a frame $\{e_1, \ldots, e_r \}$ of ${}_\alpha E$ on $\Delta$, 
that is, 
\[
{}_\alpha E= \bigoplus _{i=1} ^r \mathcal O_{\Delta} \cdot e_i 
\] 
holds. Note that $\{e_1, \ldots, e_r\}$ gives a 
basis of the quotient vector space ${}_\alpha E /{}_{\alpha-1}E$. 
Thus, we can take $(a_{ij})\in \mathrm{GL}(r, \mathbb C)$ such that 
$\{v_1, \ldots, v_r\}$, where 
$v_j :=\sum _{i=1} ^r e_i a_{ij}$ for every $j$, 
gives a frame of ${}_\alpha E$ compatible with 
the parabolic filtration. 
\end{rem}

\begin{lem}\label{p-lem7.16}
Let $\bm v = \{v_1, \ldots, v_r\}$ be a local frame 
of ${}_\alpha E$ defined over some open neighborhood 
of $0$, such that 
$v_i \in {}_{\beta_i}E \setminus {}_{<\beta_i} E$ for every $i$. 
Then $\beta_i \in (\alpha -1, \alpha]$ for every $i$. 
\end{lem}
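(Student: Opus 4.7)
The plan is to verify the two inequalities $\beta_i \leq \alpha$ and $\beta_i > \alpha - 1$ separately, in each case arguing by contradiction and using only the monotonicity and shift properties of the prolongation collected in Lemma \ref{p-lem7.8}.

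For the upper bound, suppose $\beta_i > \alpha$. Since $\alpha < \beta_i$, Lemma \ref{p-lem7.8}(i) yields the inclusions ${}_\alpha E \subset {}_\alpha E \subset {}_{<\beta_i} E$ (taking $\mu = \alpha$ in the union defining ${}_{<\beta_i}E$). Because $v_i$ is a member of the frame of ${}_\alpha E$, it certainly lies in ${}_\alpha E$, hence in ${}_{<\beta_i} E$, contradicting the hypothesis $v_i \notin {}_{<\beta_i} E$.

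For the lower bound, suppose $\beta_i \leq \alpha - 1$. Then by Lemma \ref{p-lem7.8}(i) we have $v_i \in {}_{\beta_i} E \subset {}_{\alpha - 1} E$. The shift identity ${}_\alpha E = {}_{\alpha - 1} E \otimes \mathcal{O}_\Delta([0])$ from Lemma \ref{p-lem7.8}(iv) gives, on stalks at $0$, the identification ${}_{\alpha - 1} E_0 = z \cdot {}_\alpha E_0 = \mathfrak{m}_0 \cdot {}_\alpha E_0$, where $\mathfrak{m}_0 = (z) \subset \mathcal{O}_{\Delta,0}$ is the maximal ideal. Since $\{v_1, \ldots, v_r\}$ is a local frame, it is an $\mathcal{O}_{\Delta,0}$-basis of the free module ${}_\alpha E_0$, and its reductions modulo $\mathfrak{m}_0 {}_\alpha E_0$ form a $\mathbb{C}$-basis of the fiber $({}_\alpha E)_0 \otimes_{\mathcal{O}_{\Delta,0}} \mathbb{C}$. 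However $v_i \in \mathfrak{m}_0 {}_\alpha E_0$ would reduce to zero, contradicting the linear independence of those reductions.

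I do not expect any serious obstacle: the lemma is essentially an immediate consequence of Lemma \ref{p-lem7.8} together with the elementary observation that no element of a basis of a finite free module over a local ring can lie in the maximal ideal times the module. The only point requiring a moment of care is to justify the local identification ${}_{\alpha-1} E_0 = \mathfrak{m}_0 \cdot {}_\alpha E_0$, which follows because $\mathcal{O}_\Delta([0])_0 = z^{-1} \mathcal{O}_{\Delta,0}$ and tensoring by it is inverse to multiplication by $z$.
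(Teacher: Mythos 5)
Your proof is correct and takes essentially the same route as the paper: the upper bound follows from monotonicity, and the lower bound from observing that the frame $\{v_1,\ldots,v_r\}$ induces a basis of ${}_\alpha E/{}_{\alpha-1}E$, which at the stalk is precisely ${}_\alpha E_0/\mathfrak{m}_0\,{}_\alpha E_0$. You merely unpack the second step via the identification ${}_{\alpha-1}E_0 = \mathfrak{m}_0\cdot{}_\alpha E_0$ and Nakayama-type linear algebra, which is exactly what the paper's one-line remark is asserting.
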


\begin{proof}[Proof of Lemma \ref{p-lem7.16}] 
Since $v_i \in {}_\alpha E$, it follows that 
$\beta_i \leq \alpha$ for all $i$.  
Moreover, since $\{v_1, \ldots, v_r\}$ forms 
a local frame of ${}_\alpha E$ near $0$, it induces a basis of the quotient vector space 
${}_\alpha E / {}_{\alpha - 1} E$.  
This implies that each $\beta_i$ lies in the interval $(\alpha - 1, \alpha]$. 
\end{proof}

\begin{lem}\label{p-lem7.17}
Let $\bm{v}=\{v_1, \ldots, v_r\}$ be a frame of ${}_\alpha E$ compatible with 
the parabolic filtration such that 
\[
v_i\in {}_{\beta_i}E\setminus {}_{<\beta_i}E 
\] 
for every $i$. 
In particular, 
\[
\Par _{\alpha}(E, h)=\{\beta_1, \ldots, \beta_r\}. 
\]
Let $\alpha'$ be any real number. 
Let $m_i$ be the smallest integer satisfying $\beta_i-m_i\leq 
\alpha'$ for every $i$. 
Then 
\[
\bm{v}':=\{z^{m_1}v_1, \ldots, z^{m_r}v_r\}
\] 
is a frame of ${}_{\alpha'} E$ compatible 
with the parabolic filtration. 
\end{lem}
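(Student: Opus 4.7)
The plan is to verify four claims in order: (a) $\beta'_i := \beta_i - m_i \in (\alpha' - 1, \alpha']$; (b) $z^{m_i} v_i \in {}_{\beta'_i} E \setminus {}_{<\beta'_i} E$; (c) $\bm{v}'$ generates the stalk $({}_{\alpha'} E)_0$ over $\mathcal O_{\Delta, 0}$; and (d) $\bm{v}'$ is compatible with the parabolic filtration. Claim (a) is immediate: by minimality of $m_i$ we have both $\beta_i - m_i \leq \alpha'$ and $\beta_i - (m_i - 1) > \alpha'$. For (b), Lemma~\ref{p-lem7.8} (iv) implies that multiplication by $z$ defines an $\mathcal O_\Delta$-linear isomorphism ${}_\beta E \xrightarrow{\sim} {}_{\beta - 1} E$ inside $j_* E$ that restricts to an isomorphism ${}_{<\beta} E \xrightarrow{\sim} {}_{<\beta - 1} E$; iterating this (or using its inverse when $m_i < 0$) yields the claim.

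The main step is (c). We first reduce to the case $\alpha' \in (\alpha - 1, \alpha]$. Letting $n$ be the unique integer with $\alpha' \in (\alpha - n - 1, \alpha - n]$, Lemma~\ref{p-lem7.8} (iv) implies that $\{z^n v_1, \ldots, z^n v_r\}$ is a frame of ${}_{\alpha - n} E$ compatible with the parabolic filtration, and the corresponding integer shifts for $\alpha'$ become $m_i - n$; the proposed frame $\{z^{m_i - n}(z^n v_i)\} = \{z^{m_i} v_i\}$ is unchanged, so we may assume $\alpha' \in (\alpha - 1, \alpha]$. In this range $m_i \in \{0, 1\}$, taking value $0$ exactly when $\beta_i \leq \alpha'$. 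The inclusion $M := \sum_i \mathcal O_{\Delta, 0} \cdot z^{m_i} v_i \subset ({}_{\alpha'} E)_0$ is clear from (b). Conversely, given $w \in ({}_{\alpha'} E)_0$, we write $w = \sum_i a_i(z) v_i$ in $({}_\alpha E)_0$ and apply a ``highest weight'' argument: if $\lambda := \max\{\beta_i : a_i(0) \neq 0\}$ is defined, then step (b) and the parabolic compatibility of $\bm v$ show that $w \in {}_\lambda E$ and that its image in ${}_\lambda E / {}_{<\lambda} E$ is the nonzero combination $\sum_{\beta_i = \lambda} a_i(0) \overline{v_i}$. Hence $w \in {}_\lambda E \setminus {}_{<\lambda} E$; combined with $w \in {}_{\alpha'} E$ and the inclusion ${}_{\alpha'} E \subset {}_{<\lambda} E$ when $\alpha' < \lambda$, this forces $\lambda \leq \alpha'$, so $a_i(0) = 0$ whenever $\beta_i > \alpha'$, giving $w \in M$.

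For (d), two indices $i, j$ satisfy $\beta'_i = \beta'_j$ if and only if $\beta_i \equiv \beta_j \pmod{\mathbb{Z}}$, which together with $\beta_i, \beta_j \in (\alpha - 1, \alpha]$ forces $\beta_i = \beta_j$ and $m_i = m_j$. For each $\lambda$ occurring as some $\beta'_i$, let $\beta$ and $m$ denote the corresponding common values; since multiplication by $z^m$ induces an isomorphism ${}_\beta E / {}_{<\beta} E \xrightarrow{\sim} {}_\lambda E / {}_{<\lambda} E$ carrying the basis $\{\overline{v_i} : \beta_i = \beta\}$ onto $\{\overline{z^{m_i} v_i} : \beta'_i = \lambda\}$, the latter is a basis of ${}_\lambda E / {}_{<\lambda} E$, which yields the parabolic compatibility. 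The principal technical obstacle is the ``highest weight'' argument in (c): it is the only place where the parabolic compatibility of $\bm v$, as opposed to merely its being a frame of ${}_\alpha E$, plays an essential role.
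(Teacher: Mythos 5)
Your proposal is correct and follows the same overall approach as the paper, which rests on the isomorphism $z^m \times \colon {}_\lambda E \to {}_{\lambda - m} E$. The paper's own proof is extremely terse: after stating that isomorphism it simply asserts twice that ``we can check'' the spanning and compatibility claims. What you add is the actual content behind those assertions — in particular the reduction to $\alpha' \in (\alpha-1,\alpha]$ (so $m_i \in \{0,1\}$) followed by the ``highest weight'' argument, which is exactly where the compatibility of $\bm v$ with the parabolic filtration gets used and is the only nontrivial part of the lemma. Your steps (a), (b), (d) are correct bookkeeping with the $z^m$-isomorphism, and step (c) correctly handles both the case where $\lambda = \max\{\beta_i : a_i(0)\neq 0\}$ is defined (forcing $\lambda \le \alpha'$) and the degenerate case where all $a_i(0)=0$. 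This is a faithful elaboration of what the paper leaves implicit rather than a different route.
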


\begin{proof}[Proof of Lemma \ref{p-lem7.17}]
We put $w_i:=z^{m_i}v_i$ for every $i$. 
We note that the map 
\begin{equation}\label{p-eq7.16}
z^m\times \colon {}_\lambda E\to {}_{\lambda-m} E
\end{equation} 
is an isomorphism for every $m\in \mathbb Z$ and 
every real number $\lambda$. By definition, 
we see that $w_i\in {}_{\alpha'} E$ for every $i$. 
By the isomorphism \eqref{p-eq7.16}, we can check 
that ${}_{\alpha'} E$ is spanned by $\bm v'$. 
Thus we have 
\[
{}_{\alpha'}E=\bigoplus _{i=1}^r \mathcal O_{\Delta}\cdot w_i, 
\] 
that is, $\bm v'$ is a frame of ${}_{\alpha'}E$. 
By \eqref{p-eq7.16} again, 
we can check that $\bm v'$ is a frame 
of ${}_{\alpha'} E$ compatible with the parabolic 
filtration. 
We finish the proof of Lemma \ref{p-lem7.17}
\end{proof}

We conclude this section with the following remark.

\begin{rem}\label{p-rem7.18}
The acceptability near the origin  
is preserved under the coordinate  
rescaling \( z \mapsto z/C \), where \( C \) is a positive constant.  
Note that the condition  
\( v \in {}_\alpha E \setminus {}_{<\alpha} E \) and  
the quantity  
\[
\gamma({}_\alpha E) = -\frac{1}{2} \lim_{z \to 0} \frac{\log \det H(\bm{v})}{\log |z|} 
\]  
are invariant under this rescaling.  
Therefore, such rescaling  
can be employed when we are concerned only with the behavior near the origin.
\end{rem}

\section{On filtered prolongation of acceptable bundles}\label{p-sec8}

In this short section, we recall the framework of filtered 
bundles as introduced by Mochizuki.  
His notation turns out to be particularly convenient in various contexts.

We have already verified the following properties of ${}_a E$.

\begin{say}[Filtered prolongation of acceptable bundles]\label{p-say8.1}
Let $(E, h)$ be an acceptable vector bundle on $\Delta^*$.  
We define
\[
\mathcal{P}^h_a E := {}_a E
\]
for every $a \in \mathbb{R}$, and set
\[
\mathcal{P}^h E := \bigcup_{a \in \mathbb{R}} \mathcal{P}^h_a E \subset j_* E,
\]
where $j \colon \Delta^* \hookrightarrow \Delta$ is the natural inclusion.  

Then, $\mathcal{P}^h E$ is a locally free 
$\mathcal{O}_\Delta(*[0])$-module of finite rank, where 
$\mathcal O_\Delta(\ast [0])$ is the sheaf of 
meromorphic functions on $\Delta$ with poles only at $0$.  
The following properties hold:
\begin{itemize}
  \item[(i)] For every $a \in \mathbb{R}$, $\mathcal{P}^h_a E$ 
  is a locally free $\mathcal{O}_\Delta$-submodule of $\mathcal{P}^h E$.
  
  \item[(ii)] $\mathcal{P}^h_a E(*[0]) = \mathcal{P}^h E$ for every 
  $a\in \mathbb R$.
  
  \item[(iii)] For any $a \leq b$, we have $\mathcal{P}^h_a E \subset \mathcal{P}^h_b E$.
  
  \item[(iv)] For any $a \in \mathbb{R}$ and $n \in \mathbb{Z}$, we have
  \[
  \mathcal{P}^h_{a+n} E = \mathcal{P}^h_a E(n[0]).
  \]
  
  \item[(v)] For any $a \in \mathbb{R}$, 
  there exists $\varepsilon > 0$ such that
  \[
  \mathcal{P}^h_{a+\varepsilon} E = \mathcal{P}^h_a E.
  \]
\end{itemize}
\end{say}

Therefore, it is natural to introduce the notion of filtered bundles as follows.

\begin{defn}[Filtered bundles]\label{p-def8.2}
We denote by $\mathcal{O}_\Delta$ the sheaf of 
holomorphic functions on $\Delta$, and by 
$\mathcal{O}_\Delta(*[0])$ the sheaf of meromorphic functions on $\Delta$ with poles only at $0$. 

Let $\mathcal{E}$ be a locally free $\mathcal{O}_\Delta(*[0])$-module.  
A {\em filtered bundle over} $\mathcal{E}$ is 
an increasing family of locally free $\mathcal{O}_\Delta$-modules 
$\mathcal{P}_a \mathcal{E} \subset \mathcal{E}$ 
indexed by $a \in \mathbb{R}$, satisfying the following conditions:
\begin{itemize}
\item[(1)] Each $\mathcal{P}_a \mathcal{E}$ is a lattice in $\mathcal{E}$, i.e.,
\[
\mathcal{P}_a \mathcal{E} \otimes_{\mathcal{O}_\Delta} 
\mathcal{O}_\Delta(*[0]) = \mathcal{E}.
\]
\item[(2)] For any $a \in \mathbb{R}$ and $n \in \mathbb{Z}$, we have
\[
\mathcal{P}_{a+n} \mathcal{E} = 
\mathcal{P}_a \mathcal{E} \otimes_{\mathcal{O}_\Delta} \mathcal{O}_\Delta(n  [0]).
\]
\item[(3)] For any $a \in \mathbb{R}$, 
there exists $\epsilon > 0$ such that
\[
\mathcal{P}_{a+\epsilon} \mathcal{E} 
= \mathcal{P}_a \mathcal{E}.
\]
\end{itemize}
In this case, we also say 
that $\mathcal{P}_\ast \mathcal{E}$ is a 
filtered bundle on $(\Delta, 0)$ for simplicity.

For any $a \in \mathbb{R}$, define
\[
\mathcal{P}_{<a} \mathcal{E} := \sum_{b < a} \mathcal{P}_b \mathcal{E},
\quad \text{and} \quad
\mathrm{Gr}^\mathcal{P}_a(\mathcal{E}) := 
\mathcal{P}_a \mathcal{E} / \mathcal{P}_{<a} \mathcal{E}.
\]
We may naturally regard $\mathrm{Gr}^\mathcal{P}_a(\mathcal{E})$ as a 
finite-dimensional $\mathbb{C}$-vector space.

A frame $\bm{v} = \{v_1, \dots, v_{\rank\mathcal{E}}\}$ of 
$\mathcal{P}_a \mathcal{E}$ is said to be 
{\em compatible} with the parabolic structure if there exists a decomposition
\[
\bm{v} = \bigsqcup_{a-1 < b \leq a} \bm{v}_b
\]
such that the following holds:
\begin{itemize}
\item For each $b$, $\bm{v}_b$ is a tuple of 
sections of $\mathcal{P}_b \mathcal{E}$, and 
it induces a basis of $\mathrm{Gr}^\mathcal{P}_b(\mathcal{E})$.
\end{itemize}

For any non-zero section $s$ of $\mathcal{E}$, the number
\[
\deg^\mathcal{P}(s) := \min\left\{ c \in \mathbb{R} \mid 
s \in \mathcal{P}_c \mathcal{E} \right\}
\]
is called the {\em parabolic degree} of $s$.  
If $s = 0$, we set $\deg^\mathcal{P}(s) := -\infty$.
\end{defn}

By Definition \ref{p-def8.2}, we can say that 
$\mathcal P^h_*E=\left(\mathcal P^h_a E\mid a\in \mathbb R\right)$ 
is a {\em{filtered bundle over}} $\mathcal P^hE$. 

\begin{rem}\label{p-rem8.3}
Definition \ref{p-def8.2} is 
essentially the same as 
\cite[2.11.1, Filtered Bundles on a Neighborhood of $0$ in $\mathbb{C}$]{mochizuki6}.  
It is a local definition. For the global setting, see \cite[2.11.3, Global Case]{mochizuki6}.  
In this paper, we are only concerned with the one-dimensional case.  
For the higher-dimensional case, we refer the reader to Section~2 of \cite{mochizuki5} (see also \cite[Section 4]{ffo}).
\end{rem}

In the following sections, we will 
use whichever of the notations ${}_a E$ and $\mathcal{P}^h_a E$ 
is more convenient in context. 
In particular, when discussing tensor products 
in Section \ref{p-sec16} and Hom bundles in Section \ref{p-sec17}, 
the notation $\mathcal{P}^h_a E$ appears to be more suitable.

\section{Some elementary inequalities}\label{p-sec9}

In this section, we present some elementary facts that will be used later.  
The arguments in this section are essentially due to Simpson~\cite{simpson3}.

We denote
\[
B(a, r) := \{ z \in \mathbb{C} \mid |z - a| < r \}, \quad
\overline{B}(a, r) := \{ z \in \mathbb{C} \mid |z - a| \leq r \},
\]
and let $\Area(\Omega)$ denote the area of a set $\Omega$.

\begin{defn}
\label{p-def9.1}
Fix a positive real number $r$.
We define
\[
B_r(w) :=
\inf\left\{
\frac{1}{r^3} \int_{\Omega} \log |w - z| \, d\lambda(z)
\ \middle| \
\Omega \subset B(0, r),\ \Area(\Omega) = r^3
\right\},
\]
for $w \in \mathbb C$, 
where $\Omega$ is an open subset of $\mathbb C$, and $d\lambda$ denotes the Lebesgue 
measure on $\mathbb C \simeq \mathbb R^2$.
\end{defn}

The following estimate is straightforward.

\begin{lem}
\label{p-lem9.2}
For any $w \in \mathbb C$ and any positive real number $r$, we have
\[
B_r(w)
\ge
\frac{3}{2} \log r - \frac{1}{2} \log \pi - \frac{1}{2}.
\]
\end{lem}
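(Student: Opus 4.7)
The plan is to show that the infimum defining $B_r(w)$ is attained, up to the constraint $\Omega \subset B(0,r)$, by a disk centered at $w$ having area $r^3$, and then to evaluate the resulting integral explicitly.

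First I would observe that the constraint $\Omega \subset B(0,r)$ can only make the infimum larger, since minimizing $\int_\Omega \log|w-z|\,d\lambda(z)$ over a smaller family of admissible sets gives a value that is no smaller than the minimum over all measurable $\Omega \subset \mathbb C$ of area $r^3$. Hence it suffices to prove
\[
\frac{1}{r^3}\int_\Omega \log|w-z|\,d\lambda(z)\;\ge\;\frac{3}{2}\log r-\frac{1}{2}\log\pi-\frac{1}{2}
\]
for every measurable $\Omega\subset\mathbb C$ with $\mathrm{Area}(\Omega)=r^3$, with no inclusion restriction.

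Next I would apply a direct bathtub/rearrangement argument. Set $\rho:=r^{3/2}/\sqrt{\pi}$, so that $\pi\rho^2=r^3=\mathrm{Area}(\Omega)$, and let $B:=B(w,\rho)$. Because $\mathrm{Area}(\Omega)=\mathrm{Area}(B)$, we have $\mathrm{Area}(\Omega\setminus B)=\mathrm{Area}(B\setminus\Omega)$. On $\Omega\setminus B$ the integrand satisfies $\log|w-z|\ge\log\rho$, while on $B\setminus\Omega$ it satisfies $\log|w-z|\le\log\rho$. Therefore
\[
\int_{\Omega\setminus B}\log|w-z|\,d\lambda
\;\ge\;\log\rho\cdot\mathrm{Area}(\Omega\setminus B)
\;=\;\log\rho\cdot\mathrm{Area}(B\setminus\Omega)
\;\ge\;\int_{B\setminus\Omega}\log|w-z|\,d\lambda,
\]
and adding $\int_{\Omega\cap B}\log|w-z|\,d\lambda$ to both sides yields
\[
\int_\Omega\log|w-z|\,d\lambda(z)\;\ge\;\int_{B(w,\rho)}\log|w-z|\,d\lambda(z).
\]

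It remains to compute the right-hand side by polar coordinates centered at $w$:
\[
\int_{B(w,\rho)}\log|w-z|\,d\lambda(z)
\;=\;2\pi\int_0^\rho s\log s\,ds
\;=\;\pi\rho^2\log\rho-\frac{\pi\rho^2}{2}.
\]
Substituting $\pi\rho^2=r^3$ and $\log\rho=\tfrac{3}{2}\log r-\tfrac{1}{2}\log\pi$, one obtains
\[
\int_{B(w,\rho)}\log|w-z|\,d\lambda(z)
\;=\;r^3\!\left(\frac{3}{2}\log r-\frac{1}{2}\log\pi-\frac{1}{2}\right),
\]
which, after dividing by $r^3$, is the desired inequality. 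There is no serious obstacle here; the only conceptual point is the rearrangement step, and that reduces to the two-line bathtub comparison above.
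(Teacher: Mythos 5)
Your proof is correct and follows essentially the same route as the paper: drop the constraint $\Omega\subset B(0,r)$, identify the minimizing $\Omega$ as a disk of area $r^3$ centered at the singularity of the integrand, and evaluate by polar coordinates. The only difference is that the paper first uses translation invariance to reduce to $w=0$ and then asserts the minimality of the centered disk as ``easy to see,'' whereas you keep the disk centered at $w$ and supply the short bathtub comparison that justifies that step explicitly.
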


\begin{proof}[Proof of Lemma \ref{p-lem9.2}]
By definition, we have
\begin{align*}
B_r(w)
&\ge
\inf\left\{
\frac{1}{r^3} \int_{\Omega} \log |w - z| \, d\lambda(z)
\ \middle| \ 
\Area(\Omega) = r^3
\right\} \\
&=
\inf\left\{
\frac{1}{r^3} \int_{\Omega} \log |z| \, d\lambda(z)
\ \middle| \
\Area(\Omega) = r^3
\right\},
\end{align*}
where the second equality follows by translation invariance of Lebesgue measure.

It is easy to see 
that the minimum is attained when $\Omega = B(0,a)$ with $a = \pi^{-1/2} r^{3/2}$. Therefore,
\[
\begin{aligned}
B_r(w)
&\ge
\frac{1}{r^3} \int_{B(0, a)} \log |z| \, d\lambda(z) \\
&=
\frac{1}{r^3} \int_0^{2\pi} d\theta \int_0^a t \log t \, dt \\
&=
\frac{2\pi}{r^3} \left( \left[ \frac{1}{2} t^2 \log t \right]_0^a - \frac{1}{2} 
\int_0^a t \, dt \right) \\
&=
\frac{2\pi}{r^3} \left( \frac{1}{2} a^2 \log a - \frac{1}{4} a^2 \right) \\
&=
\frac{3}{2} \log r - \frac{1}{2} \log \pi - \frac{1}{2},
\end{aligned}
\]
as claimed. This completes the proof of Lemma~\ref{p-lem9.2}.
\end{proof}

\begin{lem}
\label{p-lem9.3}
Let $r > 0$. Then for every $w \in \overline{B}(0, 2)$, the following inequality holds:
\[
B_r(w)
\ge
\frac{3}{2} \log \left( \frac{|w|}{2} \right) - \frac{1}{2} \log \pi - \frac{1}{2}.
\]
\end{lem}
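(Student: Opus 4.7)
\smallskip

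The plan is to split into two cases according to whether $r \geq |w|/2$ or $r < |w|/2$, using Lemma~\ref{p-lem9.2} in the first case and a direct pointwise lower bound on $\log|w-z|$ in the second. One may assume $w \neq 0$, since otherwise the right-hand side is $-\infty$ and the statement is trivial.

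In the first case, when $r \geq |w|/2$, Lemma~\ref{p-lem9.2} applies directly and gives
\[
B_r(w) \geq \frac{3}{2} \log r - \frac{1}{2} \log \pi - \frac{1}{2} \geq \frac{3}{2} \log\!\left(\frac{|w|}{2}\right) - \frac{1}{2} \log \pi - \frac{1}{2},
\]
where the last inequality uses $r \geq |w|/2$ and the monotonicity of $\log$. So there is nothing further to do in this case.

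In the second case, when $r < |w|/2$, the constraint $\Omega \subset B(0,r)$ forces $\Omega$ to be far from $w$. More precisely, for every $z \in B(0,r)$ one has
\[
|w - z| \geq |w| - |z| > |w| - r > \frac{|w|}{2},
\]
so $\log|w-z| \geq \log(|w|/2)$ pointwise on $B(0,r)$. Integrating this bound over any $\Omega \subset B(0,r)$ with $\Area(\Omega) = r^3$ and dividing by $r^3$, I obtain
\[
\frac{1}{r^3} \int_\Omega \log|w-z|\, d\lambda(z) \geq \log\!\left(\frac{|w|}{2}\right),
\]
hence $B_r(w) \geq \log(|w|/2)$. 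It then remains to verify
\[
\log\!\left(\frac{|w|}{2}\right) \geq \frac{3}{2} \log\!\left(\frac{|w|}{2}\right) - \frac{1}{2} \log \pi - \frac{1}{2},
\]
which rearranges to $\log(|w|/2) \leq \log \pi + 1$; this is automatic because $|w| \leq 2$ gives $|w|/2 \leq 1 \leq \pi e$.

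The argument is essentially routine; there is no real obstacle, only a mild subtlety in the second case, where one must remember to use the hypothesis $w \in \overline{B}(0,2)$ (via $|w|/2 \leq 1$) to absorb the extra $\tfrac{1}{2}\log(|w|/2)$ into the constants $-\tfrac{1}{2}\log\pi - \tfrac{1}{2}$.
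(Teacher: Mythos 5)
Your proof is correct and follows essentially the same route as the paper's: the same case split at $r$ versus $|w|/2$, the same appeal to Lemma~\ref{p-lem9.2} in the first case, and the same pointwise estimate $|w-z|\ge |w|/2$ followed by the observation that $\log(|w|/2)\le 0$ absorbs the discrepancy in the second case. The only (cosmetic) deviations are your explicit aside about $w=0$ and your rearrangement of the final inequality to $\log(|w|/2)\le\log\pi+1$, which is the same fact the paper uses.
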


\begin{proof}[Proof of Lemma \ref{p-lem9.3}]
If $|w| \le 2r$, then the conclusion 
follows directly from Lemma~\ref{p-lem9.2} and the inequality $\log(|w|/2) \le \log r$.

If $|w| \ge 2r$, then for any $z \in B(0, r)$,
\[
|w - z| \ge |w| - |z| \ge |w| - r \ge \frac{|w|}{2}.
\]
Thus,
\[
\begin{aligned}
B_r(w)
&\ge
\inf\left\{
\frac{1}{r^3} \int_{\Omega} \log\left( \frac{|w|}{2} \right) \, d\lambda(z)
\ \middle| \
\Omega \subset B(0, r),\ \Area(\Omega) = r^3
\right\} \\
&=
\log\left( \frac{|w|}{2} \right).
\end{aligned}
\]

Combining both cases, we obtain the claimed 
inequality using the fact that 
$\log(|w|/2) \le 0$ for $w \in \overline{B}(0,2)$ and
\[
\frac{1}{2} \log \pi + \frac{1}{2} > 0.
\]
This completes the proof of Lemma~\ref{p-lem9.3}.
\end{proof}

\begin{lem}
\label{p-lem9.4}
Let $r \in \mathbb R$ with $0 < r < 1$. Then for all 
$z, w \in \mathbb C$ with $r \le |z| < 1$ and 
$|w| \le 2$, the following inequality holds:
\[
\log |w - z|
\le
\frac{2}{3} \cdot \frac{\log |z|}{\log r} \cdot B_r(w)
+ \frac{1}{3} \log \pi + \frac{1}{3} + 2 \log 2.
\]
\end{lem}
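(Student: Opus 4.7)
My plan is to establish the inequality by cases on the sign of $B_r(w)$. Let $\alpha := \log|z|/\log r$; since $r \le |z| < 1$ forces $\log r \le \log|z| < 0$, we have $\alpha \in (0,1]$. Denote the target constant by $C := \tfrac{1}{3}\log\pi + \tfrac{1}{3} + 2\log 2$, so the inequality to prove is
\[
\log|w-z| \le \tfrac{2\alpha}{3}B_r(w) + C.
\]

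In the case $B_r(w) \ge 0$ I would simply use the crude bound $|w-z| \le |w|+|z| \le 3$, so $\log|w-z| \le \log 3 < \log 4 = 2\log 2 \le C$, and since $\tfrac{2\alpha}{3}B_r(w) \ge 0$ the inequality follows.

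In the case $B_r(w) \le 0$ I would start from the triangle inequality $|w-z| \le 2\max(|w|,|z|)$, yielding
\[
\log|w-z| \le \log 2 + \max\bigl(\log|w|,\ \alpha\log r\bigr),
\]
where I have used $\log|z| = \alpha\log r$. I would then split according to which term realizes the maximum. If $\log|w| \ge \alpha\log r$, I apply Lemma \ref{p-lem9.3} rewritten as $\log|w| \le \tfrac{2}{3}B_r(w) + \tfrac{1}{3}\log\pi + \tfrac{1}{3} + \log 2$ to obtain $\log|w-z| \le \tfrac{2}{3}B_r(w) + C$; the assumptions $B_r(w) \le 0$ and $\alpha \le 1$ then give $\tfrac{2}{3}B_r(w) \le \tfrac{2\alpha}{3}B_r(w)$, as desired. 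If instead $\log|w| < \alpha\log r$, I apply Lemma \ref{p-lem9.2} rewritten as $\log r \le \tfrac{2}{3}B_r(w) + \tfrac{1}{3}\log\pi + \tfrac{1}{3}$; multiplying by $\alpha > 0$ and using $\alpha \le 1$ together with the positivity of $\tfrac{1}{3}\log\pi + \tfrac{1}{3}$ yields $\alpha\log r \le \tfrac{2\alpha}{3}B_r(w) + \tfrac{1}{3}\log\pi + \tfrac{1}{3}$, whence $\log|w-z| \le \tfrac{2\alpha}{3}B_r(w) + C$.

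The only real subtlety is managing the interpolation factor $\alpha$. Replacing $\tfrac{2}{3}B_r(w)$ by $\tfrac{2\alpha}{3}B_r(w)$ enlarges the right-hand side precisely when $B_r(w) \le 0$, so the estimates from Lemmas \ref{p-lem9.2} and \ref{p-lem9.3} suffice in that regime after the trivial bookkeeping above; in the complementary regime $B_r(w) \ge 0$ the same substitution shrinks the right-hand side, so one must fall back on the trivial bound, and the constant $C$ has been calibrated just large enough (namely $C \ge \log 4 > \log 3$) to absorb it.
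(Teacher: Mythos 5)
Your proof is correct, and your sub-case split $|w|\ge|z|$ vs.\ $|w|<|z|$ (which is what $\log|w|\ge\alpha\log r$ vs.\ $\log|w|<\alpha\log r$ amounts to) together with the appeal to Lemmas \ref{p-lem9.2} and \ref{p-lem9.3} is essentially the same decomposition the paper uses. Where you diverge is in introducing a preliminary case on the sign of $B_r(w)$. The paper avoids that entirely: in the regime $|w|\ge|z|$ it writes $\log|w-z|\le\log(|w|/2)+2\log 2$, notes that $\log(|w|/2)\le 0$ because $|w|\le 2$, and therefore $\log(|w|/2)\le\alpha\log(|w|/2)$ for $\alpha\in(0,1]$; only \emph{then} is Lemma \ref{p-lem9.3} applied, already multiplied by $\alpha$. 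In this way the interpolation factor $\alpha$ enters by multiplying a nonpositive quantity, which is sign-safe regardless of $B_r(w)$. Your route instead first bounds $\log|w-z|$ by $\tfrac{2}{3}B_r(w)+C$ without any $\alpha$ and then retrofits the factor by the comparison $\tfrac{2}{3}B_r(w)\le\tfrac{2\alpha}{3}B_r(w)$, which is only valid for $B_r(w)\le 0$ --- hence your extra case, which you correctly discharge with the crude bound $|w-z|\le 3<4$ and the observation that $C\ge 2\log 2=\log 4$. Both arguments are correct and of the same difficulty; the paper's version is marginally tighter bookkeeping since the sign of $B_r(w)$ never needs to be tracked.
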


\begin{proof}[Proof of Lemma \ref{p-lem9.4}]
Since $r \le |z| < 1$, we have $\log r \le \log |z| < 0$, 
hence \[0 < \frac{\log |z|}{\log r} \le 1.\]

If $|w| \le |z|$, then
\[
\log |w - z| \le \log(2|z|) = \log |z| + \log 2 
= \frac{\log |z|}{\log r} \log r + \log 2.
\]
Applying Lemma~\ref{p-lem9.2}, we obtain
\begin{align*}
\log |w - z|
&\le
\frac{\log |z|}{\log r} \cdot \frac{2}{3} \left( B_r(w) 
+ \frac{1}{2} \log \pi + \frac{1}{2} \right) + \log 2 \\
& \leq  
\frac{2}{3} \cdot \frac{\log |z|}{\log r} \cdot B_r(w)
+ \frac{1}{3} \log \pi + \frac{1}{3} + \log 2.
\end{align*}

If $|w| \ge |z|$, then
\[
\log |w - z| \le \log(2|w|) = \log 
\left( \frac{|w|}{2} \right) + 2 \log 2,
\]
and since $\log(|w|/2) \le 0$ and $\log |z|/\log r \le 1$, we get
\[
\log |w - z| \le \frac{\log |z|}{\log r} 
\cdot \log \left( \frac{|w|}{2} \right) + 2 \log 2.
\]
Applying Lemma~\ref{p-lem9.3}, we obtain
\begin{align*}
\log |w - z|
&\le
\frac{\log |z|}{\log r} \cdot \frac{2}{3} 
\left( B_r(w) + \frac{1}{2} \log \pi + \frac{1}{2} \right) + 2 \log 2 \\
& \leq 
\frac{2}{3} \cdot \frac{\log |z|}{\log r} \cdot B_r(w)
+ \frac{1}{3} \log \pi + \frac{1}{3} + 2 \log 2.
\end{align*}
This completes the proof of Lemma~\ref{p-lem9.4}.
\end{proof}

\section{Simpson's key lemma}\label{q-sec10}

The main goal of this section is to establish the following key lemma (see 
Lemma \ref{q-lem10.1}), which is essentially due to Simpson.  
Note that our version is slightly different from 
the original statement (see \cite[Lemma 10.2]{simpson1}).  
However, our formulation of Lemma \ref{q-lem10.1} is sufficient 
for the proof of Theorem \ref{p-thm12.3}.

In this section, we frequently use the following notation:
\[
S(a, r) := \{ z \in \mathbb{C} \mid |z - a| = r \},
\]
and 
\[
B(a, r)^* := \{ z \in \mathbb{C} \mid 0 < |z - a| <r \} = B(a, r) \setminus \{a\}.
\]

\begin{lem}[{\cite[Lemma 10.2]{simpson1}}]\label{q-lem10.1}
Let $\delta$ be a positive real number with $\delta < 1$.  
Suppose that $h$ is a smooth Hermitian metric on the trivial holomorphic vector bundle  
$\mathcal{O}_{B(0,1+2\delta)^*}^{\oplus k}$  
over the punctured disk  
$B(0,1+2\delta)^* := B(0,1+2\delta) \setminus \{0\}$,  
and that $h$ has negative curvature.  
Assume further that the eigenvalues of $h$ are less than or 
equal to $1$, and that  
\[
|\det h| \leq C |z|
\]  
holds for some positive constant $C$.  
Then there exist a positive constant $C'$ and a constant section 
$e \in \mathbb{C}^k$ of $\mathcal O^{\oplus k}_{B(0, 1+2\delta)^*}$ 
such that  
\[
|e(z)|_h \leq C' |z|^{\frac{1}{3k}}
\]  
for all $z \in B(0,1)^*$.
\end{lem}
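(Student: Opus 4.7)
I would combine a pointwise linear-algebra observation, a pigeonhole argument to extract a single constant direction, and the integral estimates of Section \ref{p-sec9}. For any nonzero $e \in \mathbb{C}^k$, set $u_e(z) := \log|e(z)|_h^2$. Negative curvature and Lemma \ref{p-lem2.7} give that $u_e$ is subharmonic on $B(0,1+2\delta)^*$; the eigenvalue hypothesis gives $u_e \leq \log|e|^2$, so $u_e$ is bounded above and extends subharmonically across $0$. From $|\det h(z)| \leq C|z|$ one sees that the smallest eigenvalue of $h(z)$ is at most $(C|z|)^{1/k}$, so at each $z \in B(0,1+2\delta)^*$ there is a unit eigenvector $v(z) \in S^{2k-1} \subset \mathbb{C}^k$ with $|v(z)|_h^2(z) \leq (C|z|)^{1/k}$.

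To find a common direction, fix a small $r > 0$ and cover $S^{2k-1}$ by $N \leq C_k \epsilon^{-(2k-1)}$ Euclidean balls of radius $\epsilon$. The measurable map $v\colon B(0,r)\setminus\{0\} \to S^{2k-1}$ yields, by pigeonhole, some $e \in S^{2k-1}$ for which $\Omega := \{z \in B(0,r) \mid |v(z) - e| < \epsilon\}$ has area at least $c_k r^2 \epsilon^{2k-1}$. The choice $\epsilon \sim r^{1/(2k-1)}$ balances this to give $|\Omega| \geq r^3$ while keeping $\epsilon^2 \leq r^{1/k}$; then for $z \in \Omega$, using $|\cdot|_h \leq |\cdot|$ from the eigenvalue hypothesis,
\[
|e|_h^2(z) \leq 2|v(z)|_h^2(z) + 2|e - v(z)|_h^2(z) \leq 2(Cr)^{1/k} + 2\epsilon^2 \leq C_1 r^{1/k},
\]
so $u_e(z) \leq \tfrac{1}{k}\log r + C_2$ on $\Omega$.

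The third step converts this set-wise bound into a pointwise bound via the Riesz decomposition $u_e(z) = \int \log|z-w|\,d\mu_e(w) + H_e(z)$ on $B(0,1+\delta)$, where $\mu_e \geq 0$ is of bounded total mass and $H_e$ is bounded harmonic. Applying Lemma \ref{p-lem9.4} to the integrand and then Fubini with Definition \ref{p-def9.1} gives
\[
\int B_r(w)\,d\mu_e(w) \leq \frac{1}{r^3}\int_{\Omega'} u_e\, d\lambda + O(1) \leq \frac{\log r}{k} + O(1)
\]
for $\Omega' \subset \Omega$ of area exactly $r^3$; plugging this back into Lemma \ref{p-lem9.4} yields $u_e(z) \leq \frac{2}{3k}\log|z| + O(1)$ for $r \leq |z| < 1$, which is the bound $|e(z)|_h \leq C'|z|^{1/(3k)}$. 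The case $|z| < r$ is handled by letting $r \to 0$ along a sequence: the constants can be made independent of $r$ and $e$, and compactness of $S^{2k-1}$ yields a limit direction $e^* \in S^{2k-1}$ for which, by continuity of $h(e,e)(z)$ in $e$ at each fixed $z$, the bound passes to the limit at every $z \in B(0,1)^*$.

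The main obstacle is the simultaneous calibration of the pigeonhole and the potential-theoretic step: the area demand $|\Omega| \geq r^3$ of Definition \ref{p-def9.1} and the approximation error $|e - v(z)|^2 \leq \epsilon^2$ move in opposite directions in $\epsilon$, and the scaling $\epsilon \sim r^{1/(2k-1)}$ must be tuned so that the two contributions combine with the factor $\tfrac{2}{3}$ of Lemma \ref{p-lem9.4} to produce exactly the exponent $\tfrac{1}{3k}$. A secondary obstacle is establishing uniform control: to obtain a single $e^*$ valid on all of $B(0,1)^*$ one must bound the Riesz mass $\mu_e(B(0,2))$ and the harmonic majorant $H_e$ uniformly in $e$, so that the compactness argument passes cleanly to the limit with a single constant $C'$.
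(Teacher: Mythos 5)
Your proposal is correct and follows essentially the same route as the paper: subharmonicity and the upper bound give a Riesz representation (the paper's Lemmas \ref{q-lem10.3}--\ref{q-lem10.6}), a pigeonhole/measure argument over $S^{2k-1}\times\overline{B}(0,r)$ produces a direction $e$ that is $h$-small on a set of area $r^3$ (the paper's Lemma \ref{q-lem10.7}), the potential estimates of Section \ref{p-sec9} convert this to a pointwise bound on $B(0,1)\setminus B(0,r)$, and compactness of $S^{2k-1}$ lets $r\to 0$. The only cosmetic difference is the pigeonhole step: you cover $S^{2k-1}$ by $\epsilon$-balls with $\epsilon\sim r^{1/(2k-1)}$ and select one with large preimage, whereas the paper uses the critical scale $\epsilon\sim r^{1/(2k)}$ and derives the existence of a large slice by an indirect total-volume count; both yield an open set of area at least $r^3$ on which $|e|_h^2\lesssim r^{1/k}$, so the numerology feeding into Lemma~\ref{p-lem9.4} and the resulting exponent $\tfrac{1}{3k}$ come out the same.
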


Before starting the proof of the lemma above,
we need to prove several preliminary results.

\begin{lem}\label{q-lem10.2} 
Let \( r_1 \) and \( \delta_1 \) be positive real numbers. 
Let \( u \) be a subharmonic function defined on \( B(0, r_1 + \delta_1) \) 
such that \( u \) is smooth outside the origin. 
Let \( f(z) \) be a smooth function on \( B(0, r_1 + \delta_1) \). 
Define
\[
\sigma(z) := \partial \overline{\partial} u 
= \frac{\partial^2 u}{\partial z \, \partial \overline{z}} \, dz \wedge d\overline{z} 
= \frac{1}{4} \Delta u \, dz \wedge d\overline{z}.
\]
Note that
\[
\Delta = 4 \frac{\partial^2 }{\partial z \, \partial \overline{z}}
\]
denotes the Laplacian with respect to \( z \), understood in the sense of distributions. 
Since \( u \) is subharmonic, \( \sqrt{-1} \partial \overline{\partial} u \) 
is a closed positive \((1,1)\)-current. Hence, \( \sqrt{-1} \sigma(z) \) defines 
a positive Radon measure {\em{(}}see, for example, 
\cite[(3.1.14) Lemma]{noguchi-ochiai}{\em{)}}. In this setting, we have
\begin{equation}\label{q-eq10.1} 
\begin{split}
&\int_{\overline{B}(0, r_1)} f(z) \, \sigma(z) 
- \frac{1}{4} \int_{\overline{B}(0, r_1)} (\Delta f) \, u(z) \, dz \wedge d\overline{z} 
\\&
= \int_{S(0, r_1)} \left( \frac{\partial u}{\partial \overline{z}} f(z) \, d\overline{z} 
+ \frac{\partial f}{\partial z} u(z) \, dz \right).
\end{split}
\end{equation}
\end{lem}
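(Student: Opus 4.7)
The strategy is to prove this distributional form of Green's second identity by mollifying $u$ and reducing to the classical smooth case. A direct computation using $d=\partial+\bar\partial$, the type identities $\partial^2=\bar\partial^2=0$, and the fact that we are in complex dimension one shows that, for $u,f$ of class $C^\infty$,
\[
d\bigl(f\,\bar\partial u + u\,\partial f\bigr) \;=\; f\,\partial\bar\partial u \;-\; \tfrac{1}{4}\,u\,(\Delta f)\,dz\wedge d\bar z.
\]
Consequently, if $u$ were smooth on the whole of $B(0,r_1+\delta_1)$, then Stokes' theorem applied to the $1$-form $f\,\bar\partial u + u\,\partial f$ on the compact oriented manifold with boundary $\overline{B}(0,r_1)$ would yield \eqref{q-eq10.1} at once.

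To handle the possible singularity of $u$ at the origin, I would regularize by mollification: fix a standard radial mollifier $\rho_\delta$ and set $u_\delta := u*\rho_\delta$, which is smooth on $B(0,r_1+\delta_1/2)$ for every $0<\delta<\delta_1/2$. Applying the smooth case above to $u_\delta$ yields
\[
\int_{\overline{B}(0,r_1)}\!\Bigl(f\,\partial\bar\partial u_\delta - \tfrac{1}{4}\,u_\delta\,(\Delta f)\,dz\wedge d\bar z\Bigr) \;=\; \int_{S(0,r_1)}\!\bigl(f\,\bar\partial u_\delta + u_\delta\,\partial f\bigr).
\]

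The remaining step is to pass to the limit $\delta \to 0^+$ term by term. Since $u_\delta \to u$ in $L^1_{\mathrm{loc}}$ and $\Delta f$ is bounded on $\overline{B}(0,r_1)$, the second integral on the left converges to $\tfrac{1}{4}\int u(\Delta f)\,dz\wedge d\bar z$. Because $\partial\bar\partial u_\delta = \sigma*\rho_\delta$ converges to $\sigma$ weakly as Radon measures, and because $u$ (hence $\sigma$) is smooth in an open neighbourhood of the circle $S(0,r_1)$, the measure $\sigma$ carries no mass on $S(0,r_1)$; therefore the weak convergence survives on the closed disk and $\int_{\overline{B}(0,r_1)} f\,\partial\bar\partial u_\delta \to \int_{\overline{B}(0,r_1)} f\,\sigma$. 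On the right, the smoothness of $u$ near $S(0,r_1)$ gives $u_\delta \to u$ in $C^1$ uniformly on $S(0,r_1)$, so both boundary integrals converge to the desired limits. The only delicate point in the whole argument is this boundary-sensitive weak convergence on the closed ball, which is secured by the absence of singular mass of $\sigma$ on $S(0,r_1)$; everything else is a routine application of standard mollification estimates.
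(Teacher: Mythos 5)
Your proof is correct, and it takes a genuinely different route from the paper's. You mollify $u$ globally, apply the smooth Stokes identity once, and pass to the limit; the delicate step is the convergence $\int_{\overline{B}(0,r_1)} f\,\partial\overline{\partial}u_\delta \to \int_{\overline{B}(0,r_1)} f\,\sigma$, which you justify by a Portmanteau-type observation: the positive Radon measures $\sigma*\rho_\delta$ converge weakly to $\sigma$ with uniformly bounded mass on compacta, and $\sigma$ puts no mass on the circle $S(0,r_1)$ (because $u$ is smooth there), so the limit passes through the closed-ball indicator. The paper instead splits $f = g + h$ with $g = \varphi f$ compactly supported in $B(0,r_1)$ and $h$ vanishing near the origin; it mollifies only for the $g$-term (where the boundary integral is trivially zero and the bulk term is a pairing against a $C_c^\infty$ test function, so weak-$*$ convergence applies without any boundary subtlety), and applies Stokes directly to the $h$-term on the annulus where $u$ is already smooth. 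Both arguments work; yours is conceptually more compact but leans on the slightly more sophisticated zero-boundary-mass passage, whereas the paper's cutoff decomposition is longer but stays entirely at the level of pairing a distribution against compactly supported test functions plus the classical smooth Stokes theorem. If you want to make your boundary step fully explicit without invoking Portmanteau, you can note that $\sigma_\delta \to \sigma$ locally uniformly in an annular neighbourhood of $S(0,r_1)$ (since $u$ is smooth there), insert a cutoff $\psi$ equal to $1$ near the origin and vanishing near $S(0,r_1)$, split $\int_{\overline{B}(0,r_1)} f\,\sigma_\delta = \int \psi f\,\sigma_\delta + \int_{\overline{B}(0,r_1)}(1-\psi)f\,\sigma_\delta$, and handle the two pieces by weak convergence and uniform convergence respectively --- which is, in effect, the paper's $g+h$ split transposed to the limit argument.
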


For the sake of completeness, we provide a detailed proof of Lemma~\ref{q-lem10.2}, 
although it is more or less standard.

\begin{proof}[Proof of Lemma~\ref{q-lem10.2}]
Take a smooth function \( \varphi(z) \) on \( \mathbb{C} \) such that 
\( \supp \varphi \subset B(0, r_1) \), and \( \varphi(z) = 1 \) on 
\( B(0, \tfrac{1}{2} r_1 + \delta_2) \) for some small constant 
\( 0 < \delta_2 \ll 1 \). Define \( g(z) := \varphi(z) f(z) \) and 
\( h(z) := f(z) - g(z) \). Then \( f(z) = g(z) + h(z) \), where 
\( \supp g \subset B(0, r_1) \) and \( h(z) = 0 \) on 
\( B(0, \tfrac{1}{2} r_1 + \delta_2) \).

Let \( \rho \) be a smooth function on \( \mathbb{C} \), supported in \( B(0,1) \), 
radial (i.e., \( \rho(z) \) depends only on \( |z| \)), non-negative, and normalized so that
\[
\int_{\mathbb{C}} \rho(z) \, d\lambda(z) = 1,
\]
where \( d\lambda(z) := \frac{\sqrt{-1}}{2} \, dz \wedge d\overline{z} \) 
denotes the Lebesgue measure on \( \mathbb{C} \). Define the family of smoothing kernels 
\[
\rho_\varepsilon(z) := \frac{1}{\varepsilon^2} \rho\left( \frac{z}{\varepsilon} \right).
\]

Set \( u_\varepsilon := u * \rho_\varepsilon \). 
Then \( u_\varepsilon \) is a smooth subharmonic 
function on a neighborhood of \( \overline{B}(0, r_1) \) 
for sufficiently small \( \varepsilon > 0 \).

By applying Stokes' theorem to \( g(z) \) and \( u_\varepsilon \), we obtain:
\begin{equation}\label{q-eq10.2} 
\begin{split}
&\int_{\overline{B}(0, r_1)} g(z) \, \partial \overline{\partial} u_{\varepsilon} 
- \frac{1}{4} \int_{\overline{B}(0, r_1)} 
(\Delta g) \, u_{\varepsilon}(z) \, dz \wedge d\overline{z} \\
&= \int_{S(0, r_1)} 
\left( \frac{\partial u_\varepsilon}{\partial \overline{z}} g(z) \, d\overline{z} 
+ \frac{\partial g}{\partial z} u_\varepsilon(z) \, dz \right).
\end{split}
\end{equation}

We note that we have 
\[
\frac{\partial u_\varepsilon}{\partial \overline{z}} = 
\left( \frac{\partial u}{\partial \overline{z}} \right) * \rho_\varepsilon, 
\] 
where \( \frac{\partial u}{\partial \overline{z}} \) is taken 
in the sense of distributions. Since \( u \) is smooth outside the origin, 
both \( u_\varepsilon(z) \) and \( \frac{\partial u_\varepsilon}{
\partial \overline{z}} \) converge uniformly to \( u(z) \) and 
\( \frac{\partial u}{\partial \overline{z}} \), 
respectively, on an open neighborhood of \(S(0, r_1)\) as \( \varepsilon \to +0\). 
It is well known that 
\( \partial \overline{\partial} u_\varepsilon \to \partial \overline{\partial} u \) 
in the sense of currents, and \( u_\varepsilon \to u \) in the sense of distributions.

Since \( g \) is smooth with compact support, we may 
let \( \varepsilon \to +0 \) in \eqref{q-eq10.2} to obtain:
\begin{equation}\label{q-eq10.3} 
\begin{split}
&\int_{\overline{B}(0, r_1)} g(z) \, \partial \overline{\partial} u 
- \frac{1}{4} \int_{\overline{B}(0, r_1)} (\Delta g) \, u(z) \, dz \wedge d\overline{z} \\
&= \int_{S(0, r_1)} \left( \frac{\partial u}{\partial \overline{z}} g(z) \, d\overline{z} 
+ \frac{\partial g}{\partial z} u(z) \, dz \right).
\end{split}
\end{equation}

Next, since \( h(z) = 0 \) and \( \Delta h = 0 \) 
on \( B(0, \tfrac{1}{2} r_1 + \delta_2) \), we can apply Stokes' theorem to get:
\begin{equation*}
\begin{split}
&\int_{\overline{B}(0, r_1)} h(z) \, \sigma(z) 
- \frac{1}{4} \int_{\overline{B}(0, r_1)} (\Delta h) \, u(z) \, dz \wedge d\overline{z} \\
&= \frac{1}{4} \int_{\overline{B}(0, r_1) \setminus B(0, \tfrac{1}{2} r_1)} 
\left( h(z) \Delta u - u(z) \Delta h \right) \, dz \wedge d\overline{z} \\
&= \int_{S(0, r_1)} 
\left( \frac{\partial u}{\partial \overline{z}} 
h(z) \, d\overline{z} 
+ \frac{\partial h}{\partial z} u(z) \, dz \right) 
-\int_{S(0, \tfrac{1}{2} r_1)} 
\left( \frac{\partial u}{\partial \overline{z}} h(z) \, d\overline{z} 
+ \frac{\partial h}{\partial z} u(z) \, dz \right).
\end{split}
\end{equation*}
Since \( h(z) = 0 \) and \( \partial h / \partial z = 0 \) 
on a neighborhood of \( S(0, \tfrac{1}{2} r_1) \), the second boundary integral vanishes. Hence,
\begin{equation}\label{q-eq10.4}
\begin{split}
&\int_{\overline{B}(0, r_1)} h(z) \, \sigma(z) 
- \frac{1}{4} \int_{\overline{B}(0, r_1)} (\Delta h) \, u(z) \, dz 
\wedge d\overline{z} \\
&= \int_{S(0, r_1)} \left( \frac{\partial u}{\partial \overline{z}} h(z) \, d\overline{z} 
+ \frac{\partial h}{\partial z} u(z) \, dz \right).
\end{split}
\end{equation}
By adding \eqref{q-eq10.3} and \eqref{q-eq10.4}, we obtain the desired equality \eqref{q-eq10.1}.

This completes the proof of Lemma~\ref{q-lem10.2}.
\end{proof}

\begin{lem}
\label{q-lem10.3}
Let $\delta > 0$ be a real number, and let $u$ be a subharmonic function
on $B(0,1+2\delta)$ that is smooth on $B(0,1+2\delta)^* := B(0,1+2\delta) \setminus \{0\}$.  
Then for every $a \in B(0,1)^*$, we have
\begin{equation}\label{q-eq10.5}
\begin{split}
u(a)
&= \frac{\sqrt{-1}}{\pi} \int_{B(0,1+\delta)} \log |z-a| \, \sigma(z) \\
&\quad + \frac{1}{\pi \sqrt{-1}} 
\int_{S(0,1+\delta)} 
\frac{\partial u}{\partial \overline{z}} \log|z-a|\, d\overline{z}
+ \frac{1}{2\pi \sqrt{-1}} \int_{S(0,1+\delta)} \frac{1}{z-a} u(z)\,dz.
\end{split}
\end{equation}
Here, $\sqrt{-1}\, \sigma(z) := \sqrt{-1}\, \partial \overline{\partial} u$, computed 
in the sense of currents, defines a positive Radon measure on $B(0,1+2\delta)$ 
because $u$ is subharmonic {\em{(}}see, 
for example, \cite[(3.1.14) Lemma]{noguchi-ochiai}{\em{)}}.
\end{lem}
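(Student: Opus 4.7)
The plan is to derive \eqref{q-eq10.5} from Lemma~\ref{q-lem10.2} applied to the smoothed fundamental solution
\[
f_{\varepsilon}(z) := \frac{1}{2}\log\bigl(|z-a|^2 + \varepsilon^2\bigr), \qquad \varepsilon > 0,
\]
which is smooth on $B(0, 1+2\delta)$ and decreases monotonically to $\log|z-a|$ as $\varepsilon \downarrow 0$. A direct computation gives
\[
\Delta f_{\varepsilon} = \frac{2\varepsilon^2}{(|z-a|^2+\varepsilon^2)^2},
\]
which is a nonnegative smooth approximation of $2\pi\, \delta_a$ in the sense of distributions.

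First, I would apply Lemma~\ref{q-lem10.2} with $r_1 = 1+\delta$ and $f = f_{\varepsilon}$ to obtain, for every $\varepsilon > 0$,
\[
\int_{\overline{B}(0, 1+\delta)} f_{\varepsilon}\, \sigma - \frac{1}{4}\int_{\overline{B}(0, 1+\delta)} (\Delta f_{\varepsilon})\, u\, dz \wedge d\overline{z} = \int_{S(0, 1+\delta)} \left( \frac{\partial u}{\partial \overline{z}} f_{\varepsilon}\, d\overline{z} + \frac{\partial f_{\varepsilon}}{\partial z}\, u\, dz\right),
\]
and then let $\varepsilon \downarrow 0$ in each term. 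Monotone convergence, applied to the positive Radon measure $\sqrt{-1}\sigma$ with $f_{\varepsilon} \downarrow \log|z-a|$ and $f_{\varepsilon}$ bounded above by a fixed constant on $\overline{B}(0, 1+\delta)$, gives $\int f_{\varepsilon}\, \sigma \to \int \log|z-a|\, \sigma$. A change of variables $w = (z-a)/\varepsilon$, combined with $dz \wedge d\overline{z} = -2\sqrt{-1}\, dA$ and the continuity of $u$ at $a$ (which holds since $a \in B(0, 1)^*$ lies in the smooth locus of $u$), shows that $-\frac{1}{4}\int (\Delta f_{\varepsilon})\, u\, dz \wedge d\overline{z} \to \sqrt{-1}\, \pi\, u(a)$. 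On the fixed circle $S(0, 1+\delta)$, the uniform convergences $f_{\varepsilon} \to \log|z-a|$ and $\partial f_{\varepsilon}/\partial z \to 1/(2(z-a))$ allow the boundary integrals to pass directly to their natural limits.

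Collecting these limits and solving for $u(a)$, together with the identities $-1/(\sqrt{-1}\pi) = \sqrt{-1}/\pi$, $1/(\sqrt{-1}\pi) = 1/(\pi\sqrt{-1})$, and $1/(2\sqrt{-1}\pi) = 1/(2\pi\sqrt{-1})$, yields precisely the formula \eqref{q-eq10.5}. Note that since $u$ is smooth near $S(0, 1+\delta)$, $\sigma$ has no mass on the boundary circle, so the distinction between $\int_{\overline{B}(0, 1+\delta)}$ and $\int_{B(0, 1+\delta)}$ is immaterial.

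The main point requiring care is the $\sqrt{-1}\sigma$-integrability of $\log|z-a|$ on $\overline{B}(0, 1+\delta)$, which justifies the convergence of the interior integral. The positive part of $\log|z-a|$ is uniformly bounded on the ball, and the only potential singularity of the negative part is at $z = a$; but there $u$ is smooth (as $a \neq 0$), so $\sqrt{-1}\sigma$ agrees with a smooth density in a neighborhood of $a$, making $-\log|z-a|$ integrable against $\sqrt{-1}\sigma$ there. Any singular contribution of $\sigma$ at the origin is harmless because $\log|z-a|$ is finite at $z = 0$.
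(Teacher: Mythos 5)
Your proof is correct, but it takes a genuinely different route from the paper. The paper's argument is a classical double excision: it removes a small disk $B(0,r_1)$ about the origin (where $u$ is merely subharmonic) and a small disk $B(a,r_2)$ about $a$ (where $\log|z-a|$ is singular), applies Stokes' theorem on the remaining region where everything is smooth, lets $r_2\to 0$ to produce the term $2\pi\sqrt{-1}\,u(a)$ from the boundary circle $S(a,r_2)$, and finally invokes Lemma~\ref{q-lem10.2} with $f=\log|z-a|^2$ on the inner disk $\overline B(0,r_1)$ to absorb the remaining boundary term into an integral against $\sigma$. Your approach instead mollifies the logarithmic kernel to $f_\varepsilon(z)=\tfrac12\log(|z-a|^2+\varepsilon^2)$, which is globally smooth, applies Lemma~\ref{q-lem10.2} once on the full disk with $r_1=1+\delta$, and passes to the limit $\varepsilon\downarrow 0$ in all four terms simultaneously: monotone convergence for the interior $\sigma$-integral (with the integrability of $\log|z-a|$ against $\sqrt{-1}\sigma$ checked by separating the smooth locus near $a$ from the bounded region near $0$), an approximation-of-identity computation for the $\Delta f_\varepsilon$ term using $\int\Delta f_\varepsilon\,dA=2\pi$ and continuity of $u$ at $a\neq 0$, and uniform convergence on $S(0,1+\delta)$ for the two boundary terms. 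I verified the sign bookkeeping: with $dz\wedge d\overline z=-2\sqrt{-1}\,dA$, the $\Delta f_\varepsilon$ term contributes $+\sqrt{-1}\pi\,u(a)$ on the left of Lemma~\ref{q-lem10.2}, and dividing by $\sqrt{-1}\pi$ produces exactly the constants $\sqrt{-1}/\pi$, $1/(\pi\sqrt{-1})$, and $1/(2\pi\sqrt{-1})$ appearing in \eqref{q-eq10.5}. The net effect is that your version trades the paper's two-parameter excision and an extra Stokes computation for a single one-parameter mollification; it is somewhat cleaner, using Lemma~\ref{q-lem10.2} as the sole source of the Green-type identity, while the paper's approach keeps the singular point $a$ and the origin visibly separate, which some readers may find more transparent.
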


\begin{proof}[Proof of Lemma \ref{q-lem10.3}]
Let $r_1$ and $r_2$ be small positive real numbers such that 
$\overline B(0, r_1)\cap 
\overline B(a, r_2)=\emptyset$ 
and $\overline B(a, r_2) \subset B(0, 1+\delta)$.  
Define
\[
\Omega_1 := \overline{B}(0, 1+\delta) \setminus 
\left( B(0, r_1) \cup B(a, r_2) \right).
\]
On an open neighborhood of $\Omega_1$, both $u$ and $\log |z - a|$ are smooth. 
Thus, we have
\[
d\left( \frac{\partial u}{\partial \overline{z}} \log|z-a|^2\, d\overline{z}
+ \frac{1}{z-a} u\, dz \right)
= 2\log|z-a|\, \partial \overline{\partial} u
\]
on an open neighborhood of $\Omega_1$. Applying Stokes' theorem yields
\begin{equation}\label{q-eq10.6}
\begin{split}
2\int_{\Omega_1} \log|z-a|\, \partial \overline{\partial} u
&= \int_{\partial \Omega_1}
\left( \frac{\partial u}{\partial \overline{z}} \log|z-a|^2\, d\overline{z}
+ \frac{1}{z-a} u\, dz \right) \\
&= \int_{S(0,1+\delta)}
\left( 2 \frac{\partial u}{\partial \overline{z}} \log|z-a|\, d\overline{z}
+ \frac{1}{z-a} u\, dz \right) \\
&\quad - \int_{S(0,r_1)}
\left( 2 \frac{\partial u}{\partial \overline{z}} \log|z-a|\, d\overline{z}
+ \frac{1}{z-a} u\, dz \right) \\
&\quad - \int_{S(a,r_2)}
\left( 2 \frac{\partial u}{\partial \overline{z}} \log|z-a|\, d\overline{z}
+ \frac{1}{z-a} u\, dz \right).
\end{split}
\end{equation}

As $r_2 \to +0$, elementary computations give
\[
\int_{S(a,r_2)} \frac{\partial u}{\partial \overline{z}} \log|z-a|\, d\overline{z} \to 0, 
\quad
\int_{S(a,r_2)} \frac{1}{z-a} u\, dz \to 2\pi \sqrt{-1} u(a).
\]
Taking the limit as $r_2 \to +0$ in \eqref{q-eq10.6}, we obtain
\begin{equation*}
\begin{split}
2 \int_{\Omega_2} \log|z-a|\, \partial \overline{\partial} u
&= \int_{S(0,1+\delta)}
\left( 2 \frac{\partial u}{\partial \overline{z}} \log|z-a|\, d\overline{z}
+ \frac{1}{z-a} u\, dz \right) \\
&\quad - \int_{S(0,r_1)}
\left( 2 \frac{\partial u}{\partial \overline{z}} \log|z-a|\, d\overline{z}
+ \frac{1}{z-a} u\, dz \right) \\
&\quad - 2\pi \sqrt{-1} u(a),
\end{split}
\end{equation*}
where
\[
\Omega_2 := \overline{B}(0,1+\delta) \setminus B(0, r_1).
\]

Solving for \( u(a) \), we obtain
\begin{equation}\label{q-eq10.7}
\begin{split}
u(a) &= \frac{\sqrt{-1}}{\pi} \int_{\Omega_2} \log |z-a|\, \partial \overline{\partial} u \\
&\quad + \frac{1}{2\pi \sqrt{-1}} \int_{S(0,1+\delta)}
\left( 2 \frac{\partial u}{\partial \overline{z}} \log|z-a|\, d\overline{z}
+ \frac{1}{z-a} u\, dz \right) \\
&\quad - \frac{1}{2\pi \sqrt{-1}} \int_{S(0,r_1)}
\left( 2 \frac{\partial u}{\partial \overline{z}} \log|z-a|\, d\overline{z}
+ \frac{1}{z-a} u\, dz \right).
\end{split}
\end{equation}

We put $f(z):=\log |z-a|^2$. Then $\Delta f=0$ on a neighborhood 
of $\overline B(0, r_1)$. 
Thus, by Lemma \ref{q-lem10.2}, we have
\begin{equation}\label{q-eq10.8}
2 \int_{\overline{B}(0, r_1)} \log |z-a|\sigma (z)
= \int_{S(0, r_1)} \left(
2 \frac{\partial u}{\partial \overline{z}} \log|z-a|\, d\overline{z}
+ \frac{1}{z-a} u\, dz \right).
\end{equation}
Combining \eqref{q-eq10.7} and \eqref{q-eq10.8}, we obtain 
the desired identity \eqref{q-eq10.5} 
since $\overline B(0, 1+\delta)\setminus \Omega_2=B(0, r_1)$.

This completes the proof of Lemma~\ref{q-lem10.3}.
\end{proof}

\begin{say}[Setting]\label{q-say10.4} 
We now proceed to prove Lemma \ref{q-lem10.1}.  
First, we clarify the setting of the lemma.  
Define the function  
\[
f(z, e) := \log |e(z)|_h,
\]  
where \( z \in B(0,1+2\delta)^* \) 
and \( e \in S^{2k-1} := \{v \in \mathbb{C}^k \mid |v| = 1\} \subset \mathbb{C}^k \). 
Then $f$ is a smooth function on 
$B(0, 1+2\delta)^* \times S^{2k-1}$, and satisfies 
$f(z, e) \leq 0$, since the eigenvalues 
of $h$ are less than or equal to $1$.

By Lemma \ref{p-lem2.7}, and since the curvature 
of $h$ is negative, it follows that 
$f(z, e)$ is a smooth subharmonic function on $B(0, 1+2\delta)^*$ 
for every $e \in S^{2k-1}$.
Therefore, $f(z, e)$ extends to a locally integrable subharmonic 
function on $B(0,1+2\delta)$ for every $e \in S^{2k-1}$ 
(see \cite[(3.3.25) Theorem]{noguchi-ochiai}).

We define
\[
\mu(z, e) := \Delta f(z, e),
\]
where
\[
\Delta = 4\frac{\partial^2}{\partial z \partial \overline{z}}
\]
is the Laplacian in the sense of distributions, taken 
with respect to the variable $z$.

Then, for each fixed $e \in S^{2k-1}$, the function $f(z, e)$ being 
subharmonic implies that $\mu(z, e)$ defines a positive Radon 
measure on $B(0,1+2\delta)$ (see, for example, \cite[(3.1.14) Lemma]{noguchi-ochiai}).
\end{say}

\begin{lem}\label{q-lem10.5}
In the setting of \ref{q-say10.4},  
there exists a positive constant \( C \) such 
that the following inequality holds:
\[
\left| 
f(z,e) - \frac{1}{2\pi} \int_{B(0,1+\delta)} \log |w - z| \, \mu(w,e) \, d\lambda(w)
\right| \leq C
\]
for all \( (z,e) \in B(0,1)^* \times S^{2k-1} \),  
where \( d\lambda(w) := \frac{\sqrt{-1}}{2} \, dw \wedge d\overline{w} \).
\end{lem}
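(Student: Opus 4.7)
The plan is to derive the stated bound by reading it off from the Riesz-type representation for subharmonic functions already provided by Lemma \ref{q-lem10.3}, then showing that the boundary contributions in that formula are uniformly bounded in $(a,e)$.

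For each fixed $e\in S^{2k-1}$, set $u(z):=f(z,e)$. By the discussion in \ref{q-say10.4}, $u$ is subharmonic on $B(0,1+2\delta)$ and smooth on $B(0,1+2\delta)^*$, so Lemma~\ref{q-lem10.3} applies with $a\in B(0,1)^*$. The first step of the plan is to translate the volume term. Writing $\sigma(z)=\tfrac14 \Delta u\, dz\wedge d\overline z$ and using $dz\wedge d\overline z=-2\sqrt{-1}\,d\lambda(z)$ together with $\mu(z,e)=\Delta u$, one gets the identity
\[
\frac{\sqrt{-1}}{\pi}\int_{B(0,1+\delta)}\log|z-a|\,\sigma(z)
=\frac{1}{2\pi}\int_{B(0,1+\delta)}\log|w-a|\,\mu(w,e)\,d\lambda(w),
\]
so that Lemma~\ref{q-lem10.3} takes the form
\[
f(a,e)=\frac{1}{2\pi}\int_{B(0,1+\delta)}\log|w-a|\,\mu(w,e)\,d\lambda(w)+I_1(a,e)+I_2(a,e),
\]
where $I_1$ and $I_2$ are the two boundary integrals over $S(0,1+\delta)$ appearing in \eqref{q-eq10.5}.

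The second step is to bound $I_1$ and $I_2$ uniformly for $(a,e)\in B(0,1)^*\times S^{2k-1}$. The circle $S(0,1+\delta)$ is compactly contained in $B(0,1+2\delta)^*$, where the metric $h$ is smooth, and the parameter space $S^{2k-1}$ is compact. Hence the smooth function $(z,e)\mapsto f(z,e)$ and its derivative $(z,e)\mapsto \partial f/\partial\overline z$ are uniformly bounded on the compact set $S(0,1+\delta)\times S^{2k-1}$. Moreover, for $a\in B(0,1)$ and $z\in S(0,1+\delta)$ one has the elementary geometric bounds $\delta\le |z-a|\le 2+\delta$, which give uniform control of $|\log|z-a||$ and of $|1/(z-a)|$. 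Combining these two families of bounds, $|I_1(a,e)|+|I_2(a,e)|\le C$ with $C$ independent of $(a,e)$, which is the assertion of the lemma.

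There is really no hard step here once Lemma~\ref{q-lem10.3} is in hand: the only mildly delicate point is to verify that the constant can be taken independent of $e$, which is secured by the joint smoothness of $h$ on a neighborhood of $S(0,1+\delta)$ together with the compactness of $S^{2k-1}$, and the observation that the lower bound $|z-a|\ge\delta$ for $a\in B(0,1)$, $z\in S(0,1+\delta)$ prevents the boundary integrands from blowing up as $a$ approaches the boundary of $B(0,1)$ or the origin.
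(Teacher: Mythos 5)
Your proposal is correct and follows essentially the same route as the paper: apply Lemma~\ref{q-lem10.3}, rewrite the area integral in terms of $\mu(w,e)\,d\lambda(w)$ via the identity $\partial\overline\partial f=\tfrac14\mu\,dw\wedge d\overline w$ and $dw\wedge d\overline w=-2\sqrt{-1}\,d\lambda$, and then bound the two boundary integrals over $S(0,1+\delta)$. The paper phrases the last step by observing that the boundary expression is a continuous function of $(z,e)$ on the compact set $\overline B(0,1)\times S^{2k-1}$, whereas you bound the integrands pointwise using smoothness of $f$ near $S(0,1+\delta)$, compactness of $S^{2k-1}$, and the geometric inequality $\delta\le|w-z|$; these are the same argument stated at slightly different levels of detail.
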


\begin{proof}[Proof of Lemma \ref{q-lem10.5}]
Consider the function
\[
(z,e) \mapsto
\left|
\frac{1}{\pi \sqrt{-1}} \int_{S(0,1+\delta)}
\frac{\partial f}{\partial \overline{w}}(w, e) \log|w - z| \, d\overline{w}
+
\frac{1}{2\pi \sqrt{-1}} \int_{S(0,1+\delta)} \frac{f(w,e)}{w - z} \, dw
\right|.
\]
This function is continuous on the compact set $\overline{B}(0,1) \times S^{2k-1}$.  
Therefore, there exists a constant $C > 0$ such that
\[
\left|
\frac{1}{\pi \sqrt{-1}} \int_{S(0,1+\delta)}
\frac{\partial f}{\partial \overline{w}}(w, e) \log|w - z| \, d\overline{w}
+
\frac{1}{2\pi \sqrt{-1}} \int_{S(0,1+\delta)} \frac{f(w,e)}{w - z} \, dw
\right| \le C
\]
for all $(z,e) \in \overline{B}(0,1) \times S^{2k-1}$.

On the other hand, by Lemma \ref{q-lem10.3} and the identity
\[
4 \partial \overline{\partial} f(w,e) = \mu(w,e)  dw \wedge d\overline{w},
\]
we have
\begin{align*}
f(z,e) 
&- \frac{1}{2\pi} \int_{B(0,1+\delta)} \log |w - z| \, \mu(w,e) \, d\lambda(w) \\
&=
\frac{1}{\pi \sqrt{-1}} \int_{S(0,1+\delta)}
\frac{\partial f}{\partial \overline{w}}(w,e) \log|w - z| \, d\overline{w}
+
\frac{1}{2\pi \sqrt{-1}} \int_{S(0,1+\delta)} \frac{f(w,e)}{w - z} \, dw.
\end{align*}

This proves the desired estimate.
\end{proof}

\begin{lem}
\label{q-lem10.6}
In the setting of \ref{q-say10.4}, there exists a positive constant $C$
such that the inequality
\[
\int_{B(0,1+\delta)}\mu(z,e)d\lambda(z) \le C
\]
holds for every $e \in S^{2k-1}$.
\end{lem}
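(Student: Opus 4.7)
The plan is to deduce the bound from Jensen's formula for subharmonic functions, which relates the total mass of the Riesz measure on a sub-disk to the growth of circular averages. Fix auxiliary radii $r := 1+\delta$ and $R := 1+\tfrac{3\delta}{2}$, so that $r < R < 1+2\delta$. For each $e \in S^{2k-1}$ the function $u_e(z) := f(z,e)$ is subharmonic on $B(0,1+2\delta)$ with $\mu(\cdot,e) = \Delta u_e$, and smooth on $B(0,1+2\delta)^*$. Writing $\mathcal{M}_v(t)$ for the circular average of a function $v$ over $S(0,t)$, the standard Jensen identity
\[
\mathcal{M}_{u_e}(R) - \mathcal{M}_{u_e}(r) \,=\, \frac{1}{2\pi}\int_r^R \frac{1}{t}\left(\int_{B(0,t)} \mu(z,e)\,d\lambda(z)\right) dt,
\]
combined with the monotonicity of $t \mapsto \int_{B(0,t)}\mu(z,e)\,d\lambda(z)$, yields the one-sided estimate
\[
\frac{\log(R/r)}{2\pi}\int_{B(0,1+\delta)}\mu(z,e)\,d\lambda(z) \,\leq\, \mathcal{M}_f(R,e) - \mathcal{M}_f(r,e),
\]
where $\mathcal{M}_f(t,e) := \mathcal{M}_{u_e}(t)$.

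The task therefore reduces to bounding the right-hand side uniformly in $e \in S^{2k-1}$. The upper bound $\mathcal{M}_f(R,e) \leq 0$ is immediate: the hypothesis that the eigenvalues of $h$ are at most $1$ forces $|e(z)|_h \leq |e| = 1$, and hence $f(z,e) = \log|e(z)|_h \leq 0$ on the entire punctured disk. For the lower bound, I observe that
\[
(z,e)\,\longmapsto\,f(z,e) \,=\, \tfrac{1}{2}\log h(z)(e,e)
\]
is jointly continuous on $B(0,1+2\delta)^* \times S^{2k-1}$, since $h$ is smooth on $B(0,1+2\delta)^*$ and strictly positive definite at every point. Its restriction to the compact set $S(0,r)\times S^{2k-1}$ is therefore bounded by some constant $C_0 > 0$ depending only on $h$ and $r$, which gives $\mathcal{M}_f(r,e) \geq -C_0$. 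Combining these two bounds, I obtain
\[
\int_{B(0,1+\delta)}\mu(z,e)\,d\lambda(z) \,\leq\, \frac{2\pi C_0}{\log(R/r)},
\]
which is the desired uniform estimate.

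The only point requiring a little care is the Jensen identity itself, because $\mu(\cdot,e)$ may carry an atomic component at the origin and $u_e$ is not assumed to be continuous there. This can, however, be justified either by invoking the Riesz representation of subharmonic functions on $B(0,1+2\delta)$ directly, or by applying Lemma~\ref{q-lem10.2} to $u_e$ on the annuli $B(0,R)\setminus \overline{B}(0,\rho)$ with an appropriate radial test function and passing to the limit $\rho \to 0^+$. I do not expect this step to present a real obstacle; the substance of the argument lies in the continuity and compactness observation used to control the two circular averages uniformly in $e$, exploiting that both chosen radii are bounded away from the origin where $h$ may degenerate.
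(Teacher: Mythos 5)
Your argument is correct, but it takes a genuinely different route from the paper's. The paper's proof is a one-liner at heart: it picks a bump function $\varphi$ with $\varphi\equiv 1$ on $B(0,1+\delta)$ and $\supp\varphi\Subset B(0,1+2\delta)$, uses the distributional identity $\int\varphi\,\mu(\cdot,e)\,d\lambda=\int(\Delta\varphi)\,f(\cdot,e)\,d\lambda$, and observes that the right-hand side is a continuous (hence bounded) function of $e$ on the compact sphere $S^{2k-1}$, because $\Delta\varphi$ is supported in an annulus bounded away from the origin where $f$ is smooth. You instead invoke the Jensen--Riesz identity
\[
\mathcal{M}_f(R,e)-\mathcal{M}_f(r,e)=\frac{1}{2\pi}\int_r^R \frac{1}{t}\Bigl(\int_{B(0,t)}\mu(z,e)\,d\lambda(z)\Bigr)dt,
\]
bound the total mass on $B(0,r)$ via monotonicity of $t\mapsto\int_{B(0,t)}\mu\,d\lambda$, kill the upper circular average with $f\le 0$, and bound the lower one using compactness of $S(0,r)\times S^{2k-1}$. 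Both arguments rest on the same underlying fact (continuity of $f$ away from $0$ plus compactness of the sphere of constant vectors), and both are sound; the normalization $\mu=\Delta f = 2\pi\nu$ makes your constants consistent. The paper's bump-function pairing is a bit more economical since it avoids the Jensen machinery and the small justification you correctly flag at the end (the validity of the circular-mean identity when $\mu$ may carry mass at the origin), but nothing is missing from your argument once that routine step is carried out, e.g.\ via Lemma~\ref{q-lem10.2} or the Riesz representation as you suggest.
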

\begin{proof}[Proof of Lemma \ref{q-lem10.6}]
Fix a smooth function $\varphi(z)$ on $B(0,1+2\delta)$
with the following properties:
\begin{itemize}
\item
$0 \le \varphi(z) \le 1$ for all $z \in B(0,1+2\delta)$,
\item
$\varphi(z)=1$ for all $z \in B(0,1+\delta)$, and
\item
the support of $\varphi$ is compact and contained in $B(0,1+2\delta)$.
\end{itemize}
Since
\[
e \mapsto
\int_{B(0,1+2\delta)}\Delta\varphi(z) f(z,e) d\lambda(z)
\]
is a smooth function on $S^{2k-1}$,
there exists a positive constant $C$ such that
\[
\int_{B(0,1+2\delta)}\Delta\varphi(z) f(z,e) d\lambda(z) \le C
\]
for all $e \in S^{2k-1}$.

On the other hand,
by the definition of the Laplacian in the sense of distributions, 
we have
\[
\int_{B(0,1+2\delta)}\varphi(z) \mu(z,e)d\lambda(z)
=\int_{B(0,1+2\delta)}\Delta\varphi(z) f(z,e) d\lambda(z).
\]
Therefore, 
\begin{align*}
\int_{B(0,1+\delta)}\mu(z,e)d\lambda(z)
&=\int_{B(0,1+\delta)}\varphi(z)\mu(z,e)d\lambda(z) \\
&\le
\int_{B(0,1+2\delta)}\varphi(z)\mu(z,e)d\lambda(z) \\
&=\int_{B(0,1+2\delta)}\Delta\varphi(z) f(z,e) d\lambda(z) \\
&\le C
\end{align*}
for all $e \in S^{2k-1}$,  as claimed.
\end{proof}

Now, we begin the proof of Lemma \ref{q-lem10.1}.

\begin{proof}[Proof of Lemma \ref{q-lem10.1}]
If $C \leq 1$, then the inequality $|\det h| \leq |z|$ clearly holds.  
In the case $C > 1$, replacing $h$ 
by $C^{-1/k} h$ allows us to assume $C = 1$ without loss of generality.

Let $C_1$ and $C_2$ be the positive 
constants obtained in Lemmas \ref{q-lem10.5} and \ref{q-lem10.6}, respectively.  
That is, $C_1$ satisfies
\begin{equation}\label{q-eq10.9}
\left| 
f(z,e) - \frac{1}{2\pi} \int_{B(0,1+\delta)} \log |w-z| \, \mu(w,e) \, d\lambda(w)
\right|
\le C_1
\end{equation}
for all $(z,e) \in B(0,1)^* \times S^{2k-1}$,  
and $C_2$ satisfies
\begin{equation}\label{q-eq10.10}
\int_{B(0,1+\delta)} \mu(z,e) \, d\lambda(z) \le C_2
\end{equation}
for all $e \in S^{2k-1}$.

By Lemma \ref{q-lem10.7} below, there exists $0 < r_0 < 1$ such that for every 
$0 < r < r_0$ 
there exist $e \in S^{2k-1}$ and an open subset $\Omega_r$ satisfying
\[
\Omega_r \subset \left\{ z \in \overline{B}(0, r) \,\middle|\, 
f(z, e) \leq \frac{1}{2k} \log r + \log 2 \right\}
\] 
and 
$\Area(\Omega_r) = r^3$, where $\Area(\Omega_r)$ is the 
area of $\Omega_r$. 
Then, from \eqref{q-eq10.9}, we obtain
\[
\frac{1}{2\pi} \int_{B(0,1+\delta)} \log |w-z| \, \mu(w,e) \, d\lambda(w) - C_1
\le f(z,e)
\le \frac{1}{2k} \log r + \log 2
\]
for all $z \in \Omega_r$. 
By \eqref{q-eq10.9}, 
\[
\frac{1}{2\pi} \int _{B(0, 1+\delta)} \log |w-z|\mu (w, e) d\lambda (w)
\] 
is integrable over $\Omega_r$. 
Applying the averaging operator
\[
\frac{1}{r^3} \int_{\Omega_r} \bullet \, d\lambda(z),
\]
we obtain
\begin{equation}\label{q-eq10.11}
\begin{split}
\frac{1}{2k} \log r + \log 2
&\ge \frac{1}{r^3} \int_{\Omega_r} f(z,e) \, d\lambda(z) \\
&\ge \frac{1}{r^3} \int_{\Omega_r} 
\left( 
\frac{1}{2\pi} \int_{B(0,1+\delta)} \log |w - z| \, \mu(w,e) \, d\lambda(w) - C_1
\right) d\lambda(z) \\
&= \frac{1}{2\pi} \int_{B(0,1+\delta)} 
\left( \frac{1}{r^3} \int_{\Omega_r} \log |w - z| \, d\lambda(z) \right)
\mu(w,e) \, d\lambda(w) - C_1 \\
&\ge \frac{1}{2\pi} \int_{B(0,1+\delta)} B_r(w) \, \mu(w,e) \, d\lambda(w) - C_1,
\end{split}
\end{equation}
by the definition of $B_r(w)$ in Definition \ref{p-def9.1}.

On the other hand, Lemma \ref{p-lem9.4} implies 
that for all $z \in B(0,1) \setminus B(0,r)$,
\begin{equation}\label{q-eq10.12}
\begin{split}
\int_{B(0,1+\delta)}
&\log |w-z|\mu(w,e) d\lambda(w) \\
&\le
\int_{B(0,1+\delta)}
\Bigl(
\frac{2}{3}\cdot\frac{\log |z|}{\log r}\cdot 
B_r(w)+\frac{1}{3}\log \pi+\frac{1}{3}+2\log 2
\Bigr)
\mu(w,e) d\lambda(w) \\
&=
\frac{2\log|z|}{3\log r}
\int_{B(0,1+\delta)}
B_r(w)\mu(w,e) d\lambda(w) \\
&\qquad \qquad
+
\Bigl(\frac{1}{3}\log \pi+\frac{1}{3}+2\log 2\Bigr)
\int_{B(0,1+\delta)}\mu(w,e) d\lambda(w)
\end{split}
\end{equation}
for all $z \in B(0,1) \setminus B(0,r)$.

Combining these inequalities \eqref{q-eq10.11} and 
\eqref{q-eq10.12} with \eqref{q-eq10.10}, we obtain
\begin{align*}
\int_{B(0,1+\delta)}
&\log |w-z|\mu(w,e) d\lambda(w) \\
&\le
\frac{4\pi\log|z|}{3\log r}
\Bigl(\frac{1}{2k}\log r+\log 2+C_1\Bigr)
+
\Bigl(\frac{1}{3}\log \pi+\frac{1}{3}+2\log 2\Bigr)
C_2 \\
&\le
\frac{2\pi}{3k}\log |z|
+\frac{4\pi}{3}
\Bigl(\log 2+C_1\Bigr)
+
\Bigl(\frac{1}{3}\log \pi+\frac{1}{3}+2\log 2\Bigr)
C_2
\end{align*}

Using \eqref{q-eq10.9} again, we deduce
\[
f(z,e) \le \frac{1}{3k} \log |z|
+ \frac{2}{3} \log 2 + \frac{5}{3} C_1
+ \frac{1}{2\pi}
\left( \frac{1}{3} \log \pi + \frac{1}{3} + 2 \log 2 \right) C_2
\]
for all $z \in B(0,1) \setminus B(0,r)$. Setting
\[
C' := 
\exp \left(
\frac{2}{3} \log 2 + \frac{5}{3} C_1
+ \frac{1}{2\pi} 
\left( \frac{1}{3} \log \pi + \frac{1}{3} + 2 \log 2 \right) C_2
\right),
\]
we obtain the inequality
\[
f(z,e) \le \frac{1}{3k} \log |z| + \log C',
\] 
or equivalently,  
\[ 
|e(z)|_h \le C' |z|^{\frac{1}{3k}}
\]
for all $z \in B(0,1) \setminus B(0,r)$.

Let $r_i \to 0$ be any decreasing sequence with $0 < r_i < r_0$.  
Then by the above argument, we can find $e_i \in S^{2k-1}$ such that
\begin{equation}\label{q-eq10.13}
|e_i(z)|_h \le C' |z|^{\frac{1}{3k}}
\end{equation}
for all $z \in B(0,1) \setminus B(0,r_i)$.
Since $S^{2k-1}$ is compact, we may, after passing to a subsequence, assume that
\[
\lim_{i \to \infty} e_i = e \in S^{2k-1}.
\]
Then from \eqref{q-eq10.13}, it follows that
\[
|e(z)|_h \le C' |z|^{\frac{1}{3k}}
\]
holds for all $z \in B(0,1)^*$.  
This completes the proof of Lemma \ref{q-lem10.1}.
\end{proof}

The following lemma is used in the proof of Lemma \ref{q-lem10.1} above.

\begin{lem}\label{q-lem10.7} 
In the setting of Lemma \ref{q-lem10.1}, assume that 
$|\det h| \leq |z|$. 
Then there exists a constant $0 < r_0 < 1$ such that for every $0 < r < r_0$, there exists a vector $e \in S^{2k-1}$ for which 
\[
\left\{ z \in \overline{B}(0, r) \,\middle|\, f(z, e) < \frac{1}{2k} \log (2^{2k} r) \right\}
\]
contains an open subset $\Omega_r$ with $\Area(\Omega_r) = r^3$.
\end{lem}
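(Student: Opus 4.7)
The plan is to combine a pointwise spectral estimate for $h(z)$ with a Fubini/pigeonhole argument on $\overline B(0,r) \times S^{2k-1}$, equipped with Lebesgue measure $\lambda$ and the uniform probability measure $\sigma$, respectively. As in the beginning of the proof of Lemma~\ref{q-lem10.1}, I first rescale so that $|\det h(z)| \leq |z|$. Writing the eigenvalues of $h(z)$ as $0 \leq \lambda_1(z) \leq \cdots \leq \lambda_k(z) \leq 1$, the trivial inequality $\lambda_1(z)^k \leq \prod_j \lambda_j(z) = |\det h(z)|$ gives the pointwise bound $\lambda_1(z) \leq |z|^{1/k}$; in particular $\lambda_1(z) \leq r^{1/k}$ for $z \in \overline B(0,r)$.

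Fix such a $z \neq 0$ and choose an $h(z)$-orthogonal, standard-orthonormal eigenbasis $v_1(z), \ldots, v_k(z)$. For any $e = \sum c_j v_j(z) \in S^{2k-1}$, the bound $\lambda_j(z) \leq 1$ yields
\[
|e(z)|_h^2 = \sum_j \lambda_j(z) |c_j|^2 \leq \lambda_1(z) + (1 - |c_1|^2),
\]
so whenever $1 - |c_1|^2 < 3 r^{1/k}$ one has $|e(z)|_h^2 < 4 r^{1/k}$, i.e.\ $f(z,e) < \tfrac{1}{2k}\log(2^{2k} r)$. The standard realization of $\sigma$ via complex Gaussians shows that $|c_1|^2$ is Beta$(1, k-1)$-distributed, and a one-line computation gives $\sigma(\{1 - |c_1|^2 < \eta\}) = \eta^{k-1}$ for $\eta \in [0,1]$; by rotation invariance this is independent of the choice of $v_1(z)$. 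Hence for every $r \leq 3^{-k}$ and every $z \in \overline B(0,r) \setminus \{0\}$,
\[
\sigma(A_z) \geq 3^{k-1} r^{(k-1)/k}, \qquad A_z := \bigl\{e \in S^{2k-1} : f(z,e) < \tfrac{1}{2k}\log(2^{2k} r)\bigr\}.
\]

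The joint set $\{(z,e) : e \in A_z\}$ is open by continuity of $(z,e) \mapsto |e(z)|_h$, so Fubini applies and yields
\[
\int_{S^{2k-1}} \Area(\{z : e \in A_z\}) \, d\sigma(e)
= \int_{\overline B(0,r) \setminus \{0\}} \sigma(A_z) \, d\lambda(z)
\geq 3^{k-1} \pi r^{3 - 1/k}.
\]
Since $\sigma$ is a probability measure, some $e \in S^{2k-1}$ realizes this average, giving an open set of area at least $3^{k-1}\pi r^{3-1/k}$. Because $3 - 1/k < 3$, the inequality $3^{k-1}\pi r^{3-1/k} \geq r^3$ holds for all $r$ smaller than some $r_0 > 0$ depending only on $k$, and an open subset $\Omega_r$ of exact area $r^3$ can then be extracted by intersecting with $B(0, s)$ and applying the intermediate value theorem to the area function in $s$. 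The only genuinely arithmetic point is the Beta$(1, k-1)$ measure computation producing the sharp exponent $(k-1)/k$, which is precisely what makes $r^2 \cdot r^{(k-1)/k} = r^{3-1/k}$ dominate $r^3$; any weaker spherical-cap bound would break the pigeonhole.
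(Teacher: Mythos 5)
Your proof is correct, and the overall strategy is the same as the paper's: establish a pointwise lower bound on the $\sigma$-measure of the good set $A_z \subset S^{2k-1}$ at each $z \in \overline{B}(0,r)\setminus\{0\}$, then apply Fubini and average over the sphere to produce a single direction $e$ that is good on a set of $z$ of area at least $r^3$, and finally trim to an open subset of exact area $r^3$. The two arguments differ only in how that pointwise measure bound is obtained. The paper picks one direction $e^\dag$ with $|e^\dag(z)|_h \leq r^{1/(2k)}$, perturbs it by $v$ with $|v|_{\mathrm{Euclid}} \leq r^{1/(2k)}$, and reads off that a spherical cap around $e^\dag$ of measure $\gtrsim r^{1-1/(2k)}$ consists of good directions. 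You instead argue directly on the spectrum, $\lambda_1(z) \leq |z|^{1/k}$, and enlarge the cap to the tube $\{e : 1 - |c_1|^2 < 3r^{1/k}\}$ around the whole circle $\{e^{i\theta}v_1(z)\}$, exploiting the $U(1)$-invariance of $|e(z)|_h$; the measure of this tube is exactly $(3r^{1/k})^{k-1}$ by the $\mathrm{Beta}(1,k-1)$ identity, which is rotation-invariant and hence independent of how $v_1(z)$ is chosen. This gives $\sigma(A_z) \gtrsim r^{1-1/k}$, a slightly better exponent than the paper's $r^{1-1/(2k)}$ — but note that either exponent works, since all that matters for the pigeonhole is that it be strictly less than $1$. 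For that reason your closing remark that ``any weaker spherical-cap bound would break the pigeonhole'' is misleading: the paper's coarser cap-around-a-point estimate works just as well. The Beta computation does buy you clean explicit constants (e.g.\ $r_0 = 3^{-k}$, arising from the constraint $3r^{1/k}\le 1$) in place of the paper's unspecified $\alpha$, which is a modest but genuine gain.
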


\begin{proof}[Proof of Lemma \ref{q-lem10.7}]
By assumption, namely $|\det h| \leq |z|$ on $\overline{B}(0, r)$, we can choose a vector $e^\dag \in S^{2k-1}$ such that
\[
f(z, e^\dag) = \log |e^\dag(z)|_h \leq \frac{1}{k} \log r < \frac{1}{2k} \log r < 0.
\]
Let $v$ be any vector with Euclidean norm $|v|_{\mathrm{Euclid}} \leq r^{1/(2k)}$. Then
\[
|e^\dag + v|_h \leq |e^\dag|_h + |v|_{\mathrm{Euclid}} < 2 r^{1/(2k)},
\]
since all eigenvalues of $h$ are less than or equal to $1$.

Without loss of generality, we may assume that $0 < r < r'_0$ 
for some sufficiently small constant $r'_0$.

Then, for each $z \in \overline{B}(0, r)$, the volume of the set of $e \in S^{2k-1}$ for which the above bound holds is at least
\[
\alpha \left(r^{1/(2k)}\right)^{2k-1} = \alpha r^{1 - (1/(2k))},
\]
for some positive constant $\alpha$.

This implies that the volume of the subset of $S^{2k-1} \times \overline{B}(0, r)$ where the bound holds is at least
\[
\alpha r^{1 - (1/(2k))} \cdot \pi r^2 = \alpha \pi r^{-1/(2k)} r^3.
\]

Suppose, for contradiction, that for every $e \in S^{2k-1}$, 
the area of the region in $\overline{B}(0, r)$ where the 
bound holds is less than $r^3$. Then the total volume 
in $S^{2k-1} \times \overline{B}(0, r)$ would be less than
\[
\sigma_{2k-1} \cdot r^3,
\]
where $\sigma_{2k-1}$ denotes the volume of the unit sphere $S^{2k-1}$ in $\mathbb{C}^k$.

However, if $r$ is sufficiently small, we have
\[
\alpha \pi r^{-1/(2k)} r^3 > \sigma_{2k-1} \cdot r^3,
\]
which is a contradiction.

Therefore, there exists a sufficiently small constant $0 < r_0 < 1$ 
such that for every $0 < r < r_0$, there exists at 
least one $e' \in S^{2k-1}$ such that the set
\[
\left\{ z \in \overline{B}(0, r) \,\middle|\, f(z, e') < \frac{1}{2k} \log (2^{2k} r) \right\}
\]
contains an open subset $\Omega_r$ with $\Area(\Omega_r) = r^3$.
\end{proof}

\section{On cyclic covers}\label{p-sec11}

In what follows, we briefly discuss cyclic 
covers, which will be used in later arguments. 
Let us recall the following elementary fact for the reader's convenience. 

\begin{lem}\label{p-lem11.1} 
Let $m$ be any positive integer with $m \geq 2$, and let 
$\epsilon$ be a complex number such that $\epsilon^m = 1$ and $\epsilon \ne 1$. 
Then 
\[
\sum_{i=0}^{m-1} \epsilon^i = 0.
\]
\end{lem}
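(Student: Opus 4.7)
The plan is to use the standard geometric series identity. Set $S := \sum_{i=0}^{m-1} \epsilon^i$, and observe that
\[
(\epsilon - 1) S = \sum_{i=0}^{m-1} \epsilon^{i+1} - \sum_{i=0}^{m-1} \epsilon^i = \epsilon^m - 1,
\]
which is a telescoping cancellation. By hypothesis, $\epsilon^m = 1$, so the right-hand side vanishes.

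Since the assumption $\epsilon \neq 1$ guarantees that $\epsilon - 1 \neq 0$, we may divide to conclude $S = 0$. There is essentially no obstacle here; the only point worth mentioning is that the non-vanishing of $\epsilon - 1$ is exactly what lets us pass from $(\epsilon - 1) S = 0$ to $S = 0$, so the hypothesis $\epsilon \neq 1$ is used precisely once and in an essential way.
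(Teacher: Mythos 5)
Your proof is correct and takes essentially the same approach as the paper: both use the factorization $1 - \epsilon^m = (1-\epsilon)(1 + \epsilon + \cdots + \epsilon^{m-1})$ (which you derive via the telescoping computation) and then divide by the nonzero factor $1 - \epsilon$.
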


\begin{proof}[Proof of Lemma \ref{p-lem11.1}]
Since 
\[
1 - \epsilon^m = (1 - \epsilon)(1 + \epsilon + \cdots + \epsilon^{m-1}) = 0,
\]
and $1 - \epsilon \ne 0$, it follows that 
\[
\sum_{i=0}^{m-1} \epsilon^i = 0.
\]
\end{proof}

The following lemma is the main result of this section.

\begin{lem}[{cf.~\cite[Lemma 10.3]{simpson1}}]\label{p-lem11.2} 
Let $(E, h)$ be an acceptable vector bundle on $X = \Delta^*$ with $\rank E = r$. 
Let $\pi \colon W := \Delta^* \to X$ be 
the $m$-fold cyclic cover of $\Delta^*$ 
given by $\pi(w) = z^m$, where $z$ is the coordinate on $X$ and $w$ is the coordinate on $W$. 
Let 
\[
\{v_1, \ldots, v_r\}
\] 
be a frame of ${}^\diamond\! E = {}_0E$ 
compatible with the parabolic filtration, 
such that $v_i \in {}_{b_i}E \setminus {}_{< b_i}E$ for each $i$. 
Let $\alpha$ be any real number, and let $m_{\alpha,i}$ be the smallest integer such that 
\[
m b_i - m_{\alpha,i} \leq \alpha
\] 
for each $i$. 
Then 
\[
\{w^{m_{\alpha,1}} \pi^*v_1, \ldots, w^{m_{\alpha,r}} \pi^*v_r\}
\] 
is a frame of ${}_\alpha (\pi^*E)$ compatible with the parabolic filtration. 
\end{lem}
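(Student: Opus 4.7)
The plan is to exploit the decomposition $\mathcal{O}_{\Delta_w,0} = \bigoplus_{k=0}^{m-1}\mathcal{O}_{\Delta_z,0}\cdot w^k$ (with $z = w^m$) combined with Fourier analysis for the $\mathbb{Z}/m\mathbb{Z}$-action $w\mapsto \zeta w$ (where $\zeta := e^{2\pi\sqrt{-1}/m}$), reducing the problem to the filtered structure of $E$ already described by Lemma~\ref{p-lem7.17}.

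The preliminaries are short. The identity $\pi^*\omega_P = \omega_P$ is immediate from $z = w^m$, so the curvature bound transfers and $(\pi^*E,\pi^*h)$ is acceptable on $W$. From $|v_i|_h = O(|z|^{-b_i-\varepsilon})$ we get $|\pi^*v_i|_{\pi^*h}(w) = |v_i|_h(w^m) = O(|w|^{-mb_i-m\varepsilon})$, and together with $mb_i - m_{\alpha,i}\leq \alpha$ and $|w|<1$, this gives $w^{m_{\alpha,i}}\pi^*v_i \in {}_\alpha(\pi^*E)$.

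The main step is an explicit description of ${}_\alpha(\pi^*E)$. Any section $s$ of $\pi^*E$ on a punctured neighborhood of $0\in\Delta_w$ that is meromorphic at $0$ decomposes uniquely as $s = \sum_{k=0}^{m-1} w^k \pi^*e_k$ with $e_k$ meromorphic germs of sections of $E$ at $0\in\Delta_z$; this reflects $\mathbb{C}((w)) = \bigoplus_{k=0}^{m-1}\mathbb{C}((z))\cdot w^k$. Since $\pi(\zeta^j w) = w^m$, the $\mathbb{Z}/m\mathbb{Z}$-action preserves $\pi^*h$, so Lemma~\ref{p-lem11.1} yields the Fourier inversion
\[
w^k \pi^*e_k(w) = \frac{1}{m}\sum_{j=0}^{m-1}\zeta^{-jk}\, s(\zeta^j w).
\]
Taking norms and using rotation invariance of the bound, $|s|_{\pi^*h}(w) = O(|w|^{-\alpha-\varepsilon})$ propagates to $|\pi^*e_k|_{\pi^*h}(w) \leq C|w|^{-k-\alpha-\varepsilon}$, i.e., $|e_k|_h(z) = O(|z|^{-(k+\alpha+\varepsilon)/m})$, so $e_k\in({}_{(k+\alpha)/m}E)_0$. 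The converse is the triangle inequality, so $s\in({}_\alpha(\pi^*E))_0$ if and only if $e_k\in({}_{(k+\alpha)/m}E)_0$ for every $k$.

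Finally, applying Lemma~\ref{p-lem7.17} to $E$ at level $(k+\alpha)/m$ gives $({}_{(k+\alpha)/m}E)_0 = \bigoplus_i \mathcal{O}_{\Delta_z,0}\cdot z^{\lceil b_i-(k+\alpha)/m\rceil} v_i$. Substituting and collecting the coefficient of each $\pi^*v_i$, the elementary identity
\[
\min_{0\leq k\leq m-1}\bigl[k + m\lceil b_i - (k+\alpha)/m\rceil\bigr] = \lceil mb_i - \alpha\rceil = m_{\alpha,i},
\]
which holds because the cosets $\{k+m\mathbb{Z}\}_{k=0}^{m-1}$ partition $\mathbb{Z}$, yields $({}_\alpha(\pi^*E))_0 = \bigoplus_i \mathcal{O}_{\Delta_w,0}\cdot w^{m_{\alpha,i}}\pi^*v_i$. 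Compatibility with the parabolic filtration is clear: $\deg^\mathcal{P}(w^{m_{\alpha,i}}\pi^*v_i) = mb_i - m_{\alpha,i} \in(\alpha-1,\alpha]$, and indices $i$ sharing a common parabolic degree also share a common residue class modulo $m$, which ensures that their classes in the graded piece are linearly independent. The main difficulty is the Fourier step that reduces the scalar norm bound on $s$ to individual bounds on each $e_k$; the remainder is a short bookkeeping argument using Lemma~\ref{p-lem7.17} and the combinatorial identity above.
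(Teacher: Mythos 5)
Your construction is the same as the paper's: decompose sections under the $\mathbb{Z}/m\mathbb{Z}$-action via Lemma~\ref{p-lem11.1}, transfer the norm estimate to each Fourier component, and reduce to the known description of $({}_{\beta}E)_0$ via Lemma~\ref{p-lem7.17}. Your version is more explicit (you derive a complete formula for $({}_\alpha(\pi^*E))_0$), and the computation leading to $({}_\alpha(\pi^*E))_0 = \bigoplus_i \mathcal{O}_{\Delta_w,0}\cdot w^{m_{\alpha,i}}\pi^*v_i$ is correct.

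However, the final sentence on compatibility contains a genuine error. You assert that indices $i$ sharing a common parabolic degree after the cover also share a common residue class modulo $m$. This is backwards. If $b_i \neq b_j$ but $mb_i - m_{\alpha,i} = mb_j - m_{\alpha,j}$, then $m_{\alpha,i} - m_{\alpha,j} = m(b_i-b_j)$ with $0 < |b_i - b_j| < 1$ (since $b_i, b_j \in (-1,0]$), hence $0 < |m_{\alpha,i} - m_{\alpha,j}| < m$, so $m_{\alpha,i}$ and $m_{\alpha,j}$ lie in \emph{distinct} residue classes mod $m$. A common residue class occurs only when $b_i = b_j$. The compatibility argument therefore requires two cases: for indices with distinct $b_i$'s (and distinct residue classes), the $G$-isotypical decomposition $\pi_*\pi^*E = \bigoplus_j w^j E$ separates the terms; for indices with the same $b_i$ (same residue class), linear independence in the graded piece must be pulled back to the compatibility of the original frame $\{v_i\}$. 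This is precisely Step~3 of the paper's proof. That said, the error is in the stated reason and not in the overall strategy: the explicit description of $({}_\beta(\pi^*E))_0$ for all $\beta$ that you establish does imply compatibility formally (the graded piece at level $\beta$ is visibly $\bigoplus_{i:\, mb_i - m_{\alpha,i}=\beta}\mathbb{C}$ under this description), so the proof can be repaired by replacing the last sentence with either the formal argument from your explicit description or the two-case argument as in the paper.
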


\begin{proof}[Proof of Lemma \ref{p-lem11.2}] 
By direct calculation, $\pi^* \omega_P$ is the Poincar\'e metric on $W$.  
Therefore, it is straightforward to verify that $\pi^* E$ is an acceptable vector bundle on $W$.  
By definition, we can readily see that $w^{m_{\alpha,i}} \pi^* v_i$ 
is a section of ${}_\alpha(\pi^* E)$ for each $i$.

\setcounter{step}{0}
\begin{step}\label{p-step11.2-1}
Let $G = \mathbb{Z}/m\mathbb{Z} = \langle g \rangle$ be the Galois group of $\pi \colon W \to X$. 
Then $G$ acts naturally on $\pi^*E$, and this action preserves the metric. 
Let $U$ be any open subset of $X$. 
Then we have 
\[
H^0(\pi^{-1}(U), \pi^*E) = H^0(U, \pi_* \pi^*E).
\] 
We also have the decomposition
\begin{equation}\label{p-eq11.1}
\pi_* \pi^* E = \bigoplus_{j=0}^{m-1} w^j E,
\end{equation} 
i.e., the $\mathcal{O}_W$-module 
$\pi^*E$ decomposes into a direct sum 
of $\mathcal{O}_X$-modules as in \eqref{p-eq11.1}, under the action of $G$. 
The action of $G$ on the right-hand side 
is given by $g^* w = \zeta w$, where $\zeta$ is an $m$th root of unity. 
\end{step}

\begin{step}\label{p-step11.2-2}
In this step, we prove that 
\[
\{w^{m_{\alpha,1}} \pi^*v_1, \ldots, w^{m_{\alpha,r}} \pi^*v_r\}
\] 
is a frame of ${}_\alpha(\pi^*E)$. 

Take any $u \in H^0(\pi^{-1}(U), \pi^*E)$. 
By \eqref{p-eq11.1}, we can write 
\[
u = \sum_{j=0}^{m-1} w^j u_j,
\] 
where $u_j \in H^0(U, E)$ for each $j$. 
Assume that 
\[
|u|_{\pi^*h} \leq \frac{C}{|w|^\lambda}
\] 
holds for some $C > 0$ and $\lambda \in \mathbb{R}$. 
Then, by considering 
\[
\sum_{l=0}^{m-1} \zeta^{-lj} (g^l)^* u
\] 
for each $j$, we obtain the same estimate:
\[
|w^j u_j|_{\pi^*h} \leq \frac{C}{|w|^\lambda}
\] 
for every $j$. 
Here we used Lemma \ref{p-lem11.1} 
and the fact that the $G$-action preserves the metric. 

This implies that ${}_\alpha(\pi^*E)$ is generated by 
\[
\{w^{m_{\alpha,1}} \pi^*v_1, \ldots, w^{m_{\alpha,r}} \pi^*v_r\}
\] 
for every $\alpha$. 
Hence this set forms a frame of ${}_\alpha(\pi^*E)$, as desired.
\end{step}

\begin{step}\label{p-step11.2-3}
In this final step, we verify that the frame 
\[
\{w^{m_{\alpha,1}} \pi^*v_1, \ldots, w^{m_{\alpha,r}} \pi^*v_r\}
\] 
is compatible with the parabolic filtration. 

Assume that 
\[
\beta := m b_1 - m_{\alpha,1} = \cdots = m b_l - m_{\alpha,l}
\] 
for some $l \geq 1$. Under this assumption, it suffices to show that 
\[
\{w^{m_{\alpha,1}} \pi^*v_1, \ldots, w^{m_{\alpha,l}} \pi^*v_l\}
\] 
is linearly independent in the quotient 
space ${}_\beta (\pi^*E) / {}_{<\beta}(\pi^*E)$. 

Suppose that 
\begin{equation}\label{p-eq11.2}
a_1 w^{m_{\alpha,1}} \pi^*v_1 + \cdots + 
a_l w^{m_{\alpha,l}} \pi^*v_l \in {}_{<\beta}(\pi^*E),
\end{equation} 
for some $a_i \in \mathbb{C}$. 

Note that if $m b_i - m_{\alpha,i} = m b_j 
- m_{\alpha,j}$ and $b_i \ne b_j$, then $m_{\alpha,i} \ne 
m_{\alpha,j}$ and $|m_{\alpha,i} - m_{\alpha,j}| < m$. 
Therefore, using the 
decomposition \eqref{p-eq11.1} as 
in Step \ref{p-step11.2-2}, we may assume that $b_1 = \cdots = b_l$. 
In this case, \eqref{p-eq11.2} implies 
\[
a_1 v_1 + \cdots + a_l v_l \in {}_{< b_1} E.
\] 
Since $\{v_1, \ldots, v_r\}$ is 
compatible with the parabolic filtration, this implies that $a_1 = \cdots = a_l = 0$. 

Thus, 
\[
\{w^{m_{\alpha,1}} \pi^*v_1, 
\ldots, w^{m_{\alpha,r}} \pi^*v_r\}
\] 
is compatible with the parabolic filtration, as claimed.
\end{step}
This completes the proof of Lemma \ref{p-lem11.2}. 
\end{proof}

The converse of the above lemma also holds, as shown below.

\begin{lem}\label{p-lem11.3}
Let $(E, h)$ be an acceptable vector 
bundle on $X = \Delta^*$ with $\rank E = r$. 
Let $\pi \colon W := \Delta^* \to X$ be 
the $m$-fold cyclic cover of $\Delta^*$ 
given by $\pi(w) = z^m$, where $z$ is the coordinate 
on $X$ and $w$ is the coordinate on $W$. 
Let 
\[
\{v_1, \ldots, v_r\}
\] 
be a frame of ${}^\diamond\! E = {}_0E$ 
such that $v_i \in {}_{b_i}E \setminus {}_{< b_i}E$ for each $i$. 
Let $\alpha$ be any real number, 
and let $m_{\alpha, i}$ be the smallest integer such that 
\[
m b_i - m_{\alpha,i} \leq \alpha
\] 
for each $i$. 
If  
\[
\{w^{m_{\alpha,1}} \pi^*v_1, \ldots, w^{m_{\alpha,r}} \pi^*v_r\}
\] 
is a frame of ${}_\alpha(\pi^*E)$ 
compatible with the parabolic filtration for some $\alpha$, then 
\[
\{v_1, \ldots, v_r\}
\] 
is a frame of ${}^\diamond\!E = {}_0E$ 
compatible with the parabolic filtration. 
\end{lem}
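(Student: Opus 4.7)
The plan is to argue by contrapositive: assume $\{v_1, \ldots, v_r\}$ is \emph{not} compatible with the parabolic filtration on ${}^\diamond E$, and derive a non-trivial linear dependence among a sub-block of the pullback frame modulo a strictly smaller filtration step, contradicting the assumed compatibility of the pullback frame. As a preliminary, Lemma \ref{p-lem7.16} gives $b_j \in (-1, 0]$, and since $v_j \in {}_{b_j} E \setminus {}_{<b_j} E$, each $b_j$ lies in $\Parr_0(E, h)$. The first claim I would establish is that non-compatibility forces the following stronger assertion: for some $\lambda \in \Parr_0(E, h)$, the sub-tuple $\bm v_\lambda := \{v_j : b_j = \lambda\}$ has linearly \emph{dependent} images in ${}_\lambda E/{}_{<\lambda} E$. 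Indeed, if each $\bm v_\lambda$ were linearly independent in the corresponding quotient, then $r = \sum_\lambda |\bm v_\lambda| \leq \sum_\lambda l_\lambda = r$ by \eqref{p-eq7.15}, forcing equality throughout and hence each $\bm v_\lambda$ to be a basis -- i.e.\ compatibility, contradicting the hypothesis.

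Fixing such a $\lambda$, let $S := \{j : b_j = \lambda\}$ and choose scalars $\{a_j\}_{j \in S}$, not all zero, with $v := \sum_{j \in S} a_j v_j \in {}_{<\lambda} E$. By the discreteness of parabolic jumps (Lemma \ref{p-lem7.10}), there exists $\epsilon > 0$ with ${}_{<\lambda} E = {}_{\lambda - \epsilon} E$, so $v \in {}_{\lambda - \epsilon} E$. A routine norm computation on the $m$-fold cover, using $|w| = |z|^{1/m}$, then gives $\pi^* v \in {}_{m(\lambda - \epsilon)}(\pi^* E)$. Since $b_j = \lambda$ is common to all $j \in S$, the integer $M := m_{\alpha, j}$ is also common on $S$; multiplying through by $w^M$ and invoking the shift isomorphism of Lemma \ref{p-lem7.8}(iv) applied to $\pi^* E$, I obtain
\[
\sum_{j \in S} a_j \, w^{m_{\alpha, j}} \pi^* v_j \;=\; w^M \pi^* v \;\in\; {}_{m\lambda - M - m\epsilon}(\pi^* E) \;=\; {}_{\beta_0 - m\epsilon}(\pi^* E) \;\subset\; {}_{<\beta_0}(\pi^* E),
\]
where $\beta_0 := m\lambda - M$ is the common parabolic order of $\{w^{m_{\alpha, j}} \pi^* v_j\}_{j \in S}$.

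The final step invokes the hypothesis: the compatibility of $\{w^{m_{\alpha, j}} \pi^* v_j\}_{j=1}^r$ as a frame of ${}_\alpha(\pi^* E)$ means that the full sub-block $\{w^{m_{\alpha, j}} \pi^* v_j : m b_j - m_{\alpha, j} = \beta_0\}$ induces a basis of ${}_{\beta_0}(\pi^* E)/{}_{<\beta_0}(\pi^* E)$; since this block contains the family indexed by $S$, the $S$-indexed sub-family remains linearly independent in the quotient. Combined with the displayed inclusion this forces $a_j = 0$ for every $j \in S$ -- the desired contradiction. The main obstacle in the plan is the initial linear-algebra reduction isolating an in-block dependence (rather than a general one), together with the bookkeeping that pullback sends ${}_{<\lambda} E$ into ${}_{<m\lambda}(\pi^* E)$; both are handled by the discreteness of jumps and the multiplication-by-$w$ shift, so no new analytic input is needed beyond what is already available in Sections \ref{p-sec7} and \ref{p-sec11}.
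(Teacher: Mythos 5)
Your proof is correct and follows essentially the same path as the paper: isolate a single weight class $S=\{j : b_j=\lambda\}$, suppose a dependence $\sum_{j\in S}a_j v_j \in {}_{<\lambda}E$, pull back and multiply by the common power $w^M$, and contradict the compatibility of the given frame of ${}_\alpha(\pi^*E)$. One small remark: the appeal to Lemma \ref{p-lem7.10} for ``${}_{<\lambda}E={}_{\lambda-\epsilon}E$'' is misattributed (that lemma gives right-continuity ${}_{\lambda}E={}_{\lambda+\epsilon}E$, whereas you want a left-discreteness statement along the lines of Corollary \ref{p-cor1.4}(vi)), and in fact the $\epsilon$-discretization is avoidable: since ${}_{<\lambda}E = \bigcup_{\mu<\lambda}{}_\mu E$, one may directly take $v\in{}_\mu E$ for some $\mu<\lambda$, giving $w^M\pi^*v\in{}_{m\mu-M}(\pi^*E)\subset{}_{<\beta_0}(\pi^*E)$, which is all that is needed and is what the paper does.
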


\begin{proof}[Proof of Lemma \ref{p-lem11.3}]
Assume that $\beta := b_1 = \cdots = b_l$ for some $l \geq 1$. 
Under this assumption, it suffices to show 
that $\{v_1, \ldots, v_l\}$ is linearly 
independent in the quotient space ${}_\beta E / {}_{<\beta}E$. 

Suppose that 
\[
a_1 v_1 + \cdots + a_l v_l \in {}_{<\beta} E
\]
for some $a_1, \ldots, a_l \in \mathbb{C}$. 
Let $n := m_{\alpha,1} = \cdots = 
m_{\alpha,l}$. 
Then we have
\[
a_1 w^n \pi^*v_1 + \cdots + a_l w^n \pi^*v_l 
\in {}_{< m\beta - n} (\pi^*E).
\] 
But by assumption, the set 
\[
\{w^n \pi^*v_1, \ldots, w^n \pi^*v_l\}
\] 
is part of a frame of ${}_\alpha(\pi^*E)$ that is 
compatible with the parabolic filtration. 
This implies that the above linear combination 
lies in a lower filtration step only if all coefficients vanish, i.e., 
\[
a_1 = \cdots = a_l = 0.
\] 
Therefore, $\{v_1, \ldots, v_r\}$ is compatible with 
the parabolic filtration, as claimed.
\end{proof}

\section{On determinant bundles}\label{p-sec12}

The main purpose of this section is to establish 
Theorem \ref{p-thm12.3} and Corollary \ref{p-cor12.4}, 
which will play crucial roles in the subsequent sections.  
We begin with an elementary lemma from Diophantine approximation.

\begin{defn}\label{p-def12.1}
Let $\alpha = (\alpha_1, \dots, \alpha_l) \in \mathbb{R}^l$ be a vector.  
We define
\[
R_m(\alpha) := \left( m\alpha_1 - n_1, \dots, m\alpha_l - n_l \right),
\]
where each $n_i$ is the integer that minimizes $|m\alpha_i - n_i|$.  
We also define
\[
\delta_m(\alpha) := \max_i |m\alpha_i - n_i|.
\]
\end{defn}

\begin{lem}\label{p-lem12.2}
Let $\alpha = (\alpha_1, \dots, \alpha_l) 
\in \mathbb{R}^l$ be a vector such that $\alpha_i \notin \mathbb{Q}$ for every $i$.  
Then, for any real number $q > 1$, 
there exists a positive integer $m \leq q$ such that
\[
\delta_m(\alpha) \leq q^{-1/l}.
\]
\end{lem}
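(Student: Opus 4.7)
The plan is to deduce Lemma \ref{p-lem12.2} from Minkowski's convex body theorem, realising it as a version of the simultaneous Dirichlet approximation statement. First I would set $Q := q^{1/l} > 1$ and introduce, on $\mathbb{R}^{l+1}$ with variables $(x_0, x_1, \ldots, x_l)$, the $l+1$ real linear forms
\[
L_0(x) := x_0 / Q^l, \qquad L_i(x) := x_0 \alpha_i - x_i \quad (i = 1, \ldots, l).
\]
After an obvious reordering the coefficient matrix is triangular with diagonal entries $(Q^{-l}, -1, \ldots, -1)$, so that $|\det L| = Q^{-l} = 1/q$.

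Next, I would consider the closed, convex, centrally symmetric body
\[
K := \bigl\{ x \in \mathbb{R}^{l+1} \,:\, |L_0(x)| \le 1, \ |L_i(x)| \le Q^{-1} \text{ for all } i \bigr\},
\]
which is the $L$-preimage of the rectangular box $[-1,1] \times [-Q^{-1}, Q^{-1}]^l$. A direct volume computation gives
\[
\mathrm{vol}(K) \;=\; \frac{2 \cdot (2 Q^{-1})^l}{|\det L|} \;=\; 2^{l+1}.
\]
Since $K$ is compact, convex, and centrally symmetric with $\mathrm{vol}(K) \ge 2^{l+1}$, Minkowski's theorem (in its non-strict form, valid for compact bodies) produces a non-zero integer point $(m, n_1, \ldots, n_l) \in K \cap \mathbb{Z}^{l+1}$. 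After possibly replacing this point by its negative, we may assume $m \ge 0$.

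To finish, I would rule out the degenerate case $m = 0$: in that case $|n_i| = |L_i(x)| \le Q^{-1} < 1$ for every $i$, which is the step where the hypothesis $q > 1$ genuinely enters, and this forces $n_i = 0$ and contradicts non-triviality. Hence $m$ is a positive integer, and the defining inequalities of $K$ read $m \le Q^l = q$ and $|m\alpha_i - n_i| \le Q^{-1} = q^{-1/l}$, the latter giving $\delta_m(\alpha) \le q^{-1/l}$ as required. The only substantive input is Minkowski's theorem, and no step presents a real obstacle. I note in passing that the irrationality assumption $\alpha_i \notin \mathbb{Q}$ plays no role in this argument and is presumably included only in view of the later application to parabolic weights.
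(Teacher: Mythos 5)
Your proof is correct and takes exactly the route the paper indicates: the paper's entire proof of Lemma~\ref{p-lem12.2} is a citation to Cassels together with the remark that ``it is essentially a consequence of Minkowski's theorem,'' and your convex-body argument (using the compact form of Minkowski's theorem to obtain the non-strict bound, and the observation that $q>1$ rules out $m=0$) is precisely that standard derivation written out in full. Your closing remark that the irrationality hypothesis is unused in this lemma is also accurate; it reflects only how the lemma is invoked in the proof of Theorem~\ref{p-thm12.3}, where it is applied to the irrational parabolic weights.
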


\begin{proof}[Proof of Lemma \ref{p-lem12.2}]
This follows easily from \cite[Chapter I, Theorem VI]{cas}.  
It is essentially a consequence of Minkowski's theorem.  
We omit the details.
\end{proof}

The following theorem is the main result of this section.

\begin{thm}\label{p-thm12.3}
Let $\{v_1, \dots, v_r\}$ be a frame of ${}_aE$ 
around the origin, compatible with the parabolic 
filtration, such that $v_i \in {}_{b_i}E \setminus {}_{< b_i}E$ 
for every $i$.  
Then we have the following equality:
\[
\gamma({}_a E) = \sum_{i=1}^r b_i.
\]
\end{thm}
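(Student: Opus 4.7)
By Corollary~\ref{p-cor7.7} we already have $\gamma({}_a E) \le \sum_{i=1}^r b_i$, so only the reverse inequality requires proof. Using Lemma~\ref{p-lem7.17} to shift the frame (a direct computation using Lemma~\ref{p-lem7.2} shows that the difference $\gamma({}_a E) - \sum b_i$ is preserved under such shifts) together with Remark~\ref{p-rem7.18} to rescale the disk, I may assume $a = 0$ with $b_i \in (-1, 0]$ by Lemma~\ref{p-lem7.16}, and that the frame extends to a frame of ${}_0 E$ on $B(0, 1 + 2\delta)$ for some $\delta > 0$. Suppose for contradiction that $\gamma({}_0 E) < \sum b_i$ and set $\tau := \sum b_i - \gamma({}_0 E) > 0$. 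Since $v_1 \wedge \cdots \wedge v_r$ generates $\det({}_0 E) = {}_{\gamma({}_0 E)}\det E$ by Theorem~\ref{p-thm7.5}, we have $\det H(h, \bm v)(z) \le C|z|^{-2\gamma({}_0 E) - 2\varepsilon}$ for every $\varepsilon > 0$.

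The key step is to apply Simpson's key lemma (Lemma~\ref{q-lem10.1}) after passing to an $m$-fold cyclic cover $\pi\colon W \to X$, $z = w^m$. I first treat the purely rational case, $b_i \in \mathbb{Q}$ for all $i$: take $m$ to be a common denominator, chosen large enough that $m\tau > 1$, so that $n_i := m b_i \in \mathbb{Z}$. By Lemma~\ref{p-lem11.2}, $\tilde{\bm v} := \{w^{n_i}\pi^* v_i\}_{i=1}^r$ is a frame of ${}_0(\pi^* E)$ compatible with the parabolic filtration, with every weight equal to $0$. A direct computation gives
\[
\det H(\pi^*h, \tilde{\bm v})(w) = |w|^{2m\sum b_i}\det H(h, \bm v)(w^m) \le C|w|^{2m\tau - 2m\varepsilon}.
\]
Passing to the Griffiths seminegative metric $\pi^*h \cdot (-\log|w|^2)^{-N}$ for a large positive integer $N$ (cf.\ Section~\ref{p-sec5}) and dividing by a positive constant (so that Lemma~\ref{p-lem5.4} bounds the eigenvalues of the resulting matrix representation in $\tilde{\bm v}$ by $1$), the resulting matrix still has determinant bounded by $C'|w|$ thanks to $m\tau > 1$. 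The hypotheses of Lemma~\ref{q-lem10.1} are thereby satisfied.

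Lemma~\ref{q-lem10.1} then produces a non-zero constant section $e = \sum_{i=1}^r a_i \tilde v_i$ with $a_i \in \mathbb{C}$, satisfying $|e|_{\pi^*h}^2 \le C''|w|^{2/(3r)}(-\log|w|^2)^N$. Decomposing $e = \sum_{k=0}^{m-1} e_k$ into eigencomponents under $G := \mathbb{Z}/m\mathbb{Z}$, the averaging argument of Step~\ref{p-step11.2-2} shows that each $e_k$ inherits the same bound. Pick $k$ with $e_k \ne 0$ and write $e_k = w^k \pi^* f_k$, so that
\[
f_k = \sum_{i\,:\, n_i \equiv k \,(\bmod\, m)} a_i z^{q_i} v_i, \qquad q_i := (n_i - k)/m \in \mathbb{Z}.
\]
Translating the norm bound back via $z = w^m$ yields $f_k \in {}_{k/m - 1/(3rm)} E \subset {}_{<k/m} E$. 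On the other hand, each $z^{q_i} v_i$ lies in ${}_{k/m} E \setminus {}_{<k/m} E$ via the isomorphism $\times z^{q_i}\colon {}_{b_i}E/{}_{<b_i}E \to {}_{k/m}E/{}_{<k/m}E$ obtained by iterating Lemma~\ref{p-lem7.8}~(iv) (since $b_i - q_i = k/m$), and compatibility of $\bm v$ with the parabolic filtration ensures that these images are linearly independent in ${}_{k/m}E/{}_{<k/m}E$. Because at least one $a_i$ in the sum above is non-zero by the choice of $k$, this forces $f_k \notin {}_{<k/m} E$, a contradiction. Hence $\gamma({}_0 E) \ge \sum b_i$.

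For the general case with some irrational $b_i$, the common denominator is replaced by Lemma~\ref{p-lem12.2}: for large $q > 1$ one obtains an integer $m \le q$ (necessarily with $m \to \infty$ as $q \to \infty$ in the presence of irrational components) such that the nearest-integer residues $\beta_i := m b_i - n_i$ satisfy $|\beta_i| \le q^{-1/l}$, where $l$ is the number of irrational components of $(b_1, \dots, b_r)$. Setting $\alpha := q^{-1/l}$ and $m_{\alpha, i} := n_i$, Lemma~\ref{p-lem11.2} still yields a frame $\tilde{\bm v}$ of ${}_\alpha(\pi^* E)$ compatible with the parabolic filtration, now with small weights $\beta_i$. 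The analogous computation gives $\det H(\pi^*h, \tilde{\bm v}) \le C|w|^{2m\tau - 2\sum\beta_i - 2m\varepsilon}$, and for $q$ sufficiently large the inequality $2m\tau - 2|\sum\beta_i| > 1$ is achieved; the remainder of the proof proceeds as before, with the target graded pieces shifted by the small amounts $\beta_i$. The main technical obstacle lies precisely in this irrational case: one must carefully balance the Diophantine parameter $q^{-1/l}$ against the fixed gap $\tau$, verify that the hypotheses of Lemma~\ref{q-lem10.1} remain valid under the perturbation, and confirm that the linear-independence argument in the shifted graded pieces still produces the desired contradiction.
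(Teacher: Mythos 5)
Your overall strategy matches the paper's: establish the inequality $\gamma({}_a E) \le \sum b_i$ easily, then argue by contradiction via an $m$-fold cyclic cover, Diophantine approximation (Lemma~\ref{p-lem12.2}), and Simpson's key lemma (Lemma~\ref{q-lem10.1}). Your treatment of the purely rational case is complete and correct. However, the final contradiction you derive there (Galois eigendecomposition $e = \sum e_k$, push each $e_k = w^k \pi^* f_k$ down, and invoke linear independence in the graded pieces ${}_{k/m}E / {}_{<k/m}E$) is more elaborate than the paper's one-line argument: since the frame $\tilde{\bm v}$ has all parabolic weights in $(-1/(3r), 0]$, any nonzero constant linear combination $e$ has parabolic degree in $(-1/(3r), 0]$, whereas the bound $|e|_{h'} \lesssim |w|^{1/(3r)}$ forces degree $\le -1/(3r)$ --- contradiction. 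Your version works but does not transfer cleanly to the irrational case.

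The genuine gap is the irrational (and mixed) case, which you yourself flag as ``the main technical obstacle'' and leave unresolved. Two concrete missing ingredients. First, after the cyclic cover the residues $\beta_i := mb_i - n_i$ can be \emph{positive}; the diagonal entries $|w^{n_i}\pi^*v_i|^2$ then grow like $|w|^{-2\beta_i - \varepsilon}$ and remain unbounded even after multiplying by $(-\log|w|^2)^{-N}$, so the eigenvalue hypothesis of Lemma~\ref{q-lem10.1} fails. The paper fixes this by working with $h' := \pi^*h \cdot |w|^{2\max_i b_i'}$, which shifts every weight into $(-2\delta_m(b), 0]$; this shift is indispensable both for the eigenvalue bound and for the contradiction (it is what puts $\Par({}_0 E')$ inside $(-1/(3r),0]$). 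Second, in the mixed rational--irrational case your application of Lemma~\ref{p-lem12.2} only controls $|\beta_i| \le q^{-1/l}$ for the irrational components; for the rational $b_i$ nothing forces $m b_i \in \mathbb{Z}$, so those $\beta_i$ may remain a bounded distance from $0$ as $q \to \infty$. The paper avoids both issues by choosing, uniformly, an integer $m$ that is simultaneously a multiple of the common denominator of the rational $b_i$ and satisfies $\delta_m(b) < 1/(6\,\rank E)$, and then applying the single shift $h'$; there is no case split. To complete your argument you would need to carry out the metric shift, verify the eigenvalue and determinant bounds after the shift, and replace the Galois-pushdown contradiction (which becomes awkward because the shift is by a non-integer power $|w|^{2\max\beta_i}$) with the direct $\Par_0(E')$ argument.
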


By combining Theorem \ref{p-thm7.5} with Theorem \ref{p-thm12.3}, 
we obtain the following important result on 
determinant bundles, which will also play a crucial role in the subsequent sections.

\begin{cor}\label{p-cor12.4}
We use the same notation as in Theorem~\ref{p-thm12.3}.  
Then we have
\[
\det({}_aE) = {}_{\sum_{i=1}^r b_i} \left( \det E \right).
\]
\end{cor}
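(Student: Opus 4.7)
The plan is to derive the corollary by direct substitution, combining the two ingredients already proved. By Theorem~\ref{p-thm7.5}, we have the equality
\[
\det({}_aE) = {}_{\gamma({}_aE)}(\det E)
\]
as sub-$\mathcal{O}_\Delta$-modules of $j_*(\det E)$, valid unconditionally for every $a \in \mathbb{R}$. Theorem~\ref{p-thm12.3} then identifies the real number appearing as the prolongation index: $\gamma({}_aE) = \sum_{i=1}^r b_i$. Substituting this equality of real numbers into the subscript on the right-hand side yields the desired identity.

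Next, I would include a brief verification to make the substitution transparent at the level of local generators, as a sanity check. Under the trivialization of $\det E$ employed in the proof of Theorem~\ref{p-thm7.5}, formula~\eqref{p-eq7.12} shows that, near $0$, the line bundle ${}_\beta(\det E)$ is generated by $z^{-\lfloor \beta - \gamma({}_aE)\rfloor}(v_1 \wedge \cdots \wedge v_r)$, while $\det({}_aE)$ is generated by $v_1 \wedge \cdots \wedge v_r$ itself. Setting $\beta = \sum_{i=1}^r b_i$ and invoking Theorem~\ref{p-thm12.3} gives $\beta - \gamma({}_aE) = 0$, so the shift exponent $-\lfloor 0 \rfloor$ vanishes and the two generators coincide, confirming the equality.

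The principal obstacle was already surmounted in Theorem~\ref{p-thm12.3}; at the level of Corollary~\ref{p-cor12.4} itself nothing deeper is required. The only subtlety worth recording is that the compatibility hypothesis on the frame enters solely through Theorem~\ref{p-thm12.3} in order to make the parabolic weights $b_i$ meaningful, whereas Theorem~\ref{p-thm7.5} places no such restriction. In particular, the right-hand side ${}_{\sum b_i}(\det E)$ is independent of the choice of compatible frame, since $\sum_{i=1}^r b_i$ coincides with the intrinsic quantity $\sum_{\lambda \in \Parr({}_aE)} \lambda \cdot \dim_{\mathbb{C}}({}_\lambda E / {}_{<\lambda}E)$ attached to ${}_aE$ by \ref{p-say7.11} and Lemma~\ref{p-lem7.12}.
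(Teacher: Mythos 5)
Your proposal is correct and coincides with the paper's own argument: Corollary~\ref{p-cor12.4} is obtained there precisely by combining Theorem~\ref{p-thm7.5} with the identity $\gamma({}_aE)=\sum_{i=1}^r b_i$ of Theorem~\ref{p-thm12.3}. The extra verification via \eqref{p-eq7.12} and the remark on well-definedness are fine but not needed.
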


The following two remarks are 
straightforward, but we include them for completeness.

\begin{rem}\label{p-rem12.5}
Let $(E, h)$ be an acceptable vector bundle 
on $\Delta^*$. 
Let $\{v_1\ldots, v_r\}$ be a frame of ${}_a E$. 
We consider 
\[
(E^\dag, h^\dag):=(E, h\cdot |z|^{2c}), 
\] 
where $c$ is a real number. 
Then $(E^\dag, h^\dag)$ is also an acceptable vector bundle 
on $\Delta^*$ since $\partial \overline \partial \log |z|^{2c}=0$ on 
$\Delta^*$. 
It is easy to see that 
\[ 
{}_{a-c} E^\dag ={}_a E
\] 
holds and that $\{v_1, \ldots, v_r\}$ is a frame of ${}_{a-c} E^\dag$. 
By definition, we have 
\[
\gamma ({}_{a-c} E^\dag)=\gamma ({}_a E)-rc. 
\] 

We further assume that 
$\{v_1, \ldots, v_r\}$ is compatible with the parabolic filtration 
such that $v_i\in {}_{b_i} E\setminus {}_{<b_i}E$ for every $i$. 
Then it is obvious that $\{v_1, \ldots, v_r\}$ is a frame 
of ${}_{a-c} E^\dag$ compatible with the parabolic filtration such that 
$v_i \in {}_{b_i-c} E^\dag \setminus {}_{<b_i-c} E^\dag$ for every $i$. 
We note that 
\[
\gamma ({}_a E)-\sum _{i=1}^r b_i =\gamma ({}_{a-c} E^\dag) -\sum 
_{i=1}^r (b_i-c) 
\] 
holds. 
Hence, in the proof of Theorem \ref{p-thm12.3}, we can freely replace 
$h$ with $h\cdot |z|^{2c}$ for any real number $c$. 
\end{rem}

\begin{rem}\label{p-rem12.6}
Let $(E, h)$ be an acceptable vector bundle on $\Delta^*$, and let 
$\{v_1, \ldots, v_r\}$ be a frame of ${}_a E$. 
Consider the pair
\[
(\widetilde E, \widetilde h) := \left(E, h e^{-\chi(-N)}\right),
\]
where $N$ is a real number. 
It is straightforward to verify that 
$(\widetilde E, \widetilde h)$ is also an acceptable 
vector bundle on $\Delta^*$, and that 
${}_\alpha \widetilde E = {}_\alpha E$ for every 
$\alpha \in \mathbb{R}$. 
Note that $\{v_1, \ldots, v_r\}$ is a frame of 
${}_a \widetilde E$, and that 
$v_i \in {}_{b_i}\widetilde E \setminus 
{}_{<b_i}\widetilde E$ if and only if 
$v_i \in {}_{b_i}E \setminus 
{}_{<b_i}E$. 
By definition, we have 
$\gamma ({}_a \widetilde E) = \gamma ({}_a E)$. 
It is also clear that 
$\{v_1, \ldots, v_r\}$ is a frame of ${}_a \widetilde E$ compatible 
with the parabolic filtration if and only if it is so for 
${}_a E$. 
Therefore, in the proof of Theorem \ref{p-thm12.3}, we can freely replace 
$h$ with 
$he^{-\chi(-N)}$ for any real number $N$. 
\end{rem}

Let us now prove Theorem~\ref{p-thm12.3}.

\begin{proof}[Proof of Theorem~\ref{p-thm12.3}] 
Although the inequality
\[
\gamma({}_a E) \leq \sum_{i=1}^r b_i
\]
was already established in Corollary~\ref{p-cor7.7}, 
we provide an alternative proof in Step~\ref{p-step12.3-1}.  
It should be noted that the assumption that the frame 
$\{v_1, \ldots, v_r\}$ is compatible with the parabolic 
filtration is not required in this step.  
The discussion in Step~\ref{p-step12.3-1} will be needed 
in Step~\ref{p-step12.3-2}.  
In Step~\ref{p-step12.3-2}, we will establish the 
reverse inequality, where Lemma~\ref{q-lem10.1} will play a crucial role.

\setcounter{step}{0}
\begin{step}\label{p-step12.3-1}
In this step, we prove that
\[
\gamma({}_a E) \leq \sum_{i=1}^r b_i.
\]
As noted above, the assumption 
that $\{v_1, \ldots, v_r\}$ is compatible 
with the parabolic filtration is not needed here.

By Lemma~\ref{p-lem5.4}, we can write
\[
|v_i|_h = \frac{v^\dag_i(z)}{|z|^{b_i}} \left(-\log |z|\right)^{M_i}
\]
around the origin, where each $M_i \in \mathbb{Z}_{>0}$ and $v^\dag_i(z)$ is bounded for all $i$.

Since $v_i \in {}_{b_i}E \setminus {}_{< b_i} E$, we know that
\[
\frac{v^\dag_i(z)}{|z|^c}
\]
is unbounded for any $c > 0$.

Let $\bm{v} := \{v_1, \ldots, v_r\}$, and consider $\det H(\bm{v})(z)$ as in \ref{p-say7.1}.  
Then we can write
\[
\det H(\bm{v})(z) = |z|^{-2\sum b_i} \left(-\log |z|\right)^{2r \sum M_i} u(z),
\]
where $u(z)$ is bounded.

Therefore, by Corollary~\ref{p-cor7.6}, we have
\[
\begin{split}
\gamma({}_a E) &= -\frac{1}{2} \liminf_{z \to 0} \frac{\log \det H(\bm{v})(z)}{\log |z|} \\
&= -\frac{1}{2} \lim_{z \to 0} \frac{\log \det H(\bm{v})(z)}{\log |z|} \\
&= \sum_{i=1}^r b_i - \frac{1}{2} \lim_{z \to 0} \frac{\log u(z)}{\log |z|} \\
&\leq \sum_{i=1}^r b_i,
\end{split}
\]
since
\[
\lim_{z \to 0} \frac{\log u(z)}{\log |z|} \geq 0.
\]
This completes the proof of the inequality.
\end{step}

\begin{step}\label{p-step12.3-2}
In this step, we prove the reverse inequality:
\[
\gamma({}_a E) \geq \sum_{i=1}^r b_i.
\]
We emphasize that the 
assumption that $\{v_1, \ldots, v_r\}$ is 
compatible with the parabolic filtration is essential in 
this step.

Suppose, to the contrary, that
\[
\gamma({}_a E) < \sum_{i=1}^r b_i.
\]
This implies that
\begin{equation}\label{p-eq12.1}
\lim_{z \to 0} \frac{\log u(z)}{\log |z|} > 0
\end{equation}
in Step~\ref{p-step12.3-1}.

By replacing $h$ with $h \cdot |z|^{2 \max_i \{b_i\} 
+ \varepsilon}$ for some small $\varepsilon > 0$, 
we may assume that $a = 0$ and $b_i \in (-1, 0)$ 
for all $i$ (see Remark~\ref{p-rem12.5}).  
Next, by replacing $h$ with $h \cdot e^{-\chi(-N)}$ for some 
sufficiently large $N \gg 0$, we may further assume that the curvature of $h$ is 
negative (see Remark~\ref{p-rem12.6}).

Note that all entries of the matrix $h$ are bounded, since each $b_i \in (-1, 0)$. 
Rescaling the coordinate via 
\( z \mapsto z/C \) for some 
constant \( C > 0 \) does not affect the 
values of \( \gamma({}_a E) \) and \( b_i \) (see Remark \ref{p-rem7.18}). 
Therefore, by choosing an appropriate rescaling, 
we may assume that the frame \( \{v_1, \ldots, v_r\} \) 
is defined on the unit disk \( B(0, 1) \). 
Applying the rescaling \( z \mapsto z/2 \) once more, 
we may further assume that the pair \( (E, h) \) is 
defined and trivialized on \( B(0, 2) \). 
Then, by further replacing $h$ with $(1/C)h$ for some 
sufficiently large constant $C \gg 0$, we may assume 
that all eigenvalues of $h$ are $\leq 1$ on some open 
neighborhood of the closed disk 
$\overline{B}(0, 1 + \delta)$ for sufficiently small $\delta > 0$.

By Lemma \ref{p-lem12.2}, we can choose a 
sufficiently large positive integer $m$ such that 
$m b_i \in \mathbb{Z}$ for all 
rational $b_i$, and
\[
  \delta_m(b) < \frac{1}{6 \, \operatorname{rank} E}.
  \]
Note that $m$ can be taken arbitrarily large.

Set $b'_i := m b_i - n_i$ for each $i$, 
where $n_i$ is the integer minimizing $|m b_i - n_i|$ (see Definition~\ref{p-def12.1}).

Now consider the $m$-fold cyclic cover
\[
\pi \colon B(0, 1 + \delta') \to B(0, 1 + \delta), \quad \pi(w) = z^m
\]
for some $\delta' > 0$.  
Define
\[
(E', h') := \left( \pi^* E, \, \pi^* h \cdot |w|^{2 \max_i \{b'_i\}} \right).
\]

Then, since $\gamma({}_a E) < \sum_{i=1}^r b_i$ (i.e., \eqref{p-eq12.1}) 
and $m$ is sufficiently large, there exists a constant $C' > 0$ such that
\[
|\det h'| \leq C' |w|
\]
on some open neighborhood of the closed disk 
$\overline{B}(0, 1 + \delta'')$ for sufficiently small $\delta'' > 0$.

By Lemma~\ref{p-lem11.2}, the set
\[
\{ w^{n_1} \pi^* v_1, \ldots, w^{n_r} \pi^* v_r \}
\]
forms a frame of ${}_0 E'$ that is compatible with the parabolic filtration.

Applying Lemma~\ref{q-lem10.1}, we obtain a 
constant section $e'$ of $E'$ 
and a constant $C'' > 0$ such that
\[
|e'(w)|_{h'} \leq C'' |w|^{\frac{1}{3 \, \operatorname{rank} E}}
\]
holds near the origin.

By construction, we have
\[
\Par({}_0 E') \subset \left( -\frac{1}{3 \, \operatorname{rank} E}, 0 \right],
\]
which contradicts the existence of such a section $e'$ with the above estimate.

Therefore, our assumption must be false, and we conclude that
\[
\gamma({}_a E) \geq \sum_{i=1}^r b_i.
\]
\end{step}

By Steps~\ref{p-step12.3-1} and~\ref{p-step12.3-2}, we obtain the desired equality:
\[
\gamma({}_a E) = \sum_{i=1}^r b_i.
\]
This completes the proof of Theorem~\ref{p-thm12.3}.
\end{proof}

\begin{prop}\label{p-prop12.7} 
Let $\{w_1, \ldots, w_r\}$ be a local frame of ${}_aE$ defined over some 
open 
neighborhood of $0$, such that 
$w_i \in {}_{c_i}E$ for every $i$. Assume that 
\[
\sum_{i=1}^r c_i \leq \gamma({}_a E).
\] 
Then the following assertions hold:
\begin{itemize}
\item[(i)] $w_i \in {}_{c_i}E \setminus {}_{<c_i}E$ for every $i$;
\item[(ii)] $\gamma ({}_a E)=\sum _{i=1}^r c_i$; 
\item[(iii)] $c_i \in (a-1, a]$ for every $i$;
\item[(iv)] $\{w_1, \ldots, w_r\}$ is 
a local frame of ${}_aE$ compatible with the parabolic filtration.
\end{itemize}
\end{prop}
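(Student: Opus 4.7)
The plan is to use Corollary~\ref{p-cor7.7} as the main workhorse: for any local frame $\{u_1, \ldots, u_r\}$ of ${}_aE$ with $u_i \in {}_{d_i}E$, it gives $\gamma({}_aE) \leq \sum_{i=1}^r d_i$. Applied to $\bm{w}$ with weights $c_i$, this yields $\gamma({}_aE) \leq \sum c_i$, which combined with the hypothesis $\sum c_i \leq \gamma({}_aE)$ immediately produces equality. This establishes (ii) and is the lever behind every other assertion, each of which will follow from a ``lower-one-weight'' contradiction.

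For (i), I would argue by contradiction: if some $w_{i_0}$ lies in ${}_{<c_{i_0}}E$, then $w_{i_0} \in {}_{c'_{i_0}}E$ for some $c'_{i_0} < c_{i_0}$. Applying Corollary~\ref{p-cor7.7} to the same frame $\bm{w}$ but with the lowered weight at position $i_0$ yields
\[
\gamma({}_aE) \leq c'_{i_0} + \sum_{j \neq i_0} c_j < \sum_i c_i = \gamma({}_aE),
\]
contradicting (ii). Then (iii) is immediate from (i) combined with Lemma~\ref{p-lem7.16}.

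For (iv), I would first note that each $c_i$ must lie in $\Par_a(E,h)$: otherwise ${}_{c_i}E = {}_{<c_i}E$ would force $w_i \in {}_{<c_i}E$ and contradict (i). Setting $S_\lambda := \{i : c_i = \lambda\}$ for each parabolic weight $\lambda$, I would show that the classes $\{\bar w_i : i \in S_\lambda\}$ in ${}_\lambda E/{}_{<\lambda}E$ are linearly independent by the same lowering trick: a nontrivial relation $\sum_{i \in S_\lambda} \alpha_i w_i \in {}_{<\lambda}E$ with $\alpha_{i_0} \neq 0$ permits me to replace $w_{i_0}$ by $\sum_{i \in S_\lambda} \alpha_i w_i$, producing a new local frame whose $i_0$-th weight drops strictly below $\lambda = c_{i_0}$, so Corollary~\ref{p-cor7.7} contradicts (ii) once again. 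A dimension count then closes the argument: $|S_\lambda| \leq \dim_{\mathbb{C}}({}_\lambda E/{}_{<\lambda}E)$, and summing over $\lambda \in \Par_a(E,h)$ both sides equal $r$ (the left by (iii) together with the parabolic-weight observation, the right by \eqref{p-eq7.15}), so each inequality is an equality and the classes form a basis of each graded piece---precisely compatibility with the parabolic filtration in the sense of Definition~\ref{p-def7.14}.

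The main obstacle is to verify, in the linear-independence step of (iv), that after replacing $w_{i_0}$ by $\sum_{i \in S_\lambda} \alpha_i w_i$ the modified collection is still a local frame. This reduces to noting that the associated change-of-frame matrix equals the identity except in its $i_0$-th column, with diagonal entry $\alpha_{i_0}$; its determinant is thus $\alpha_{i_0} \neq 0$, so $w_{i_0}$ is recoverable from the new tuple. Once this elementary linear-algebra verification is in place, all four conclusions follow cleanly from repeated invocations of Corollary~\ref{p-cor7.7} together with the saturated equality (ii).
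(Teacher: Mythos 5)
Your proposal is correct and takes essentially the same approach as the paper: both prove (ii) by pinching $\gamma({}_aE)\le\sum c_i\le\gamma({}_aE)$ via Corollary~\ref{p-cor7.7}, and both establish (i) and (iv) by the same ``lower a weight, then Corollary~\ref{p-cor7.7} gives a contradiction'' device. Your treatment of (iv) is somewhat more explicit than the paper's --- the paper simply asserts that non-compatibility yields a constant $\mathrm{GL}(r,\mathbb{C})$ change of frame with strictly smaller weight sum, whereas you unpack this by proving linear independence of the classes $\{\bar w_i\}_{i\in S_\lambda}$ in each ${}_\lambda E/{}_{<\lambda}E$ and then closing with the dimension count from \eqref{p-eq7.15} --- but the underlying argument is identical.
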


\begin{proof}[Proof of Proposition \ref{p-prop12.7}]
For each $i$, take a real number $c_i'$ such that 
$w_i \in {}_{c_i'}E \setminus {}_{<c_i'}E$. 
By definition, we have $c_i' \leq c_i$ for all $i$. 
By Corollary \ref{p-cor7.7}, it follows that
\[
\gamma({}_a E) \leq \sum_{i=1}^r c_i'.
\]
Therefore,
\[
\gamma({}_a E) \leq \sum_{i=1}^r c_i' \leq \sum_{i=1}^r c_i \leq \gamma({}_a E).
\]
Thus, all inequalities must be equalities, and 
we conclude that $c_i' = c_i$ for all $i$. 
In particular, $w_i \in {}_{c_i}E \setminus {}_{<c_i}E$ for all $i$ 
and $\gamma({}_a E)=\sum _{i=1}^r c_i$, which proves (i) and 
(ii).

Next, since $\{w_1, \ldots, w_r\}$ forms 
a local frame of ${}_a E$ near $0$, it follows from (i) and 
Lemma~\ref{p-lem7.16} that $c_i \in (a - 1, a]$ for all $i$. 
Hence, (iii) follows.

Finally, consider the quotient vector space ${}_aE / {}_{a-1}E$. 
Suppose that $\{w_1, \ldots, w_r\}$ is not compatible with the parabolic filtration. 
Then there exists $(a_{ij}) \in \mathrm{GL}(r, \mathbb{C})$ such that 
the new frame $\{w'_1, \ldots, w'_r\}$, defined by
\[
w'_j = \sum_{i=1}^r w_i a_{ij},
\]
satisfies $w'_j \in {}_{c_j'}E \setminus {}_{<c_j'}E$ for each $j$, and
\[
\sum_{j=1}^r c_j' < \sum_{i=1}^r c_i.
\]
However, by Corollary \ref{p-cor7.7} again, we have 
\[
\gamma({}_a E) \leq \sum_{j=1}^r c_j',
\]
which contradicts the assumption $\sum_{i=1}^r c_i \leq \gamma({}_a E)$. 
Hence, $\{w_1, \ldots, w_r\}$ must be compatible with the parabolic filtration. 
This proves (iv), and completes the proof of Proposition \ref{p-prop12.7}.
\end{proof}

By Corollary~\ref{p-cor7.7}, Theorem~\ref{p-thm12.3}, and  
Proposition~\ref{p-prop12.7}, we obtain the following useful statement,  
which will be used in subsequent sections.

\begin{cor}\label{p-cor12.8} 
Let $\{u_1, \ldots, u_r\}$ be a local frame of ${}_a E$ 
defined over some open neighborhood of $0$, such that  
$u_i \in {}_{d_i} E$ for every $i$.  
Then the inequality  
\[
\gamma({}_a E) \leq \sum_{i=1}^r d_i
\]
holds.  
Equality holds if and only if the following three conditions are satisfied:
\begin{itemize}
\item[(i)] $\{u_1, \ldots, u_r\}$ is a local 
frame of ${}_a E$ compatible with the parabolic filtration;
\item[(ii)] $u_i \in {}_{d_i} E \setminus {}_{<d_i} E$ for every $i$;
\item[(iii)] $\Par({}_a E) = \{d_1, \ldots, d_r\}$.
\end{itemize}
\end{cor}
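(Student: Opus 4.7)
The proof assembles three earlier results from this section: Corollary \ref{p-cor7.7} for the inequality, and the pair (Proposition \ref{p-prop12.7}, Theorem \ref{p-thm12.3}) for the two directions of the equality case. For the inequality $\gamma({}_a E) \leq \sum_{i=1}^r d_i$ itself, I would simply invoke Corollary \ref{p-cor7.7}: the frame $\{u_1,\ldots,u_r\}$ satisfies $u_i \in {}_{d_i} E$ by hypothesis, so the estimate is immediate, with no additional argument required.

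For the forward direction (equality implies (i), (ii), and (iii)), I would apply Proposition \ref{p-prop12.7} with $c_i := d_i$. The required inequality $\sum d_i \leq \gamma({}_a E)$ holds with equality, and the proposition yields at once (ii), namely $u_i \in {}_{d_i} E \setminus {}_{<d_i} E$ for every $i$, the containment $d_i \in (a-1, a]$, and (i), namely that $\{u_1,\ldots,u_r\}$ is compatible with the parabolic filtration. Condition (iii) then falls out from the definition of parabolic weights in \ref{p-say7.11}: in a compatible frame the number of vectors of weight $\lambda$ equals $\dim_{\mathbb{C}}({}_\lambda E / {}_{<\lambda} E)$, which forces the multiset equality $\{d_1,\ldots,d_r\} = \Par_a(E,h)$.

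For the reverse direction, assuming (i), (ii), and (iii), observe that (i) together with (ii) are precisely the hypotheses of Theorem \ref{p-thm12.3}, so that theorem delivers $\gamma({}_a E) = \sum_{i=1}^r d_i$ at once; condition (iii) is in fact redundant here but is stated for emphasis and for later reference. I anticipate no significant obstacle in the proof of this corollary itself, since all of the analytic difficulty is already absorbed into Theorem \ref{p-thm12.3}, whose proof via Simpson's key lemma (Lemma \ref{q-lem10.1}) together with the Diophantine approximation input (Lemma \ref{p-lem12.2}) constitutes the technical heart of this section.
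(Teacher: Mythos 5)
Your proposal is correct and follows exactly the route the paper indicates, which cites Corollary \ref{p-cor7.7}, Theorem \ref{p-thm12.3}, and Proposition \ref{p-prop12.7} without further elaboration: Corollary \ref{p-cor7.7} gives the inequality, Proposition \ref{p-prop12.7} (applied with $c_i := d_i$) gives the forward implication including compatibility, and Theorem \ref{p-thm12.3} gives the reverse implication, with condition (iii) indeed following from (i) and (ii) via Definition \ref{p-def7.14} and \ref{p-say7.11}.
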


We will use this corollary when 
showing that a given frame is compatible with the parabolic filtration.

\section{On dual bundles}\label{p-sec13}

In this section, we investigate the prolongation of dual vector bundles. 
We begin by reformulating Corollary \ref{p-cor1.7} (see also Corollary \ref{p-cor4.3}).

\begin{lem}[Duality for line bundles, 
see Lemma \ref{p-lem1.11}]\label{p-lem13.1}
Let $(L, h)$ be an acceptable line bundle on $\Delta^*$.  
Let $\alpha \in \mathbb{R}$ be any real number.  
Then we have ${}_\alpha L = {}_{\gamma({}_\alpha L)} L$, 
and $\Par_\alpha (L, h) = \{ \gamma({}_\alpha L) \}$.   

Moreover, if $0 < \varepsilon \ll 1$, then  
\[
\gamma({}_{-\alpha + 1 - \varepsilon}(L^\vee)) = -\gamma({}_\alpha L).
\]  

In particular, the following equality holds:
\[
\left({}_{\gamma({}_\alpha L)} L\right)^\vee = {}_{-\gamma({}_\alpha L)}(L^\vee).
\]
\end{lem}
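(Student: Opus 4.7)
The plan is to assemble the three parts of the lemma from ingredients already in place: Theorem \ref{p-thm12.3} expressing $\gamma$ in terms of parabolic weights, Corollary \ref{p-cor4.3} giving the duality $({}_\alpha L)^\vee = {}_{-\alpha+1-\varepsilon}(L^\vee)$, and the elementary identity $h^\vee(v^\vee,v^\vee)=h(v,v)^{-1}$ for the induced metric on a dual line bundle. I expect no serious obstacle; the whole argument is essentially a translation of Corollary \ref{p-cor4.3} into the language of $\gamma$ and $\Par$, and the only mildly technical point is a bookkeeping check about the generator of the dual sheaf.

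First I would handle the rank-one identification. Since $L$ is a line bundle, any local frame of ${}_\alpha L$ near $0$ consists of a single nonvanishing generator $v$. By Corollary \ref{p-cor4.5} combined with Lemma \ref{p-lem7.9}, there is a unique real number $b$ with $v \in {}_b L \setminus {}_{<b} L$, and Lemma \ref{p-lem7.16} forces $b \in (\alpha-1, \alpha]$. Theorem \ref{p-thm12.3} applied to the rank-one frame $\{v\}$ gives $\gamma({}_\alpha L) = b$. Hence $\Par_\alpha(L,h) = \{b\} = \{\gamma({}_\alpha L)\}$, and the chain $\mathcal{O}_\Delta \cdot v = {}_\alpha L \supset {}_b L \supset \mathcal{O}_\Delta \cdot v$ collapses to ${}_\alpha L = {}_{\gamma({}_\alpha L)} L$.

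Next I would establish the duality identity $\gamma({}_{-\alpha+1-\varepsilon}(L^\vee)) = -\gamma({}_\alpha L)$ for $0 < \varepsilon \ll 1$. Let $v$ be the local generator above, and let $v^\vee$ be the dual section characterized by $v^\vee(v) = 1$; it freely generates the $\mathcal{O}_\Delta$-module $({}_\alpha L)^\vee$, because the $\mathcal{O}_\Delta$-dual of a free rank-one module is free of rank one. By Corollary \ref{p-cor4.3}, $({}_\alpha L)^\vee = {}_{-\alpha+1-\varepsilon}(L^\vee)$, so $v^\vee$ is likewise a local generator of ${}_{-\alpha+1-\varepsilon}(L^\vee)$. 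The identity $h^\vee(v^\vee,v^\vee) = h(v,v)^{-1}$ gives
\[
\frac{\log h^\vee(v^\vee,v^\vee)}{\log|z|} = -\frac{\log h(v,v)}{\log|z|},
\]
and passing to the limit $z \to 0$ via Corollary \ref{p-cor7.6}, applied once to $({}_\alpha L, h)$ with the frame $\{v\}$ and once to $({}_{-\alpha+1-\varepsilon}(L^\vee), h^\vee)$ with the frame $\{v^\vee\}$, yields the desired equality.

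The final displayed equality follows by chaining the previous two steps. Applying the first step to $L^\vee$ at level $-\alpha+1-\varepsilon$ gives ${}_{-\alpha+1-\varepsilon}(L^\vee) = {}_{\gamma({}_{-\alpha+1-\varepsilon}(L^\vee))}(L^\vee)$, which by the second step equals ${}_{-\gamma({}_\alpha L)}(L^\vee)$. Combining this with Corollary \ref{p-cor4.3} and with ${}_\alpha L = {}_{\gamma({}_\alpha L)} L$ yields $({}_{\gamma({}_\alpha L)} L)^\vee = {}_{-\gamma({}_\alpha L)}(L^\vee)$, completing the proof.
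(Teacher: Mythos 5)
Your proof is correct, but it takes a genuinely different route from the paper's. The paper proves Lemma~\ref{p-lem13.1} by the explicit floor-function computation built into Proposition~\ref{p-prop4.1} and Theorem~\ref{p-thm4.4}: after choosing the trivialization $(L,h)\simeq(\mathcal O_{\Delta^*},|\cdot|^2e^{-2\varphi})$ it reads off $\gamma({}_\alpha L)=\lfloor\alpha-\gamma\rfloor+\gamma$ directly from the generator $z^{-\lfloor\alpha-\gamma\rfloor}$ of ${}_\alpha L$, and then matches that against $\gamma({}_{-\alpha+1-\varepsilon}(L^\vee))=\lfloor -\alpha+1-\varepsilon+\gamma\rfloor-\gamma$ exactly as in the proof of Corollary~\ref{p-cor4.3}; no appeal to Theorem~\ref{p-thm12.3} or Corollary~\ref{p-cor7.6} is made. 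You instead derive $\gamma({}_\alpha L)=b$ from Theorem~\ref{p-thm12.3} applied to the rank-one frame $\{v\}$, using Corollary~\ref{p-cor4.5}, Lemma~\ref{p-lem7.9}, and Lemma~\ref{p-lem7.16} to pin down the parabolic weight $b\in(\alpha-1,\alpha]$ (and you correctly observe, implicitly, that a rank-one frame is automatically compatible with the parabolic filtration), and you get the duality of $\gamma$'s from $h^\vee(v^\vee,v^\vee)=h(v,v)^{-1}$ together with Corollary~\ref{p-cor7.6}. Both arguments are sound and the section ordering makes your citations legitimate, but note that the paper explicitly flags in the introduction that Lemma~\ref{p-lem1.11}, unlike Theorems~\ref{p-thm1.12}--\ref{p-thm1.14}, does \emph{not} need the hard identity $\gamma({}_aE)=\sum b_i$; by routing through Theorem~\ref{p-thm12.3} you are invoking the most technically demanding result in the paper (Sections~\ref{p-sec9}--\ref{p-sec12}) to prove something that the line-bundle analysis of Section~\ref{p-sec4} already gives essentially for free. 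Your argument is cleaner to state but far less economical in dependencies; the paper's is elementary and self-contained within the line-bundle machinery.
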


\begin{proof}[Proof of Lemma~\ref{p-lem13.1}]
We use the same notation as in the 
proof of Proposition \ref{p-prop4.1} (see also Theorem \ref{p-thm4.4}).  

Since ${}_\alpha L = \mathcal{O}_\Delta \cdot z^{-\lfloor \alpha - \gamma \rfloor}$, 
we may take the following trivialization: 
\[
({}_\alpha L, h) \simeq 
\left( \mathcal{O}_\Delta, 
|\cdot|^2 e^{-2\varphi_\alpha} \right),
\] 
where $\varphi_\alpha := \varphi + \lfloor \alpha - \gamma \rfloor \log |z|$. 
Note that 
\[
\lim_{z \to 0} \frac{\varphi(z)}{\log |z|} = \gamma. 
\]
Therefore, we obtain
\[
\gamma({}_\alpha L) 
= \lim_{z \to 0} \frac{\varphi_\alpha(z)}{\log |z|} 
= \lfloor \alpha - \gamma \rfloor + \gamma.
\] 
This immediately implies that ${}_\alpha L = {}_{\gamma({}_\alpha L)} L$, 
and that $\Par_\alpha (L, h) = \{ \gamma({}_\alpha L) \}$.

Similarly, we compute
\[
\gamma \left({}_{-\alpha + 1 - \varepsilon}(L^\vee)\right) 
= \lfloor -\alpha + 1 - \varepsilon + \gamma \rfloor - \gamma.
\] 
For details, see the proof of Corollary \ref{p-cor4.3}.   

If $0 < \varepsilon \ll 1$, then
\begin{equation*}
\begin{split}
\gamma({}_{-\alpha + 1 - \varepsilon}(L^\vee)) 
&= \lfloor -\alpha + 1 - \varepsilon + \gamma \rfloor - \gamma \\
&= -\lceil \alpha - 1 + \varepsilon - \gamma \rceil - \gamma \\
&= -\lfloor \alpha - \gamma \rfloor - \gamma \\
&= -\gamma({}_\alpha L).
\end{split}
\end{equation*}
Hence, we have
\[
\left({}_{\gamma({}_\alpha L)} L\right)^\vee 
= \left({}_\alpha L\right)^\vee 
= {}_{-\alpha + 1 - \varepsilon}(L^\vee) 
= {}_{-\gamma({}_\alpha L)}(L^\vee),
\] 
by Corollary \ref{p-cor1.7} (see also Corollary \ref{p-cor4.3}). 

This completes the proof of Lemma \ref{p-lem13.1}.
\end{proof}

The main result of this section is the following theorem.  
One of the main ingredients in the proof of 
Theorem \ref{p-thm13.2} is Theorem \ref{p-thm12.3}.

\begin{thm}[Dual bundles, see Theorem \ref{p-thm1.12}]\label{p-thm13.2}
Let $(E, h)$ be an acceptable vector bundle on $\Delta^*$, 
and let $a$ be any real number. Then,
\[
\left({}_a E\right)^\vee = {}_{-a + 1 - \varepsilon}\left(E^\vee\right)
\]
holds for any sufficiently small $\varepsilon > 0$. 

Moreover, let $\{v_1, \ldots, v_r\}$ be a local frame of ${}_a E$ near the origin, 
compatible with the parabolic filtration, such that 
$v_i \in {}_{b_i} E \setminus {}_{<b_i} E$ for each $i$. 
For each $i$, define
\begin{equation}\label{p-eq13.1}
v_i^\vee := (-1)^{i-1} \, v_1 \wedge 
\cdots \wedge v_{i-1} \wedge v_{i+1} \wedge 
\cdots \wedge v_r \otimes (v_1 \wedge \cdots \wedge v_r)^{\otimes -1}.
\end{equation}
Then $\{v_1^\vee, \ldots, v_r^\vee\}$ forms a local frame of ${}_{-a + 1 - \varepsilon}(E^\vee)$ 
near the origin, compatible with the parabolic filtration, such that 
\[
v_i^\vee \in {}_{-b_i}(E^\vee) \setminus {}_{<-b_i}(E^\vee)
\]
for each $i$. In particular, we have 
\[
\Par _a(E, h)=\{b_1, \ldots, b_r\} \quad 
\text{ and }\quad \Par _{-a+1-\varepsilon} (E^\vee, h^\vee)=\{ -b_1, \ldots, -b_r\}. 
\]
\end{thm}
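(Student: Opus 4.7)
I proceed in four steps. First, under the natural identification $E^\vee \cong \wedge^{r-1}E \otimes (\det E)^{-1}$ with pairing $(\omega \otimes t^{-1})(v) = (v \wedge \omega)/t$, a direct wedge computation shows that the formula \eqref{p-eq13.1} satisfies $v_i^\vee(v_j) = \delta_{ij}$ on $\Delta^*$ (the sign $(-1)^{i-1}$ compensates exactly for moving $v_i$ past $v_1,\dots,v_{i-1}$ to restore $v_1 \wedge \cdots \wedge v_r$). Consequently $({}_a E)^\vee$ is locally generated by $\{v_1^\vee, \ldots, v_r^\vee\}$ near the origin.

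Second, I establish the inclusion ${}_{-a+1-\varepsilon}(E^\vee) \subseteq ({}_a E)^\vee$ for sufficiently small $\varepsilon > 0$. For local sections $w$ of ${}_{-a+1-\varepsilon}(E^\vee)$ and $s = \sum_j f_j v_j$ of ${}_a E$, it suffices to show each $w(v_j)$ extends holomorphically across $0$. The Cauchy--Schwarz estimate $|w(v_j)| \le |w|_{h^\vee}|v_j|_h$ yields $|w(v_j)| = O(|z|^{(a-b_j) - 1 + \varepsilon - \varepsilon' - \varepsilon''})$ for arbitrarily small $\varepsilon', \varepsilon'' > 0$. Since $b_j \in (a-1, a]$ forces $a - b_j \in [0, 1)$, the exponent strictly exceeds $-1$, which forbids any pole at the origin.

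The crux is the third step: proving $v_i^\vee \in {}_{-b_i}(E^\vee)$. The induced dual metric gives
\[
|v_i^\vee|^2_{h^\vee} = \bigl(H(\bm v)^{-1}\bigr)_{ii} = \frac{\det H(\widehat{\imath})}{\det H(\bm v)},
\]
where $H(\widehat{\imath})$ denotes the $(r-1)\times(r-1)$ principal submatrix obtained by deleting the $i$-th row and column. I bound the numerator by Hadamard's inequality for positive semidefinite matrices: $\det H(\widehat{\imath}) \le \prod_{j\ne i}|v_j|^2_h = O(|z|^{-2\sum_{j\ne i}b_j - \varepsilon})$. For the denominator I invoke Corollary~\ref{p-cor12.4} (which rests on Theorem~\ref{p-thm12.3}): the element $v_1 \wedge \cdots \wedge v_r$ generates ${}_{\sum b_j}(\det E)$ and does not lie in ${}_{<\sum b_j}(\det E)$, so Theorem~\ref{p-thm4.4}(iv) applied to the acceptable line bundle $\det E$ yields $|v_1 \wedge \cdots \wedge v_r|^2_{\det h} > |z|^{-2\sum b_j + \delta}$ near $0$, for every $\delta > 0$. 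Combining these bounds produces $|v_i^\vee|^2_{h^\vee} = O(|z|^{2 b_i - \eta})$ for every $\eta > 0$, which is exactly $v_i^\vee \in {}_{-b_i}(E^\vee)$.

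To conclude, set $F := ({}_a E)^\vee$. The third step gives $F \subseteq {}_{-\min_j b_j}(E^\vee)$, while the second gives ${}_{-a+1-\varepsilon}(E^\vee) \subseteq F$ for small $\varepsilon > 0$. Since $b_j \in (a-1,a]$ forces $-\min_j b_j \in [-a, 1-a)$, for $\varepsilon$ sufficiently small we have $-a+1-\varepsilon > -\min_j b_j$, and Lemma~\ref{p-lem7.8}(i) gives ${}_{-\min_j b_j}(E^\vee) \subseteq {}_{-a+1-\varepsilon}(E^\vee)$; hence all three lattices coincide. From the identity $\det H(h^\vee, \bm v^\vee) = 1/\det H(h, \bm v)$ (the Gram matrix of the dual basis is the inverse Hermitian of the original Gram matrix), Corollary~\ref{p-cor7.6} combined with Theorem~\ref{p-thm12.3} yields $\gamma(F) = -\sum b_i = \sum(-b_i)$. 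Applying the equality case of Corollary~\ref{p-cor12.8} to $\{v_1^\vee,\ldots,v_r^\vee\}$ with weights $-b_i$, I conclude that the frame is compatible with the parabolic filtration, $v_i^\vee \in {}_{-b_i}(E^\vee) \setminus {}_{<-b_i}(E^\vee)$ for each $i$, and $\Par_{-a+1-\varepsilon}(E^\vee, h^\vee) = \{-b_1, \ldots, -b_r\}$. The main obstacle is the third step: turning the abstract isomorphism $E^\vee \cong \wedge^{r-1}E \otimes (\det E)^{-1}$ into a sharp pointwise estimate for $|v_i^\vee|_{h^\vee}$ requires the sharp \emph{lower} bound on $\det H(\bm v)$, which is accessible only through Theorem~\ref{p-thm12.3}/Corollary~\ref{p-cor12.4} (the deepest result of the paper) together with the line-bundle description in Theorem~\ref{p-thm4.4}(iv); without this sharpness $v_i^\vee$ would only be localized up to a nontrivial gap in the filtration, and Corollary~\ref{p-cor12.8} could not be applied.
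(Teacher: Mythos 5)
Your proof is correct, and it follows essentially the same strategy as the paper: the same identification of $({}_a E)^\vee$ with $\wedge^{r-1}({}_a E)\otimes(\det({}_a E))^\vee$, the same reverse inclusion via the Cauchy--Schwarz degree estimate, the same reliance on $\gamma({}_a E)=\sum b_i$ (Theorem~\ref{p-thm12.3}) through the determinant line bundle to get the sharp lower bound, and the same appeal to $H(h^\vee,\bm v^\vee)=H(h,\bm v)^{-1}$ together with Corollary~\ref{p-cor12.8} to finish. The only meaningful difference is presentational: where the paper obtains $v_i^\vee\in{}_{-b_i}(E^\vee)$ abstractly from the module identity $({}_a E)^\vee=\wedge^{r-1}({}_a E)\otimes{}_{-\gamma({}_a E)}(\det E)^\vee$ (combining Theorem~\ref{p-thm7.5} and Lemma~\ref{p-lem13.1}), you make the underlying norm estimate explicit via the adjugate formula $(H(\bm v)^{-1})_{ii}=\det H(\widehat\imath)/\det H(\bm v)$, Hadamard's inequality for the numerator, and Theorem~\ref{p-thm4.4}(iv) applied to $\det E$ for the denominator — a slightly more computational but equivalent route that makes the role of the Hadamard-type bound visible rather than implicit. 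Likewise your sandwich ${}_{-a+1-\varepsilon}(E^\vee)\subseteq({}_a E)^\vee\subseteq{}_{-\min_j b_j}(E^\vee)\subseteq{}_{-a+1-\varepsilon}(E^\vee)$ repackages the paper's two inclusions through an intermediate lattice, but is logically the same.
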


\begin{proof}[Proof of Theorem~\ref{p-thm13.2}] 
By Corollary~\ref{p-cor7.7}, we have 
\[
\gamma \left({}_{\gamma ({}_a E)} \det E\right) = \gamma ({}_a E).
\] 
Therefore, by Lemma~\ref{p-lem13.1}, we obtain 
\begin{equation}\label{p-eq13.2} 
\left({}_{\gamma ({}_a E)} \det E\right)^\vee = {}_{-\gamma ({}_a E)} (\det E)^\vee.
\end{equation}
As a result, we have
\begin{equation}\label{p-eq13.3}
\begin{aligned}
\left({}_a E\right)^\vee 
&= \bigwedge^{r-1} \left({}_a E\right) \otimes 
\left(\det({}_a E)\right)^\vee \\
&= \bigwedge^{r-1} \left({}_a E\right) \otimes 
\left({}_{\gamma({}_a E)} \det E\right)^\vee \\
&= \bigwedge^{r-1} \left({}_a E\right) 
\otimes {}_{-\gamma({}_a E)} \left(\det E\right)^\vee,
\end{aligned}
\end{equation}
where we have used Theorem~\ref{p-thm7.5} and \eqref{p-eq13.2}.

By \eqref{p-eq13.1} and \eqref{p-eq13.3}, it follows that 
\[
v^\vee _i \in {}_{-b_i} (E^\vee)
\] 
since $\gamma ({}_a E)=\sum _{i=1}^r b_i$ by Theorem \ref{p-thm12.3}. 

Since $\bm v:=\{v_1, \ldots, v_r\}$ is a local frame of ${}_a E$, 
we have $b_i\in (a-1, a]$ for every $i$ by Lemma \ref{p-lem7.16}. 
Therefore, $-b_i\leq -a+1-\varepsilon$ for every $i$ when $0<\varepsilon \ll 1$. 
Thus, $v^\vee _i\in {}_{-a +1-\varepsilon} (E^\vee)$ for 
every $i$. 

By definition, $\bm v':=\{v_1^\vee, \ldots, v_r^\vee\}$ is 
the dual frame of $\{v_1, \ldots, v_r\}$. Thus, it gives a local 
frame of $({}_a E)^\vee$ near the origin. 
This implies that 
\begin{equation}\label{p-eq13.4}
({}_a E)^\vee \subset {}_{-a+1-\varepsilon} (E^\vee). 
\end{equation}

\begin{claim}
We have the inclusion
\[
{}_{-a + 1 - \varepsilon}(E^\vee) \subset \left({}_a E\right)^\vee
\]
for any sufficiently small $\varepsilon > 0$.
\end{claim}

\begin{proof}[Proof of Claim]
Let $f \in {}_{-a + 1 - \varepsilon}(E^\vee)$. Then locally near $0$, we can write
\[
f = \sum_{i=1}^r f_i(z) v_i^\vee(z),
\]
where each $f_i$ is holomorphic outside $0$. Since
\[
|f_i(z)| = |v_i(f)| \leq |v_i(z)|_h 
\cdot |f(z)|_{h^\vee} = O\left( \frac{1}{|z|^{1 - \delta}} \right)
\]
for small $\delta > 0$, we conclude that each $f_i(z)$ is holomorphic near $0$.

Hence, $f$ extends holomorphically, and we obtain the desired inclusion.
This completes the proof of Claim.
\end{proof}

Therefore,
by combining \eqref{p-eq13.4} with Claim, we obtain the equality:
\[
\left({}_a E\right)^\vee = {}_{-a + 1 - \varepsilon}(E^\vee).
\]

As shown above, 
$\bm{v}^\vee = \{v_1^\vee, \ldots, v_r^\vee\}$ is a local frame of 
${}_{-a + 1 - \varepsilon}(E^\vee)$ near the origin. 
We have already proved that $v_i^\vee \in {}_{-b_i}(E^\vee)$ for every $i$. 
By definition, it is easy to verify that
\[
H(h^\vee, \bm{v}^\vee) = H(h, \bm{v})^{-1}.
\]
Therefore,
\[
\begin{split}
\gamma \left( {}_{-a + 1 - \varepsilon}(E^\vee) \right) 
&= -\frac{1}{2} \lim_{z \to 0} \frac{\log \det H(\bm{v}^\vee)}{\log |z|} \\
&= -\frac{1}{2} \lim_{z \to 0} \frac{-\log \det H(\bm{v})}{\log |z|} \\
&= -\gamma({}_a E) = \sum_{i=1}^r (-b_i),
\end{split}
\]
by Theorem~\ref{p-thm12.3}. 
Finally, by Corollary~\ref{p-cor12.8}, 
the frame $\bm{v}^\vee$ satisfies all the desired properties. 

This completes the proof of Theorem \ref{p-thm13.2}. 
\end{proof}

For each $i$, we set
\[
v'_i := v_i \cdot |z|^{b_i}.
\]
We denote $\bm{v}' := \{v'_1, \ldots, v'_r\}$. 
As in \ref{p-say7.1}, we define
\[
H(h, \bm{v}') := \left(h(v'_i, v'_j)\right)_{i, j}.
\]

Now we are ready to prove the following theorem. 
Although this property does not play a role in the present work, 
it is of independent interest.

\begin{thm}[Weak norm estimate, see Theorem \ref{p-thm1.13}]\label{p-thm13.3}
Let $\{v_1, \ldots, v_r\}$ be a local frame of ${}_a E$ around 
the origin, compatible with the parabolic filtration, such that 
\[
v_i \in {}_{b_i} E \setminus {}_{<b_i} E \quad \text{for every } i.
\]
Then there exist positive constants $C$ and $M$ such that
\[
C^{-1}(-\log |z|)^{-M} I_r \leq H(h, \bm{v}')(z) \leq C(-\log |z|)^M I_r
\]
holds in a neighborhood of the origin, where $I_r$ is 
the identity matrix of size $r$.

This means that both
\[
C(-\log |z|)^M I_r - H(h, \bm{v}')(z)
\quad \text{and} \quad
H(h, \bm{v}')(z) - C^{-1}(-\log |z|)^{-M} I_r
\]
are positive semidefinite around the origin.
\end{thm}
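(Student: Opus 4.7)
The plan is to prove the upper and lower matrix inequalities separately, handling the lower bound by duality via Theorem \ref{p-thm13.2}. Both halves reduce, via Cauchy--Schwarz and an elementary linear-algebra estimate, to the pointwise norm bound supplied by Lemma \ref{p-lem5.4}.

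For the upper bound, I would first fix $N > 0$ large enough that both $(E, h e^{-\chi(-N)})$ and $(E^\vee, h^\vee e^{-\chi(-N)})$ are Griffiths seminegative; such $N$ exists by Lemma \ref{p-lem2.5} since $(E, h)$ and $(E^\vee, h^\vee)$ are acceptable. Because $v_i \in {}_{b_i} E$, Definition \ref{p-def2.3} gives $|v_i|_h = O(|z|^{-b_i - \varepsilon})$ for every $\varepsilon > 0$, so Lemma \ref{p-lem5.4} applied with $a = b_i$ shows that $|v_i|^2_h \, |z|^{2b_i} (-\log |z|^2)^{-N}$ is bounded near the origin. Hence $|v'_i|_h^2 = |v_i|_h^2 \, |z|^{2b_i} \leq C_1 (-\log|z|)^{M_1}$, and Cauchy--Schwarz gives $|h(v'_i, v'_j)| \leq C_1 (-\log|z|)^{M_1}$ for all $i, j$. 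Since a Hermitian matrix whose entries have modulus at most $A$ is dominated by $rA \cdot I_r$ as a Hermitian form, this upgrades to the matrix inequality $H(h, \bm{v}')(z) \leq rC_1 (-\log|z|)^{M_1} I_r$.

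The lower bound is equivalent to the upper bound $H(h, \bm{v}')(z)^{-1} \leq C(-\log|z|)^M I_r$, and the inverse of a Gram matrix is the Gram matrix of the dual frame with respect to the dual metric. By Theorem \ref{p-thm13.2}, the wedge-product duals $v_i^\vee$ defined in \eqref{p-eq13.1} form a local frame of ${}_{-a+1-\varepsilon}(E^\vee)$ compatible with the parabolic filtration, with $v_i^\vee \in {}_{-b_i}(E^\vee) \setminus {}_{<-b_i}(E^\vee)$, and under the canonical identification $\bigwedge^{r-1} E \otimes (\det E)^{\otimes -1} \simeq E^\vee$ they coincide (up to a global sign of no consequence for Hermitian pairings) with the dual basis of $\bm{v}$. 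Consequently the dual basis of $\bm{v}'$ is $\{|z|^{-b_i} v_i^\vee\}_{i=1}^r$, and I then repeat the argument above verbatim, applied to the acceptable bundle $(E^\vee, h^\vee)$ and to the weights $-b_i$ in place of $b_i$. This yields an entrywise bound $|h^\vee(|z|^{-b_i} v_i^\vee, |z|^{-b_j} v_j^\vee)| \leq C_2 (-\log|z|)^{M_2}$, hence the matrix inequality $H(h, \bm{v}')(z)^{-1} \leq rC_2 (-\log|z|)^{M_2} I_r$. Taking $C$ and $M$ to be the larger of the two resulting constants concludes the proof.

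The main potential obstacle is essentially bookkeeping: one must confirm that the formula \eqref{p-eq13.1} really produces the dual basis of $\bm{v}$ (up to a harmless global sign) under the canonical isomorphism $\bigwedge^{r-1} E \otimes (\det E)^{\otimes -1} \simeq E^\vee$, and then verify that the rescaling $v_i \mapsto |z|^{b_i} v_i$ corresponds on the dual side to $v_i^\vee \mapsto |z|^{-b_i} v_i^\vee$, so that the identity $H(h, \bm{v}')^{-1} = H(h^\vee, (\bm{v}')^\vee)$ holds at the level of matrices. Once this duality is in place, the two halves of the argument are exact mirror images of each other, and the whole estimate is a symmetric application of Lemma \ref{p-lem5.4} to $(E, h)$ and to $(E^\vee, h^\vee)$.
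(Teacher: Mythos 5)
Your proposal is correct and follows essentially the same route as the paper: apply Lemma \ref{p-lem5.4} to obtain a polynomial-in-$(-\log|z|)$ upper bound on $H(h, \bm{v}')$, then obtain the lower bound by applying the same estimate to the dual bundle $(E^\vee, h^\vee)$ with the rescaled dual frame $(\bm{v}^\vee)'$ and invoking the identity $H(h, \bm{v}') = \bigl(H(h^\vee, (\bm{v}^\vee)')\bigr)^{-1}$, which in turn rests on Theorem \ref{p-thm13.2}. You are merely more explicit than the paper about upgrading the entrywise Cauchy--Schwarz bound to a matrix inequality (the factor $r$), a step the paper leaves implicit.
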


\begin{proof}[Proof of Theorem \ref{p-thm13.3}]
For each $i$, set
\[
(v_i^\vee)' := v_i^\vee \cdot |z|^{-b_i}.
\]
We denote $(\bm{v}^\vee)' := \{(v_1^\vee)', \ldots, (v_r^\vee)'\}$. 
By Lemma \ref{p-lem5.4}, there exist positive constants $C'$, $M'$ such that
\[
H(h, \bm{v}')(z) \leq C'(-\log |z|)^{M'} I_r
\]
holds near the origin. 
Similarly, applying 
Lemma \ref{p-lem5.4} to the dual bundle, 
we obtain positive constants $C''$ and $M''$ such that
\[
H(h^\vee, (\bm{v}^\vee)')(z) \leq C''(-\log |z|)^{M''} I_r
\]
holds near the origin. 
By definition, it is easy to verify that
\[
H(h, \bm{v}')(z) = \left(H(h^\vee, (\bm{v}^\vee)')(z)\right)^{-1}.
\]
Combining these inequalities, we conclude that 
there exist positive constants $C$ and $M$ such that
\[
C^{-1}(-\log |z|)^{-M} I_r \leq H(h, \bm{v}')(z) \leq C(-\log |z|)^M I_r
\]
holds around the origin. This completes the proof of Theorem \ref{p-thm13.3}.
\end{proof}

As a direct consequence of Theorem \ref{p-thm13.3}, we obtain 
the following useful estimate.

\begin{cor}\label{p-cor13.4} 
Let $\{v_1, \ldots, v_r\}$ be a local frame of ${}_a E$ around 
the origin, compatible with the parabolic filtration, such that 
\[
v_i \in {}_{b_i} E \setminus {}_{<b_i} E \quad \text{for every } i.
\]
Then there exist positive constants $C_0$ and $M_0$ such that
\[
\frac{C_0^{-1}}{|z|^{b_i}} \left(-\log |z|\right)^{-M_0} 
\leq |v_i|_h \leq \frac{C_0}{|z|^{b_i}} \left(-\log |z|\right)^{M_0}
\]
holds for every $i$ in a neighborhood of the origin.
\end{cor}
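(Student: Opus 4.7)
The plan is to extract the stated pointwise bounds on $|v_i|_h$ directly from the matrix inequality in Theorem \ref{p-thm13.3} by evaluating at the standard basis vectors. Recall that the diagonal entries of $H(h, \bm{v}')$ are precisely
\[
H(h, \bm{v}')_{ii}(z) = h(v'_i, v'_i)(z) = |v_i(z)|_h^2 \, |z|^{2b_i},
\]
so the result amounts to a diagonal extraction from the sandwich inequality.

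More concretely, I would first apply Theorem \ref{p-thm13.3} to produce positive constants $C$ and $M$ with
\[
C^{-1}(-\log |z|)^{-M} I_r \leq H(h, \bm{v}')(z) \leq C(-\log |z|)^M I_r
\]
in a neighborhood of the origin. Next, I would invoke the elementary fact that for Hermitian matrices $A \leq B$ and any vector $\xi \in \mathbb{C}^r$, one has $\xi^* A \xi \leq \xi^* B \xi$; applying this with $\xi = e_i$ the $i$-th standard basis vector gives $A_{ii} \leq B_{ii}$. Feeding in the left and right inequalities, I obtain
\[
C^{-1}(-\log |z|)^{-M} \leq |v_i(z)|_h^2 \, |z|^{2b_i} \leq C(-\log |z|)^M
\]
for every $i$ near the origin.

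Finally, I would take square roots and rearrange, which yields
\[
\frac{C^{-1/2}}{|z|^{b_i}} (-\log|z|)^{-M/2} \leq |v_i(z)|_h \leq \frac{C^{1/2}}{|z|^{b_i}} (-\log|z|)^{M/2},
\]
so setting $C_0 := C^{1/2}$ and $M_0 := M/2$ delivers the claimed estimate. There is no serious obstacle here, since the statement is genuinely a formal corollary; the only minor point to state carefully is the diagonal extraction principle for Hermitian matrix inequalities, which is precisely where the power of Theorem \ref{p-thm13.3} beyond a mere trace or determinant bound is used.
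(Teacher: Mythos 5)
Your proposal is correct and is exactly the intended argument: the paper's proof simply states that the corollary ``follows directly from Theorem \ref{p-thm13.3},'' and the diagonal extraction via $\xi = e_i$ followed by taking square roots is the obvious (and only) way to read off the pointwise bounds from the sandwich inequality. The only nitpick is that since $b_i \in (a-1, a]$ may be negative, one should note that $|z|^{-b_i}$ is still well defined and positive for $z$ near $0$, but this is immediate.
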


\begin{proof}[Proof of Corollary \ref{p-cor13.4}]
This follows directly from Theorem \ref{p-thm13.3}.
\end{proof}

\section{Examples of filtered bundles}\label{p-sec14}

Before discussing the prolongation of tensor products 
and Hom bundles of acceptable vector bundles on 
$\Delta^*$, we set up the framework of filtered bundles. 
We use the same notation as in Section \ref{p-sec8}. 
Let us begin with a simple example, which we 
will use again in Section \ref{p-sec15}. 

\begin{ex}\label{p-ex14.1}
Note that $\mathcal{O}_\Delta(*[0])$ is itself 
a locally free $\mathcal{O}_\Delta(*[0])$-module of rank one. 
Let $\mathcal{P}^{(c)}_\ast\left(\mathcal{O}_\Delta(*[0])\right)$ 
denote the filtered bundle over $\mathcal{O}_\Delta(*[0])$ defined by
\[
\mathcal{P}^{(c)}_a \left(\mathcal{O}_\Delta(*[0])\right) 
= \mathcal{O}_\Delta(\lfloor a - c \rfloor [0]).
\] 
\end{ex}

\begin{rem}\label{p-rem14.2}
Let $(\mathcal{O}_{\Delta^*}, h_c)$ be a flat line bundle on $\Delta^*$, where
\[
h_c := \frac{|\cdot|^2}{|z|^{2c}} = |\cdot|^2 \cdot e^{-2c \log |z|}.
\]
Then we can verify that
\[
\mathcal{P}^{h_c}_a \mathcal{O}_{\Delta^*} 
= \mathcal{O}_\Delta\left(\lfloor a - c \rfloor [0]\right)
\]
holds for every $a \in \mathbb{R}$. 
Hence, the filtered 
bundle $\mathcal{P}^{(c)}_\ast\left(\mathcal{O}_\Delta(\ast[0])\right)$ 
in Example~\ref{p-ex14.1} can be realized as 
the filtered prolongation of the acceptable line bundle 
$(\mathcal{O}_{\Delta^*}, h_c)$ over $\Delta^*$. 
In particular, we can view 
$\mathcal{P}^{(0)}_\ast\left(\mathcal{O}_{\Delta}(\ast [0])\right)$ 
as the filtered prolongation of the trivial Hermitian line bundle 
$(\mathcal{O}_{\Delta^*}, |\cdot|^2)$. 
Note that 
$\mathcal{P}^{(0)}_\ast\left(\mathcal{O}_{\Delta}(\ast [0])\right)$ 
will be used in Proposition~\ref{p-prop15.1}.
\end{rem}

Let $\mathcal{P}_\ast E_1$ and $\mathcal{P}_\ast E_2$ be 
filtered bundles of rank $r_1$ and $r_2$ on $(\Delta, 0)$, respectively. Then 
\[
E_1 \otimes E_2 \quad \text{and} \quad 
\Hom_{\mathcal{O}_\Delta(*[0])}(E_1, E_2)
\]
are locally free $\mathcal{O}_\Delta(*[0])$-modules of rank $r_1 r_2$.

Let $a \in \mathbb{R}$. We define
\begin{align*}
\mathcal{P}_a(E_1 \otimes E_2) & 
:= \sum_{b + c \leq a} \mathcal{P}_b E_1 \otimes \mathcal{P}_c E_2, \\
\mathcal{P}_a \Hom(E_1, E_2) & 
:= \left\{ f \in \Hom_{\mathcal{O}_\Delta(*[0])}(E_1, E_2) \mid 
f(\mathcal{P}_k E_1) \subset \mathcal{P}_{a+k} E_2 \text{ for all } k 
\in \mathbb{R} \right\}.
\end{align*}

Suppose $\mathcal{P}ar(\mathcal{P}_0 E_1) 
= \{b_1, \dots, b_{r_1}\}$ and $\mathcal{P}ar(\mathcal{P}_0 E_2) 
= \{c_1, \dots, c_{r_2}\}$. Let $\{v_i\}$ and $\{w_j\}$ be frames 
of $\mathcal{P}_0 E_1$ and $\mathcal{P}_0 E_2$, 
respectively, which are compatible with the filtrations.

By the definition of filtered bundles, we have
\[
E_1 \otimes E_2 = \sum_{i, j} \mathcal{O}_\Delta(*[0]) \cdot v_i \otimes w_j,
\]
\[
\Hom(E_1, E_2) = \sum_{i, j} \mathcal{O}_\Delta(*[0]) \cdot v_i^\vee \otimes w_j.
\]
We used condition (1) in the definition of a filtered 
bundle (see Definition \ref{p-def8.2}).

\begin{prop}\label{p-prop14.3}
The family $\mathcal{P}_a(E_1 \otimes E_2)\,(a \in \mathbb{R})$ 
defines a filtered bundle structure over $E_1 \otimes E_2$. 
We denote this filtered bundle by 
$\mathcal{P}_\ast E_1 \otimes \mathcal{P}_\ast E_2$.
\end{prop}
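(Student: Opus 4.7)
The plan is to exhibit an explicit $\mathcal{O}_\Delta$-frame for $\mathcal{P}_a(E_1\otimes E_2)$ for each $a\in\mathbb{R}$, from which the three axioms of Definition \ref{p-def8.2} will follow mechanically. For each pair $(i,j)$, set $m_{ij}(a):=\lfloor a-b_i-c_j\rfloor$ and let $e_{ij}(a):=z^{-m_{ij}(a)}v_i\otimes w_j$. The claim is that $\{e_{ij}(a)\}_{i,j}$ is an $\mathcal{O}_\Delta$-basis of $\mathcal{P}_a(E_1\otimes E_2)$. Containment is immediate: by condition (2) of Definition \ref{p-def8.2}, $z^{-m_{ij}(a)}v_i\in\mathcal{P}_{b_i+m_{ij}(a)}E_1$, and since $b_i+m_{ij}(a)+c_j\le a$, we get $e_{ij}(a)\in\mathcal{P}_a(E_1\otimes E_2)$; linear independence over $\mathcal{O}_\Delta(*[0])$ is clear because $\{v_i\otimes w_j\}$ is already an $\mathcal{O}_\Delta(*[0])$-basis of $E_1\otimes E_2$.

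The substantive point is generation: every section of $\mathcal{P}_b E_1\otimes \mathcal{P}_c E_2$ with $b+c\le a$ must be an $\mathcal{O}_\Delta$-linear combination of the $e_{ij}(a)$. Writing $x=\sum_i f_i v_i\in\mathcal{P}_b E_1$ and $y=\sum_j g_j w_j\in\mathcal{P}_c E_2$ with $f_i,g_j\in\mathcal{O}_\Delta(*[0])$, this reduces to the order estimate $\ord_0(f_ig_j)\ge -m_{ij}(a)=\lceil b_i+c_j-a\rceil$, which follows from the elementary inequality $\lceil x\rceil+\lceil y\rceil\ge\lceil x+y\rceil$ combined with $b+c\le a$, provided one knows $\ord_0 f_i\ge\lceil b_i-b\rceil$ and $\ord_0 g_j\ge\lceil c_j-c\rceil$. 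Thus the proof reduces to a filtered-bundle analogue of Lemma \ref{p-lem7.17}: if $\{v_1,\ldots,v_{r_1}\}$ is a frame of $\mathcal{P}_0 E_1$ compatible with the parabolic filtration with $v_i\in\mathcal{P}_{b_i}E_1\setminus\mathcal{P}_{<b_i}E_1$, then any $x=\sum_i f_i v_i\in\mathcal{P}_b E_1$ satisfies $\ord_0 f_i\ge\lceil b_i-b\rceil$ for every $i$.

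This is the main obstacle, and I will handle it as follows. Set $d:=\max_i(b_i-\ord_0 f_i)$, let $S$ denote the subset of indices achieving the maximum, and write $f_i=z^{n_i}u_i$ with $u_i(0)\ne 0$ and $n_i:=\ord_0 f_i$, so that $z^{n_i}v_i\in\mathcal{P}_d E_1\setminus\mathcal{P}_{<d}E_1$ by condition (2). The key observation is that all $i\in S$ share a common value of $b_i$: the relation $b_i-n_i=d$ with $n_i\in\mathbb{Z}$ forces $b_i\equiv d\pmod 1$, and since $b_i\in(-1,0]$ this determines $b_i$ uniquely. Condition (2) then supplies an isomorphism $\mathrm{Gr}^{\mathcal{P}}_{b_i}E_1\xrightarrow{\sim}\mathrm{Gr}^{\mathcal{P}}_{d}E_1$ via multiplication by $z^{n_i}$, so the classes $\{[z^{n_i}v_i]\}_{i\in S}$ are linearly independent in $\mathrm{Gr}^{\mathcal{P}}_{d}E_1$ because $\{[v_i]\}_{i\in S}$ is a subset of the basis of $\mathrm{Gr}^{\mathcal{P}}_{b_i}E_1$ guaranteed by the compatibility hypothesis. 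Consequently $[x]=\sum_{i\in S}u_i(0)[z^{n_i}v_i]\ne 0$ in $\mathrm{Gr}^{\mathcal{P}}_d E_1$, which forces $x\notin\mathcal{P}_{<d}E_1$ and hence $d\le b$, as required.

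Granting this, the three axioms of Definition \ref{p-def8.2} follow immediately from the frame description: the lattice condition holds because the $e_{ij}(a)$ are an $\mathcal{O}_\Delta(*[0])$-basis of $E_1\otimes E_2$; the twist compatibility reduces to the identity $m_{ij}(a+n)=m_{ij}(a)+n$ for $n\in\mathbb{Z}$; and for discreteness, $a\mapsto m_{ij}(a)$ is right-continuous and locally constant off the discrete subset $\bigcup_{i,j}(b_i+c_j+\mathbb{Z})$, so finitely many $(i,j)$ yield a common $\varepsilon>0$ on which the frame is unchanged. Hence each $\mathcal{P}_a(E_1\otimes E_2)$ is locally free of rank $r_1 r_2$ and $\mathcal{P}_\ast E_1\otimes\mathcal{P}_\ast E_2$ defines a filtered bundle over $E_1\otimes E_2$.
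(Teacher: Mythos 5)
Your proof is correct and follows essentially the same route as the paper: both exhibit the explicit frame $\{z^{-\lfloor a-b_i-c_j\rfloor}v_i\otimes w_j\}_{i,j}$ and then verify the three axioms of Definition \ref{p-def8.2} directly from it. The one place where you go beyond the paper is the careful justification (via the $\mathrm{Gr}^{\mathcal{P}}_d$ argument and the observation that the maximizing indices share a common $b_i$) of the fact that $\{z^{-\lfloor b-b_i\rfloor}v_i\}_i$ is a frame of $\mathcal{P}_b E_1$; the paper asserts this without comment, implicitly invoking the filtered-bundle analogue of Lemma \ref{p-lem7.17}.
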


\begin{proof}[Proof of Proposition \ref{p-prop14.3}]
Fix $a \in \mathbb{R}$. By definition, we have 
$v_i \otimes w_j \in \mathcal{P}_{b_i} E_1 \otimes \mathcal{P}_{c_j} E_2$. Set
\[
n_{ij,a} := \max\{ n \in \mathbb{Z} \mid n + b_i + c_j \leq a \}.
\]
Then
\[
z^{-n_{ij,a}} v_i \otimes w_j \in 
\mathcal{P}_{b_i + n_{ij,a}} E_1 \otimes 
\mathcal{P}_{c_j} E_2 \subset \mathcal{P}_a(E_1 \otimes E_2).
\]
Hence, 
\[
\sum_{i,j} \mathcal{O}_\Delta \cdot z^{-n_{ij,a}} 
v_i \otimes w_j \subset \mathcal{P}_a(E_1 \otimes E_2).
\]

Let $b, c \in \mathbb{R}$. Set
\[
n_{i,b} := \max\{ n \in \mathbb{Z} 
\mid n + b_i \leq b \}, \quad m_{j,c} := \max\{ n \in \mathbb{Z} \mid n + c_j \leq c \}.
\]
Then $\{ z^{-n_{i,b}} v_i \}$ and $\{ z^{-m_{j,c}} w_j \}$ 
are frames of $\mathcal{P}_b E_1$ and $\mathcal{P}_c E_2$, respectively. Therefore,
\[
\mathcal{P}_a(E_1 \otimes E_2) = 
\sum_{b + c \leq a} \mathcal{O}_\Delta \cdot z^{-n_{i,b}} v_i \otimes z^{-m_{j,c}} w_j.
\]
By the maximality of $n_{ij,a}$, we obtain
\[
\mathcal{P}_a(E_1 \otimes E_2) = 
\sum_{i,j} \mathcal{O}_\Delta \cdot z^{-n_{ij,a}} v_i \otimes w_j.
\]

It is clear that condition (1) in the 
definition of filtered bundles is satisfied (see Definition \ref{p-def8.2}). 
Condition (2) in Definition \ref{p-def8.2} follows from the 
identity $n_{ij,a+n} = n_{ij,a} + n$. 
Choose $\epsilon_{ij,a} > 0$ small enough 
such that $n_{ij,a + \epsilon_{ij,a}} = n_{ij,a}$, 
and set $\epsilon := \min_{i,j} \epsilon_{ij,a}$. Then,
\[
\mathcal{P}_{a+\epsilon}(E_1 \otimes E_2) = 
\sum_{i,j} \mathcal{O}_\Delta \cdot z^{-n_{ij,a}} 
v_i \otimes w_j = \mathcal{P}_a(E_1 \otimes E_2).
\]
Therefore, $\mathcal{P}_\ast(E_1 \otimes E_2)$ 
defines a filtered bundle over $E_1 \otimes E_2$, as desired.
\end{proof}

\begin{prop}\label{p-prop14.4}
The increasing family of $\mathcal{O}_\Delta$-modules $\mathcal{P}_a 
\Hom(E_1, E_2)\,(a \in \mathbb{R})$ defines a filtered bundle structure over 
$\Hom(E_1, E_2)$. This filtered bundle is denoted by 
$\Hom(\mathcal{P}_\ast E_1, \mathcal{P}_\ast E_2)$.
\end{prop}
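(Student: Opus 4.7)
The strategy is to mimic the argument of Proposition~\ref{p-prop14.3} by producing an explicit local frame of $\mathcal{P}_a \Hom(E_1, E_2)$ in terms of the dual basis $\{v_i^\vee \otimes w_j\}$. The key reduction is that the defining condition
\[
f(\mathcal{P}_k E_1) \subset \mathcal{P}_{a+k} E_2 \quad \text{for every } k \in \mathbb{R}
\]
is equivalent to the finite collection of conditions $f(v_i) \in \mathcal{P}_{a+b_i} E_2$ for every $i$. One direction is immediate since $v_i \in \mathcal{P}_{b_i} E_1$. For the converse I would invoke compatibility: by the argument of Lemma~\ref{p-lem7.17}, the set $\{z^{-n_{i,k}} v_i\}$ with $n_{i,k} := \max\{n \in \mathbb{Z} \mid n + b_i \leq k\}$ is a frame of $\mathcal{P}_k E_1$, so any $s \in \mathcal{P}_k E_1$ can be written as $s = \sum_i h_i z^{-n_{i,k}} v_i$ with $h_i \in \mathcal{O}_\Delta$; axiom~(2) of Definition~\ref{p-def8.2} then gives $h_i z^{-n_{i,k}} f(v_i) \in \mathcal{P}_{a+b_i+n_{i,k}} E_2 \subset \mathcal{P}_{a+k} E_2$.

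Next I would parametrize by writing $f = \sum_{i,j} f_{ij}(z)\, v_i^\vee \otimes w_j$ with $f_{ij} \in \mathcal{O}_\Delta(*[0])$. The reduction above becomes $\sum_j f_{ij} w_j \in \mathcal{P}_{a+b_i} E_2$ for each $i$. Using the compatibility of $\{w_j\}$ together with the fact that $\{z^{-n_{j,a+b_i}} w_j\}$, with $n_{j,a+b_i} := \max\{n \in \mathbb{Z} \mid n + c_j \leq a+b_i\}$, is a frame of $\mathcal{P}_{a+b_i} E_2$, this is equivalent to $\ord_0 f_{ij} \geq -\lfloor a + b_i - c_j \rfloor$ for every pair $(i,j)$, i.e.\ $f_{ij} \in \mathcal{O}_\Delta(\lfloor a+b_i-c_j\rfloor [0])$. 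This yields the explicit formula
\[
\mathcal{P}_a \Hom(E_1, E_2) = \bigoplus_{i,j} \mathcal{O}_\Delta\bigl(\lfloor a + b_i - c_j \rfloor [0]\bigr) \cdot v_i^\vee \otimes w_j.
\]

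From this formula the three axioms of Definition~\ref{p-def8.2} follow directly and in parallel to the proof of Proposition~\ref{p-prop14.3}: the direct sum decomposition gives local freeness of rank $r_1 r_2$ and the lattice property of axiom~(1); axiom~(2) follows from the identity $\lfloor a + n + b_i - c_j \rfloor = \lfloor a + b_i - c_j \rfloor + n$ for $n \in \mathbb{Z}$; and axiom~(3) follows from the right-continuity of the floor function, by taking $\epsilon > 0$ smaller than the distance from each $a+b_i-c_j$ to the next integer above, minimized over the finitely many pairs $(i,j)$. The only non-routine step is the reduction in the first paragraph, which relies essentially on compatibility of the chosen frames; all subsequent steps are pure bookkeeping.
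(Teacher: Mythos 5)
Your proposal is correct and takes essentially the same approach as the paper: reduce the defining condition to the finitely many constraints indexed by the parabolic weights of $\mathcal{P}_0 E_1$, express $\mathcal{P}_a \Hom(E_1,E_2)$ via the frame $\{v_i^\vee \otimes z^{-m_{ij,a}} w_j\}$ with $m_{ij,a} = \lfloor a+b_i-c_j\rfloor$, and check the three axioms from this explicit description. The only cosmetic remark is that Lemma~\ref{p-lem7.17} is stated for acceptable bundles rather than abstract filtered bundles, but as you note it is the \emph{argument} (just axiom~(2) of Definition~\ref{p-def8.2}) that transfers, and the paper invokes the same fact implicitly in the same place.
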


\begin{proof}[Proof of Proposition \ref{p-prop14.4}]
Let $a \in \mathbb{R}$ and 
$f \in \mathcal{P}_a \Hom(E_1, E_2)$. 
By definition, 
\[f(\mathcal{P}_{b_i} E_1) 
\subset \mathcal{P}_{a + b_i} E_2
\] 
holds. 
Conversely, if an $\mathcal{O}_\Delta(*[0])$-module 
morphism $f \colon E_1 \to E_2$ satisfies 
\[
f(\mathcal{P}_{b_i} E_1) 
\subset \mathcal{P}_{a + b_i} E_2
\] 
for all $i$, 
then $f \in \mathcal{P}_a \Hom(E_1, E_2)$. 

Hence,
\[
\mathcal{P}_a \Hom(E_1, E_2) = 
\left\{ f \in \Hom_{\mathcal{O}_\Delta(*[0])}(E_1, E_2) \,\middle|\, 
f(\mathcal{P}_{b_i} E_1) \subset \mathcal{P}_{a + b_i} E_2 \text{ for all } i\right\}.
\]
Define 
\[
m_{ij,a} := \max\{ m \in \mathbb{Z} \mid m + c_j \leq b_i + a \}.
\]
Then, by the above discussion,
\[
\mathcal{P}_a \Hom(E_1, E_2) = 
\sum_{i,j} \mathcal{O}_\Delta \cdot v_i^\vee \otimes z^{-m_{ij,a}} w_j.
\]

It is clear that condition (1) in Definition \ref{p-def8.2} 
is satisfied. 
Condition (2) in Definition \ref{p-def8.2} follows from $m_{ij,a+n} = 
m_{ij,a} + n$. Choose $\epsilon_i > 0$ such that 
$\mathcal{P}_{a + b_i + \epsilon_i} E_2 = \mathcal{P}_{a + b_i} E_2$, 
and set $\epsilon := \min \epsilon_i$. Then,
\[
\mathcal{P}_{a+\epsilon} \Hom(E_1, E_2) = \mathcal{P}_a \Hom(E_1, E_2).
\]
Therefore, $\mathcal{P}_\ast \Hom(E_1, E_2)$ 
defines a filtered bundle over $\Hom(E_1, E_2)$.
\end{proof}

\section{Dual bundles revisited}\label{p-sec15}

In this section, we study prolongations of dual bundles 
within the framework of filtered bundles.

\begin{prop}\label{p-prop15.1}
Let \( (E, h) \) be an acceptable 
vector bundle of rank \( r \) on \( \Delta^* \). Then
\[
\mathcal{P}^{h^\vee}_\ast E^\vee = 
\Hom(\mathcal{P}^h_\ast E, \mathcal{P}^{(0)}_{\ast} \mathcal{O}_\Delta(*[0])).
\]
\end{prop}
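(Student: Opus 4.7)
The plan is to verify the asserted equality at each index $a \in \mathbb{R}$ by computing both sides explicitly in terms of a common dual frame, and observing that they coincide as $\mathcal{O}_\Delta$-submodules of $E^\vee$. First I would fix a local frame $\{v_1,\ldots,v_r\}$ of $\mathcal{P}^h_0 E$ compatible with the parabolic filtration, with $v_i \in \mathcal{P}^h_{b_i} E \setminus \mathcal{P}^h_{<b_i} E$; by Lemma \ref{p-lem7.16}, $b_i \in (-1,0]$. Invoking Theorem \ref{p-thm13.2} at $a = 0$ then gives that the dual frame $\{v_1^\vee,\ldots,v_r^\vee\}$ is a local frame of $\mathcal{P}^{h^\vee}_{1-\varepsilon} E^\vee$ compatible with the parabolic filtration, with $v_i^\vee \in \mathcal{P}^{h^\vee}_{-b_i} E^\vee \setminus \mathcal{P}^{h^\vee}_{<-b_i} E^\vee$ and $-b_i \in [0,1)$.

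With this frame in hand, applying Lemma \ref{p-lem7.17} to $\{v_i^\vee\}$ with target level $a$ produces, for every $a \in \mathbb{R}$, the explicit description
\[
\mathcal{P}^{h^\vee}_a E^\vee = \bigoplus_{i=1}^r \mathcal{O}_\Delta \cdot z^{-\lfloor a+b_i\rfloor} v_i^\vee,
\]
since $\lceil -b_i - a \rceil = -\lfloor a+b_i\rfloor$ is the smallest integer $m_i$ with $-b_i - m_i \le a$. On the other hand, specializing the explicit formula from the proof of Proposition \ref{p-prop14.4} to the target filtered bundle $\mathcal{P}^{(0)}_\ast \mathcal{O}_\Delta(\ast [0])$, which has rank one with frame $\{1\}$ and single parabolic weight $0$, gives $m_{i1,a} = \max\{m \in \mathbb{Z} \mid m \le a+b_i\} = \lfloor a+b_i\rfloor$, so
\[
\mathcal{P}_a \Hom\!\left(\mathcal{P}^h_\ast E,\, \mathcal{P}^{(0)}_\ast \mathcal{O}_\Delta(\ast [0])\right) = \bigoplus_{i=1}^r \mathcal{O}_\Delta \cdot z^{-\lfloor a+b_i\rfloor} v_i^\vee.
\]
The two descriptions agree, and since both filtered bundles restrict to $E^\vee$ on $\Delta^*$, equality of all $\mathcal{P}_a$-pieces near $0$ yields the asserted equality of filtered bundles on $(\Delta, 0)$.

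The substantive content is packaged into Theorem \ref{p-thm13.2}: it is that result which produces a parabolic-compatible dual frame with weights $-b_i$ and thereby links the two sides. The remaining argument is essentially formal, a direct comparison of two basis descriptions, and I do not anticipate any real obstacle. The only point requiring care is the matching of floor/ceiling functions coming from Lemma \ref{p-lem7.17} on the one hand and Proposition \ref{p-prop14.4} on the other, verified by the elementary identity $\lceil -x \rceil = -\lfloor x \rfloor$.
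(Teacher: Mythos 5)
Your proposal is correct and rests on the same essential input as the paper's proof, namely Theorem \ref{p-thm13.2} together with Lemma \ref{p-lem7.17} and the explicit frame description of $\mathcal{P}_a\Hom$ in Proposition \ref{p-prop14.4}. The paper packages the argument as a double inclusion (the first inclusion via the abstract relation $f \in (\mathcal{P}^h_{-k+1-\epsilon}E)^\vee$ and Lemma \ref{p-lem7.9}, the second by a frame computation), whereas you compute both sides explicitly as $\bigoplus_i \mathcal{O}_\Delta \cdot z^{-\lfloor a+b_i\rfloor}v_i^\vee$ and observe they coincide; this is a streamlined but substantively equivalent presentation.
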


Note that 
\[
\Hom(\mathcal{P}^h_\ast E, \mathcal{P}^{(0)}_{\ast} \mathcal{O}_\Delta(*[0]))
\]
in Proposition \ref{p-prop15.1} 
is a filtered bundle, as described in Proposition \ref{p-prop14.4}, 
since both \( \mathcal{P}^h_\ast E \) and 
\( \mathcal{P}^{(0)}_\ast \mathcal{O}_\Delta(*[0]) \) 
are filtered bundles (see Sections \ref{p-sec4} and \ref{p-sec8}).

\begin{proof}[Proof of Proposition \ref{p-prop15.1}]
Let \( k \in \mathbb{R} \) be arbitrary. Take any 
\( f \in \mathcal{P}_k \Hom(E, \mathcal{O}_\Delta(*[0])) \). 
By definition, for any \( a \in \mathbb{R} \), we have
\[
f(\mathcal{P}^h_a E) \subset \mathcal{P}^{(0)}_{a + k} \mathcal{O}_\Delta(*[0]).
\]
Take \( a = -k + 1 - \epsilon \) with any \( 0 < \epsilon \ll 1 \). Then
\[
f(\mathcal{P}^h_{-k + 1 - \epsilon} E) \subset 
\mathcal{P}^{(0)}_{1 - \epsilon} \mathcal{O}_\Delta(*[0]) = \mathcal{O}_\Delta.
\]
This implies
\[
f \in (\mathcal{P}^h_{-k + 1 - \epsilon} E)^\vee 
= \mathcal{P}^{h^\vee}_{k + \epsilon - \delta} E^\vee
\]
for any sufficiently small \( \delta > 0 \), and hence
\[
f \in \mathcal{P}^{h^\vee}_{k + \varepsilon'} E^\vee
\] 
holds for any $0<\varepsilon '\ll 1$. 
By Lemma \ref{p-lem7.9}, we conclude
\[
f \in \mathcal{P}^{h^\vee}_k E^\vee.
\]
Thus,
\[
\mathcal{P}_k \Hom(E, \mathcal{O}_\Delta(*[0])) 
\subset \mathcal{P}^{h^\vee}_k E^\vee.
\]

We now prove the opposite inclusion. Fix \( k \in \mathbb{R} \).  
It suffices to show that for 
any \( f \in \mathcal{P}^{h^\vee}_k E^\vee \) and any \( a \in \mathbb{R} \), we have
\[
f(\mathcal{P}^h_a E) \subset \mathcal{P}^{(0)}_{a + k} \mathcal{O}_\Delta(*[0]).
\]

Suppose \( \mathcal{P}ar(\mathcal{P}^h_0 E) = \{b_1, \dots, b_r\} \),  
and let \( \{v_i\}_{i=1}^r \) be a 
frame of \( \mathcal{P}^h_0 E \) compatible with the parabolic filtration.  
Let \( \{v_i^\vee\}_{i=1}^r \) be the dual frame of \( \{v_i\}_{i=1}^r \) 
as in Theorem \ref{p-thm13.2}.  
Define
\[
n_i := \max \{ n \in \mathbb{Z} \mid n - b_i \leq k \}.
\]
Then \( \{ z^{-n_i} v_i^\vee \}_{i=1}^r \) is 
a frame of \( \mathcal{P}^{h^\vee}_k E^\vee \) 
by Lemma \ref{p-lem7.17} and Theorem \ref{p-thm13.2}.

Fix \( a \in \mathbb{R} \), and define
\[
m_i := \max \{ m \in \mathbb{Z} \mid m + b_i \leq a \}.
\]
By Lemma \ref{p-lem7.17} again, the 
set \( \{ z^{-m_i} v_i \}_{i=1}^r \) forms a frame of \( \mathcal{P}^h_a E \).

Therefore, in order to prove 
that \( f(\mathcal{P}^h_a E) \subset \mathcal{P}^{(0)}_{a+k} 
\mathcal{O}_\Delta(*[0]) \) for 
any \( f \in \mathcal{P}^{h^\vee}_k E^\vee \), it suffices to check
\[
z^{-n_i} v_i^\vee (z^{-m_i} v_i) 
\in \mathcal{P}^{(0)}_{a + k} 
\mathcal{O}_\Delta(*[0]) = \mathcal{O}_\Delta(\lfloor a + k \rfloor [0]).
\]
This follows from the inequality
\[
n_i + m_i = (n_i - b_i) + (m_i + b_i) \leq k + a,
\]
which implies
\[
n_i + m_i \leq \lfloor a + k \rfloor.
\]
Hence, for all \( f \in \mathcal{P}^{h^\vee}_k E^\vee \), we have
\[
f(\mathcal{P}^h_a E) \subset \mathcal{P}^{(0)}_{a + k} \mathcal{O}_\Delta(*[0]).
\]
Since \( a \in \mathbb{R} \) is arbitrary, we obtain the inclusion
\[
\mathcal{P}^{h^\vee}_k E^\vee \subset \mathcal{P}_k 
\Hom(E, \mathcal{O}_\Delta(*[0])).
\]

Therefore,
\[
\mathcal{P}^{h^\vee}_k E^\vee = \mathcal{P}_k 
\Hom(E, \mathcal{O}_\Delta(*[0])).
\]
Since this equality holds 
for every \( k \in \mathbb{R} \), the proof of Proposition \ref{p-prop15.1} is complete.
\end{proof}

\section{On tensor products}\label{p-sec16}

In this section, we discuss the prolongations of tensor products of acceptable bundles in detail. 
Throughout this section, we use the notation 
\[\lceil a \rceil := \min \{ n \in \mathbb{Z} \mid n \geq a \}
\] for $a \in \mathbb{R}$. 

\begin{prop}\label{p-prop16.1}
Let $(E_1, h_1)$ and $(E_2, h_2)$ be acceptable vector 
bundles of rank $r_1$ and $r_2$, respectively.  
Suppose that  
\[
\mathcal{P}ar(\mathcal{P}^{h_1}_{0}E_1) 
= \{b_1, \dots, b_{r_1}\}, \quad 
\mathcal{P}ar(\mathcal{P}^{h_2}_{0}E_2) = \{c_1, \dots, c_{r_2}\}.
\]  
Let $\{v_1, \ldots, v_{r_1}\}$ 
and $\{w_1, \ldots, w_{r_2}\}$ 
be frames of $\mathcal{P}^{h_1}_{0}E_1$ and $\mathcal{P}^{h_2}_{0}E_2$, respectively, such that  
\[
v_i \in \mathcal{P}^{h_1}_{b_i}E_1 
\setminus \mathcal{P}^{h_1}_{<b_i}E_1, \quad w_j 
\in \mathcal{P}^{h_2}_{c_j}E_2 \setminus \mathcal{P}^{h_2}_{<c_j}E_2,
\]  
and are compatible with the parabolic filtrations. Then:

\begin{itemize}
  \item[(i)] The set $\{z^{\lceil b_i + c_j \rceil} 
  v_i \otimes w_j\}_{1 \leq i \leq r_1,\ 1 \leq j \leq r_2}$ 
  forms a frame of $\mathcal{P}^{h_1 \otimes h_2}_{0}(E_1 \otimes E_2)$.
  
  \item[(ii)] The set of parabolic weights 
  of $\mathcal{P}^{h_1 \otimes h_2}_{0}(E_1 \otimes E_2)$ is given by  
  \[
  \mathcal{P}ar\left( \mathcal{P}^{h_1 \otimes h_2}_{0}(E_1 \otimes E_2) \right) 
  = \left\{ b_i + c_j - \lceil b_i + c_j \rceil 
  \right\}_{1 \leq i \leq r_1,\ 1 \leq j \leq r_2}.
  \]  
  In particular, the set $\{z^{\lceil b_i + c_j 
  \rceil} v_i \otimes w_j\}_{1 \leq i \leq r_1,\ 1 \leq j \leq r_2}$ 
  is compatible with the parabolic filtration.
\end{itemize}
\end{prop}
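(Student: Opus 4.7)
My strategy is to apply Corollary~\ref{p-cor12.8} to the tuple $\{u_{i,j}\} := \{z^{\lceil b_i + c_j\rceil}\, v_i \otimes w_j\}$ with parabolic-degree candidates $d_{i,j} := b_i + c_j - \lceil b_i + c_j\rceil \in (-1, 0]$. I would verify, in order: (A) the pointwise bound $u_{i,j} \in \mathcal{P}^{h_1\otimes h_2}_{d_{i,j}}(E_1\otimes E_2)$; (B) that $\{u_{i,j}\}$ is a local frame of $\mathcal{P}^{h_1\otimes h_2}_0(E_1\otimes E_2)$; and (C) the identity $\gamma\bigl(\mathcal{P}^{h_1\otimes h_2}_0(E_1\otimes E_2)\bigr) = \sum_{i,j} d_{i,j}$. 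The equality case of Corollary~\ref{p-cor12.8} then yields both assertions (i) and (ii) of the proposition: the frame is automatically compatible with the parabolic filtration, and its set of parabolic weights is exactly $\{d_{i,j}\}_{i,j}$.

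Claim (A) is immediate from $|v_i\otimes w_j|_{h_1\otimes h_2} = |v_i|_{h_1}|w_j|_{h_2}$ together with the defining growth conditions of $\mathcal{P}^{h_1}_{b_i}E_1$ and $\mathcal{P}^{h_2}_{c_j}E_2$. The main obstacle is (B), since the pointwise bound of (A) alone is too weak to exclude sections of $\mathcal{P}^{h_1\otimes h_2}_0(E_1\otimes E_2)$ lying outside the module generated by $\{u_{i,j}\}$. The plan is to close this gap by applying the weak norm estimate of Theorem~\ref{p-thm13.3} to each factor. Setting $v'_i := |z|^{b_i} v_i$ and $w'_j := |z|^{c_j} w_j$, the theorem provides positive constants $C,M$ so that both $H(h_1,\bm{v}')$ and $H(h_2,\bm{w}')$ are pinched between $C^{-1}(-\log|z|)^{-M} I$ and $C(-\log|z|)^{M} I$. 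Since the Gram matrix of the normalized tensor frame $u''_{i,j} := |z|^{b_i+c_j}(v_i\otimes w_j)$ equals the Kronecker product $H(h_1,\bm{v}')\otimes H(h_2,\bm{w}')$, the eigenvalues multiply and the analogous two-sided bound persists. Hence for a Laurent section $s = \sum g_{i,j}(z)\, v_i\otimes w_j$ on a punctured neighborhood of $0$, the norm $|s|^2_{h_1\otimes h_2}$ is comparable to $\sum |g_{i,j}|^2 |z|^{-2(b_i+c_j)}$ up to polylogarithmic factors; the condition $|s|_{h_1\otimes h_2} = O(|z|^{-\varepsilon})$ for every $\varepsilon > 0$ therefore forces $\ord_0 g_{i,j} \geq \lceil b_i + c_j\rceil$ for each pair $(i,j)$, which is exactly the condition that $s$ lies in $\sum_{i,j}\mathcal{O}_\Delta \cdot u_{i,j}$. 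Together with (A), this gives (B).

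Granted (B), Corollary~\ref{p-cor7.6} applies to $\{u_{i,j}\}$. Writing $D := \mathrm{diag}\bigl(z^{\lceil b_i+c_j\rceil}\bigr)$, one has the factorization $H(h_1\otimes h_2,\bm{u}) = D\bigl(H(h_1,\bm{v})\otimes H(h_2,\bm{w})\bigr)D^{\ast}$, whence
\[
\det H(h_1\otimes h_2,\bm{u}) = |z|^{2\sum_{i,j}\lceil b_i+c_j\rceil}\bigl(\det H(h_1,\bm{v})\bigr)^{r_2}\bigl(\det H(h_2,\bm{w})\bigr)^{r_1}.
\]
Taking the logarithm, dividing by $\log|z|$, passing to the limit, and substituting $\gamma(\mathcal{P}^{h_1}_0 E_1) = \sum_i b_i$ and $\gamma(\mathcal{P}^{h_2}_0 E_2) = \sum_j c_j$ from Theorem~\ref{p-thm12.3} then yields (C), and the equality case of Corollary~\ref{p-cor12.8} finishes the proof.
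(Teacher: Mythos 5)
Your proof is correct, and it takes a genuinely different route from the paper's for part (i). To show that the coefficients $g_{i,j}$ in the expansion $s = \sum g_{i,j}\,v_i\otimes w_j$ of an arbitrary $s\in\mathcal{P}^{h_1\otimes h_2}_0(E_1\otimes E_2)$ have $\ord_0 g_{i,j}\geq\lceil b_i+c_j\rceil$, the paper pairs $s$ against the dual sections $v_i^\vee\otimes w_j^\vee$, invokes Proposition~\ref{p-prop15.1} to place these in $\mathcal{P}_{-b_i-c_j}\Hom(E_1\otimes E_2,\mathcal O_\Delta(*[0]))$, and reads off $f_{ij}\in\mathcal{P}^{(0)}_{\lceil b_i+c_j\rceil-(b_i+c_j)}\mathcal O_\Delta(*[0])=\mathcal O_\Delta$. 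You instead apply the weak norm estimate (Theorem~\ref{p-thm13.3}) to each factor, observe that the Gram matrix of the $|z|^{b_i+c_j}$-normalized tensor frame is the Kronecker product $H(h_1,\bm v')\otimes H(h_2,\bm w')$, and use the multiplicativity of the spectral bounds to get a two-sided estimate of $|s|^2_{h_1\otimes h_2}$ in terms of $\sum|g_{i,j}|^2|z|^{-2(b_i+c_j)}$ up to $(-\log|z|)$-factors; from this, the growth condition and the Riemann removable singularity argument give the required pole bound. Both versions ultimately sit downstream of the same heavy machinery — Theorem~\ref{p-thm12.3} and Theorem~\ref{p-thm13.2} — but yours makes the comparison of norms explicit and quantitative, whereas the paper's duality pairing avoids ever manipulating the norms directly and is somewhat cleaner to state. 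Your step (C) and the appeal to the equality case of Corollary~\ref{p-cor12.8} coincide with the paper's Step~2.

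One small point worth recording in a full write-up: in step (B) you must note that the raw bound $|g_{i,j}|\leq C_\varepsilon|z|^{b_i+c_j-\varepsilon}(-\log|z|)^{M}$ a priori only gives a growth estimate for a holomorphic function on a punctured neighborhood, so you first conclude that $g_{i,j}$ is meromorphic (since $b_i+c_j>-2$, $z^2 g_{i,j}\to 0$ and hence $zg_{i,j}$ extends), and then that the pole order is at most $-\lceil b_i+c_j\rceil$. This is implicit in your sketch but should be spelled out, since calling $s$ a "Laurent section" presupposes what needs to be shown.
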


\begin{proof}[Proof of Proposition \ref{p-prop16.1}]
Since \( |v_i \otimes w_j|_{h_1 \otimes h_2} = |v_i|_{h_1} \cdot |w_j|_{h_2} \), we have 
\[
v_i \otimes w_j \in \mathcal{P}^{h_1 \otimes h_2}_{b_i + c_j}(E_1 \otimes E_2),
\]
and hence
\[
z^{\lceil b_i + c_j \rceil} v_i \otimes w_j \in 
\mathcal{P}^{h_1 \otimes h_2}_{b_i + c_j - \lceil b_i + c_j \rceil}(E_1 \otimes E_2) 
\subset \mathcal{P}^{h_1 \otimes h_2}_0(E_1 \otimes E_2).
\]

\setcounter{step}{0}
\begin{step}\label{p-step16.1-1} 
In this step, we prove statement (i).

To prove (i), it suffices to show that for every 
\( f \in \mathcal{P}^{h_1 \otimes h_2}_0(E_1 \otimes E_2) \), 
there exist holomorphic functions \( f_{ij} \in \mathcal{O}_\Delta \) such that
\[
f = \sum_{i,j} f_{ij} z^{\lceil b_i + c_j \rceil} v_i \otimes w_j.
\]

Since \( \{v_i\} \) and \( \{w_j\} \) are frames of 
\( E_1 \) and \( E_2 \) on \(\Delta^*\), respectively, 
the set \( \{ z^{\lceil b_i + c_j \rceil} 
v_i \otimes w_j \}_{i,j} \) forms a frame of \( E_1 \otimes E_2 \) 
on \(\Delta^*\).  
Hence, for 
any \( f \in \mathcal{P}^{h_1 \otimes h_2}_0(E_1 \otimes E_2) \), we can write
\[
f = \sum_{i,j} f_{ij} z^{\lceil b_i + c_j \rceil} v_i \otimes w_j,
\]
where each \( f_{ij} \) is holomorphic outside the origin.  
Therefore, it remains to show that \( f_{ij} \) is holomorphic at the origin.

Let \( \{v_i^\vee\} \) and \( \{w_j^\vee\} \) 
denote the dual frames of \( \{v_i\} \) and \( \{w_j\} \), respectively.  
Recall that 
\[
v_i^\vee \in \mathcal{P}^{h_1^\vee}_{-b_i} E_1^\vee, \quad 
w_j^\vee \in \mathcal{P}^{h_2^\vee}_{-c_j} E_2^\vee.
\]
Therefore,
\[
v_i^\vee \otimes w_j^\vee \in 
\mathcal{P}^{h_1^\vee \otimes h_2^\vee}_{-b_i - c_j}(E_1^\vee \otimes E_2^\vee).
\]
By Proposition~\ref{p-prop15.1}, we have
\[
v_i^\vee \otimes w_j^\vee \in 
\mathcal{P}_{-b_i - c_j} \Hom(E_1 \otimes E_2, \mathcal{O}_\Delta(*[0])).
\]

Since \( f \in \mathcal{P}^{h_1 \otimes h_2}_0(E_1 \otimes E_2) \), it follows that
\begin{align*}
(v_i^\vee \otimes w_j^\vee)(f) 
&= (v_i^\vee \otimes w_j^\vee)\left( \sum_{i,j} f_{ij} 
z^{\lceil b_i + c_j \rceil} v_i \otimes w_j \right) \\
&= z^{\lceil b_i + c_j \rceil} f_{ij} 
\in \mathcal{P}^{(0)}_{-b_i - c_j} \mathcal{O}_\Delta(*[0]).
\end{align*}

Since \( 0 \leq \lceil b_i + c_j \rceil - (b_i + c_j) < 1 \), we have
\[
f_{ij} \in \mathcal{P}^{(0)}_{\lceil b_i + c_j \rceil - (b_i + c_j)} \mathcal{O}_\Delta(*[0]) 
= \mathcal{O}_\Delta(\lfloor \lceil b_i + c_j \rceil - b_i - c_j \rfloor [0]) 
= \mathcal{O}_\Delta.
\]

This completes the proof of (i).
\end{step}

\begin{step}\label{p-step16.1-2}
In this step, we prove statement (ii).

As before, we define
\[
\bm{v} := \{ v_1, \ldots, v_{r_1} \}, \quad 
\bm{w} := \{ w_1, \ldots, w_{r_2} \}, \quad 
\bm{v} \otimes \bm{w} := \{ v_i \otimes w_j \}_{i,j}.
\]
We further define 
\[
\left(\bm v \otimes \bm w\right)^\sharp 
:= \left\{ z^{\lceil b_i + c_j \rceil} 
v_i \otimes w_j \right\}_{1 \leq i \leq r_1, \, 1 \leq j \leq r_2}.
\]
As shown in Step~\ref{p-step16.1-1}, 
the set $\left(\bm v \otimes \bm w\right)^\sharp$ forms a frame of 
$\mathcal P^{h_1 \otimes h_2}_0(E_1 \otimes E_2)$. 

We consider the Hermitian matrix
\[
H(h_1 \otimes h_2, \bm{v} \otimes \bm{w}) 
:= \left( h_1(v_i, v_j) \cdot h_2(w_k, w_l) \right),
\]
whose $\left((i-1)r_2 + k, \, (j-1)r_2 + l\right)$-th entry is given by 
$h_1(v_i, v_j) \cdot h_2(w_k, w_l)$. Similarly, define
\[
H\left(h_1 \otimes h_2, \left(\bm{v} \otimes \bm{w}\right)^\sharp\right) 
:= \left( z^{\lceil b_i + c_k \rceil} \cdot \overline{z}^{\lceil b_j + c_l \rceil} 
h_1(v_i, v_j) \cdot h_2(w_k, w_l) \right).
\]
Then we have
\[
\begin{split}
\det H&\left(h_1 \otimes h_2, \left(\bm{v} \otimes \bm{w}\right)^\sharp\right)\\
&= \left( \prod_{i, k} z^{\lceil b_i + c_k \rceil} \right)
\left( \prod_{j, l} \overline{z}^{\lceil b_j + c_l \rceil} \right)
\cdot (\det H(h_1, \bm{v}))^{r_2} \cdot (\det H(h_2, \bm{w}))^{r_1} \\
&= |z|^{2 \sum_{i, j} \lceil b_i + c_j \rceil} 
\cdot (\det H(h_1, \bm{v}))^{r_2} \cdot (\det H(h_2, \bm{w}))^{r_1}.
\end{split}
\] 
Therefore,
\begin{equation}
\begin{split}\label{p-eq16.1}
\gamma\left( \mathcal{P}^{h_1 \otimes h_2}_0(E_1 \otimes E_2) \right) 
&= -\frac{1}{2} \liminf_{z \to 0} 
\frac{ \log \det H(h_1 \otimes h_2, \left(\bm{v} \otimes \bm{w}\right)^\sharp) }
{ \log |z| } \\
&= -\frac{1}{2} \lim_{z \to 0} 
\frac{ \log \det H(h_1 \otimes h_2, \left(\bm{v} \otimes \bm{w}\right)^\sharp) }
{ \log |z| } \\
&= -\sum_{i, j} \lceil b_i + c_j \rceil 
+ r_2 \cdot \gamma\left( \mathcal{P}^{h_1}_0(E_1) \right) 
+ r_1 \cdot \gamma\left( \mathcal{P}^{h_2}_0(E_2) \right) \\
&= -\sum_{i, j} \lceil b_i + c_j \rceil 
+ r_2 \cdot \sum_i b_i 
+ r_1 \cdot \sum_j c_j
\end{split}
\end{equation}
by Corollary \ref{p-cor7.6} and Theorem \ref{p-thm12.3}.  

On the other hand, we have
\begin{equation}\label{p-eq16.2}
z^{\lceil b_i + c_j \rceil} v_i \otimes w_j \in 
\mathcal{P}^{h_1 \otimes h_2}_{b_i + c_j - \lceil b_i + c_j \rceil}(E_1 \otimes E_2),
\end{equation}
and
\begin{equation}\label{p-eq16.3}
\sum_{i,j} \left( b_i + c_j - \lceil b_i + c_j \rceil \right)
= -\sum_{i,j} \lceil b_i + c_j \rceil + r_2 \sum_i b_i + r_1 \sum_j c_j.
\end{equation}

By Corollary~\ref{p-cor12.8}, together with \eqref{p-eq16.1}, 
\eqref{p-eq16.2}, and \eqref{p-eq16.3}, 
we conclude that $\left(\bm v \otimes \bm w\right)^\sharp$ is a frame of 
$\mathcal{P}^{h_1 \otimes h_2}_0(E_1 \otimes E_2)$ compatible with the parabolic filtration, 
\[
\mathcal{P}ar\left( \mathcal{P}^{h_1 \otimes h_2}_0(E_1 \otimes E_2) \right) 
= \left\{ b_i + c_j - \lceil b_i + c_j \rceil \right\}_{1 \leq i \leq r_1,\ 1 \leq j \leq r_2},
\]
and
\[
z^{\lceil b_i + c_j \rceil} v_i \otimes w_j \in 
\mathcal{P}^{h_1 \otimes h_2}_{b_i + c_j - \lceil b_i + c_j \rceil}(E_1 \otimes E_2)
\setminus 
\mathcal{P}^{h_1 \otimes h_2}_{< b_i + c_j - \lceil b_i + c_j \rceil}(E_1 \otimes E_2).
\]

Thus, statement (ii) is proved.
\end{step}

We now complete the proof of Theorem~\ref{p-prop16.1}.
\end{proof}

We are now ready 
to describe the behavior of the prolongation of the tensor product of acceptable bundles.

\begin{thm}[Tensor products, see Theorem \ref{p-thm1.14}]\label{p-thm16.2}
Let $(E_1, h_1)$ and $(E_2, h_2)$ be 
acceptable vector bundles of rank $r_1$ and $r_2$, respectively. 
Then the parabolic filtration on $E_1\otimes E_2$ induced 
by $h_1\otimes h_2$ coincides with the tensor product filtration:
\[
\mathcal{P}^{h_1 \otimes h_2}_\ast(E_1 \otimes E_2)
= \mathcal{P}^{h_1}_\ast(E_1) \otimes \mathcal{P}^{h_2}_\ast(E_2).
\]
Equivalently, for every \( a \in \mathbb{R} \),
\[
\mathcal P^{h_1 \otimes h_2}_a(E_1 \otimes E_2) 
= \sum_{a_1 + a_2 \leq a} \mathcal P^{h_1}_{a_1}(E_1) \otimes 
\mathcal P^{h_2}_{a_2}(E_2).
\]
\end{thm}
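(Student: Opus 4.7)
The plan is to compute both sides of the claimed equality in terms of the frames $\bm v=\{v_1,\ldots,v_{r_1}\}$ of $\mathcal P^{h_1}_0 E_1$ and $\bm w=\{w_1,\ldots,w_{r_2}\}$ of $\mathcal P^{h_2}_0 E_2$ provided by Proposition \ref{p-prop16.1}, and to verify that the resulting bases of $\mathcal P_a^{h_1\otimes h_2}(E_1\otimes E_2)$ and of the right-hand side $\sum_{a_1+a_2\leq a}\mathcal P^{h_1}_{a_1}E_1\otimes \mathcal P^{h_2}_{a_2}E_2$ coincide. We fix $\bm v$ and $\bm w$ compatible with the parabolic filtrations, with $v_i\in \mathcal P^{h_1}_{b_i}E_1\setminus \mathcal P^{h_1}_{<b_i}E_1$ and $w_j\in \mathcal P^{h_2}_{c_j}E_2\setminus \mathcal P^{h_2}_{<c_j}E_2$.

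First, I would compute the right-hand side. By Lemma \ref{p-lem7.17}, for every $a_1\in\mathbb R$ the set $\{z^{-\lfloor a_1-b_i\rfloor}v_i\}$ is a frame of $\mathcal P^{h_1}_{a_1}E_1$, and similarly for $E_2$. Hence
\[
\mathcal P^{h_1}_{a_1}E_1\otimes \mathcal P^{h_2}_{a_2}E_2
=\bigoplus_{i,j}\mathcal O_\Delta\cdot z^{-\lfloor a_1-b_i\rfloor-\lfloor a_2-c_j\rfloor}\,v_i\otimes w_j.
\]
For fixed $(i,j)$, as $(a_1,a_2)$ ranges over $a_1+a_2\leq a$, the quantity $\lfloor a_1-b_i\rfloor+\lfloor a_2-c_j\rfloor$ is bounded above by $\lfloor a-b_i-c_j\rfloor$, and this bound is attained (for instance, by taking $a_2=c_j$ and $a_1=a-c_j$). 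Summing over all admissible $(a_1,a_2)$ therefore yields
\[
\sum_{a_1+a_2\leq a}\mathcal P^{h_1}_{a_1}E_1\otimes \mathcal P^{h_2}_{a_2}E_2
=\sum_{i,j}\mathcal O_\Delta\cdot z^{-\lfloor a-b_i-c_j\rfloor}\,v_i\otimes w_j.
\]

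Next, I would compute the left-hand side using Proposition \ref{p-prop16.1}. That proposition furnishes a frame $\{z^{\lceil b_i+c_j\rceil}v_i\otimes w_j\}$ of $\mathcal P^{h_1\otimes h_2}_0(E_1\otimes E_2)$ compatible with the parabolic filtration, with the $(i,j)$-th vector lying in $\mathcal P^{h_1\otimes h_2}_{b_i+c_j-\lceil b_i+c_j\rceil}\setminus \mathcal P^{h_1\otimes h_2}_{<b_i+c_j-\lceil b_i+c_j\rceil}$. Applying Lemma \ref{p-lem7.17} to this compatible frame with target level $a$, the new frame of $\mathcal P^{h_1\otimes h_2}_a(E_1\otimes E_2)$ consists of vectors
\[
z^{\lceil b_i+c_j-\lceil b_i+c_j\rceil-a\rceil}\cdot z^{\lceil b_i+c_j\rceil}\,v_i\otimes w_j,
\]
and an elementary identity with ceilings and floors rewrites the total exponent as $-\lfloor a-b_i-c_j\rfloor$. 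Comparing with the previous paragraph gives the desired equality of locally free $\mathcal O_\Delta$-modules for every $a\in \mathbb R$.

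The main obstacle is really just the bookkeeping in the second paragraph, namely verifying that the supremum of $\lfloor a_1-b_i\rfloor+\lfloor a_2-c_j\rfloor$ subject to $a_1+a_2\leq a$ is exactly $\lfloor a-b_i-c_j\rfloor$ and that it is attained, since once this is clear both filtrations are described by the same explicit frame. The rest is formal and does not require any additional input beyond Proposition \ref{p-prop16.1} (which in turn relied on Theorem \ref{p-thm12.3}) and Lemma \ref{p-lem7.17}.
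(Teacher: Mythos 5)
Your proof is correct and follows essentially the same route as the paper: both rely on Proposition \ref{p-prop16.1} together with Lemma \ref{p-lem7.17} to produce the explicit frame $\{z^{-\lfloor a-b_i-c_j\rfloor}v_i\otimes w_j\}$ of $\mathcal P^{h_1\otimes h_2}_a(E_1\otimes E_2)$. The only difference is presentational --- the paper observes that the inclusion $\sum_{a_1+a_2\le a}\mathcal P^{h_1}_{a_1}E_1\otimes\mathcal P^{h_2}_{a_2}E_2\subset\mathcal P^{h_1\otimes h_2}_a(E_1\otimes E_2)$ is immediate from the definition of the prolongation and only proves the reverse inclusion, whereas you compute both sides explicitly and verify they give the same $\mathcal O_\Delta$-module, which costs you the extra (correctly executed) step of showing the supremum of $\lfloor a_1-b_i\rfloor+\lfloor a_2-c_j\rfloor$ over $a_1+a_2\le a$ equals $\lfloor a-b_i-c_j\rfloor$ and is attained.
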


\begin{proof}[Proof of Theorem \ref{p-thm16.2}]
Let \( k \in \mathbb{R} \) be arbitrary. Then the following inclusion  
\[
\sum_{a + b \leq k} \mathcal{P}^{h_1}_a(E_1) \otimes \mathcal{P}^{h_2}_b(E_2)
\subset \mathcal{P}^{h_1 \otimes h_2}_k(E_1 \otimes E_2)
\]  
holds obviously by definition. Hence, it 
suffices to prove the opposite inclusion.

Suppose that  
\[
\mathcal{P}ar(\mathcal{P}^{h_1}_0 E_1) = \{b_1, \ldots, b_{r_1}\}, \quad 
\mathcal{P}ar(\mathcal{P}^{h_2}_0 E_2) = \{c_1, \ldots, c_{r_2}\}.
\]  
Let \( \{v_1, \ldots, v_{r_1}\} \) and \( \{w_1, \ldots, w_{r_2}\} \) be frames of 
\( \mathcal{P}^{h_1}_0 E_1 \) and \( \mathcal{P}^{h_2}_0 E_2 \), respectively, 
compatible with the corresponding parabolic filtrations.

By Proposition \ref{p-prop16.1}, we have  
\[
\mathcal{P}ar\left( \mathcal{P}^{h_1 \otimes h_2}_0(E_1 \otimes E_2) \right)
= \{ b_i + c_j - \lceil b_i + c_j 
\rceil \}_{1 \leq i \leq r_1,\ 1 \leq j \leq r_2},
\]  
and the set  
\[
\{ z^{\lceil b_i + c_j \rceil} v_i \otimes w_j \}_{1 \leq i \leq r_1,\ 1 \leq j \leq r_2}
\]  
forms a frame of \( \mathcal{P}^{h_1 \otimes h_2}_0(E_1 \otimes E_2) \) 
compatible with the parabolic filtration.

For each \( (i,j) \in \{1, \ldots, r_1\} \times \{1, \ldots, r_2\} \), define  
\[
n_{ij} := \max \left\{ n \in \mathbb{Z} 
\mid n + b_i + c_j - \lceil b_i + c_j \rceil \leq k \right\}.
\]  
Then the set  
\[
\{ z^{-n_{ij} + \lceil b_i + c_j \rceil} v_i \otimes w_j \}_{i,j}
\]  
is a frame of \( \mathcal{P}^{h_1 \otimes h_2}_k(E_1 \otimes E_2) \).  
Since  
\[
z^{-n_{ij} + \lceil b_i + c_j \rceil} v_i \in 
\mathcal{P}^{h_1}_{n_{ij} - \lceil b_i + c_j \rceil + b_i}(E_1), 
\quad w_j \in \mathcal{P}^{h_2}_{c_j}(E_2),
\]  
and \( n_{ij} + b_i + c_j - \lceil b_i + c_j \rceil \leq k \), it follows that  
\[
z^{-n_{ij} + \lceil b_i + c_j \rceil} v_i \otimes w_j \in 
\mathcal{P}^{h_1}_{n_{ij} - \lceil b_i + c_j \rceil + b_i}(E_1) \otimes 
\mathcal{P}^{h_2}_{c_j}(E_2) 
\subset \sum_{a + b \leq k} \mathcal{P}^{h_1}_a(E_1) \otimes \mathcal{P}^{h_2}_b(E_2).
\]

Therefore, we obtain the inclusion  
\[
\mathcal{P}^{h_1 \otimes h_2}_k(E_1 \otimes E_2) 
\subset \sum_{a + b \leq k} \mathcal{P}^{h_1}_a(E_1) \otimes \mathcal{P}^{h_2}_b(E_2),
\]  
and hence the desired equality  
\[
\mathcal{P}^{h_1 \otimes h_2}_k(E_1 \otimes E_2) 
= \sum_{a + b \leq k} \mathcal{P}^{h_1}_a(E_1) \otimes \mathcal{P}^{h_2}_b(E_2)
\]  
holds for every \( k \in \mathbb{R} \). This completes the proof of Theorem \ref{p-thm16.2}.
\end{proof}

\section{On Hom bundles}\label{p-sec17}

In this final section, we prove that the parabolic filtration on 
$\Hom (E_1, E_2)$ induced by 
$h^\vee _1\otimes h_2$ coincides with the filtration on the filtered bundle 
$\Hom(\mathcal{P}^{h_1}_\ast E_1,  \mathcal{P}^{h_2}_\ast E_2)$. 

\begin{prop}\label{p-prop17.1}
Let \( (E_1, h_1) \) and \( (E_2, h_2) \) be 
acceptable vector bundles of 
rank \( r_1 \) and \( r_2 \), respectively, defined on \( \Delta^* \).  
Then the parabolic filtration 
on the Hom bundle \(\Hom(E_1, E_2) \) induced by the 
metric \( h_1^\vee \otimes h_2 \) coincides with the filtration on the filtered bundle 
$\Hom(\mathcal{P}^{h_1}_\ast E_1, \mathcal{P}^{h_2}_\ast E_2)$: 
\[
\mathcal{P}^{h_1^\vee \otimes h_2}_\ast \Hom(E_1, E_2) = 
\Hom(\mathcal{P}^{h_1}_\ast E_1,\, \mathcal{P}^{h_2}_\ast E_2).
\]
\end{prop}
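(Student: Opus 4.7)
The plan is to prove both inclusions directly via norm estimates on local sections, using the weak norm estimate of Theorem~\ref{p-thm13.3} to bridge the two descriptions of the filtration on $\Hom(E_1, E_2)$. An alternative route proceeds via Theorem~\ref{p-thm16.2} and Proposition~\ref{p-prop15.1}, reducing the claim to the algebraic identity
\[
\Hom\bigl(\mathcal{P}^{h_1}_\ast E_1,\ \mathcal{P}^{(0)}_\ast \mathcal{O}_\Delta(*[0])\bigr) \otimes \mathcal{P}^{h_2}_\ast E_2 = \Hom\bigl(\mathcal{P}^{h_1}_\ast E_1,\ \mathcal{P}^{h_2}_\ast E_2\bigr),
\]
to be verified on parabolic-compatible frames, but the direct estimate is cleaner.

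For the inclusion $\mathcal{P}^{h_1^\vee \otimes h_2}_k \Hom(E_1, E_2) \subset \mathcal{P}_k \Hom(\mathcal{P}^{h_1}_\ast E_1, \mathcal{P}^{h_2}_\ast E_2)$, I would fix $f \in \mathcal{P}^{h_1^\vee \otimes h_2}_k \Hom(E_1, E_2)$, so that $|f|_{h_1^\vee \otimes h_2} = O(|z|^{-k - \varepsilon})$ for every $\varepsilon > 0$. For any $s \in \mathcal{P}^{h_1}_a E_1$ one has $|s|_{h_1} = O(|z|^{-a - \varepsilon})$, and the pointwise operator inequality $|f(s)|_{h_2} \leq |f|_{h_1^\vee \otimes h_2} \cdot |s|_{h_1}$ yields $|f(s)|_{h_2} = O(|z|^{-(a+k) - 2\varepsilon})$ for every $\varepsilon > 0$. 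Hence $f(s) \in \mathcal{P}^{h_2}_{a+k} E_2$ for every $a \in \mathbb{R}$, so $f \in \mathcal{P}_k \Hom(\mathcal{P}^{h_1}_\ast E_1, \mathcal{P}^{h_2}_\ast E_2)$.

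For the reverse inclusion, I would fix a local holomorphic frame $\{e_1, \ldots, e_{r_1}\}$ of $\mathcal{P}^{h_1}_0 E_1$ compatible with the parabolic filtration, with $e_i \in \mathcal{P}^{h_1}_{b_i} E_1 \setminus \mathcal{P}^{h_1}_{<b_i} E_1$, and set $e'_i := |z|^{b_i} e_i$. By Theorem~\ref{p-thm13.3}, the Gram matrix $H(h_1, \bm{e}')$ is pinched between $C^{-1}(-\log|z|)^{-M} I_{r_1}$ and $C(-\log|z|)^M I_{r_1}$, and hence so is its inverse (with possibly different constants). Expanding $|f|^2_{h_1^\vee \otimes h_2}$ in the frame $\{e'_i\}$ and controlling the inverse Gram matrix as a Hermitian positive operator yields
\[
|f|^2_{h_1^\vee \otimes h_2}(z) \leq C'(-\log|z|)^{M'} \sum_{i=1}^{r_1} |f(e'_i)(z)|^2_{h_2}.
\]
The hypothesis $f \in \mathcal{P}_k \Hom(\mathcal{P}^{h_1}_\ast E_1, \mathcal{P}^{h_2}_\ast E_2)$ gives $f(e_i) \in \mathcal{P}^{h_2}_{b_i + k} E_2$, hence $|f(e'_i)|_{h_2} = |z|^{b_i} |f(e_i)|_{h_2} = O(|z|^{-k - \varepsilon})$ for every $\varepsilon > 0$. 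Absorbing the logarithmic factor (using $(-\log|z|)^{M'} = O(|z|^{-\varepsilon})$ for every $\varepsilon > 0$) we obtain $|f|_{h_1^\vee \otimes h_2} = O(|z|^{-k - \varepsilon})$ for every $\varepsilon > 0$, so $f \in \mathcal{P}^{h_1^\vee \otimes h_2}_k \Hom(E_1, E_2)$.

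The main obstacle is the reverse inclusion, which rests on the two-sided pseudo-orthonormality of $\{e'_i\}$ supplied by Theorem~\ref{p-thm13.3}: this is precisely what converts sectionwise filtration data on $E_1$ into a global norm bound on $\Hom(E_1, E_2)$, modulo logarithmic factors that are harmless for the parabolic filtration. The other inclusion and the remainder of the argument are direct applications of the definitions of filtered bundle and acceptable bundle.
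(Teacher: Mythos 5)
Your proof is correct, but it takes a genuinely different route from the paper's. The paper first identifies $\mathcal{P}^{h_1^\vee \otimes h_2}_k \Hom(E_1,E_2)$ with $\sum_{b+c\le k}\mathcal{P}^{h_1^\vee}_b E_1^\vee \otimes \mathcal{P}^{h_2}_c E_2$ via Theorem~\ref{p-thm16.2} (tensor products), then uses Proposition~\ref{p-prop15.1} and parabolic-compatible frames $\{v_i^\vee\}$, $\{z^{-n_{ij}}w_j\}$ to match this against the abstract filtration $\Hom(\mathcal{P}^{h_1}_\ast E_1, \mathcal{P}^{h_2}_\ast E_2)$; the argument is essentially algebraic frame-matching. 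You instead bypass the tensor-product machinery entirely and argue pointwise: the easy inclusion follows from the operator-norm inequality $|f(s)|_{h_2}\le|f|_{h_1^\vee\otimes h_2}|s|_{h_1}$, and the reverse inclusion from expanding $|f|^2_{h_1^\vee\otimes h_2}$ in the rescaled compatible frame $\{e'_i\}=\{|z|^{b_i}e_i\}$ and invoking the two-sided log-power pinch of $H(h_1,\bm{e}')$ (hence of its inverse) from Theorem~\ref{p-thm13.3}. Both routes ultimately rest on Theorem~\ref{p-thm13.2} (hence on Theorem~\ref{p-thm12.3}), so neither is fundamentally cheaper, but your approach is more direct analytically and, interestingly, gives Theorem~\ref{p-thm13.3} an actual role in the paper --- the authors explicitly remark that the weak norm estimate ``does not play a role in the present work.'' The one step you should spell out is the linear-algebra lemma behind the inequality $|f|^2_{h_1^\vee\otimes h_2}\le C'(-\log|z|)^{M'}\sum_i|f(e'_i)|^2_{h_2}$: writing $|f|^2=\operatorname{tr}(\widetilde{G}K)$ with $\widetilde{G}$ the Gram matrix of the dual frame and $K_{ij}=h_2(f(e'_i),f(e'_j))$, one bounds $\operatorname{tr}(\widetilde{G}K)=\operatorname{tr}(K^{1/2}\widetilde{G}K^{1/2})\le\lambda_{\max}(\widetilde{G})\operatorname{tr}(K)$, and $\lambda_{\max}(\widetilde{G})=\lambda_{\max}(H(h_1,\bm{e}')^{-1})$ is controlled by the lower bound in Theorem~\ref{p-thm13.3}.
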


\begin{proof}[Proof of Proposition \ref{p-prop17.1}]
As usual, we denote the filtered bundle 
\(\Hom(\mathcal{P}^{h_1}_\ast E_1, \mathcal{P}^{h_2}_\ast E_2)\) 
by \( \mathcal{P}_\ast \Hom(E_1, E_2) \) (see Proposition \ref{p-prop14.3}). 
By Theorem \ref{p-thm16.2}, for any \( k \in \mathbb{R} \), we have
\[
\mathcal{P}^{h_1^\vee \otimes h_2}_k \Hom(E_1, E_2)
= \mathcal{P}^{h_1^\vee \otimes h_2}_k (E_1^\vee \otimes E_2)
= \sum_{b + c \leq k} \mathcal{P}^{h_1^\vee}_b E_1^\vee \otimes \mathcal{P}^{h_2}_c E_2.
\]

Let \( f \otimes u \in \mathcal{P}^{h_1^\vee}_b E_1^\vee 
\otimes \mathcal{P}^{h_2}_c E_2 \) with \( b + c \leq k \), and 
let \( x \in \mathcal{P}^{h_1}_a E_1 \).  
Then, by Proposition \ref{p-prop15.1}, we have  
\[
f(x) \in \mathcal{P}^{(0)}_{b + a} \mathcal{O}_\Delta(*[0]).
\]  
Therefore,
\[
(f \otimes u)(x) = f(x) \cdot u \in 
\mathcal{P}^{h_2}_{b + a + c} E_2 \subset \mathcal{P}^{h_2}_{a + k} E_2.
\]
This implies that, for every \( a \in \mathbb{R} \), we have  
\[
(f \otimes u)(\mathcal{P}^{h_1}_a E_1) \subset \mathcal{P}^{h_2}_{a + k} E_2,
\]
and hence \( f \otimes u \in \mathcal{P}_k \Hom(E_1, E_2) \).  
Thus, we obtain the inclusion
\[
\mathcal{P}^{h_1^\vee \otimes h_2}_k \Hom(E_1, E_2)
\subset \mathcal{P}_k \Hom(E_1, E_2).
\]

We now prove the opposite inclusion. 
Let  
\[
\mathcal{P}ar(\mathcal{P}^{h_1}_0 E_1) = \{b_1, \ldots, b_{r_1}\}, \quad
\mathcal{P}ar(\mathcal{P}^{h_2}_0 E_2) = \{c_1, \ldots, c_{r_2}\}.
\]  
Let \( \{v_1, \ldots, v_{r_1}\} \) and \( \{w_1, \ldots, w_{r_2}\} \) be 
frames of \( \mathcal{P}^{h_1}_0 E_1 \) and 
\( \mathcal{P}^{h_2}_0 E_2 \), respectively, compatible with the parabolic filtrations, such that  
\[
v_i \in \mathcal{P}^{h_1}_{b_i} E_1 \setminus \mathcal{P}^{h_1}_{< b_i} E_1,\quad 
w_j \in \mathcal{P}^{h_2}_{c_j} E_2 \setminus \mathcal{P}^{h_2}_{< c_j} E_2.
\] 
By Theorem \ref{p-thm13.2}, we have  
\[
\mathcal{P}^{h_1^\vee}_{1 - \epsilon} E_1^\vee \simeq (\mathcal{P}^{h_1}_0 E_1)^\vee
\] 
for sufficiently small $\epsilon > 0$.
Moreover, by Theorem \ref{p-thm13.2},  
\[
\mathcal{P}ar(\mathcal{P}^{h_1^\vee}_{1 - \epsilon} E_1^\vee) = \{-b_1, \ldots, -b_{r_1}\},
\]  
and the dual frame \( \{v_1^\vee, \ldots, v_{r_1}^\vee\} \) is 
compatible with this parabolic filtration. 
Fix an arbitrary \( k \in \mathbb{R} \), and let \( f \in \mathcal{P}_k \Hom(E_1, E_2) \).  
Then, for any \( a \in \mathbb{R} \), we have  
\[
f(\mathcal{P}^{h_1}_a E_1) \subset \mathcal{P}^{h_2}_{a + k} E_2.
\]  
In particular,
\[
f(\mathcal{P}^{h_1}_{b_i} E_1) \subset 
\mathcal{P}^{h_2}_{b_i + k} E_2 
\] 
for all $i$. 
Define  
\[
n_{ij} := \max \{ n \in \mathbb{Z} \mid n + c_j \leq b_i + k \}.
\]  
Then, by Lemma \ref{p-lem7.17}, the set 
\( \{z^{-n_{ij}} w_j\}_{j = 1}^{r_2} \) 
forms a frame of \( \mathcal{P}^{h_2}_{b_i + k} E_2 \).  
Since \( f(\mathcal{P}^{h_1}_{b_i} E_1) \subset \mathcal{P}^{h_2}_{b_i + k} E_2 \), 
there exist holomorphic functions \( f_{ij} \in \mathcal{O}_\Delta \) such that
\[
f = \sum_{i,j} f_{ij} \cdot v_i^\vee \otimes z^{-n_{ij}} w_j.
\] 
Since \( v_i^\vee \in \mathcal{P}^{h_1^\vee}_{-b_i} E_1^\vee \) 
and \( z^{-n_{ij}} w_j \in \mathcal{P}^{h_2}_{b_i + k} E_2 \), it follows that  
\[
v_i^\vee \otimes z^{-n_{ij}} w_j 
\in \mathcal{P}^{h_1^\vee}_{-b_i} 
E_1^\vee \otimes \mathcal{P}^{h_2}_{b_i + k} E_2 
\subset \sum_{b + c \leq k} \mathcal{P}^{h_1^\vee}_b E_1^\vee \otimes \mathcal{P}^{h_2}_c E_2 
= \mathcal{P}^{h_1^\vee \otimes h_2}_k \Hom(E_1, E_2).
\]
Therefore, \( f \in \mathcal{P}^{h_1^\vee \otimes h_2}_k \Hom(E_1, E_2) \), and hence
\[
\mathcal{P}_k \Hom(E_1, E_2) \subset \mathcal{P}^{h_1^\vee \otimes h_2}_k \Hom(E_1, E_2).
\]

Combining both inclusions, we conclude that
\[
\mathcal{P}_k \Hom(E_1, E_2) = \mathcal{P}^{h_1^\vee \otimes h_2}_k \Hom(E_1, E_2)
\quad \text{for all } k \in \mathbb{R}.
\]
This completes the proof of Proposition \ref{p-prop17.1}.
\end{proof}



\begin{thebibliography}{Dem2} 

\bibitem[B1]{biquard1} 
O.~Biquard, Fibr\'es paraboliques stables 
et connexions singuli\`eres plates, 
Bull. Soc. Math. France \textbf{119} (1991), no. 2, 231--257. 

\bibitem[B2]{biquard2} 
O.~Biquard, Prolongement d'un fibre holomorphe hermitien \`a courbure $L^p$ 
sur une courbe ouverte, Internat. J. Math. \textbf{3} (1992), no. 4, 441--453. 

\bibitem[BB]{bb} 
O.~Biquard, P.~Boalch, 
Wild non-abelian Hodge theory on curves, Compos. Math. \textbf{140} (2004), 
no. 1, 179--204. 
 
\bibitem[C]{cas}
J.~W.~S.~Cassels, {\em{An introduction to Diophantine approximation}}, 
Cambridge Tracts in Mathematics and Mathematical Physics, 
No. \textbf{45}. Cambridge University Press, New York, 1957.

\bibitem[CG]{cornalba-griffiths} 
M.~Cornalba, P.~Griffiths, 
Analytic cycles and vector bundles on non-compact 
algebraic varieties, Invent. Math. \textbf{28} (1975), 1--106. 

\bibitem[Dem1]{demailly1} 
J.-P.~Demailly, {\em{Analytic methods in algebraic geometry}}, 
Surveys of Modern Mathematics, \textbf{1}. International 
Press, Somerville, MA; Higher Education Press, Beijing, 2012.

\bibitem[Dem2]{demailly} 
J.-P.~Demailly, {\em{Complex Analytic and Differential Geometry}}, 
available 
at \url{https://www-fourier.ujf-grenoble.fr/~demailly/manuscripts/agbook.pdf}

\bibitem[DH]{deng-hao} 
Y.~Deng, F.~Hao, Vanishing theorem for 
tame harmonic bundles via $L^2$-cohomology, 
Compos. Math. \textbf{160} (2024), no. 12, 2828--2855.

\bibitem[F]{forster}
O.~Forster, {\em{Lectures on Riemann surfaces}}, Translated from 
the German by Bruce Gilligan. Graduate 
Texts in Mathematics, \textbf{81}. Springer-Verlag, New 
York--Berlin, 1981.

\bibitem[FFO]{ffo} 
O.~Fujino, T.~Fujisawa, T.~Ono, Notes on 
acceptable bundles II, 
preprint (2026). arXiv:2604.05233 [math.AG] 

\bibitem[M1]{mochizuki1}
T.~Mochizuki, 
Asymptotic behaviour of tame 
nilpotent harmonic bundles with trivial 
parabolic structure, 
J. Differential Geom. \textbf{62} (2002), no. 3, 351--559. 

\bibitem[M2]{mochizuki2}
T.~Mochizuki, 
Kobayashi--Hitchin correspondence 
for tame harmonic bundles and an application, 
Ast\'erisque No. \textbf{309} (2006), viii+117 pp. 

\bibitem[M3]{mochizuki3} 
T.~Mochizuki, 
Asymptotic behaviour of tame harmonic 
bundles and an application to pure 
twistor $D$-modules. I, Mem. Amer. Math. Soc. \textbf{185} 
(2007), no. 869, xii+324 pp. 

\bibitem[M4]{mochizuki4}
T.~Mochizuki, 
Wild harmonic bundles and wild pure 
twistor $D$-modules, Ast\'erisque No. \textbf{340} (2011), 
x+607 pp. 

\bibitem[M5]{mochizuki5} 
T.~Mochizuki, 
Good wild harmonic bundles and good filtered Higgs bundles, 
SIGMA Symmetry Integrability Geom. Methods Appl. \textbf{17} (2021), Paper No. 068, 66 pp. 

\bibitem[M6]{mochizuki6} 
T.~Mochizuki, {\em{Periodic monopoles and difference modules}}, 
Lecture Notes in Mathematics, \textbf{2300}. Springer, Cham, [2022], {\copyright}2022. 

\bibitem[M7]{mochizuki7} 
T.~Mochizuki, 
\textit{Private notes}, April 11, 2025.

\bibitem[NO]{noguchi-ochiai} 
J.~Noguchi, T.~Ochiai, {\em{Geometric function theory in several complex 
variables}}, Translated from the Japanese by 
Noguchi. Translations of Mathematical 
Monographs, \textbf{80}. American Mathematical Society, Providence, RI, 1990. 

\bibitem[SS1]{sabbah-schnell1} 
C.~Sabbah, C.~Schnell, 
\textit{The MHM Project} (Version 2), 
available at 
\url{https://perso.pages.math.cnrs.fr/users/claude.sabbah/MHMProject/mhm.html}

\bibitem[SS2]{sabbah-schnell2} 
C.~Sabbah, C.~Schnell, 
Degenerating complex variations of Hodge structure in dimension one, 
preprint (2022). arXiv:2206.08166 [math.AG]

\bibitem[S1]{simpson1}
C.~T.~Simpson, 
Constructing variations of Hodge 
structure using Yang--Mills theory 
and applications to uniformization, 
J. Amer. Math. Soc. \textbf{1} (1988), no. 4, 867--918. 

\bibitem[S2]{simpson2} 
C.~T.~Simpson, 
Harmonic bundles on noncompact 
curves, J. Amer. Math. Soc. \textbf{3} (1990), no. 3, 713--770.

\bibitem[S3]{simpson3} 
C.~T.~Simpson, 
\textit{Private notes}, June 11, 2025.

\end{thebibliography}
\end{document}